\theoremstyle{plain}
\newtheorem{Thm}{Theorem}[section]
\newtheorem{Coro}[Thm]{Corollary}
\newtheorem{Lem}[Thm]{Lemma}
\theoremstyle{definition}
\newtheorem{Def}[Thm]{Definition}
\newcommand{\mT}{\mathcal{T}}
\newcommand{\mS}{\mathcal{S}}
\newcommand{\mP}{\mathcal{P}}
\newcommand{\mD}{\mathcal{D}}
\newcommand{\comment}[1]{}
\begin{document}

\title{Calculating isotopy classes of Heegaard splittings}

\author{Jesse Johnson}
\address{\hskip-\parindent
        Department of Mathematics \\
        Oklahoma State University \\
        Stillwater, OK 74078
        USA}
\email{jjohnson@math.okstate.edu}

\subjclass{Primary 57M}
\keywords{Heegaard splittings, thin position}

\thanks{Research supported by NSF MSPRF grant 0602368}

\begin{abstract}
We show that given a partially flat angled ideal triangulation for a 3-manifold $M$ with boundary (as defined by Lackenby), there is an algorithm to produce a list of Heegaard splittings for $M$ such that below a given bound $g$, each isotopy class appears exactly once.  In particular, this algorithm determines precisely when two almost normal surfaces represent Heegaard splittings that are isotopic in the ambient 3-manifold.  A closely related algorithm determines the smallest genus common stabilization of any two Heegaard splittings on the list.  The methods, in fact, characterize isotopies between Heegaard surfaces in any triangulation, but the existence of infinitely many normal surfaces of the same genus prevents this characterization from being algorithmic in general.
\end{abstract}

\maketitle


\section{Introduction}

Consider a 3-manifold $M$ and a triangulation $\mT$ for $M$.  It is well known that many isotopy classes of surfaces in $M$ can be represented by normal and almost normal surfaces with respect to $\mT$, i.e. surfaces that intersect the tetrahedra in simple pieces of a finite number of types.  In particular, incompressible surfaces and strongly irreducible Heegaard surfaces can be represented in this manner.  Such surfaces can be classified algorithmically using linear programming techniques and knowing that every Heegaard surface appears on a calculable list of representative surfaces is the basis for algorithms to calculate the Heegaard genus of a 3-manifold~\cite{lacknb:1eff},~\cite{li:alg} (the former for hyperbolic 3-manifolds with boundary and the latter for closed atoroidal 3-manifolds.)

When one wants to extend these results to distinguish the different isotopy classes of Heegaard surfaces in $M$, the problem arises that while every isotopy class is represented by some normal or almost normal surface, it may by represented by more than one.  In this paper, we present criteria to determine when two almost normal surfaces are isotopic in the ambient manifold.  This allows one to replace the list of representatives with a list of isotopy classes.  In a partially flat angled ideal triangulation of a 3-manifold with boundary, as defined by Lackenby~\cite{lacknb:1eff} there are finitely many normal and almost normal surfaces of any given genus, so there is a finite algorithm to determine this criteria.

More precisely, we will encode the Heegaard splittings in terms of a complex $\mD = \mD(M, \mT^1)$ determined by the 3-manifold $M$ and a graph $\mT^1 \subset M$.  (In fact, we also allow $\mT^1$ to have closed loop components.)  For the purposes of the algorithm, this graph will be the 1-skeleton of the triangulation $\mT$, though we will develop the machinery in terms of any graph.  

The vertices of $\mD$ are surfaces in $M$ that are incompressible, strongly irreducible or satisfy a third property called index-two with respect to $\mT^1$.  (These terms will be defined with respect to a graph below, but when $\mT^1$ is empty, they correspond to the the standard definitions.)  We fix a specific pair of vertices $v_-, v_+$ depending on $M$.  Edges in $\mD(M, \mT^1)$ are oriented and correspond to compression bodies in $M$ bounded by the union of their endpoints.  Heegaard splittings in $M$ are represented by oriented paths in $\mD(M, \mT^1)$ between two fixed vertices $v_-$, $v_+$.  Every vertex in $\mD(M, \mT^1)$ has an associated genus and from this we define a genus for each path.  We define a sequence of nested, $n$-dimensional subcomplexes $\{\mD^n(M, \mT^1)\}$ of $\mD(M, \mT^1)$ and show that $\mD^1$ and $\mD^2$ encode the structure of the set of incompressible and Heegaard surfaces in $M$ as follows:

\begin{Thm}
\label{mainthm1}
A vertex $v$ in $\mD^1(M, \mT^1)$ represents an incompressible surface if and only if every edge path starting at $v$ in which the genus of the vertices does not increase is constant in genus.  The incompressible surfaces represented by two such vertices will be isotopic if and only if there is an edge path between the two in which the genus is constant.

Every oriented path in $\mD^1(M, \mT^1)$ from $v_-$ to $v_+$ represents a Heegaard splitting for $M$ and every irreducible Heegaard splitting for $M$ is represented by such a path in $\mD^1(M, \mT^1)$.  A path will represent an irreducible Heegaard splitting if and only if there is no sequence of face slides in $\mD^2(M, \mT^1)$, starting with this path, such that the genus is non-increasing, but the final path has lower genus than the starting path.  Two paths represent isotopic Heegaard splittings if and only if there is a sequence of face slides from one to the other in which the genus stays constant.
\end{Thm}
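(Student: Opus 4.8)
The plan is to translate the statement into the language of generalized Heegaard splittings relative to the graph $\mT^1$. An oriented edge path from $v_-$ to $v_+$ in $\mD^1(M,\mT^1)$ is, once the definitions are unwound, a chain of compression bodies glued end to end along the surfaces that label its vertices --- a generalized Heegaard splitting of $M$ relative to $\mT^1$ whose thin surfaces are the incompressible vertices and whose thick surfaces are the strongly irreducible or index-two vertices --- and its genus, in the sense defined above, is the genus of the Heegaard surface obtained by amalgamating it. So amalgamation gives at once that every such path represents a Heegaard splitting of $M$. For the converse I would run Scharlemann--Thompson untelescoping, adapted to the graph $\mT^1$: thin position cuts the given irreducible splitting into a generalized splitting with incompressible thin surfaces and thick surfaces that are strongly irreducible away from $\mT^1$, the single new feature being that a thick surface meeting $\mT^1$ can fail strong irreducibility only in the restricted ``index-two'' way --- which is exactly why that third vertex type is built into $\mD$ --- and then check that the result is a path in $\mD^1$.

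For the two incompressible statements, suppose $v$ represents an incompressible surface $S_v$. Since an incompressible surface is never the positive boundary of a nontrivial compression body, every edge incident to $v$ is either a product region or has $S_v$ as a component of its negative boundary, and in the latter case the other endpoint has genus at least $g(S_v)$. Hence any edge path out of $v$ along which the vertex genus never increases must be constant in genus, with all its surfaces isotopic to $S_v$ across the intervening products. Conversely, if $S_v$ is not incompressible --- so it is strongly irreducible or index-two, and in particular compressible --- a maximal compression of $S_v$ to one side has trace a single compression body, whose negative boundary, by strong irreducibility (respectively the index-two property), has strictly smaller genus than $S_v$; this is a single edge out of $v$ on which the genus strictly drops, which proves the first sentence. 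For the isotopy clause, one direction is the remark just made that a constant-genus edge at an incompressible vertex is a product; the other uses that isotopic incompressible surfaces cobound a product region, and a general-position argument with respect to $\mT^1$ then subdivides this region into the products recorded by a constant-genus edge path.

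For the Heegaard statements the same idea is pushed up a dimension. A \emph{face slide} modifies a path by pushing a subpath across a $2$-cell of $\mD^2$, and I would set up the $2$-cells so that a face slide preserving the genus is realized by an ambient isotopy of the amalgamated Heegaard surface --- and conversely every isotopy-move between generalized splittings is such a face slide --- while a face slide that strictly drops the genus corresponds under amalgamation to a destabilization. Granting this, the irreducibility criterion is immediate: a non-increasing sequence of face slides out of a path $P$ ending at strictly smaller genus amalgamates, step by step, to a chain of isotopies and destabilizations exhibiting the Heegaard surface of $P$ as a stabilization of a lower-genus splitting, so $P$ is reducible; while a reducible (hence, for irreducible $M$, stabilized) splitting destabilizes by a single genus-dropping face slide across the corresponding $2$-cell. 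Likewise the ``if'' half of the isotopy criterion is immediate, since a constant-genus sequence of face slides fixes the isotopy class of the amalgamated surface.

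The heart of the matter, and the step I expect to be the real obstacle, is the forward direction of the isotopy criterion. Given an ambient isotopy $\{\Sigma_t\}_{t\in[0,1]}$ between two Heegaard surfaces, I would put the one-parameter family in general position with respect to the fixed graph $\mT^1$; for all but finitely many $t$ the surface $\Sigma_t$ is transverse to $\mT^1$ and, by the untelescoping of the first paragraph, determines a path in $\mD^1$. The real work is to make that untelescoping depend on $t$ canonically enough to be literally constant --- hence of constant genus --- on each of the finitely many intervals of regular times. This is a one-parameter, Cerf-theoretic version of Scharlemann--Thompson thin position, which I would control using a Rubinstein--Scharlemann graphic comparing the family $\{\Sigma_t\}$ with the stratification of $M$ by $\mT^1$; this analysis is also what pins down precisely which $2$-cells must be included in $\mD^2$. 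At each of the finitely many exceptional times --- where $\Sigma_t$ passes a vertex of $\mT^1$, acquires a tangency with an edge, or crosses a vertex of the graphic --- a local model shows that the associated path changes by a single constant-genus face slide; concatenating these gives the constant-genus sequence of face slides the theorem requires. A pervasive lower-order nuisance throughout is keeping the relative-to-$\mT^1$ bookkeeping consistent with isotopies that take place in all of $M$ and may sweep across $\mT^1$.
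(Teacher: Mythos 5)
Your translation into generalized Heegaard splittings, amalgamation and untelescoping is a reasonable alternative framing, and your treatment of the ``easy'' directions (constant-genus slides give isotopies, genus-dropping slides give destabilizations, amalgamation of a path gives a splitting) matches the paper in substance. But there is a genuine gap in the step you yourself flag as the heart of the matter, and it is not quite where you locate it. You reduce the forward direction of the isotopy criterion to controlling a one-parameter family $\{\Sigma_t\}$ of Heegaard surfaces by a Cerf/Rubinstein--Scharlemann analysis, producing one face slide at each exceptional time of the isotopy. This cannot work as stated, because the two given paths in $\mD^1(M, \mT^1)$ are two \emph{weak reductions}, and weak reduction is not canonical: a single Heegaard surface can admit inequivalent untelescopings yielding genuinely different paths in $\mD^1$. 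In that case the ambient isotopy may be taken to be the identity, there are no exceptional times, and your construction produces no face slides at all --- yet the two paths must still be connected. The connecting moves are not isotopy-induced; they arise from comparing two thinnings of the same splitting path, and this is precisely what forces the index-two vertices and the triangular $2$-cells into $\mD^2$.

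The paper handles this with a second layer of thin position for which your proposal has no analogue: a sequence of face-slide-related paths in $\mS(M,\mT^1)$ is itself a path in the path complex $\mP(\mS)$, which is again a height complex, and thinning \emph{that} path (Theorem~\ref{mainthm}, via Lemmas~\ref{criticalpathonlyonewrvertexlem}--\ref{cg1lem}) shows that any two such sequences are connected through generalized splittings having exactly one index-two thick surface, i.e.\ through faces of $\mD^2$. Your graphic does the work of the paper's Lemma~\ref{sisimplyconnectedlem} (realizing the isotopy by face slides in $\mS$ with controlled genus), but without the iterated thinning you cannot convert slides in $\mS$ into slides in $\mD^2$, nor handle the trivial-isotopy case above. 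A secondary issue: your claim that a non-incompressible vertex of $\mD^1$ admits a genus-dropping edge fails for index-one vertices whose only K-disks are bridge disks (classically incompressible but bridge-bicompressible surfaces), so the dichotomy ``not index zero, hence compressible'' is false in the relative setting; the paper's Lemma~\ref{compcriterialem} concerns compressibility in the classical sense and its converse direction again routes through Theorem~\ref{mainthm} rather than through a single maximal compression.
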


Because $\mD^2$ encodes the isotopy classes of Heegaard splittings for $M$, the algorithm promised above would follow from an algorithm that constructs $\mD^2(M, \mT^1)$ where $\mT^1$ is the 1-skeleton of a triangulation for $M$. This is our second result:

\begin{Thm}
\label{mainthm2}
Given a positive integer $g$, if $\mT$ is partially flat angled ideal triangulation for a 3-manifold $M$ then there is an algorithm that constructs the subcomplex of $\mD^2(M, \mT^1)$ spanned by all vertices representing surfaces of genus less than or equal to $g$.
\end{Thm}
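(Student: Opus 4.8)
The plan is to reduce the construction of the part of $\mD^2(M,\mT^1)$ lying below genus $g$ to a finite collection of normal surface computations, using the fact, due to Lackenby~\cite{lacknb:1eff}, that a partially flat angled ideal triangulation carries only finitely many normal and almost normal surfaces of any bounded genus. Once the vertices, the oriented edges, and the $2$--cells of the desired subcomplex are each shown to form a finite, effectively enumerable set, assembling them into a complex is automatic.

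First I would produce the vertex set. By the standard normalization results, every incompressible surface of genus at most $g$ is isotopic to a normal surface, and every strongly irreducible surface and every index--two surface (relative to $\mT^1$) of genus at most $g$ is isotopic to an almost normal surface; Lackenby's combinatorial Gauss--Bonnet estimate converts the genus bound into a bound on the weight of such a surface, so all of them appear on an effectively computable list. For each surface on the list I would run the usual normal surface subroutines to decide whether it is incompressible, strongly irreducible, or index--two with respect to $\mT^1$ --- each of these is a statement about the existence or non--existence of compressing disks, or of pairs of compressing disks on opposite sides, and such disks may be taken normal in the complementary pieces, so only finitely many candidates must be examined --- and I would discard those belonging to none of the three types. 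I would also locate the two distinguished vertices $v_-$ and $v_+$, which the definition of $\mD$ attaches to $M$. It does no harm if the resulting list contains several mutually isotopic surfaces: by Theorem~\ref{mainthm1} the isotopies among vertices are already encoded in the edge and $2$--cell structure of $\mD^2$, so the subcomplex need not carry one vertex per isotopy class.

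Next I would build the oriented edges and the $2$--cells. An edge from $v$ to $w$ records a compression body $H\subset M$ whose positive and negative boundaries are the two vertex surfaces in the order prescribed by the edge; after isotoping representatives of $v$ and $w$ into normal position with minimal (or empty) intersection, $M$ cut along $v\cup w$ is a cell--decomposed manifold with finitely many pieces, and deciding which unions of these pieces assemble into a compression body with the prescribed boundary is a finite check --- in particular the product edges that witness isotopies between vertices are found in this way. One then shows that, because the boundary surfaces have bounded genus and the angle structure is rigid, only finitely many compression bodies arise as edges between a fixed pair of bounded--genus vertices, so that the $1$--skeleton $\mD^1$ of the subcomplex is finite. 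A face slide is a $2$--cell attached along a loop of bounded length in this $1$--skeleton, so once $\mD^1$ is finite there are only finitely many face slides, each recognizable by a finite computation. Collecting the vertices, the oriented edges, and the face slides produces the required subcomplex of $\mD^2(M,\mT^1)$.

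The main obstacle is the first step. One must check that all three classes of vertex surface --- incompressible, strongly irreducible, and index--two with respect to $\mT^1$ --- genuinely normalize into the finite family supplied by the angled triangulation, and that the three recognition problems are algorithmic in this setting, since it is precisely these properties that make the vertices of $\mD$ well defined. A closely related difficulty is to prove that only finitely many compression--body edges, and hence only finitely many face slides, occur between bounded--genus vertices, and that the cut--open analysis above finds all of them rather than merely some; this is where the rigidity furnished by the angle structure, which forces the relevant compression bodies and face slides to have bounded combinatorial complexity, is indispensable.
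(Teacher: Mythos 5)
Your outline matches the paper's at the top level (use Lackenby's finiteness of bounded--genus normal and almost normal surfaces to get a finite candidate list, then compute edges and faces by finite search), but the two steps you yourself flag as ``the main obstacle'' are exactly where the paper's proof lives, and your proposed substitutes do not work as stated. First, vertex recognition: deciding whether a candidate surface is incompressible, strongly irreducible, or index--two with respect to $\mT^1$ is not a ``usual normal surface subroutine.'' Index two means the descending link (a disk complex) is connected but has nontrivial $\pi_1$, and there is no evident finite certificate for nontriviality of $\pi_1$ of an infinite complex by ``examining finitely many candidate disks.'' The paper avoids this entirely: it defines an \emph{index--$n$ normal surface} locally (each piece in each tetrahedron has well-defined index, summing to $n$), proves in Lemma~\ref{normalizerlem} via the Bachman-ball argument that every index--$n$ vertex of $\mS(M,\mT^1)$ has such a representative, and then classifies all index--$0$, $-1$, $-2$ pieces in a tetrahedron (triangles, quadrilaterals, octagons, dodecagons, and tubed unions of these). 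Recognition becomes a purely combinatorial inspection of pieces. Relatedly, your remark that duplicate isotopic surfaces on the vertex list ``do no harm'' is not right: the theorem asks you to construct the subcomplex of $\mD^2$, whose vertices are (blind) isotopy classes, so you must decide vertex equality; the paper does this with Lemma~\ref{bentsurfaceisotopiclem}.

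Second, the edges. An edge of $\mD$ is not merely ``a compression body with the two surfaces as its boundary'': it is an equivalence class of strictly monotone oriented paths in $\mS(M,\mT^1)$ from an index--$n$ vertex to a lower-index vertex, i.e.\ a maximal sequence of K-compressions to one side together with its equivalence class under horizontal slides. Your plan --- isotope the two normal representatives to minimal intersection, cut $M$ along their union, and check which unions of complementary pieces form a compression body --- neither certifies that a given compression body is realized by such a decreasing path nor shows that all edges are found this way, and the asserted finiteness ``because the angle structure is rigid'' has no support: the angle structure is used only to bound the normal and almost normal surfaces of bounded genus, not to control compression bodies. The paper's actual device is the finite graph $G(w)$ of bent surfaces with a fixed edge-weight vector, with pinch moves as edges; Lemmas~\ref{indexonealglem} and~\ref{indextwoalglem} show that decreasing paths in $\mS$ from an index--one or index--two normal surface to a lower-index one correspond exactly to non-increasing-complexity paths in $G(w)$, and Lemma~\ref{finitebentsurfaceslem} gives finiteness and constructibility of $G(w)$. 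Without these (or equivalent) ingredients, the reduction to ``a finite check'' in your second and third paragraphs is not established.
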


The first of these two theorems is proved using a formulation of thin position in terms of a cell complex whose vertices represent all isotopy classes of surfaces transverse to $\mT^1$.  The description of this complex and the results leading up to the proof of Theorem~\ref{mainthm1} are proved in Sections~\ref{thinpsect} through~\ref{surfderivedsect}.  Sections~\ref{oddsect} and~\ref{catsect} contain a few more results about this version of thin position that, while not directly applicable to Theorem~\ref{mainthm1}, should be useful for future applications.  Section~\ref{gordonsect} contains an exposition of Dave Bachman's proof of the Gordon conjecture in terms of this approach to thin position.  The proof of Theorem~\ref{mainthm2} is described in Sections~\ref{normalsect} through~\ref{thm2sect}.  

The motivation for this paper came from attempting to apply the ideas from Bachman's proof of the Gordon conjecture~\cite{bach:gordon} to Stockings'~\cite{stocking} and Rubinstein's~\cite{rub:alnormal} methods of constructing almost normal surfaces.  Bachman presented some results along these lines in~\cite{bachman:normal}.  I want to thank Robin Wilson for explaining Stocking's paper to me and Dave Bachman for discussing his approach to thin position with me.  I thank Martin Scharlemann and Scott Tayler for making a number of suggestions to improve the exposition, and Marion Moore for convincing me to ignore trivial spheres (which makes things much simpler.)  I thank Daryl McCullough and Sangbum Cho for pointing me towards Cho's work on contractible complexes in~\cite{cho}, which provided the proof of Lemma~\ref{disksetcontractible}. And I thank Trent Schirmer for pointing out a problem with the original definition of the complex of surfaces, which has been fixed in the current version of the paper.

\section{Thin position}
\label{thinpsect}

The notion of ``thin position'' has been defined in a number of ways and in a number of contexts, including (in historical order) Gabai's thin position for knots ~\cite{gabai}, Scharlemann and Thompson's thin position for 3-manifolds~\cite{sch:thin}, Hayashi and Shimokawa's generized thin position for knots~\cite{hayashimo}, which combines the first two of these, Tomova's generalization~\cite{tom:brcompare} to include cut disks, and Taylor and Tomova's further generalization to embedded graphs~\cite{tomovataylor}.  A notion of circular thin position for 3-manifolds with infinite first homology has also been intoduced by Manjerrez-Gutierrez~\cite{manjarrez}, and Stevens~\cite{stevens} defined a form of thin position surfaces containing a given knot.  Bachman, Derby-Talbot and Sedgwick have applied a thin position argument to isotopies of incompressible surfaces relative to a knot~\cite{bdts:dehn}.

Schulten's work with generalized Heegaard splittings~\cite{sch:cross},~\cite{schl:vert} has shown how powerful thin position for 3-manifolds can be for understanding Heegaard splittings, and Tomova~\cite{tom:brcompare} has extended many of these ideas to thin position for knots.  Recently, Bachman~\cite{bach:gordon}~\cite{bach:stabs} has proved a number of interesting results by applying thin position to sequences of generalized Heegaard splittings.  This iteration of the thin position arguments proves to be a very natural approach to many problems.

The connections between these different ideas are somewhat subtle and may not be visible without coming to a fairly deep understanding of the definitions.  In practice, thin position is more of a philosophy than a concrete mathematical object.  One goal of this paper is to present a framework for defining thin position in different contexts.  This will lead to a fairly concrete treatment of some of the more advanced results about thin position that are known by experts, but are rather obscure in the literature.  

In all of the different flavors of thin position, one applies lexicographic/dictionary ordering to a set of presentations for a given object and finds that any presentation can be simplified to a presentation satisfying certain nice properties.  We will define thin position in terms of a cell complex $\mS$ with a partial ordering on its vertices, satisfying a number of axioms that will be introduced throughout the paper.  (A list of the axioms is provided in Appendix~\ref{axiomsect} for reference.)  We consider paths in $\mS$, applying lexicographic ordering to the sets of local maxima.  By choosing the appropriate complex and the appropriate ordering, one can recover any of the existing notions of thin position.

In order to recover Scharlemann and Thompson's thin position for 3-manifolds, one constructs a complex $\mS$ whose vertices represent isotopy classes of separating surfaces in a given manifold.  Edges correspond to compressing disks for surfaces and faces come from disjoint compressing disks.  We define conditions corresponding to incompressible and strongly irreducible surfaces entirely in terms of the combinatorics of the cell complex in such a way that in this complex, the definition agrees with the traditional definitions.  In particular, paths in $\mS(M, \emptyset)$ correspond to generalized Heegaard splittings.  In the abstract setting, any path can be ``thinned'' to a path whose locally maximal vertices are strongly irreducible and whose local minima are incompressible.  This is Scharlemann and Thompson's main theorem in~\cite{sch:thin}.

In order to recover Bachman's work~\cite{bachman} on sequences of generalized Heegaard splittings, we construct a complex $\mP(\mS)$ whose vertices are paths in $\mS$.  The 1-skeleton of this complex is essentially Schulten's width complex~\cite{sch:width}.  A path in $\mP(\mS)$ corresponds to a sequence of paths in $\mS$.  In the context of Scharlemann-Thompson thin position, this is a sequence of generalized Heegaard splittings.  The same techniques used to characterise thin paths in $\mS$ can be used to characterize thin paths in $\mP(\mS)$ that define thin sequences of paths in $\mS$.

We sum up these results by constructing from $\mS(M, \mT^1)$ the complex $\mD(m, \mT^1)$ described above.  The arguments of the previous sections imply that this derived complex contains all the necessary information to understand paths and sequences of paths in $\mS$, as follows:

\begin{Thm}
\label{mainthm}
Every path in $\mS$ can be thinned to produce a path represented in $\mD(\mS)$.  Any sequence of paths in $\mS$ that are related by face slides in $\mS$ can be thinned to a sequence of paths that is represented by face slides in $\mD(\mS)$.
\end{Thm}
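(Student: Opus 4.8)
The plan is to assemble the two assertions from the thinning machinery of the preceding sections, with the second (sequence) assertion obtained by running the same argument one dimension higher inside the complex $\mP(\mS)$.

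\emph{The first assertion.} To a path $\gamma$ in $\mS$ I associate its width: the finite multiset of genera of the locally maximal vertices of $\gamma$, compared lexicographically in decreasing order (with ties broken as prescribed in Section~\ref{thinpsect}). Each of the thinning moves on paths described in Sections~\ref{thinpsect} through~\ref{surfderivedsect} leaves this width unchanged or strictly decreases it, and the axioms guarantee that such a move is available whenever some locally maximal vertex of $\gamma$ fails to be strongly irreducible or some locally minimal vertex fails to be incompressible. Since widths are finite multisets over the well-ordered set of genera, and finite multisets over a well-ordered set are themselves well-ordered under the lexicographic order, the thinning process terminates at a path $\gamma'$ all of whose local maxima are strongly irreducible and all of whose local minima are incompressible. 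It then suffices to observe that the subsequence of locally extremal vertices of $\gamma'$, together with the compression bodies cobounded by consecutive such vertices, is exactly an oriented edge path in $\mD(\mS)$ from $v_-$ to $v_+$ carrying the same genus data as $\gamma'$; this is essentially the definition of the surface-derived complex $\mD(\mS)$, so the representation claim follows.

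\emph{The second assertion.} A sequence of paths in $\mS$ related by face slides is an edge path $\Gamma$ in $\mP(\mS)$. I assign to $\Gamma$ a width-of-widths: using the partial order on paths induced by comparing widths, list the widths of the locally maximal vertices of $\Gamma$ in decreasing lexicographic order and take the resulting finite multiset. The face-slide thinning moves for sequences developed earlier do not increase this quantity and strictly decrease it unless the distinguished (locally maximal) paths of $\Gamma$ are already thin; well-ordering of finite multisets over a well-ordered set again forces termination at a sequence $\Gamma'$ whose distinguished paths are thin. Applying the first assertion to the paths appearing in $\Gamma'$, and invoking the correspondence between face slides in $\mS$ and face slides in $\mD(\mS)$, exhibits $\Gamma'$ as a sequence of face slides in $\mD(\mS)$, as required.

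\emph{The main obstacle.} The delicate point is the last step of the second assertion: a face slide in $\mS$ relating two already-thin paths need not map directly to a single face slide in $\mD(\mS)$, since the elementary move in $\mS$ may modify the path away from its extremal vertices. One must therefore show that, after a further genus-preserving thinning, such a move is realized by a finite composition of face slides in $\mD(\mS)$, and that these auxiliary thinnings can be chosen coherently along the whole sequence $\Gamma'$ so that the pieces fit together into a single sequence of face slides. Setting up this bookkeeping — together with checking that thinning $\Gamma$ at the level of sequences really is compatible with thinning each constituent path — is where the genuine work is concentrated; the remainder is a direct appeal to the definitions and to the termination principle above.
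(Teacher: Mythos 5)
Your outline follows the paper's strategy (thin in $\mS$, then run the same machinery one level up in $\mP(\mS)$), but the crux of the second assertion is exactly the step you defer to ``bookkeeping,'' and it does not require bookkeeping so much as a structural fact you never invoke. After thinning the sequence $\Gamma$ to a thin path $F'$ in $\mP(\mS)$ (Lemma~\ref{unorientedthinninglem} applied to the height complex $\mP(\mS)$), the local maxima of $F'$ are \emph{index-one vertices of $\mP(\mS)$}. Lemmas~\ref{criticalpathonlyonewrvertexlem} and~\ref{critpathcritmaxlem} say that such a vertex is a path in $\mS$ with exactly one weakly reducible maximum, that maximum having path index two, and Lemma~\ref{cg1lem} plus the barrier axiom force its minima to be index zero. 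Consequently each monotone segment of $F'$ thins the corresponding path at exactly one maximum, and that single index-two $\to$ index-one $\to$ index-zero triple is by definition a face of $\mD^2(\mS)$. So a face slide in $\mS$ between thin paths \emph{is} realized by a single face slide in $\mD(\mS)$ — there is no residual coherence problem to solve, but you need the index-one characterization of maximal vertices of $\mP(\mS)$ to see this, and your width-of-widths induction does not produce it.

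Two smaller gaps. First, the theorem is stated for an abstract oriented height complex, where the complexity takes values in an arbitrary partially ordered set; your termination argument via well-ordering of finite multisets of genera is not available there. Termination comes from the net axiom (the finite-valence, finite-diameter graph argument in Lemma~\ref{unorientedthinninglem}), not from well-ordering. Second, ``all maxima strongly irreducible and all minima incompressible'' is not literally the condition for being represented in $\mD(\mS)$: strongly irreducible means \emph{path} index one, whereas the vertices of $\mD(\mS)$ are rigid vertices with well-defined index. You need the barrier axiom, together with the fact that the adjacent minima are index zero, to conclude that the path link of each maximum equals its full descending link, upgrading path index one to index one.
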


This Theorem is proved for any complex $\mS$ satisfying a certain collection of axioms.  The main example used in this paper, the complex of surfaces, is defined in Section~\ref{surfacecomplessect}.  This complex is defined for a 3-manifold $M$ and a graph $\mT^1$ in the 3-manifold.  Using this as motivation, we define our first two axioms in Section~\ref{heightsect}, and define a preliminary notion of thin position in Section~\ref{thinpathsect}.  This primordial form of thin position is useful for understanding incompressible surfaces and for iterating thin position later, but is not quite complete.

Before defining thin position in its final form, we discuss in Section~\ref{ksplittingsect} how certain paths in $\mS(M, \mT^1)$ correspond to compression bodies in $M$.  This leads to a definition of orientations on the edges of the complex of surfaces in Section~\ref{orsect}, and to the definition of our final form of thin position in Section~\ref{thinorsect}.  The main result of this section is that the local maxima of thin paths satisfy a condition that is equivalent to the traditional notion of strongly irreducible.

In traditional thin position, the thin levels (local minima in this setting) are understood by using a result proved by Casson and Gordon~\cite{cass:red}, using a generalization of a theorem of Haken~\cite{haken}.  We present in Section~\ref{cgsect} an axiom that is equivalent to this result, and show that it implies the expected result about local minima of thin paths.   We prove in Sections~\ref{kbodycplxsect},~\ref{diskssect} and~\ref{pathlinksect} that the complex of surfaces satisfies this axiom.  

With the end of Section~\ref{pathlinksect}, we have proved, in the axiomatic setting, the main results of Scharlemann and Thompson's original paper~\cite{sch:thin}.  In Sections~\ref{iteratedsect} and~\ref{projectionsect}, we define the path complex for the complex of surfaces and use this to ``iterate'' the notion of thin position, recovering a more general version of Bachman's sequences of generalized Heegaard splittings.  

In Section~\ref{linksect}, we discuss the difference between ``strongly irreducible'' and ``weakly incompressible'', showing that in many (but not all) cases they are the same.  In Sections~\ref{splittingpathsect} and~\ref{thinheegsect}, we discuss in more depth the process of ``thinning'' and its inverse, then in Section~\ref{heegsect}, we relate this all to Heegaard splittings.  This recovers many of the facts about amalgamation introduced by Schultens~\cite{sch:cross}.  Finally, in Section~\ref{derivedsect} we define the complex $\mD(\mS)$ discussed in Theorems~\ref{mainthm1} and~\ref{mainthm} and prove Theorem~\ref{mainthm}.  The proof of Theorem~\ref{mainthm1} is presented as a sequence of Lemmas in Section~\ref{surfderivedsect}.

\section{The complex of surfaces}
\label{surfacecomplessect}

We begin by considering an example that will be used to provide context for the axioms we introduce throughout the paper.  Let $M$ be a 3-manifold and $\mT^1$ be the union of a (possibly empty) properly embedded graph and a disjoint (possibly empty) link.  The reader may prefer (at least at first) to consider only the case when $\mT^1$ is empty.  This greatly simplifies the definitions and recovers Scharlemann and Thompson's thin position for 3-manifolds.  However, the more general case is necessary for the main application in this paper.

Given surfaces $S$ and $S'$, both transverse to $\mT^1$, we will say that $S$ is \textit{transversely} isotopic to $S'$ if there is an isotopy from $S$ to $S'$ in which each intermediate surface is transverse to $\mT^1$.  In such an isotopy, the number of points of intersection with each edge of $\mT^1$ is preserved, though the points of intersection may move within the edges and loops of $\mT^1$.

A surface $S$ in $M$ will be called \textit{strongly separating} if we can label the components of $M \setminus S$ either $+$ or $-$ so that each component of $S$ is in the closure of one positive component and one negative component.  (Note that a connected separating surface is strongly separating.)  An \textit{oriented, strongly separating surface} is a strongly separating surface along with a choice of labels for the complement.  Because $M$ is connected, every strongly separating surface corresponds to exactly two oriented strongly separating surfaces.  However, if there is an isotopy from the surface to itself that interchanges the two sides, it will define a single isotopy class of oriented, strongly separating surfaces.

Consider a sphere component $R$ of $S \subset M$ disjoint from $\mT^1$. Let $S'$ be the result of attaching an embedded tube from $R$ to a second component of $S$. (The second component need not be a sphere.) Then we will say that $S'$ is the result of a \textit{sphere tubing} on $S$. Note that the meridian disk for this tube is a trivial loop in $S'$. Thus the inverse of a sphere tubing is not a compression in the strict sense. (Compressing disks are usually assumed to have essential boundary.)

The surface $S'$ is homeomorphic to $S$ as an abstract surface, but not necessarily isotopic in $M$.  For the most part, $S$ and $S'$ will be isotopic if and only of $R$ bounds a ball disjoint from $S$ and $\mT^1$. 
\begin{figure}[htb]
  \begin{center}
  \includegraphics[width=3.5in]{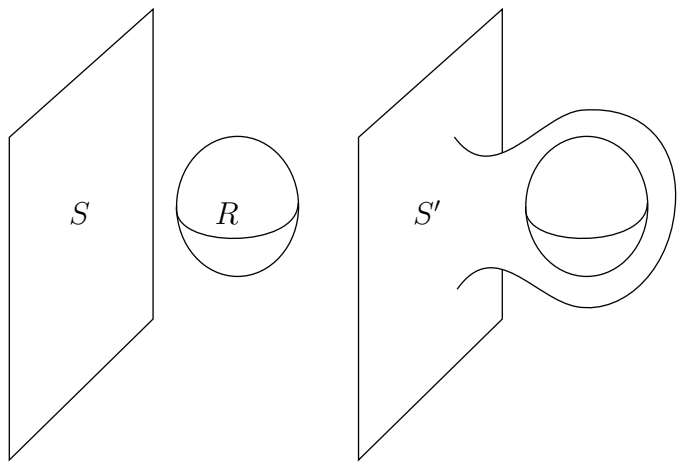}
  \put(-230,90){$S$}
  \put(-175,90){$R$}
  \put(-100,90){$S'$}
  \caption{Tubing a surface across a sphere produces a homeomorphic surface that is not, in general isotopic to the original.}
  \label{spheretubefig}
  \end{center}
\end{figure}

A \textit{trivial sphere} in $S$ is a sphere component of $S$ that bounds a ball in $M$ whose interior is disjoint from $S$ and $\mT^1$.  If $S$ is strongly separating and transversely oriented, then the result of adding or removing a trivial sphere to will also be strongly separating and there is a canonical way to define a transverse orientation for the resulting surface.  We will say that two surfaces $S$, $S'$ are \textit{sphere-blind isotopic}, or just \textit{blind isotopic} if they are related by a sequence of isotopies, sphere tubings, and adding or removing trivial sphere components.

A surface $S$ will be called \textit{bridge compressible} with respect to $\mathcal{T}^1$ if there is a disk $D$ with interior disjoint from $S$ and $\mathcal{T}^1$ such that $\partial D$ consists of an arc in $S$ and an arc in $\mathcal{T}^1$ disjoint from the vertices of $\mT^1$, as in Figure~\ref{edgesfig}.  This disk $D$ will be called a \textit{bridge disk}.

A \textit{tree disk} for $S$ is a disk $D$ containing a simply connected component of $\mT^1 \setminus S$ with a single vertex such that $\partial D$ is contained in $S$ and bounds a disk $E \subset S$ with interior disjoint from $\mT^1$.  We also require that $D \cup E$ bounds a ball in $M$ with interior disjoint from $\mT^1$.  A surface $S$ will be called \textit{tree compressible} with respect to $\mathcal{T}^1$ if there is a tree disk for $S$.

We will say that $S \setminus \mT^1$ is \textit{compressible (in the complement of $\mT^1$)} if there is a disk embedded in $M$ with interior disjoint from $S \cup \mT^1$ and whose boundary is an essential loop in $S \setminus \mT^1$.  (Note that the boundary of such a disk may be trivial in $S$ but essential in $S \setminus \mT^1$.)  This disk will be called a \textit{compressing disk}.  A \textit{K-disk} for $S$ is either a bridge disk, tree disk or a compressing disk for $S$.  

Recall that for a transversely oriented surface $S$, the components of $M \setminus S$ are labeled $+$ or $-$.  We will say that a K-disk is \textit{on the positive/negative side of $S$} if its interior is contained in the positive/negative complement of $S$.  A connected, strongly separating surface $S$ is \textit{K-compressible} if it has a K-disk and is called \textit{K-bicompressible} if there is both a K-disk on the positive side of $S$ and a K-disk on the negative side of $S$.

If a surface $S$ is K-compressible, we can form a new surface from $S$ as follows:  Let $D$ be a K-disk for $S$ and assume without loss of generality that $D$ is on the positive side of $S$.  Let $N$ be a closed regular neighborhood of $D$ in the positive complement of $S$.  Define $S'$ to be the surface that results from removing $N \cap S$ from $S$ and attaching $\partial N \setminus S$.  In the case of a tree disk, this construction creates a trivial sphere, which we can remove while preserving the blind isotopy class.  The surface $S'$ will be called a \textit{K-compression} of $S$, or we will say that $S'$ results from K-compression.  The three types of K-compressions are shown in Figure~\ref{edgesfig}.
\begin{figure}[htb]
  \begin{center}
  \includegraphics[width=4.5in]{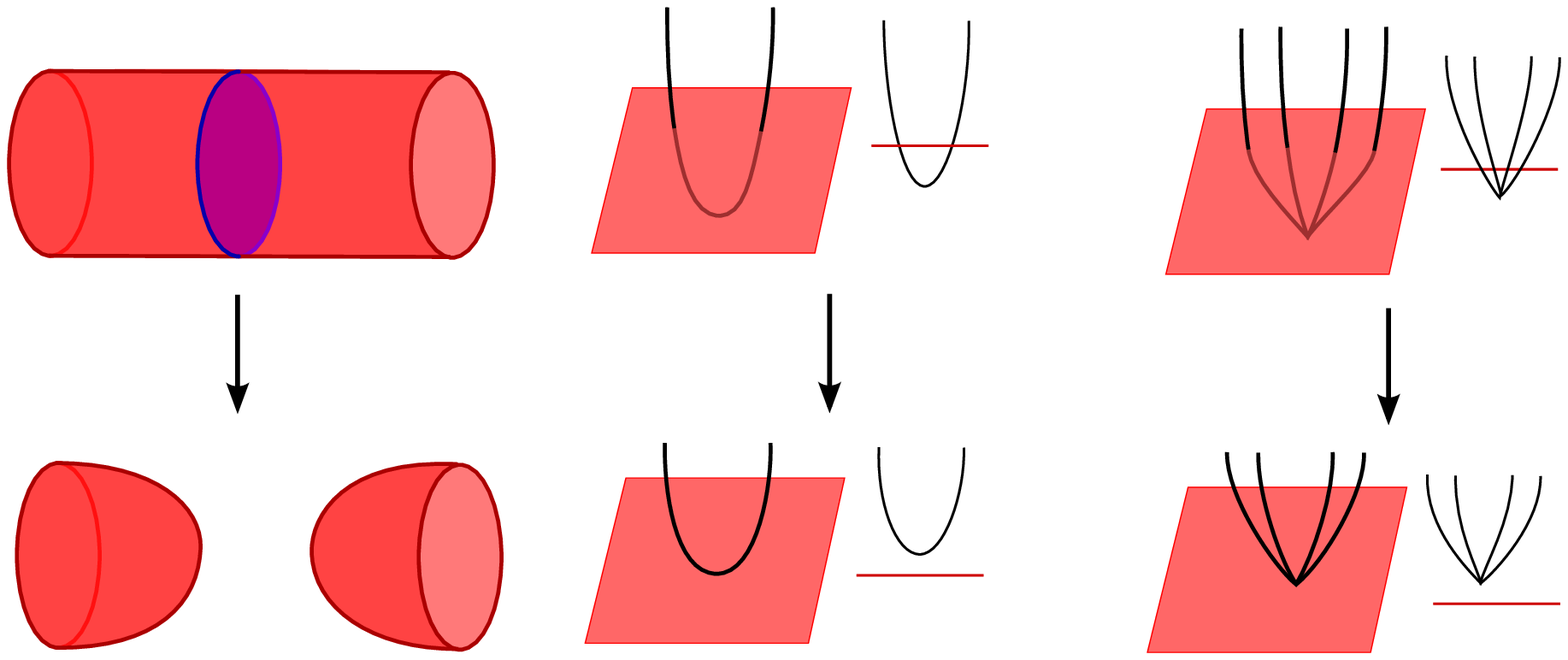}
  \caption{Edges of $\mS(M, \mT^1)$ are defined by K-compressions.}
  \label{edgesfig}
  \end{center}
\end{figure}

If the K-disk is a compressing disk then K-compression is what is normally just called compression:  We remove the annulus $N \cap S$ from $S$ and replace it with the two disks $\partial N \setminus S$.  If $D$ is a bridge disk or tree disk then $S'$ is isotopic in $M$ to $S$, or rather is the result of isotoping $S$ across part of the graph $\mT^1$ so as to reduce the number of intersections.  We will call this a \textit{bridge compression} or \textit{tree compression}, respectively.

The \textit{complex of surfaces} $\mS(M,\mT^1)$ will be the simplicial complex defined by surfaces and K-disks as follows: \\

\noindent
\textbf{Vertices}:  Every vertex is a sphere-blind isotopy class of oriented, strongly separating surfaces embedded in $M$ transverse to $\mT^1$.  This includes two vertices representing the empty surface:  One in which $M$ is positive and one in which $M$ is negative.  \\  

\noindent
\textbf{Edges}: Given a vertex $v$ in the complex, choose a surface $S$ representing $v$.  Given a K-disk $D$ for $S$, let $S'$ be the surface that results from K-compressing $S$ across $D$.  Let $v'$ be the vertex representing $S'$.  For every K-disk $D$ for $S$ with one exception, we will include in $\mS(M, \mT^1)$ an edge connecting $v$ to $v'$.  The one exception is that we will not include edges representing tree disks on the negative side of $S$.  We will say that this edge is \textit{below} $v$.

If $D'$ is a K-disk on the same side of $S$ as $D$ and $D \cap S$ is isotopic to $D' \cap S$ relative to $\mT^1 \cap S$ then K-compressing $S$ across $D'$ produces a surface that is blind isotopic to $S'$.  (If $D$ and $D'$ are not isotopic in the complement of $S$, the surfaces may not be isotopic.  However, there will be a collection of separating spheres between along which we can sphere tube one the resulting surfaces to get the other.)  Thus we will include a single edge representing all the K-disks whose intersection with $S$ is isotopic to that of $D$.  Similarly, if $D$ and $D'$ are tree disks on the same side of $S$ whose boundaries bound disks $E, E' \subset S$ and a regular neighborhood of $E$ is isotopic in $S \setminus \mT^1$ to a regular neighborhood of $E'$ then tree compressing across $D$ or $D'$ produces the same surface $S'$.  We will similarly include a single edge representing all such tree disks.

Note that the construction depends on a choice of representative for $v$.  If we choose a different representative $S^*$ for $v$, then there is a blind isotopy from $S^*$ to $S$ that sends each K-disk for $S$ to a K-disk for $S^*$.  The blind isotopy class of the surface that results from K-compressing is preserved, so the construction of $\mS(M, \mT^1)$ does not depend on the representative we choose for $v$.

However, there is one subtle point: If there are two inequivalent isotopies from $S$ to $S^*$ then there will be two distinct ways to identify the edges determined by one representative with the edges determined by the other.  This does not affect the construction of $\mS(M, \mT^1)$, but we must be careful when we interpret edges later on. \\

\noindent
\textbf{2-cells}: Given a surface $S$ representing a vertex $v$, assume $e_1$ and $e_2$ are distinct edges below $v$.  Let $v_1$, $v_2$ be the second endpoints of $e_1$, $e_2$, respectively, represented by surfaces $S_1$, $S_2$.  The edges $e_1$, $e_2$ represent K-disks $D_1$, $D_2$ for $S$ and we will include faces based on how $D_1 \cap S$ and $D_2 \cap S$ sit relative to each other.  

First consider the case when $D_1$, $D_2$ are disjoint.  If $D_2$ is a bridge disk or tree disk for $S$ then it will be a bridge disk or tree disk for the surface $S_1$ that results from K-compressing $S$ across $D_1$.  If $D_2$ is a compressing disk for $S$ then its boundary will be contained in $S_1$ and it will be a compressing disk for $S_1$ if and only if its boundary is essential in $S_2$.  The disk $D_1$ has a similar relationship with the surface $S_2$ that results from K-compressing $S$ across $D_2$.

If $D_1$ and $D_2$ are K-disks for $S_2$, $S_1$, respectively then they define an edge $e'_1$ descending from $v_2$ and an edge $e'_2$ descending from $v_1$.  Performing the K-compressions in either order produces the same surface so the four edges $e_1$, $e_2$, $e'_1$, $e'_2$ form a loop as in the top left of Figure~\ref{facesfig}.  

In fact, the edges $e'_1$ and $e'_2$ are not uniquely determined because there may be more than one way to isotope the surface that results from compressing along $D_1$, or $D_2$ to the chosen representative for $S_1$ or $S_2$. So there are potentially infinitely many such sets of edges. We will include in $\mS(M, \mT^1)$ a face, called a \textit{diamond} bounded by each of these loops and say that such a diamond is \textit{below} $v$. Note that all these diamonds are in some sense topologically equivalent, and related by a symmetry of the surfaces involved.
\begin{figure}[htb]
  \begin{center}
  \includegraphics[width=4.5in]{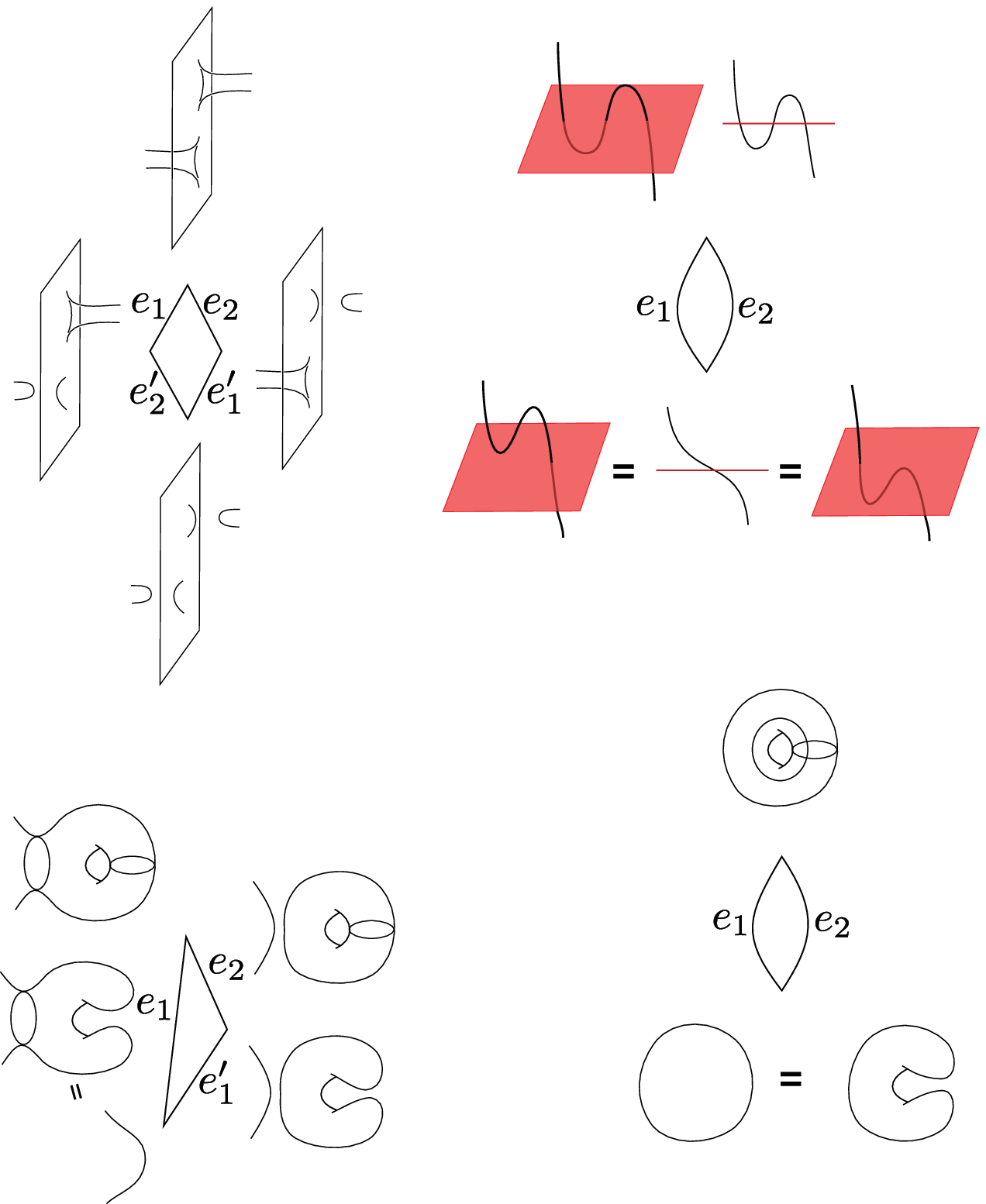}
  \caption{Faces of $\mS(M, \mT^1)$ are defined by pairs of K-compressions.}
  \label{facesfig}
  \end{center}
\end{figure}

There are two cases in which either $D_1$ or $D_2$ has trivial boundary in $S_2$ or $S_1$, respectively.  In the first case, $D_1$ and $D_2$ are disks on opposite sides of $S$ with parallel boundary.  Compressing $S$ across one or the other produces two surfaces which are not, in general, isotopic.  However, the two disks and the annulus between them comprise a sphere $R$.  We can turn the surface produced by compressing across $D_1$ into the surface produced by $D_2$ by tubing the surface to $R$ twice.  Thus the surfaces are blind isotopic so the edges determined by both $D_1$ and $D_2$ share a second endpoint and form a loop.  We will include a 2-cell in $\mS(M, \mT^1)$ bounded by this loop.  This face will be called a \textit{bigon} and we will say that this bigon is \textit{below} $v$.

In the second case, one of the disks, say $D_1$, is a compressing disk whose boundary separates from $S$ a planar subsurface containing $D_2 \cap S$ such that compressing $S$ across $D_1$ produces a surface $S_1$ in which $D_2$ is a K-disk for a sphere component.  Let $e'_2$ be the edge below $v_1$ defined by $D_2$.  K-compressing $S_1$ across $D_2$ produces a surface with a sphere component.  If this sphere component is trivial then the surface that results from removing this trivial sphere is isotopic to the surface $S_2$ that results from K-compressing $S$ across $D_2$.  If the sphere is non-trivial then we can sphere tube the surface to a parallel sphere to make the sphere component trivial, then remove it.  Thus the edges $e_1$, $e_2$, $e'_2$ form a loop, as in the bottom left of Figure~\ref{facesfig}.  As in the case of a diamond, there may be multiple choices for the edge $e'_2$ and we will include in $\mS(M, \mT^1)$ a face, called a \textit{triangle} bounded by every such loop.  We will say that this triangle is \textit{below} $v$.  

Next consider the case when $D_1$ and $D_2$ are bridge compressing disks that intersect in a single point of $\mT \cap S$, or compressing disks on opposite sides of $S$ that intersect in a point and are contained in a torus component of $S$ disjoint from $\mT^1$.  Both K-compressions will produce the same surface, so the edges $e_1$, $e_2$ will share a second vertex besides $v$.  In this case, we will include in $\mS(M, \mT)$ a face called a \textit{bigon}  bounded by the loop formed from these two edges, as on the right in Figure~\ref{facesfig}.  Again we will say that this bigon is \textit{below} $v$.  

Note that every face is defined by a pair of K-disks that either are disjoint or intersect in a single point.  In paricular, two compressing disks that intersect in a single point (sometimes call a stabilizing pair) will determine a bigon if and only if they are contained in a torus component.  For higher genus components, two compressing disks must be disjoint to determine a face below that vertex.  This is precisely the condition for when two vertices in the curve complex determine an edge.  We will see later that there is a strong connection between this definition and the curve complex (or, rather, the disk complex.)

Before defining the 3-cells in $\mS(M, \mT^1)$, we note the following property of the 2-cells:  Each 2-cell $c$ we have defined below a given vertex $v$ has two edges, $e_1$ and $e_2$ below $v$ ending at vertices $v_1$ and $v_2$.  Any remaining edges in the boundary of $c$ are below $v_1$ or $v_2$.  We can think of such a face as the result of taking a diamond and shrinking zero, one or both of the lower edges to a point.  The edge $e'_2$ below $v_1$ is defined by the image in $S_1$ of the K-disk $D_2$.  Thus we can think of the edge $e'_2$ as a projection of $e_2$ down to the vertex $v_1$.  For any pair of edges $e_1$, $e_2$ that determine a 2-cell in $\mS(M, \mT^1)$, we will define the \textit{projection of} $e_2$ \textit{across} $c$ as the edge of $c$ below $v_1$ if it exists.  Otherwise, we will say that the projection of $e_2$ is the vertex $v_1$.  Define the projection of $e_1$ across $c$ symmetrically.

\begin{Lem}
\label{surfaceprojectionlem}
Given edges $e_1$, $e_2$, $e_3$ below a vertex $v$ such that each pair of these edges are contained in a face below $v$, the union of the projections of any two edges across the third either is either a vertex, an edge or a pair of edges that define a face.
\end{Lem}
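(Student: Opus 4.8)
The plan is to prove the lemma by a case analysis carried out entirely in the local model: fix a surface $S$ representing $v$ and $K$-disks $D_1, D_2, D_3$ representing $e_1, e_2, e_3$. Unwinding the definition of the 2-cells of $\mS(M, \mT^1)$, the hypothesis that each pair $e_i, e_j$ spans a face below $v$ says that $D_i, D_j$ can be isotoped so that either they are disjoint with interiors disjoint from $S$ (giving a diamond, a triangle, or a parallel-boundary bigon), or they meet in a single point lying either in $\mT^1 \cap S$ (a pair of bridge disks) or in a torus component of $S$ disjoint from $\mT^1$ (a stabilizing pair); in the latter two cases the face is a bigon, whose boundary consists only of the two edges $e_i, e_j$, so the projection of either of them across it is the common lower vertex. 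Since the assertion is symmetric in $e_1, e_2, e_3$, it suffices to fix one edge $e_c$ to project across and write $e_a, e_b$ for the other two, with corresponding disks $D_c, D_a, D_b$ and chosen faces $c_{ca}$, $c_{cb}$.

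First suppose $D_c$ meets $D_a$, or symmetrically $D_b$, in a single point, so that $c_{ca}$ is a bigon: then the projection of $e_a$ across $c_{ca}$ is the vertex $v_c$, the projection of $e_b$ across $c_{cb}$ is by definition an edge below $v_c$ or the vertex $v_c$, and the union is a vertex or an edge. Otherwise $D_c$ is disjoint, with interiors disjoint from $S$, from both $D_a$ and $D_b$, since every face not of bigon-at-a-point type arises from a disjoint pair. In that case $K$-compress $S$ along $D_c$ to obtain a surface $S_c$ representing $v_c$, carrying $D_a, D_b$ to disks $D_a^c, D_b^c$ for $S_c$; the projection of $e_a$ across $c_{ca}$ is the edge below $v_c$ named by $D_a^c$ if $D_a^c$ is a $K$-disk for $S_c$, and the vertex $v_c$ otherwise, and likewise for $e_b$. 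If at least one of the two projections is a vertex we are done as before, so assume both are edges. Because $D_c$ is disjoint from $D_a$ and $D_b$, the $K$-compression along $D_c$ is supported away from $D_a \cup D_b$, so it commutes with the $K$-compressions along $D_a$ and $D_b$ and leaves unchanged the way $D_a$ meets $D_b$ — disjoint, or a single point still lying in $\mT^1 \cap S_c$ or in a torus component of $S_c$; in each of these situations $D_a^c, D_b^c$ realize one of the configurations used to define the 2-cells of $\mS(M, \mT^1)$ and hence span a face below $v_c$. This yields the asserted trichotomy.

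The step I expect to be the main obstacle is the appeal, in the last case, to the claim that a pair of disjoint $K$-disks for a surface always determines one of the recognized 2-cells: one must check that the case list in the definition of $\mS(M, \mT^1)$ is genuinely exhaustive, so that no pairwise-compatible triple can produce a configuration — for instance one involving tree disks, sphere components, or a compressing curve that becomes trivial in an unanticipated way after two successive compressions — that escapes the dichotomy above. Establishing this, together with the routine but fiddly verification that disjointness of $K$-disks and the two single-point configurations are preserved up to the blind-isotopy moves built into the complex when one $K$-compresses along a disjoint $K$-disk, is where the real work of the proof lies.
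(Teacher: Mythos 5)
Your proposal is correct and follows essentially the same route as the paper's (very terse) proof: compress along the disk disjoint from the other two and check that the surviving disks land in one of the configurations defining a 2-cell, a single edge, or a vertex. The only sub-case you elide is one the paper mentions explicitly — after compressing along $D_c$ the disks $D_a^c$, $D_b^c$ may become parallel on the same side of $S_c$ and hence determine the \emph{same} edge rather than a pair of edges spanning a face — but this still lands in the lemma's trichotomy (``an edge''), so it is a cosmetic omission rather than a gap.
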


\begin{proof}
Two edges below a vertex $v$ determine a 2-cell below $v$ if and only if the boundaries of the corresponding K-disks are disjoint.  If we compress across a disk disjoint from both then the boundaries of the disks will be disjoint in the new surface and will either determine a face below the new vertex, will be parallel disks and thus determine the same edge, or one or both will be trivial and project to a vertex.
\end{proof}

\noindent
\textbf{3-cells}: 
We will define the 3-cells of $\mS(M, \mT^1)$ by using projections of edges to build cube-like cells.  Let $e_1$, $e_2$, $e_3$ be edges below a vertex $v$ such that each pair of edges define a 2-cell below $v$. Moreover, assume we have chosen a specific 2-cell for each pair of edges. By Lemma~\ref{surfaceprojectionlem}, each pair of edges project across the cell they cobound with the third edge $e_i$ to either a point, a single edge or a pair of edges that determine a face below $v_i$.  

Consider the set of 3-tuples with entries in $\{0,1\}$, which we will think of as the vertices of a cube.  Identify $v$ with $(1,1,1)$ and identify $v_1$, $v_2$, $v_3$ with $(0,1,1)$, $(1,0,1)$ and $(1,1,0)$, respectively.  Identify each face of the cube with one of the three faces below $v$ so that the three edges adjacent to $(1,1,1)$ is sent to the appropriate edge below $v$.  This identification is continuous, though it may send edges of the squares to vertices in $\mS(M, \mT^1)$.  

Let $e_{ij}$ be the projection of $e_i$ across the cell it cobounds with $e_j$. For each $i \neq j \neq k$, if $e_{ij}$ and $e_{kj}$ project to a single edge or a vertex, then we will let $c'_j$ be this edge or vertex. Otherwise, we may have a number of choices of 2-cell $c'_j$ below $v_j$. For each choice of 2-cell $c'_j$ below each $v_j$, let $e_{ijk}$ be the edge that results from projecting $e_i$ across the cell it cobounds with $e_j$, then across $c'_k$. Because compressing $S$ along $D_j$ and $D_k$ regardless of order, we can choose the 2-cells $c'_1$, $c'_2$, $c'_3$ so that $e_{ijk} = e_{ikj}$. (This choice is not unique.)

Identify each of the three remaining faces of the cube with $c'_1$, $c'_2$, $c'_3$ chosen above (including the case when on of these is the empty set or a single edge). Because we can map the boundary of the cube continuously into $\mS(M, \mT^1)$ in this way, the image in $\mS(M, \mT^1)$ is a sphere.  We will include in $\mS(M, \mT^1)$ a 3-cell bounded by these 2-cells for each compatible choice of $c'_j$s, and we will say that this 3-cell is \textit{below} $v$.

If the three K-disks that determine a 3-cell are pairwise disjoint and do not co-bound a planar subsurface of $S$ then all three of the faces are diamonds and each face will project to a diamond.  These six diamonds determine a cube as in Figure~\ref{3cellsfig}.  The figure also shows some of the other 3-cells that may arise.  Figure~\ref{odd3cellsfig} shows two situations that will produce the 3-cells on the far right of Figure~\ref{3cellsfig}.   Figure~\ref{odd3cells2fig} shows two situations that produce the the middle 3-cell on the bottom row of Figure~\ref{3cellsfig}.  The remaining two cases occur when two of the three edges determine a bigon or triangle, but the third edge determines a diamond with each of the first two.  \\
\begin{figure}[htb]
  \begin{center}
  \includegraphics[width=3.5in]{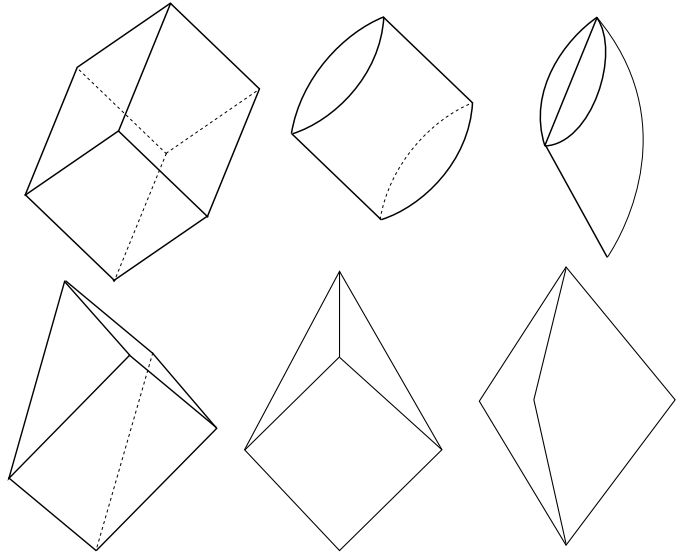}
  \caption{The three-dimensional cells of of $\mS(M, \mT^1)$ are defined by triples of edges descending from a given vertex.}
  \label{3cellsfig}
  \end{center}
\end{figure}

\begin{figure}[htb]
  \begin{center}
  \includegraphics[width=3.5in]{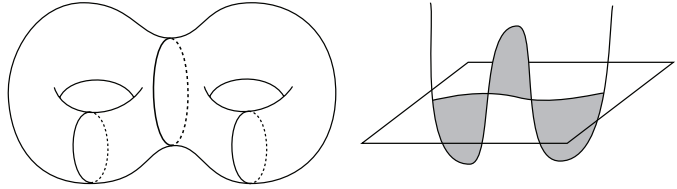}
  \caption{The disks in the two examples induce the 3-cells on the far right of Figure~\ref{3cellsfig}.}
  \label{odd3cellsfig}
  \end{center}
\end{figure} 

\begin{figure}[htb]
  \begin{center}
  \includegraphics[width=2.5in]{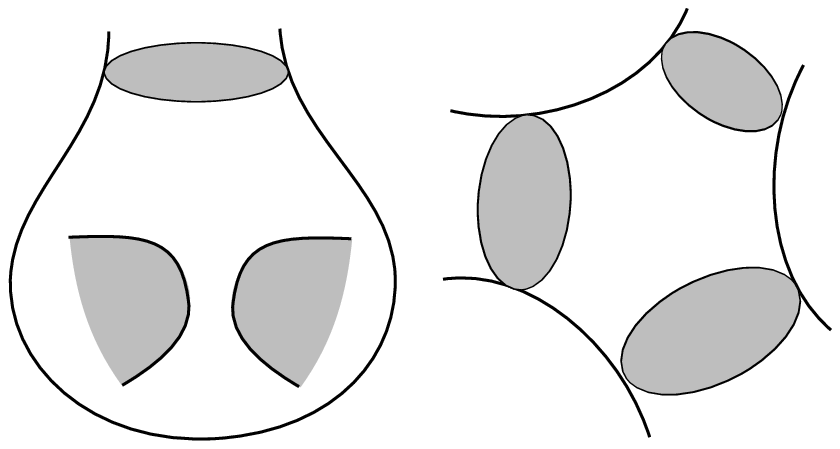}
  \caption{The disks in these examples cobound a planar surface and define the 3-cells in the middle and right of the bottom row of Figure~\ref{3cellsfig}.}
  \label{odd3cells2fig}
  \end{center}
\end{figure}

Note that there is a 3-cell for every triple of edges such that each pair determines a face.  This implies that given four edges below $v$ such that any three of them determine a 3-cell below $v$, any three edges will project across the fourth to a 3-cell, a 2-cell, an edge or a vertex.  Thus we can continue the construction for higher dimensional cells. \\

\noindent
\textbf{Higher-dimensional cells}:  The 3-cells of $\mS(M, \mT^1)$ are defined by mapping the boundary of a 3-cube into the 2-skeleton of $\mS(M, \mT^1)$, based on the projections defined by the 2-cells.  This generalizes directly to a construction for defining 4-cells based on the cells of dimension 3 and lower.  We can the continue by induction, defining the $n$-cells by mapping the boundaries of $n$-cubes into the $n-1$-skeleton of $\mS(M, \mT^1)$.  In particular, a set $e_1,\dots,e_n$ of edges below a vertex $v$ will determine an $n$-cell below $v$ if and only if every pair of edges determines a 2-cell below $v$.  In this paper, we will not make use of these higher dimensional cells, so we will not dwell on this construction any further here.  However, the reader should be able to reconstruct the details if desired.

\section{Height complexes}
\label{heightsect}

We will define thin position for any cell complex $\mS$ satisfying certain axioms and paired with certain additional information.  First, we require a complexity function from the vertices of $\mS$ to some partially ordered set.  (This is really a partial ordering on the vertices of $\mS$, but we find it useful to think in terms of a complexity function.)

To define such a complexity on the complex of surface $\mS(M, \mT^1)$, consider a (possibly disconnected) surface $S$ representing some vertex $v$ in $\mS(M,\mT^1)$.  The \textit{complexity} $c(S)$ will be the number of components of $S$ minus the Euler characteristic of $S \setminus \mT^1$, plus the number of sphere components.  Because the Euler characteristic of a sphere is two, adding or removing a sphere component from $S$ does not change its complexity.  A bridge compression or tree compression always increases the Euler characteristic.  A compression increases the Euler characteristic of $S \setminus \mT^1$ by at least two and increases the number of components by at most one, so any K-compression will reduce the complexity.  In particular, if an edge $e$ is below a vertex $v$ then the complexity of the second endpoint will be lower than that of $v$.  

In general, consider a cell complex $\mS$ with a complexity $c(S)$ into a partially ordered set. \\

\noindent
\textbf{The Morse axiom}: The complexities of the endpoints of any edge in $\mS$ are comparable and distinct.  Every 2-cell in $\mS$ is a diamond, a triangle or a bigon as in the construction of $\mS(M, \mT^1)$.  Given three edges such that any two bound a 2-cell, the projection of any two across the third will determine a face, an edge or a vertex.  Every $n$-cell is defined by mapping the boundary of an $n$-cube in the $n-1$-skeleton via projections. \\

This axiom implies that every cell in a height complex has a maximal vertex and a minimal vertex.  As in the definition of $\mS(M, \mT^1)$, if $v$ is the maximal vertex of a cell $C$ then we will say that $C$ is \textit{below} $v$. \\

\noindent
\textbf{The net axiom}: For any vertex $v \in \mS$, there is an integer $\ell(v)$ such that every edge path starting at $v$, along which the complexity strictly decreases, has length at most $\ell(v)$. \\

\begin{Def}
A cell complex satisfying the Morse axiom and the net axiom is called a \textit{height complex}.
\end{Def}

This terminology was chosen because we will think of the complexity function $c$ as a height function on $\mS$.  By construction, and by Lemma~\ref{surfaceprojectionlem}, the complex of surfaces satisfies the Morse axiom.  To check the second axiom, we need to be a little more careful.  Because the complexity of every vertex in $\mS(M, \mT^1)$ is a non-negative integer, the length of any decreasing path from a given vertex $v$ is at most $c(v)$, so $\mS(M, \mT^1)$ also satisfies the Net axiom.  (Note that later, we will define a height complex with a complexity that is not a non-negative integer, so we will need this axiom in its full generality.)

\section{Thin paths}
\label{thinpathsect}

In this section we consider paths in a height complex $\mS$.  We can take $\mS$ to be the complex of surfaces defined in the previous section, but the arguments will rely only on the axioms of a height complex, so the Lemmas we prove will apply in general.

Recall that the \textit{link} of a vertex $v$ in a simplicial complex $\mS$ is the boundary of a regular neighborhood of $v$.  (Here, we are thinking of the complex $\mS$ as a topological space.)  Every edge with an endpoint in $v$ determines a vertex in the link and the higher dimensional cells of $\mS$ determine a cell decomposition of $L_v$.  Recall that an edge $e$ is below one of its endpoints $v$ if other endpoint has complexity less than that of $v$.  

\begin{Def}
The \textit{descending link} $L_v$ of $v$ is the simplicial quotient of the subcomplex of the link spanned by the vertices corresponding to edges below $v$.
\end{Def}

By the \textit{simplicial quotient}, we mean the simplicial complex that results from identifying any two simplices in the link that have the same boundary. This is necessary because, for example, a pair of vertices in the link may be spanned by an infinite number of edges.

The descending link of a vertex is made up of the ``corners'' of the cells below that vertex.  Because each $n$-cell in a height complex $\mS$ is defined by $n$ edges below a given vertex of $\mS$, the descending link is a simplicial complex, i.e. each cell in the descending link is a convex hull of its vertices.  Recall that a \textit{flag complex} is a simplicial complex such that for any finite set of vertices in which each pair bounds an edge, the entire set bounds a simplex.

\begin{Lem}
The descending link of any vertex in a height complex is a flag complex.
\end{Lem}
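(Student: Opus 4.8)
The plan is to read the flag condition directly off the way cells of a height complex are built below a vertex. First I would set up the dictionary between $L_v$ and the edges below $v$. By definition the vertices of $L_v$ are exactly the edges below $v$, and after the simplicial quotient a set $\{e_1,\dots,e_n\}$ of such edges spans an $(n-1)$-simplex of $L_v$ precisely when some $n$-cell of $\mS$ below $v$ has its corner at $v$ spanned by $e_1,\dots,e_n$. (A cell all of whose corner edges at $v$ point downward from $v$ must have $v$ as its unique maximal vertex, since $\mS$ is a height complex, so it is automatically below $v$; and such a corner is a genuine simplex, as already noted, because an $n$-cell is cut out by exactly $n$ edges below its top vertex, so $L_v$ is an honest simplicial complex once the quotient is taken.) In particular $e_i$ and $e_j$ span an edge of $L_v$ if and only if they cobound a $2$-cell below $v$, so the lemma reduces to the following: if $e_1,\dots,e_n$ are edges below $v$, any two of which cobound a $2$-cell below $v$, then $\mS$ contains an $n$-cell below $v$ whose corner at $v$ is the $(n-1)$-simplex on $\{e_1,\dots,e_n\}$.

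I would prove this by induction on $n$, using the $n$-cell clause of the Morse axiom, that is, the cube-image construction spelled out for $\mS(M,\mT^1)$ in the ``higher-dimensional cells'' paragraph. The cases $n=1,2$ are the hypothesis itself. For the inductive step, the hypothesis lets one project each $e_j$ across the $2$-cell it cobounds with $e_i$, and, by the clause of the Morse axiom governing projections, these projected edges again pairwise cobound $2$-cells below the relevant vertex $v_i$; so by induction every proper subcollection of the projected data spans a cell below the appropriate vertex. These cells assemble into a continuous map of the boundary of an $n$-cube into the $(n-1)$-skeleton of $\mS$, the $2$-faces through the top vertex going to the $2$-cells below $v$, exactly as in the construction of the $3$-cells from $2$-cells and edges. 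The Morse axiom then furnishes an $n$-cell of $\mS$ filling this sphere and lying below $v$, and by construction its corner at $v$ is the $(n-1)$-simplex on $\{e_1,\dots,e_n\}$. Passing to the simplicial quotient gives the required simplex of $L_v$.

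The step I expect to be the main obstacle is the assembly: on the faces of the $n$-cube not through the top vertex one must choose projected lower-dimensional cells that are simultaneously compatible along shared faces — the analogue of arranging $e_{ijk}=e_{ikj}$ for every ordering — so that the cube boundary really maps in continuously. For the complex of surfaces this compatibility holds because the $K$-compressions along the $K$-disks involved can be performed in any order; in the abstract setting it is precisely what the $n$-cell clause of the Morse axiom is designed to encode, and care is needed to organize the induction so that it reuses the lower-dimensional cells already produced rather than choosing new ones. Granting this, the genuinely new content of the lemma is the observation in the first paragraph: the simplicial quotient collapses the (possibly infinitely many) $n$-cells sharing a fixed set of $n$ edges below $v$ to a single $(n-1)$-simplex, so that ``there is an $n$-cell below $v$'' becomes ``the $(n-1)$-simplex lies in $L_v$,'' which is exactly the flag condition.
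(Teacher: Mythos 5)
Your proposal is correct and follows essentially the same route as the paper: both reduce the flag condition to the statement that a set of edges below $v$ which pairwise cobound $2$-cells below $v$ spans a higher-dimensional cell below $v$, which the paper simply cites as part of the Morse axiom (via the construction of $n$-cells by projections). Your induction on $n$ merely unpacks that clause of the axiom in more detail than the paper's one-paragraph argument, including the compatibility of choices on the lower cube faces, which the paper's construction of $3$-cells already addresses via the choice $e_{ijk}=e_{ikj}$.
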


\begin{proof}
Two vertices in $L_v$ bound an edge if and only if the corresponding edges in $\mS$ determine a face.  Given a set of vertices in $L_v$ such that each pair bounds an edge, each pair of the corresponding edges in $\mS$ will determine a 2-cell in $\mS$, so the Morse axiom implies that the entire set will determine a higher dimensional cell in $\mS$.  This cell determines a cell in $L_v$ bounded by the original set of vertices.
\end{proof}

\begin{Def}
The \textit{index} of a vertex $v \in \mS$ is $n+1$ where $\pi_n(L_v)$ is the first non-trivial homotopy group of the descending link.  If $D_v$ is empty then the index is zero.
\end{Def}

Note that the index of a vertex may be undefined.  If $\mS$ happens to be the complex of surfaces $\mS(M, \mT^1)$ defined above, then edges descending from a vertex $v$ correspond to K-disks for a surface $S$ representing $v$.  An index-zero vertex has no descending edge, so the corresponding surface has no K-disks, and is thus K-incompressible.  Moreover, in the case when $\mT^1 = \emptyset$, the descending link of $v$ is precisely the disk complex for $S$, and the index of $v$ is the topological index of $S$ as defined by Bachman~\cite{bachman}.

If we think of the complexity $c$ as a height function, then we can think of the vertices with well defined index as critical points of this function.  In particular, note that if $x$ is an index-$n$ critical point of a Morse function $f$ on a smooth manifold $N$ and $B$ is a small ball around $x$ then the set of points $\{y \in \partial B | f(y) < f(x)$ has exactly one non-trivial homotopy group, namely its dimension $n-1$ homotopy group.

\begin{Def}
We will say that a vertex $v$ is \textit{rigid} if it has a well defined index.  Otherwise, we will say that $v$ is \textit{floppy}.
\end{Def}

This terminology will be justified in the applications at the end of this paper.  We avoid the term ``topologically minimal'' introduced by Bachman because a rigid vertiex will often be the minimum or maximum of a path, which would make it a minimal maximum or a minimal minimum.

An \textit{edge path} is a sequence of directed edges $E = e_1,\dots,e_n$ in $\mS$ such that each edge shares its first endpoint with the previous edge and its second endpoint with the next edge.  The endpoints of the edges in $E$ determine a sequence of vertices in $\mS$.  We will label these vertices $v_0,\dots,v_n$ such that $v_i$ is the vertex shared by $e_i$ and $e_{i+1}$. The \textit{endpoints} of the path $E$ will be the first vertex of the first edge ($v_0$) and second vertex of the last edge ($v_n$).  Note that two vertices in $\mS$ may be connected by more than one edge, so we will always keep track of a path by its edges, rather than by its vertices.

A \textit{maximum} in an edge path $E$ is a vertex $v$ in the path such that $v$ is not an endpoint of the path and the edges before and after $v$ are both below $v$.  A \textit{minimum} is a vertex $v$ in the path such that $v$ is not an endpoint of the path and neither of the vertices before or after $v$ is below $v$.

We will define the \textit{complexity} $c(E)$ of a path $E$ to be the $k$-tuple of complexities of the maximal vertices in the path, ordered in a non-increasing fashion.  This determines an ordering on edge paths by applying lexicographic ordering to these $k$-tuples.  In other words, we compare the highest complexity maxima of each path, then the second highest and so on.  If the complexities of the first $i-1$ maxima of $E$ and $E'$ are the same, but $i$th maximum of $E$ is higher than that of the $i$th maximum of $E'$ then $c(E) > c(E')$.  If all the pairs of components match, then the paths will have the same complexity.

If $v_i$ is a maximum of a path $E$ then the edges $e_i$ and $e_{i+1}$ are below $v$ and define points in the link of $v$.  If these points are in the same component of the path link then we can define a new path that is thinner than $E$ as follows:  

Because the two points are in the same component of $L_v$, there is a sequence of edges connecting them in $L_v$.  These correspond to a sequence of faces $c_1,\dots,c_m$ below $v$ such that $c_1$ contains $e_i$ in its boundary, $c_m$ contains $e_{i+1}$ and each $c_i$ shares one edge below $v$ with $c_{i-1}$ and its other edge below $v$ with $c_{i+1}$.  Because $\mS$ satisfies the Morse axiom, every vertex in $c_i$ other than $v$ has lower complexity than $v$.  The lower edges of the 2-cells $\{c_i\}$ form a path whose endpoints are the same as the path consisting of $e_i, e_{i+1}$.  Let $E'$ be the path constructed from $E$ by removing $e_i$, $e_{i+1}$ and replacing them with this new path.  

All the vertices in this new sub-path have lower complexity than $v$ so all the new maxima in this sub-path are lower than the maximum $v$.  Thus while $E'$ may be longer than $E$, the lexicographic ordering implies that $c(E') < c(E)$.  We can think of this construction as sliding the path $E$ across the faces $c_1,\dots, c_m$.  We will say that $E'$ results from \textit{thinning} $E$.

\begin{Def}
A path $E$ between vertices $v$, $v'$ in $\mS$ will be called \textit{thin} if it cannot be thinned, i.e. it is not possible to create a lower-complexity path by the above construction. 
\end{Def}

Note that the term `thin' has traditionally referred to the lowest complexity position among all positions.  The sense in which it is used here would usually be called locally thin.  However, we will not be interested in globally thin paths, so we will drop the word `locally'.

\begin{Lem}
\label{unorientedthinninglem}
If $E = \{e_i\}$ is a thin path in $\mS$ then every maximum in the path has index one.  For any path $E$ between vertices $v$ and $v'$, there is a thin path that results from thinning $E$.
\end{Lem}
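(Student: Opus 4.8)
The plan is to handle the two assertions separately: the statement about maxima is an immediate consequence of the definition of a thin path, while the existence of a thinning is a well-foundedness argument for the complexity ordering.

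For the first assertion, let $v = v_i$ be a maximum of a thin path $E = e_1,\dots,e_n$. Then $e_i$ and $e_{i+1}$ are both below $v$, so they determine vertices $p$ and $q$ of the descending link $L_v$. If $p$ and $q$ lay in the same component of $L_v$ — in particular if $e_i = e_{i+1}$, so that $p = q$ — then a sequence of edges of $L_v$ joining $p$ to $q$ supplies a chain $c_1,\dots,c_m$ of $2$-cells below $v$ to which the thinning construction applies, producing a path $E'$ with $c(E') < c(E)$ and contradicting thinness. Hence $p$ and $q$ lie in distinct components of $L_v$, so $L_v$ is disconnected, $\pi_0(L_v)$ is its first non-trivial homotopy group, and $v$ has index $0 + 1 = 1$ (in particular $v$ is rigid).

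For the second assertion, I first check that a single thinning step strictly lowers the complexity of a path. Thinning at a maximum $v_i$ of complexity $\mu = c(v_i)$ replaces the edges $e_i, e_{i+1}$ by the lower edges of a chain of $2$-cells below $v_i$, and this is a purely local modification: each maximum $v_j$ with $j \notin \{i-1, i, i+1\}$ keeps both of its adjacent edges and hence its status, $v_i$ itself is no longer a maximum, and a new maximum can only occur at $v_{i-1}$, at $v_{i+1}$, or at an interior vertex of the inserted subpath. Since $e_i$ and $e_{i+1}$ are below $v_i$ we have $c(v_{i-1}), c(v_{i+1}) < \mu$, and since the inserted subpath consists of edges of $2$-cells below $v_i$, the Morse axiom forces each of its interior vertices to have complexity $< \mu$ as well. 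So passing from $E$ to $E'$ deletes one maximum of complexity $\mu$ and inserts only finitely many of complexity strictly below $\mu$, whence $c(E') < c(E)$ in the lexicographic ordering on the complexity tuples.

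It remains to see that iterated thinning terminates. For $\mS = \mS(M,\mT^1)$ the complexity is already a non-negative integer, and the lexicographic order on the finite, non-increasing integer tuples $c(E)$ is well-founded, so only finitely many thinning steps can occur. For a general height complex the same argument applies after replacing each vertex complexity $c(w)$ by the integer $d(w)$, the length of a longest strictly descending edge path starting at $w$ — finite by the net axiom, and strictly decreasing along descending edge paths; a thinning step at $v_i$ then deletes a maximum of $d$-value $d(v_i)$ and inserts maxima reachable from $v_i$ by strictly descending edge paths of positive length, hence of strictly smaller $d$-value. In either case the terminal path is thin and is obtained from $E$ by thinning. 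The delicate point throughout is the bookkeeping about which vertices change status under a thinning step, together with the verification that the inserted subpath lies strictly below $v_i$; both follow from the locality of the construction and the Morse axiom, and are precisely what make $c(E)$ (or its $d$-valued surrogate) a legitimate well-founded complexity.
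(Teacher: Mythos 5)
Your first assertion is proved exactly as in the paper: the two descending edges at a maximum of a thin path must lie in different components of the descending link, which is therefore non-empty and disconnected, giving index exactly one. For the termination claim, however, you take a genuinely different route. The paper assembles the graph $G$ of all edges appearing in the faces slid across during the (possibly infinite) thinning process, observes that every vertex of $G$ is joined to the original path by an increasing path, and concludes from the net axiom (finite descent length, hence finite diameter) together with finite valence that $G$ is finite, so the process stops. You instead argue by a well-founded ordering: each thinning step removes one maximum and creates new maxima only at $v_{i\pm 1}$ or at interior vertices of the inserted subpath, all of which lie strictly below $v_i$; replacing the poset-valued complexity by the integer depth $d(w)$ (longest strictly descending edge path from $w$, finite by the net axiom) turns the multiset of maxima into a quantity that strictly decreases in a well-founded (multiset/lexicographic) order. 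Your bookkeeping of which vertices can change status is correct, and the substitution of $d$ for $c$ is a genuine improvement in the general setting, since complexities of distinct maxima in a poset need not be comparable and the paper's lexicographic comparison of $c$-tuples is therefore delicate; your argument also sidesteps the paper's finite-valence claim for $G$, which is only justified per step. The one point you assert without justification is the well-foundedness of the lexicographic order on finite non-increasing integer tuples; this is the Dershowitz--Manna multiset ordering (equivalently, the well-ordering of ordinals below $\omega^{\omega}$ via $(a_1,\dots,a_k)\mapsto \omega^{a_1}+\cdots+\omega^{a_k}$) and deserves at least a citation, but it is standard and correct.
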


\begin{proof}
Let $E$ be a thin path and let $v$ be a maximum of $E$ between edges $e_i$ and $e_{i+1}$.  Because there are two edges below $v$, its descending link is non-empty and $v$ does not have index zero.  If $e_i$ and $e_{i+1}$ define vertices in the same component of the descending link of $v$ then by the above construction, $E$ can be thinned.  Since $E$ is thin, the descending link must disconnected, so $v$ has index exactly one.

If $E$ is not thin, let $E_0 = E$ and let $E_1$ be the result of thinning $E_0$.  By construction, each vertex in this new path is connected to a vertex in the old path by a strictly increasing path.  If $E_1$ is not thin then there is a path $E_2$ that results from thinning $E_1$ and so on.  If we continue this process, it either ends with a thin path or continues indefinitely, producing an infinite sequence of paths.

Let $G$ be the graph consisting of all the edges in the faces along which we slide to produce the (possibly infinite) sequence of paths $\{E_i\}$.  By induction on $i$, there is an increasing path contained in $G$ from any vertex $v'$ contained in a path $E_j$ to a vertex of $E$.  The net axiom implies that there is a bound on the length of any descending path from a vertex, so the graph $G$ has finite diameter.  Because we thinned along finitely many squares at each step, the graph $G$ has finite valence.  Every finite valence, finite diameter graph is finite, so $G$ must be finite.  This implies that the sequence of paths $\{E_i\}$ is finite and the process must terminate with a thin path $E_i$.  
\end{proof}

\section{K-compression bodies}
\label{ksplittingsect}

Let $N$ be the set $S \times [0,1]$ for a compact, closed, orientable (possibly disconnected) surface $S$.  Let $B_1,\dots,B_k$ be a (possibly empty) collection of balls, each parameterized as $D_i \times [0,1]$ for $D_i$ a disk.  Let $B'_1, \dots B'_{k'}$ be a second (possibly empty) collection of balls.  Identify each annulus $\partial D_i \times [0,1]$ with an annulus in $S \times \{0\}$ such that the annuli in $S \times \{0\}$ are pairwise disjoint.  Let $N'$ be the union of $N$ and $\bigcup B_i$, glued according to this identification.  Identify the boundaries of $B'_1,\dots, B'_{k'}$ with spheres in the boundary of $N'$ that are disjoint from $S' \times \{1\}$, then let $H$ be the union of $N'$ and $\bigcup B_i$, glued along these spheres.  A \textit{compression body} is a 3-manifold homeomorphic to a space $H$ that results from this construction.

The \textit{positive boundary}, $\partial_+ H$ of $H$ will be the image of $\partial N \setminus S \times \{1\}$.  The \textit{negative boundary}, $\partial_- H$ of $H$ is the set $\partial H \setminus \partial_+ H$.  Note that in this definition, we allow compression bodies to be disconnected, to have sphere components in their boundary and to have components homeomorphic to a surface cross an interval.

A \textit{K-compression body} is a pair $(H, K)$ where $H$ is a compression body and $K \subset H$ is a properly embedded graph such that each component of $K$ is either a vertical arc in $F \times [0,1]$ or is an arc or one-vertex tree that can be isotoped into the positive boundary $\partial_+ H$ while fixing its intersection with $\partial H$.  (By a \textit{properly embedded} graph, we mean a graph $K$ such that $K \cap \partial H$ consists of valence-one vertices of $K$.)

Given $M$ and $\mT^1$ as above, we will say that a compression body $H \subset M$ is a \textit{K-compression body (with respect to $\mT^1$)} if $K = H \cap \mT^1$ makes $(H, K)$ a K-compression body.

\begin{Lem}
\label{edgekbodylem}
Let $v$ and $v'$ be the endpoints of an edge $e$ in $\mS(M, \mT^1)$ such that $e$ is below $v$.  Then $e$ defines a K-compression body with respect to $\mT^1$ whose positive boundary is a representative for $v$ and whose negative boundary if a representative for $v'$.
\end{Lem}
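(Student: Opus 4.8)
The plan is to argue by cases on the type of the K-disk $D$ that the edge $e$ represents (compressing disk, bridge disk, or tree disk), in each case producing explicitly the region of $M$ swept out when one K-compresses $S$ across $D$ and checking that it matches the model description of a K-compression body. First I would choose a surface $S$ representing $v$ and a K-disk $D$ for $S$ realizing $e$, on (say) the positive side of $S$, with $S'$ the result of K-compressing and $N$ a closed regular neighborhood of $D$ in the positive complement, as in the definition of K-compression. The natural candidate for $H$ is the region of $M$ between $S$ and $S'$ swept out by the compression: a collar $S \times [0,1]$ of $S$, together with the $2$-handle (or $1$-handle, from the dual perspective) determined by $N$. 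I would set $\partial_+ H = S$ (a representative of $v$) and $\partial_- H = S'$ (a representative of $v'$, after discarding any trivial sphere created in the tree-disk case, which only changes $H$ by removing a ball and does not affect the blind isotopy class).

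The key steps, in order: (1) In the \emph{compressing disk} case with $\partial D$ essential in $S$, $H$ is a collar of $S$ with a single $2$-handle attached along $\partial D \subset S \times \{1\}$; dually this is $S' \times [0,1]$ with one $1$-handle, which is exactly the $k=1$ (and $k'=0$) model of a compression body, and $K = H \cap \mT^1$ is a union of vertical arcs since $D$ and $N$ are disjoint from $\mT^1$, so $K$ is a disjoint union of vertical arcs in the product part — this gives a K-compression body. (2) In the \emph{bridge disk} case, $S'$ is isotopic in $M$ to $S$ (the compression is an isotopy of $S$ across an arc of $\mT^1$), so $H$ is homeomorphic to $S \times [0,1]$; but now one component of $K$ is the arc of $\mT^1$ pushed across $D$, which by construction is an arc that can be isotoped into $\partial_+ H = S$ fixing its endpoints on $\partial H$, exactly as required in the definition of a K-compression body. (3) In the \emph{tree disk} case, the compression produces $S'$ together with a trivial sphere; before removing that sphere, $H$ is a collar of $S$ union a $2$-handle union a ball, and the relevant component of $K$ is the one-vertex tree contained in the tree disk, which isotopes into $\partial_+ H$; removing the trivial sphere amounts to deleting a ball component of the $\bigcup B'_j$ part of the model (or absorbing it), leaving a compression body with $K$ still satisfying the defining condition. (4) Finally I would note the orientations match: $H$ lies on the positive side of $S = \partial_+ H$, and the labels $\pm$ on $M \setminus S'$ are inherited consistently, so $\partial_+ H$ represents $v$ and $\partial_- H$ represents $v'$ with the correct induced transverse orientations, invoking the blind-isotopy invariance discussed when edges were defined so that the choice of representative $S$ is immaterial.

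The main obstacle I expect is bookkeeping in the tree-disk case: one must check that the ball bounded by $D \cup E$ (with interior disjoint from $\mT^1$) interacts correctly with the collar of $S$ so that the resulting $H$ really is a compression body in the model sense rather than something with an extra, non-model, handle — i.e. that attaching $N(D)$ along an annulus in $S \times \{0\}$ whose core is $\partial D = \partial E$, a \emph{trivial} curve in $S$, together with removing the trivial sphere, yields precisely $S' \times [0,1]$ plus allowed handles and balls. A secondary subtlety is making sure the condition ``$K$ can be isotoped into $\partial_+ H$ while fixing $K \cap \partial H$'' is verified for the bridge and tree components rather than merely asserted; this should follow directly from the definitions of bridge disk and tree disk, since those disks provide exactly the required isotopy, but it is the point where the general ($\mT^1 \neq \emptyset$) case genuinely uses more than the classical Scharlemann--Thompson setup.
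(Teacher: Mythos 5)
Your proposal is correct and follows essentially the same route as the paper: both build $H$ as a collar $S\times[0,1]$ union the ball coming from a regular neighborhood of the K-disk, split into the three cases (compressing disk, bridge disk, tree disk), and verify in each case that $H\cap\mT^1$ consists of vertical arcs plus, respectively, nothing, one $\partial_+$-parallel arc, or one $\partial_+$-parallel tree. The tree-disk bookkeeping you flag is handled in the paper simply by observing that the ball meets the collar in a disk of its boundary, so the union is again a product and the tree component is boundary-parallel by construction.
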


\begin{proof}
Let $S$ be a surface representing $v$ and let $D$ be a K-disk representing $e$.  Without loss of generality, assume $D$ is on the positive side of $S$.  Let $S^*$ be the result of transversely isotoping a copy of each component of $S$ into the component of the positive complement to which it is adjacent. We will assume that the isotopy fixes $\mT^1$ setwise, but not pointwise.  We can extend this isotopy to an ambient isotopy of $(M, \mT^1)$ and let $D^*$ be the image of $D$ after the isotopy.  Then $D^*$ is a K-disk fir $S^*$.  Let $S'$ be the result of K-compressing $S^*$ along $D^*$.  

Because $S^*, D^*$ are isotopic to $S, D$, the surface $S'$ represents $v'$.  Between $S$ and $S^*$, there is a submanifold homeomorphic to $S \times [0,1]$ that intersects $\mT^1$ in a collection of vertical arcs.  We will parameterize it so that $S \times \{1\} = S$ and $S \times \{0\} = S^*$.  The submanifold $H$ of $M$ between $S$ and $S'$ is the union of this $S \times [0,1]$ and a ball $B$ that intersects $S \times [0,1]$ in one of three ways:

If $D$ is a compressing disk, then $B$ can be parameterized as $D \times [0,1]$ so that $B \cap (S \times [0,1]) = \partial D \times [0,1]$.  By definition, the union is a compression body.  Its intersection with $\mT^1$ is the same as that of $S \times [0,1]$, a collection of vertical arcs, so this $H$ is a K-compression body with respect to $\mT^1$ such as the top object in Figure~\ref{critsincompfig}.
\begin{figure}[htb]
  \begin{center}
  \includegraphics[width=2.5in]{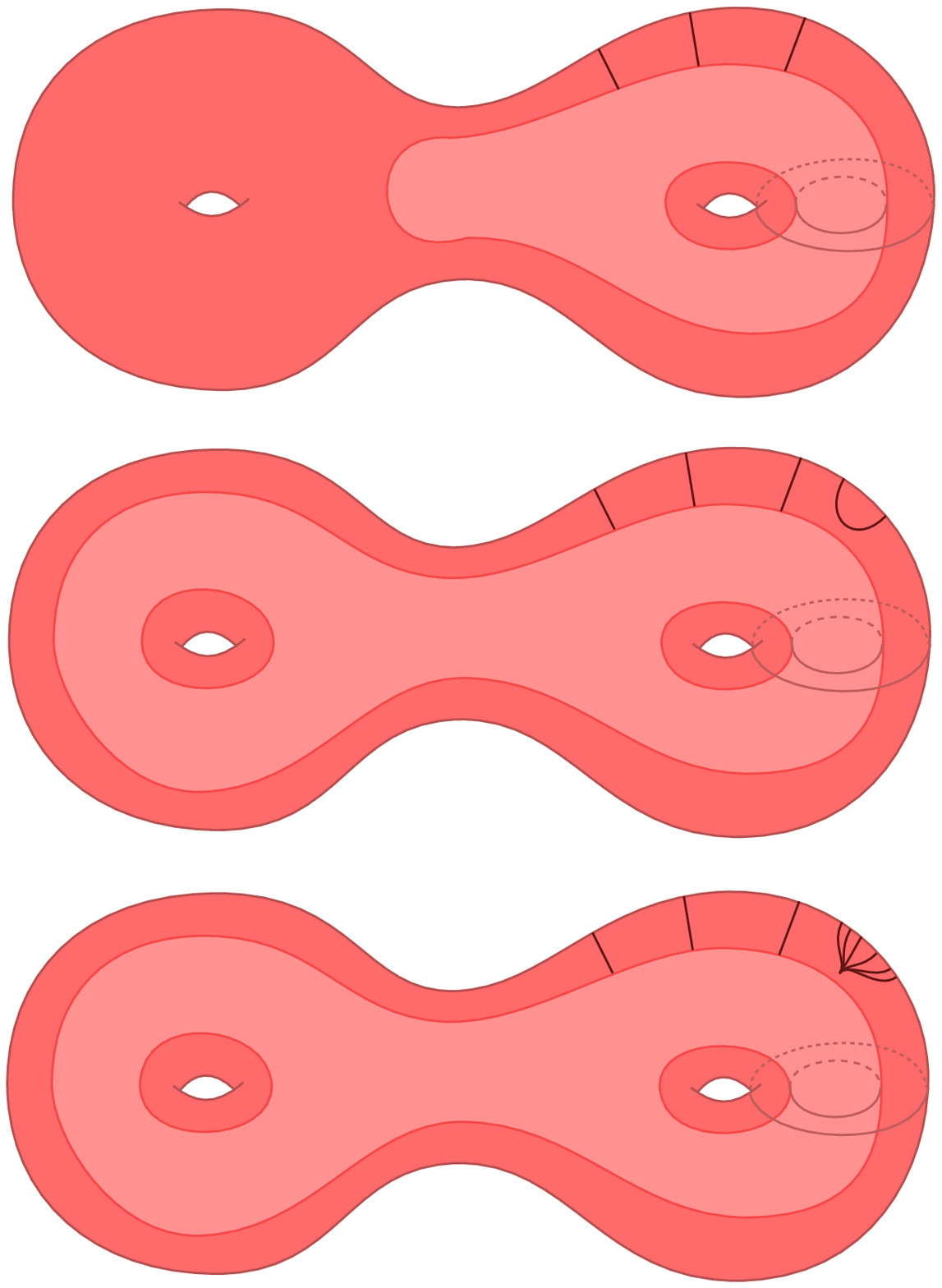}
  \caption{Edges of $\mS(M, \mT^1)$ define K-compression bodies.}
  \label{critsincompfig}
  \end{center}
\end{figure}

If $D$ is a bridge disk then $B$ intersects $S \times [0,1]$ in a disk in $\partial B$, so the union $H$ is itself homeomorphic to $S \times [0,1]$.  The ball $B$ intersects $\mT^1$ in a single unknotted arc whose endpoints connect to vertical arcs in $S \times [0,1]$.  Thus $H$ is a K-compression body with respect to $\mT^1$ with a single horizontal arc as in the middle of Figure~\ref{critsincompfig}.

If $D$ is a tree disk then $B$ intersects $S \times [0,1]$ in a disk in $\partial B$, so the union $H$ is again homeomorphic to $S \times [0,1]$.  The ball $B$ intersects $\mT^1$ in an unknotted tree whose endpoints connect to vertical arcs in $S \times [0,1]$.  Thus $H$ is a K-compression body with respect to $\mT^1$ with a single boundary parallel tree as shown at the bottom of Figure~\ref{critsincompfig}.
\end{proof}

\section{Oriented paths}
\label{orsect}

An edge path in $\mS(M,\mT^1)$ defines a sequence of K-compression bodies in $M$.  We would like to put these K-compression bodies together to form something like the generalized Heegaard splittings defined by Scharlemann and Thompson~\cite{sch:thin}.  To do this, we need to keep track of which side of each surface the next compression body is being added to.  For this purpose, we will add directions to the edges of our height complex.

Every edge $e$ in $\mS(M, \mT^1)$ descends from a vertex $v$ representing some surface $S$ and $e$ corresponds to a K-disk for $S$ or a sphere component for $S$.  Because $S$ is transversely oriented, each component of $M \setminus S$ is labeled positive or negative.  If the K-disk is on the positive side of $S$ then we will choose an orientation for $e$ pointing away from $v$.  Otherwise, we will have the edge $e$ point towards $v$.  With this definition, an edge always points towards the surface that is in the positive complement of the other surface.

In a height complex, by definition every face is either a diamond, a triangle or a bigon.  A triangle or bigon can be thought of as the result of taking a diamond and crushing one or both of the lower edges (i.e. those not adjacent to the maximum) down to a point.  We will say that a diamond is \textit{parallel oriented} if opposite edges point in the same direction.  We will say that a triangle or bigon is parallel-oriented if its orientation is induced by crushing one or both of the lower edges of a parallel oriented diamond.

\begin{Def}
In general, an \textit{oriented} height complex will be a height complex $\mS$ that satisfies the following axiom: \\
\end{Def}

\noindent
\textbf{The parallel orientation axiom}: For any 2-cell $q$ in $\mS$, the orientations on the edges of $q$ make it a parallel-oriented diamond, triangle or bigon. \\

Because the higher dimensional cells in a height complex are built from 2-cells, the parallel orientation axiom implies that the orientations on every $n$-dimensional cell will be induced from crushing edges of an oriented $n$-cube.  From the construction of the complex of surfaces, it is immediate that $\mS(M, \mT^1)$ satisfies the parallel orientation axiom.

\begin{Def}
We will say that an edge path $E = \{e_i\}$ in an oriented height complex is \textit{oriented} if the directions of the edges that come from the path and those that come from the complex agree.  The path $E$ will be \textit{reverse oriented} if the orientations coming from the path all disagree with those coming from the complex.
\end{Def}

In the complex of surfaces, the direction on each edge points towards the surface that is in the positive complement of the other, so each $v_{i+1}$ in an oriented path will represent a surface in the positive complement of the surface representing $v_i$.  In other words, an oriented path always moves in the same direction in $M$.  This implies that the vertices of any oriented path can be represented by pairwise disjoint surfaces.  In particular, its endpoints cobound a submanifold of $M$.  For a directed path that is monotonic in the complexity, one can say precisely what this submanifold looks like:

\begin{Lem}
\label{decpathkbodylem}
Let $E$ be a strictly decreasing oriented or reverse oriented path in $\mS(M, \mT^1)$ starting at a vertex $v$ and ending at a vertex $v'$.  Then $E$ determines a K-compression body whose positive boundary is a representative for $v$ and whose negative boundary consists of a representative for $v'$.
\end{Lem}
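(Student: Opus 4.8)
The plan is to induct on the length $n$ of the path $E = e_1, \dots, e_n$, using Lemma~\ref{edgekbodylem} as the base case. For $n = 1$, the path is a single edge, and since $E$ is strictly decreasing and oriented (or reverse oriented), the edge is below its higher endpoint; if $E$ is oriented the edge is below $v$ and $v$ is the higher vertex, if reverse oriented the edge is below $v$ as well but traversed backwards — in either case Lemma~\ref{edgekbodylem} gives a K-compression body $H_1$ with $\partial_+ H_1$ representing the higher-complexity endpoint and $\partial_- H_1$ representing the lower one. Because the path is decreasing in complexity, the higher endpoint is $v$ and the lower is $v'$, matching the claim. (One should check the orientation bookkeeping: the edge points towards the surface lying in the positive complement of the other, which by the construction in Section~\ref{orsect} is exactly the positive boundary of the K-compression body produced in Lemma~\ref{edgekbodylem}.)

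For the inductive step, write $E = E' \cdot e_n$ where $E'$ is the decreasing oriented path $e_1, \dots, e_{n-1}$ from $v = v_0$ to $v_{n-1}$. By the inductive hypothesis, $E'$ determines a K-compression body $H'$ with $\partial_+ H' \cong v$ and $\partial_- H' \cong v_{n-1}$. The final edge $e_n$ is below $v_{n-1}$ (since $E$ is strictly decreasing), so Lemma~\ref{edgekbodylem} gives a K-compression body $H_n$ with $\partial_+ H_n \cong v_{n-1}$ and $\partial_- H_n \cong v_n = v'$. The idea is to glue $H_n$ onto $H'$ along a common representative of $v_{n-1}$: since the vertices of an oriented path can be represented by pairwise disjoint surfaces (remarked just before the Lemma, using that the edges all point "in the same direction" in $M$), there is a submanifold of $M$ realizing $H'$ with negative boundary $S_{n-1}$ and a submanifold realizing $H_n$ with positive boundary the same $S_{n-1}$, and these meet only along $S_{n-1}$; their union $H = H' \cup_{S_{n-1}} H_n$ is a submanifold of $M$ with $\partial_+ H = \partial_+ H' \cong v$ and $\partial_- H = \partial_- H_n \cong v'$.

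It remains to verify that $H$ is actually a K-compression body — this is the main obstacle. Gluing a compression body $H_n$ to $H'$ along a component-wise full negative boundary component $S_{n-1} = \partial_+ H_n$ yields a compression body again (this is the standard fact that "a compression body stacked on a compression body along positive-meets-negative is a compression body"; concretely, the handle structure of $H_n$ viewed from $S_{n-1}$ can be absorbed into $H'$, and one must check the sphere-boundary-components and product-component corner cases allowed by the generous definition in Section~\ref{ksplittingsect}). For the graph part, $K = H \cap \mT^1$: inside $H'$ the graph is a K-compression-body graph for $H'$, inside $H_n$ it is one for $H_n$, and across $S_{n-1}$ the arcs match up because $S_{n-1}$ is transverse to $\mT^1$ with a well-defined number of intersection points; each component of $K$ is then seen to be either a vertical arc all the way through (if it was vertical in both pieces) or an arc/one-vertex tree isotopic into $\partial_+ H$ (the isotopies from each piece concatenate, pushing everything up through $S_{n-1}$ toward $\partial_+ H' = \partial_+ H$, using that the $H_n$-part is isotopic into $S_{n-1}$ and then the $H'$-part carries it up to the top). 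The only delicate point is when $e_n$ is a tree disk on the positive side creating a trivial sphere that gets deleted; here one checks the deleted sphere lies in a ball and does not obstruct the gluing, exactly as in the proof of Lemma~\ref{edgekbodylem}. Whether $E$ is oriented or reverse oriented affects only which endpoint is called $v$ versus $v'$ and is handled by the same argument read in the opposite direction.
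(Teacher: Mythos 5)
Your proof is correct and follows essentially the same route as the paper: induction on the length of the path, with Lemma~\ref{edgekbodylem} as the base case and the inductive step consisting of stacking the piece for the final edge onto the K-compression body for the initial segment. The only (cosmetic) difference is that the paper attaches just the ball $B$ of the last edge directly to the negative boundary of the inductively obtained compression body, rather than gluing in a full product-plus-ball as you do.
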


\begin{proof}
Without loss of generality, we will assume the path $E = e_1,\dots,e_n$ is oriented and we will induct on the number of edges in the path.  If $n = 1$ then the path determines a compression body by Lemma~\ref{edgekbodylem}.  

For $n > 1$, assume the Lemma is true for length $n-1$ paths.  By the induction hypothesis, there is a compression body $H$ determined by the path $e_1,\dots,e_{n-1}$.  The negative boundary component of $H$ represents the first endpoint of $e_1$ and the positive boundary $\partial_+ H$ corresponds to the last endpoint of $e_{n-1}$, which is the first endpoint of $e_n$.  

As in the proof of Lemma~\ref{edgekbodylem}, the surface representing the second endpoint of $e_n$ comes from $\partial_+ H$ by attaching a ball $B$ contained in the positive component of the complement.  The interior of $H$ is on the negative side of $\partial_+ H$, so $H$ and $B$ have disjoint interiors.  In each of the cases considered in Lemma~\ref{edgekbodylem}, the submanifold $H \cup B$ is a compression body that, along with its intersection with $\mT^1$ defines a K-compression body.
\end{proof}

\begin{Lem}
\label{alignkbodieslem}
An oriented path $E$ in $\mS(M, \mT^1)$ defines a sequence of K-compression bodies with pairwise disjoint interiors, one for each interval between consecutive minima and maxima, that coincide alternately along their positive and negative boundaries.
\end{Lem}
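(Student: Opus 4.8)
The plan is to decompose an oriented path $E = e_1, \dots, e_n$ at its minima and maxima and apply Lemma~\ref{decpathkbodylem} to each piece. Let $v_0, \dots, v_n$ be the vertices of $E$, and let $w_0, w_1, \dots, w_m$ (with $w_0 = v_0$ and $w_m = v_n$) be the subsequence consisting of the endpoints together with all the minima and maxima of $E$, listed in order. By definition of minimum and maximum, between consecutive terms $w_{j}$ and $w_{j+1}$ the complexity of the path is strictly monotonic: the subpath $E_j$ of $E$ running from $w_j$ to $w_{j+1}$ is either strictly increasing or strictly decreasing in complexity, and moreover, since $E$ is oriented, each $E_j$ is an oriented subpath. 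A strictly increasing oriented path is a strictly decreasing reverse-oriented path when traversed backwards, so in either case Lemma~\ref{decpathkbodylem} applies to $E_j$ and produces a K-compression body $H_j$ whose positive boundary represents the higher-complexity endpoint of $E_j$ and whose negative boundary represents the lower-complexity endpoint.

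Next I would check that consecutive $H_j$ fit together correctly along their boundaries. At an interior vertex $w_j$ (a minimum or maximum of $E$), the two adjacent subpaths $E_{j-1}$ and $E_j$ both have $w_j$ as an endpoint. If $w_j$ is a maximum, then $w_j$ is the higher-complexity endpoint of both $E_{j-1}$ and $E_j$, so both $H_{j-1}$ and $H_j$ meet $w_j$ along their positive boundary; the chosen representatives of the positive boundaries agree because they both represent the vertex $w_j$ (after an ambient isotopy, as in the proofs of Lemmas~\ref{edgekbodylem} and~\ref{decpathkbodylem}, we may take a single common representative). If $w_j$ is a minimum, then symmetrically both $H_{j-1}$ and $H_j$ meet $w_j$ along their negative boundary. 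Since $w_0, \dots, w_m$ alternate between minima and maxima (a maximum cannot be immediately followed by another maximum with no minimum in between, and vice versa, once the endpoints are included), the gluing pattern is exactly the alternating one claimed: consecutive compression bodies are glued alternately along positive boundaries and along negative boundaries.

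Finally I would verify that the $H_j$ have pairwise disjoint interiors. This follows from the discussion preceding Lemma~\ref{decpathkbodylem}: since $E$ is oriented, all of its vertices can be represented by pairwise disjoint surfaces, and an oriented path always moves in the same direction through $M$. Concretely, the surfaces representing $w_0, \dots, w_m$ can be taken pairwise disjoint and "stacked" in order, so that $H_j$ lies in the region of $M$ between the surface for $w_j$ and the surface for $w_{j+1}$, and these regions have pairwise disjoint interiors. One small point to confirm is that the union $\bigcup_j H_j$ really does exhaust the submanifold cobounded by the endpoints, which again is immediate from the stacking picture. The main obstacle is bookkeeping rather than substance: one must be careful that the "chosen representative" of each boundary surface can be made to match across adjacent pieces, which requires chasing the ambient isotopies used in Lemma~\ref{decpathkbodylem} and invoking that the construction of $\mS(M, \mT^1)$ is independent of the representative; with that in hand, the rest is a routine induction on $m$ or a direct concatenation argument.
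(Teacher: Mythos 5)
Your overall skeleton is the same as the paper's: cut $E$ at its extrema into strictly monotonic oriented segments, apply Lemma~\ref{decpathkbodylem} to each, and glue the resulting K-compression bodies along their shared positive or negative boundaries. But there is a genuine gap at the gluing step, and it is exactly the point the paper's proof spends nearly all of its length on. You assert that the positive boundaries of $H_{j-1}$ and $H_j$ ``agree because they both represent the vertex $w_j$ (after an ambient isotopy \dots we may take a single common representative).'' Vertices of $\mS(M,\mT^1)$ are \emph{sphere-blind} isotopy classes, not isotopy classes: two representatives of $w_j$ may differ not only by isotopy but by sphere tubings and by the addition or removal of trivial sphere components. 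An ambient isotopy therefore cannot in general carry $\partial_+ H_j$ onto $\partial_+ H_{j-1}$, and this is not bookkeeping --- it is the substantive content of the lemma.

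The paper handles this by showing that a sphere tubing of $\partial_+ H$ can be propagated through the whole compression body: one isotopes the sphere $R$ off of $H$ (using incompressibility of $\partial_- H$ in $H$), extends the tubing arc $\alpha$ through $H$ so that its endpoints lie in $\partial_- H$, and then performs a sequence of sphere tubings on $\partial_- H \cup \partial_+ H$ that replace $H$ by a homeomorphic compression body whose positive boundary is the desired representative. A separate argument is needed for trivial sphere components of the negative boundaries: a trivial sphere of $\partial_- H_2$ bounds a ball containing a component of $H_2$, and one must absorb or excise such balls from the adjacent compression body to force $\partial_- H_2 = \partial_- H_3$. Without these two constructions your induction does not close up, so you should either supply them or restate the gluing claim only up to sphere-blind isotopy, which is weaker than what the lemma asserts.
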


\begin{proof}
Assume the initial edge in the path is increasing.  (If it's not, we will skip to the second step in the proof.)  Let $v_0$ be the initial vertex in the path $E$ and let $v_1$ be the first maximum.  Because the initial edge is increasing, the segment of $E$ from $v_0$ to $v_1$ will be strictly increasing so Lemma~\ref{decpathkbodylem} implies that there is a compression body $H_1$ whose positive boundary is a representative for $v_1$ and whose negative boundary is a representative for $v_0$.  If any trivial sphere in $\partial_- H_1$ bounds a ball disjoint from $H_1$ then we will remove this sphere from $\partial_- H_1$ and redefine $H_1$ to include this ball.

Let $v_2$ be the first minimum in $E$ after $v_1$.  Again Lemma~\ref{decpathkbodylem} implies that there is a compression body $H_2$ whose positive boundary represents $v_1$ and whose negative boundary represents $v_2$.  Because $\partial_+ H_1$ and $\partial_+ H_2$ represent the same vertex $v_1$, they are related by isotopies and by tubing to spheres.  If we sphere tube $\partial_+ H_2$ to a sphere, we can extend the construction to the entire compression body as follows:

Let $S$ be the sphere to which we would like to add a tube.  This sphere is disjoint from $\partial_+ H$ by assumption.  Because the negative boundary of a compression body is incompressible inside the compression body, we can isotope $S$ disjoint from $\partial_- H$.  If $S$ is then contained in $H$, there is a sphere blind isotopy of $\partial_- H$ after which $S$ is parallel into $\partial_- H$, so we can assume that $S$ is completely disjoint from $H$.  

Let $\alpha$ be the arc defining the sphere tubing of $\partial_+ H$.  If both endpoints of $\alpha$ are in $\partial_+ H$ then $\alpha \cap \partial H_+$ will consist of two points and $\alpha \cap \partial_- H$ will consist of an even number of points.  If $\alpha$ approaches its endpoints from outside $H$, then we will extend $\alpha$ into $H$ so that its endpoints are in $\partial_- H$.  After possibly extending $\alpha$ in this way, $\alpha$ will define a sequence of sphere tubings on the surface $\partial_- H \cup \partial_+ H$, that take $H$ to a homeomorphic compression body and take $\partial_+ H$ to the surface we wanted.  If we start with only one endpoint of $\alpha$ in $\partial_+ H$, then after possibly extending this endpoint to $\partial_- H$, we can perform a similar sequence of sphere tubings on $\partial_- H \cup \partial_+ H$ to get the compression body and the surface that we want.

By repeating this process, we can arrange so that $\partial_+ H_2 = \partial_+ H_1$.  If a trivial sphere in $\partial_- H_2$ bounds a ball disjoint from $H_2$, we will add this ball into $H_2$.

Let $v_3$ be the first maximum after $v_2$ and let $H_3$ be a representative for the increasing path from $v_2$ to $v_3$.  By sphere tubing $H_3$ as we did with $H_2$, we can assume that the representative $\partial_- H_2$ for $v_2$coincides with the representative $\partial_- H_3$ for $v_2$.  We will also fill in any trivial sphere component of $\partial_- H_3$ that bounds a ball disjoint from $H_3$.  We can isotope the remaining trivial sphere components of $\partial_- H_2$ and $\partial_- H_3$ to be disjoint from $\partial H_3$ and $\partial H_2$, respectively.

If $\partial_- H_2$ contains a trivial sphere, then the ball $B$ bounded by this trivial sphere intersects $H_2$ and in fact contains a component of $H_2$.  If $B$ is disjoint from $H_3$ then we can isotope $H_3$ so that it contains this ball.  We will then remove $B$ from $H_3$ so that $\partial B$ is a component of $\partial_- H_3$.  If we perform this operation for every trivial component of $\partial_- H_2$ and a similar operation for every trivial component of $\partial_- H_3$, we will ensure that $\partial_- H_2 = \partial_- H_3$.

We can continue in this fashion for each consecutive maximum and minimum of the path $E$, to find the sequence of compression bodies promised by the Lemma.
\end{proof}

This sequence of K-compression bodies is analogous to the generalized Heegaard splittings defined in~\cite{sch:thin}.  In particular, if $\mT^1$ is the empty graph and the surfaces represented by $v^-$, $v^+$ are parallel to $\partial M$ then the compression bodies defined by an oriented path between $v^-$ and $v^+$  in $\mS(M,\emptyset)$ form a generalized Heegaard splitting.

\section{Thinning oriented paths}
\label{thinorsect}

We saw in Section~\ref{thinpathsect} that the local maxima of a thin path in an (unoriented) height complex $\mS$ has link index one.  However, if we want to restrict our attention to directed paths in an oriented height complex, this proof doesn't work; the construction in the original proof does not necessarily produce an oriented path.  In this section we define a more refined construction that always produces an oriented path.  

The parallel orientation axiom implies that the boundary of any 2-cell in $\mS$ contains two ``parallel'' paths;  Either, there are two paths from the top vertex to the bottom or there is a path across the top two edges, and a second path across the bottom edges.  Given a path $E$ and a 2-cell $f$ such that one of these paths in the boundary of $f$ is a sub-path of $E$, an \textit{oriented face slide} consists of replacing this subpath of $E$ with the other oriented path in the boundary of $f$.  Because of the parallel orientation axiom, the new path is also oriented.

We will differentiate between the two types of oriented face slides:  A slide that replaces the two edges adjacent to the maximum vertex with the complementary path of edges, or vice versa, will be called a \textit{vertical face slide}.  A face slide that replaces one path from a maximum to a minimum with the other will called a \textit{horizontal face slide}.

We will say that two oriented paths in $\mS$ are \textit{equivalent} if there is a sequence of horizontal face slides that turns one of the paths into the other.  Because a horizontal face slide does not create any maxima or minima, the set of maximal and minimal vertices in two equivalent paths are the same.  (We will see later that in the case when $\mS = \mS(M, \mT^1)$, if two splittings paths are equivalent then they determine the same K-splittings, i.e. the same collections of compression bodies in $M$.)

\begin{Def}
An oriented path will be called \textit{thinnable} if it is equivalent to an oriented path whose complexity can be reduced by a vertical face slide.  An oriented path is \textit{thin} if it is not thinnable.  
\end{Def}

Note that this is different from the definition of a thin path in an unoriented height complex.

\begin{Def}
Given a maximal vertex $v$ in an oriented path $E$, we will define $u^-_E$, $u^+_E$ to be the vertices of the descending link for $v$ determined by the edges in $E$ right before and right after $v$, respectively.  Define  The \textit{negative path link} $L^-_v$ to be the subset of the descending link spanned by the set of vertices $\{u^-_F\ |\ F$ is a path equivalent to $E\}$.  The \textit{positive path link} $L^+_v$ will be the subset spanned by $\{u^+_F\ |\ F$ is a path equivalent to $E\}$.  The subcomplex spanned by positive and negative path links will be called the \textit{path link} $L^E_v$.
\end{Def}

By definition, every vertex in the positive path link corresponds to an edge pointing away from $v$ and every vertex in the negative path link corresponds to an edge pointing towards $v$.  Since each edge points exactly one way, the positive and negative path links are disjoint.

Note that while the path link is contained in the descending link of a given vertex, it may be a proper subset of the descending link.  In particular, the homotopy type of the path link may be different from that of the descending link, so we need the following definition.

\begin{Def}
Given a maximum $v$ in a path $E$, the \textit{path index} of $v$ is the the smallest $i$ such that the $\pi_{i-1}(L^E_v)$ is non-trivial.\
\end{Def}

As with the previously defined index, the path index of a maximum may be undefined.  Vertices with path index one play a special role in thin position and correspond to strongly irreducible surfaces in Scharlemann-Thompson thin position.  In fact, the path link illustrates a subtlety in terminology that is commonly overlooked:  The term weakly incompressible is defined almost identically to strongly irreducible.  The difference is that (translated into index terminology) weakly incompressible means index one, while strongly irreducible means path index one.

If a maximum $v$ does not have path index one then there is an edge in $L^E_v$ connecting the positive and negative path links.  After picking an equivalent path, this edge will connect the vertices in $L^E_v$ corresponding to the edges before and after $v$ and will thus define a 2-cell in $\mS$ containing $e_i$ and $e_{i+1}$.  A vertical face slide across this 2-cell will replace $v_i$ with lower maxima, so we have the following:

\begin{Lem}
\label{thinpathsimaxlem}
A path $E$ in an oriented height complex $\mS$ will be thin if and only if every maximum in the path has path index one.
\end{Lem}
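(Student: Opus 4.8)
The plan is to mirror the proof of Lemma~\ref{unorientedthinninglem} but using the refined oriented machinery developed in this section. I would split the statement into its two directions. For the forward direction, suppose $E$ is thin. Let $v = v_i$ be any maximum of $E$, sitting between edges $e_i$ and $e_{i+1}$, which determine vertices $u^-_E$ and $u^+_E$ in the path link $L^E_v$. Since $v$ is a maximum, both $e_i$ and $e_{i+1}$ are below $v$, so $L^E_v$ is nonempty and the path index of $v$ is at least one. If the path index of $v$ were not exactly one, then $\pi_0(L^E_v)$ would be trivial, so $u^-_E$ and $u^+_E$ would lie in the same component of $L^E_v$. As indicated in the paragraph preceding the statement, this means there is an edge path in $L^E_v$ from $u^-_E$ to $u^+_E$; after performing horizontal face slides to move to an equivalent path $F$ realizing the endpoints of this edge path as the vertices immediately before and after $v$ in $F$, each edge of the $L^E_v$-path corresponds to a $2$-cell below $v$, and the sequence of these $2$-cells lets us perform vertical face slides replacing the two edges adjacent to $v$ by the complementary paths, each of whose vertices has strictly smaller complexity than $c(v)$ by the Morse axiom. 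The new maxima created are all lower than $v$, so the lexicographic complexity strictly decreases; but then $E$ is equivalent (via the horizontal slides) to a path whose complexity is reducible by vertical slides, contradicting thinness. Hence every maximum of a thin path has path index one.

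For the converse, suppose every maximum of $E$ has path index one; I want to show $E$ is not thinnable. Suppose for contradiction it is: then $E$ is equivalent to a path $E'$ whose complexity can be reduced by a single vertical face slide across some $2$-cell $f$. Since $E$ and $E'$ are equivalent via horizontal slides and horizontal slides create no maxima or minima, they have the same set of maximal and minimal vertices, and in particular the maxima of $E'$ also have path index one (the path link $L^E_v$ depends only on the equivalence class of the path, by its definition in terms of all paths equivalent to $E$). A vertical face slide that strictly lowers complexity must involve the two edges adjacent to some maximum $v$ of $E'$ and replace them by a complementary path lying strictly below $v$; the $2$-cell $f$ then exhibits an edge in $L^{E'}_v = L^E_v$ joining $u^-_{E'}$ to $u^+_{E'}$, i.e. joining the negative and positive path links. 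But the negative and positive path links are disjoint (each edge points exactly one way), so such an edge witnesses that $u^-_{E'}$ and $u^+_{E'}$ lie in the same component of $L^E_v$, forcing $\pi_0(L^E_v)$ to be trivial and hence the path index of $v$ to exceed one — a contradiction. Therefore $E$ is thin.

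The main obstacle, and the place requiring the most care, is the bookkeeping in the forward direction when passing from ``$u^-_E$ and $u^+_E$ are in the same component of the path link'' to ``there is an actual sequence of vertical face slides lowering the complexity.'' The subtlety is that the edge path in $L^E_v$ connecting the two vertices may use vertices $u^-_F$ for several \emph{different} equivalent paths $F$, so one must interleave horizontal face slides (to realize each successive path link vertex as the edge-before-$v$) with the vertical slides across the $2$-cells corresponding to the edges of the $L^E_v$-path. One must check that the Morse axiom genuinely guarantees every non-apex vertex of each such $2$-cell has complexity $< c(v)$, so that no new maximum of complexity $\ge c(v)$ is introduced, and that the horizontal slides used are legitimate (they are, by the parallel orientation axiom, since each produces another oriented path). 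Once this is set up carefully, both directions reduce to the observation that ``path index one'' is exactly the assertion that $u^-_E$ and $u^+_E$ lie in distinct components of $L^E_v$, and the rest is the same lexicographic-ordering argument used for the unoriented case.
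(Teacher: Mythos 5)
Your converse direction is essentially the paper's argument, but your forward direction falls back on the unoriented construction of Lemma~\ref{unorientedthinninglem}, and this is exactly what the opening paragraph of Section~\ref{thinorsect} warns against: replacing $e_i, e_{i+1}$ by the concatenated lower edges of a whole chain of $2$-cells corresponding to an edge path in $L^E_v$ from $u^-_E$ to $u^+_E$ does not in general produce an oriented path, and it is not a vertical face slide in the sense required by the definition of ``thinnable.'' Concretely, an edge of that chain joining two vertices of the negative path link corresponds to a $2$-cell both of whose upper edges point toward $v$; by the parallel orientation axiom its boundary then splits into two oriented paths running from its minimum up to $v$, so the ``complementary path'' of lower edges is traversed against its orientation on one side and only horizontal slides are available across such a cell. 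The idea you are missing is to exploit the fact that $L^-_v$ and $L^+_v$ are disjoint and together contain every vertex of $L^E_v$: if $L^E_v$ is connected, then some \emph{single} edge must join a vertex $u^-_F$ of $L^-_v$ to a vertex $u^+_{F'}$ of $L^+_v$. Because horizontal slides acting on the descending segment of the path before $v$ do not interfere with those acting on the descending segment after $v$, there is a single equivalent path realizing $u^-_F$ and $u^+_{F'}$ simultaneously as the edges immediately before and after $v$; that one $2$-cell then admits a genuine vertical face slide, which replaces $v$ by strictly lower maxima. No chain of faces, and no interleaving, is needed.

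A smaller but real jump occurs in your converse, where you say that an edge joining $u^-_{E'}$ to $u^+_{E'}$ ``forces $\pi_0(L^E_v)$ to be trivial.'' Placing two vertices in the same component does not by itself make the complex connected. What rescues the step is that $L^-_v$ and $L^+_v$ are each connected: any two equivalent paths differ by a sequence of horizontal slides, and each slide either leaves the edge adjacent to $v$ alone or moves it across an edge of the corresponding path link. Hence $L^E_v$ has at most two components, and one crossing edge does connect it. You should state and use this; it is the same connectivity the paper invokes in the proof of Corollary~\ref{linkconnectedcoro}.
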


\section{The Casson-Gordon axiom}
\label{cgsect}

So far, we have defined a condition on the maxima of a path $E$ that determines when $E$ is thin.  In the setting of the complex of surfaces and thin position for 3-manifolds, this condition is equivalent to the statement that a generalized Heegaard splitting is thin if and only if every thick surface is strongly irreducible.  

\begin{Def}
We will say that a vertex $v \in \mS$ is \textit{compressible to the negative/positive side} if there is an edge below $v$ that points towards/away from $v$, respectively.  Otherwise, $v$ is \textit{incompressible to the negative/positive side}, respectively.
\end{Def}

In particular, an index-zero surface will be incompressible to both sides.  The second half of Scharlemann-Thompson's Theorem in~\cite{sch:thin} is that the thin surfaces of thin splittings are incompressible, i.e. index-zero.  This relies on a Lemma by Casson and Gordon~\cite{cass:red}.  In the general context, we will state this lemma as a slightly weaker, though more general, axiom: \\

\noindent
\textbf{The Casson-Gordon axiom}: Let $v$ be a maximum in an oriented path $E$, and let $v_-$, $v_+$ be the minima of $E$ right before and after $v$, respectively.  If $v_-$ is compressible to the positive side then either the path link of $v$ is contractible or $v_+$ is compressible to the positive side.  Similarly, if $v_+$ is compressible to the negative side then either the path link of $v$ is contractible or $v_-$ is compressible to the negative side.  \\

In particular, if the path link of a maximum is contractible then all its homotopy groups will be trivial, so the surface will be floppy.  This axiom allows us to control the minima of a path, based on the maxima as follows:

\begin{Lem}
\label{cgapplem}
Assume $\mS$ is an oriented height complex satisfying the Casson-Gordon axiom and let $E$ be an oriented path in $\mS$ such that the initial vertex $v_0$ is incompressible to the negative side, the final vertex $v_n$ is incompressible to the positive side and such that every maximum of $E$ has a well defined path index.  Then every minimum of $E$ has index zero.
\end{Lem}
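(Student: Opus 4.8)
The plan is to prove the equivalent assertion that every minimum of $E$ is incompressible to both sides: a vertex with no edge below it has index zero by definition, and a minimum that is incompressible to both the positive and the negative side has no descending edge at all, since each descending edge points one way or the other. The first step is to exploit the path-index hypothesis. If some maximum $v$ of $E$ had contractible path link $L^E_v$, then every homotopy group of $L^E_v$ would vanish, so there would be no smallest $i$ with $\pi_{i-1}(L^E_v)\neq 0$, and $v$ would have no well-defined path index, contrary to hypothesis. Hence every maximum of $E$ has non-contractible path link, so for each maximum $v$ the Casson-Gordon axiom, shorn of its ``contractible'' alternative, reduces to two implications: writing $v_-$ and $v_+$ for the minima of $E$ immediately before and after $v$, if $v_-$ is compressible to the positive side then so is $v_+$, and if $v_+$ is compressible to the negative side then so is $v_-$. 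Contrapositively: along $E$, incompressibility to the negative side passes from the minimum just before a maximum to the one just after it, and incompressibility to the positive side passes from the minimum just after a maximum to the one just before it.

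Now I would run the two propagations. List the minima of $E$ in order along the path; consecutive minima are separated by exactly one maximum, and --- assuming for the moment that $E$ begins with an increasing run and ends with a decreasing one --- the initial vertex $v_0$ and the final vertex $v_n$ occupy the two ends of this chain, playing the roles of the ``minimum before the first maximum'' and the ``minimum after the last maximum''. (This is exactly why the hypotheses are phrased in terms of $v_0$ and $v_n$.) Propagating the hypothesis that $v_0$ is incompressible to the negative side forward along the chain shows that every minimum is incompressible to the negative side; propagating the hypothesis that $v_n$ is incompressible to the positive side backward along the chain shows that every minimum is incompressible to the positive side. Together these give that every minimum is incompressible to both sides, hence has index zero. (If $E$ is monotone, or has a single maximum and no minima, there is nothing to prove.)

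The step I expect to be the real obstacle is the behavior at the ends of $E$, namely the assumption just used that $E$ opens with an increasing run and closes with a decreasing one. If instead $E$ descends from $v_0$ to a minimum $w$ before reaching its first maximum (or symmetrically ascends to $v_n$ from a minimum after its last maximum), then no maximum of $E$ has $v_0$ (resp.\ $v_n$) as an adjacent minimum, so the propagation above does not certify $w$ incompressible to the negative (resp.\ positive) side. One must then argue directly that such a boundary run cannot carry the offending K-disk: by Lemmas~\ref{edgekbodylem} and~\ref{decpathkbodylem} the run exhibits $w$ as the negative boundary of a K-compression body lying on the relevant side of $w$, and since the negative boundary of a compression body is incompressible within it, this is the starting point for excluding a K-disk on that side of $w$ --- the bridge-disk and tree-disk cases, and the part of $M$ lying outside the compression body, still need to be checked. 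Granting this endpoint analysis, the lemma is just the two-directional propagation above.
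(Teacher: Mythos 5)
Your argument is essentially the paper's: the paper proves the same propagation by contradiction, assuming some minimum is compressible (without loss of generality to the positive side) and pushing that compressibility forward across each successive maximum via the Casson--Gordon axiom --- using exactly your observation that a well-defined path index forces the path link to be non-contractible --- until it concludes that $v_n$ is compressible to the positive side, a contradiction. The endpoint subtlety you flag is not treated in the paper's proof either; it implicitly takes $v_0$ and $v_n$ to be the first and last links in the chain of minima, which is the situation in every application of the lemma (there the endpoints have index zero, so $E$ necessarily opens with an ascent and closes with a descent), and note that your sketched repair for the general case invokes K-compression bodies and so lives in $\mS(M,\mT^1)$ rather than in the abstract axiomatic setting in which the lemma is stated.
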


\begin{proof}
Assume for contradiction that a minimum $v$ in the path does not have index zero.  Then there is an edge $e$ descending from $v$ and this edge either points towards or away from $v$.  Without loss of generality, we will assume this edge points away from $v$.  

Consider the subpath from $v$ to the maximum after $v$, to the following minimum.  Because the maximum has a well defined path index, its path link has a non-trivial $n$-dimensional homotopy group for some $n$.  The Casson-Gordon axiom thus implies that there must be an edge in $\mS$ descending away from the minimum after $v$.  Because there are finitely many minima in the original path, repeating this argument implies that the final vertex in the path has a edge descending away from it.  This contradicts the initial assumption, so every minimum must be index-zero/incompressible.
\end{proof}

In the context of generalized Heegaard splittings, the Casson-Gordon axiom can be interpreted as follows:  Given a thick surface $S$ between thin surfaces $S^-$ and $S^+$, if $S^-$ is compressible towards $S$ then either the compressing disk crashes through $S^-$, defining a compressing disk on the side away from $S$, or it defines a compressing disk for the sub-manifold between $S^-$ and $S^+$, in which case Casson and Gordon's Theorem implies that $S$ is weakly reducible (its path link is connected).  The conclusion from the axiom is more general and it implies the following corollary, which is equivalent to Scharlemann-Thompson's main result for generalized Heegaard splittings:

\begin{Coro}
\label{thinpathcharlem}
Assume $\mS$ is an oriented height complex satisfying the Casson-Gordon axiom and let $E$ be a thin oriented path in $\mS$ whose endpoints have index zero.  Then every maximum of $E$ is strongly irreducible and every minimum of $E$ is incompressible.
\end{Coro}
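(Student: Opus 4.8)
The plan is to read the Corollary directly off Lemma~\ref{thinpathsimaxlem} and Lemma~\ref{cgapplem}, the only real work being to translate the combinatorial conditions ``index zero'' and ``path index one'' into the topological language of the statement.

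First I would dispose of the maxima. By hypothesis $E$ is a thin oriented path, so Lemma~\ref{thinpathsimaxlem} applies and tells us that every maximum $v$ of $E$ has path index one, i.e. $\pi_0(L^E_v)$ is non-trivial, so the path link $L^E_v$ is disconnected. As recorded in Section~\ref{thinorsect}, ``path index one'' is precisely the condition called \emph{strongly irreducible} (as opposed to ``index one'', which is weakly incompressible); hence every maximum of $E$ is strongly irreducible, which is the first assertion.

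For the minima I would invoke Lemma~\ref{cgapplem}. Its hypotheses are exactly in force here: $\mS$ is an oriented height complex satisfying the Casson--Gordon axiom; the endpoints $v_0$ and $v_n$ have index zero, which means their descending links are empty, so in particular $v_0$ is incompressible to the negative side and $v_n$ is incompressible to the positive side; and, by the previous paragraph, every maximum of $E$ has a well-defined path index (equal to one). Lemma~\ref{cgapplem} then yields that every minimum of $E$ has index zero. A vertex of index zero has empty descending link, hence the surface it represents has no K-disk and is therefore incompressible; this is the second assertion.

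There is no substantive obstacle: the content lives in Lemma~\ref{cgapplem}, which propagates one-sided compressibility along the path using the Casson--Gordon axiom together with the non-contractibility of the path links at the maxima, and in Lemma~\ref{thinpathsimaxlem}. The only points that need care are purely definitional --- confirming that ``path index one'' is literally the strong irreducibility condition being used (so that no extra argument is needed for the maxima), and confirming that ``index zero'' at an endpoint supplies the one-sided incompressibility demanded by Lemma~\ref{cgapplem}, which is immediate since an empty descending link contains no edge pointing in either direction.
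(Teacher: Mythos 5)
Your proposal is correct and is exactly the argument the paper intends: the paper states this Corollary without a separate proof precisely because it follows by combining Lemma~\ref{thinpathsimaxlem} (thin $\Rightarrow$ every maximum has path index one, which is the definition of strongly irreducible) with Lemma~\ref{cgapplem} (whose hypotheses are supplied by the index-zero endpoints and the well-defined path indices of the maxima). Your definitional checks — that index zero at the endpoints gives the required one-sided incompressibility, and that path index one is literally strong irreducibility — are the right ones and are handled correctly.
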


The next three sections are devoted to proving that $\mS(M, \mT^1)$ satisfies the Casson-Gordon axiom.

\section{Boundary reducible manifolds}
\label{kbodycplxsect}

In this section, we prove the following Lemma: 

\begin{Lem}
\label{cgsteponelem}
Let $M$ be a 3-manifold such that no component of $\partial M$ is a sphere disjoint from $\mT^1$.  If $S$ is an embedded, separating surface and $D$ is a sphere disjoint from $\mT^1$ or a K-disk for $\partial M$ that cannot be made disjoint from $S$ by sphere-blind isotopy then $S$ is floppy.
\end{Lem}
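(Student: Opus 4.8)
The plan is to prove that the descending link $L_v$ of the vertex $v$ represented by $S$ is non-empty and contractible; since a non-empty contractible complex has no first non-trivial homotopy group, this is exactly the statement that $v$ has no well-defined index, i.e. that $S$ is floppy. This is the analogue in the present setting of the fact --- essentially due to Casson and Gordon~\cite{cass:red}, and closely related to Bachman's results on topologically minimal surfaces~\cite{bachman} --- that a surface which cannot be isotoped off an essential sphere or $\partial$-reducing disk fails to be topologically minimal.

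First I would put $D$ into a position minimizing $|D \cap S|$ among all transverse representatives, where $S$ also ranges over its sphere-blind isotopy class. A standard innermost-disk argument --- surgering $D$ along loops of $D \cap S$ that are inessential in $S \setminus \mT^1$, and discarding sphere pieces that can be isotoped off $S$, using the hypothesis that no component of $\partial M$ is a sphere disjoint from $\mT^1$ --- lets me assume that every loop of $D \cap S$ is essential in $S \setminus \mT^1$, and by hypothesis $D \cap S$ is non-empty. An innermost loop of $D \cap S$ on $D$ then cuts off a subdisk $D_0 \subseteq D$ with interior disjoint from $S$ and $\mT^1$, so $D_0$ is a compressing disk for $S$; in particular $L_v \neq \emptyset$, so $S$ does not have index zero.

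The heart of the argument is then to exhibit a deformation retraction of $L_v$ onto the subcomplex $\Gamma_0$ spanned by those families of K-disks that can be simultaneously isotoped off $D$. The retraction is built by iterated surgery along $D$: given a simplex of $L_v$ (a finite pairwise-compatible family of K-disks), isotope its members to meet $D$ transversely and minimally, surger them along innermost subdisks of $D$, and record the intermediate families as a path in $L_v$; each surgery strictly reduces the total intersection with $D$, so after finitely many steps the family lies in $\Gamma_0$. Once this retraction is in hand, one observes that $\Gamma_0$ is a cone with apex $D_0$: any family of K-disks lying in a common complement of $D$ is in particular disjoint from $D_0 \subseteq D$, so each of its members bounds a diamond (or a degenerate face) with $D_0$, and since the descending link is a flag complex the family together with $D_0$ spans a simplex of $\Gamma_0$. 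Hence $\Gamma_0$, and therefore $L_v$, is contractible, and $S$ is floppy.

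I expect the main obstacle to be making the surgery retraction rigorous inside the combinatorial structure of $\mS(M,\mT^1)$. A K-disk may be a compressing disk, a bridge disk, or a tree disk, and surgering along $D$ can change its type, can produce boundary loops that are trivial in $S$ but essential in $S \setminus \mT^1$ (or vice versa), and can create trivial spheres that must be absorbed via sphere tubings and the addition or removal of trivial spheres; one must check that the surgered objects are again genuine K-disks, that a pairwise-compatible family stays pairwise-compatible under simultaneous surgery, and that the identifications defining the simplicial quotient in the link do not obstruct continuity of the homotopy. This is the kind of bookkeeping carried out in Cho's work on disk complexes~\cite{cho}; the underlying topology (innermost disks and disk surgery) is routine, but tracking it faithfully through the cell complex is delicate, and it is presumably this bookkeeping that forces the hypothesis excluding spherical boundary components disjoint from $\mT^1$.
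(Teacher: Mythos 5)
Your overall strategy --- show $L_v$ is non-empty, then kill all of its homotopy groups by pushing K-disks off $D$ and coning from a subdisk of $D$ --- is the right one, and the final cone/flag step is fine. But the central step, the deformation retraction of $L_v$ onto $\Gamma_0$, is asserted rather than proved, and the naive version of it fails at exactly the point you wave at. If $\{B_1,\dots,B_k\}$ is a simplex and you surger $B_1$ along an outermost subdisk $C\subset D$ cut off by an arc of $D\cap B_1$, the new disk contains a parallel copy of $C$, and $C$ may well intersect $B_2,\dots,B_k$; so ``a pairwise-compatible family stays pairwise-compatible'' is false unless $C$ is chosen outermost with respect to \emph{all} the $B_i$ simultaneously --- and then the surgery performed on $B_1$ depends on which simplex it sits in, so the retraction is not obviously well defined on faces, let alone continuous on $L_v$. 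Resolving this is the entire content of Cho's Proposition 3.1, and his Theorem 4.2 as used in this paper applies only to full subcomplexes of the disk complex of a compression body; here $L_v$ lives in a general $(M,\mT^1)$ and contains bridge and tree disks, so you cannot simply cite it. A second, smaller gap: when $D$ is a bridge or tree disk for $\partial M$, the intersection $D\cap S$ may consist entirely of arcs (with endpoints on $\mT^1\cap S$), so there need be no innermost loop; the K-disk you extract for non-emptiness, and your cone apex $D_0$, must also be allowed to be a bridge or tree disk cut off by an outermost arc or by arcs surrounding a vertex.

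For comparison, the paper avoids the global retraction entirely. It fixes an arbitrary simplicial map $\xi:S^n\to L_v$ and null-homotopes it directly: the surgeries are performed on $D$ (isotoping $D$ across the outermost pieces recorded in a ``squeeze list'' for each $B_i$, then pulling tight against $S$), and the K-disks read off at each stage are the ``visible'' subdisks of the moved copy of $D$. These choices assemble into cubes, one per simplex of $S^n$, which glue into a cone over $S^n$ (the Bachman ball) mapping into $L_v$ and agreeing with $\xi$ on the boundary. Because $D$ is embedded and the $B_i$ in a simplex are pairwise disjoint, the disjointness needed for the map to be simplicial comes for free, which is precisely what your direction of surgery does not give you. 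If you want to salvage your route, you would need to reprove a Cho-type retraction criterion for descending links with bridge and tree disks; otherwise the sphere-by-sphere Bachman-ball argument is the cleaner path.
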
 

Let $D$ be such a surface in $M$ and $S$ a surface representing a vertex $v \in \mS(M, \mT^1)$.  By isotoping and compressing the disk $D$, we can assume that $S \cap D$ consists entirely of essential loops and arcs in $S$.  The final disk may not be isotopic to $D$, but it will have the same boundary and will thus represent the same edge in $\mS$.  After this isotopy, we will say that $D$ is \textit{pulled tight} with respect to $S$.

\begin{Lem}
There is, up to blind isotopy, a unique way to pull $D$ tight.  After $D$ is pulled tight, the surface $D$ will either contain a K-disk for $S'$ or be disjoint from $S$.
\end{Lem}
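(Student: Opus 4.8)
The plan is to treat the statement in two parts. First I would establish uniqueness of the tightened position by a standard innermost-disk / outermost-arc argument: suppose $D$ and $D^*$ are two tightened representatives of the same edge (so $\partial D = \partial D^*$, or in the sphere case both are spheres in the appropriate isotopy class). Put them in general position with respect to each other and to $S$, and consider $D \cap D^*$. An innermost loop of intersection bounds a disk on each side; since the intersections with $S$ have already been made essential, such a loop cannot be removed by an isotopy within the complement of $S$ unless it is inessential in both disks, and in that case we can surger one disk along the other, reducing $|D \cap D^*|$ without creating inessential curves of intersection with $S$. Iterating, $D$ and $D^*$ become disjoint off their common boundary; together they cobound a region that, after using irreducibility/incompressibility of the pieces of $M\setminus S$ away from $\mT^1$, must be a product, yielding a blind isotopy (tubing along trivial spheres if a ball intervenes) from one to the other. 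The sphere-disjoint-from-$\mT^1$ case is the same argument with $\partial D = \emptyset$.

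For the second part — after tightening, $D$ either contains a K-disk for $S$ or is disjoint from $S$ — I would argue as follows. Once $D$ is pulled tight, $D \cap S$ consists of essential loops and essential arcs in $S$ (arcs only occur in the bridge/tree case where $\partial D$ meets $\mT^1$). If this intersection is empty, we are in the second alternative and there is nothing to prove. Otherwise, pick a component $D_0$ of $D \setminus S$ that is outermost (in the sphere or compressing-disk case) or innermost-most with respect to the graph (in the bridge/tree case): that is, a subdisk of $D$ cut off by a single arc or loop of $D \cap S$, whose interior is disjoint from $S$. By construction the interior of $D_0$ is also disjoint from $\mT^1$ (since $D$ is), and its boundary is an essential loop in $S$, or an arc with one endpoint-run along $\mT^1$, or an arc cutting off a subdisk of $S$ meeting $\mT^1$ in a tree — exactly the three defining configurations of a compressing disk, bridge disk, or tree disk for $S$. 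Hence $D_0$ is a K-disk for $S$, establishing the first alternative.

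The main obstacle, and where I expect most of the care to go, is not the cut-and-paste combinatorics but verifying that the outermost/innermost piece genuinely satisfies \emph{all} the side conditions in the definitions of the three kinds of K-disk — in particular, for a tree disk, that the subdisk $E \subset S$ it bounds together with $D_0$ actually cobounds a ball in $M$ with interior disjoint from $\mT^1$, and for a bridge disk that the relevant arc of $\mT^1$ misses the vertices. These conditions have to be extracted from the hypothesis that $D$ was a K-disk for $\partial M$ (or a sphere) and from the way the pieces of $M \setminus S$ sit inside $M$; handling the case where the ball-condition fails will likely require one more innermost-sphere reduction, which is why the first-part uniqueness statement is proved first and then reused here. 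I would also need to be mildly careful that tightening does not change the edge of $\mS$ represented by $D$: the boundary is fixed throughout, and by the construction of $\mS(M,\mT^1)$ the edge depends only on $\partial D$ relative to $S$ and $\mT^1$, so this is immediate.
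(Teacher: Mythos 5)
Your second half (the existence of a K-disk in a tightened $D$) is essentially the paper's argument: take an innermost loop or outermost arc of $D\cap S$ in $D$; since the intersection is already essential in $S$, the subdisk it cuts off is a compressing, bridge, or tree disk. That part is fine. The problem is in your uniqueness argument, which both attacks the wrong statement and relies on a hypothesis you do not have.

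The lemma's uniqueness claim is that the \emph{tightening procedure} has a well-defined outcome: the only choices made while pulling $D$ tight are which trivial loop (or arc) of $D\cap S$ to remove next, and the paper disposes of this by a local commutation argument --- nested trivial loops must be removed from the inside out, and removing two non-nested trivial loops in either order yields the same surface. Your argument instead compares two \emph{arbitrary} already-tight disks $D$, $D^*$ with the same boundary via cut-and-paste on $D\cap D^*$ and then asserts that, once disjoint, they cobound a product region ``using irreducibility/incompressibility of the pieces of $M\setminus S$.'' No such irreducibility or incompressibility is available: $S$ is an arbitrary strongly separating surface, and its complement may be reducible and compressible, so two disjoint disks with common boundary need not cobound anything close to a product, and a ``tubing along trivial spheres'' patch does not cover non-trivial separating spheres or more complicated regions. (Your innermost-loop step is also muddled: every loop of $D\cap D^*$ bounds subdisks in both $D$ and $D^*$ automatically, so the dichotomy you invoke is vacuous.) Even if repaired, this would prove a stronger statement than the lemma asserts; the intended proof is the purely combinatorial confluence argument on the order of the simplification moves, which never touches the global topology of $M\setminus S$.
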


\begin{proof}
Trivial loops of intersection can be removed in any order, as long as nested loops are removed from the inside out.  Switching the order in which two non-nested loops are removed does not change the final surface, so there is a unique way to pull any disk tight.

If $D$ is a compression disk or sphere component then the intersection contains an innermost loop in this component.  This loop is essential in $S$ because $S$ is pulled tight, so the disk in $D$ bounded by this loop is a compression disk for $S$.  If $D$ is a bridge disk then the intersection contains an innermost loop or an outermost arc.  As above, an innermost loop bounds a compression disk.  The endpoints of any arc are in $\mT^1$ so every outermost arc defines a bridge disk for $S$.  In a tree disk component of $S$, there is either an arc that determines a bridge disk or a collection of arcs surrounding the vertex that define a tree disk for $S$ contained in $D$.
\end{proof}

Because $D$ is an embedded K-disk or a sphere disjoint from $\mT^1$, any K-disks contained in $D$ have disjoint boundaries.  Thus the set of K-disks for $S$ in $D$ form a simplex in the descending link.

\begin{Def}
Given a representative $S$ for $v$, the \textit{visible simplex} for $S$ is the simplex of the descending link for $v$ defined by K-disks with boundary in $D \cap S$ after $D$ is pulled tight.
\end{Def}

We will fix a representative $S$ for $v$, and consider different representatives for $D$, each of which will determine a different visible simplex.

Let $B$ be a K-disk for $S$ contained in $H^-$ or $H^+$ and assume we have isotoped and compressed $B$ to remove all loops of intersection with $D$.  Because the interiors of $D$ and $B$ are disjoint from $\mT^1$, the endpoints of any arc in $D \cap B$ are contained in $S$.  

Let $c_1,\dots,c_n$ be the components of $D \cap B$, ordered so that each $c_i$ is an outermost arc with respect to the set of $c_j$ such that $j > i$.  In other words, choose an outermost arc from the set, then remove it and choose an outermost arc in the new collection and so on until we have exhausted the set.  We will call the ordered set $C = \{c_i\}$ a \textit{squeeze list} for $B$.

Given a simplex of the descending link for $v$, let $B_1,\dots,B_n$ be K-disks representing the vertices that define the simplex.  We can choose the disks so that they are pairwise disjoint and do not intersect $D$ in loops.  Let $C_i$ be a squeeze list for each $B_i$ and let $m_i$ be the number of components in each $C_i$.  

Consider the cube $I^n = [0,m_1] \times \dots \times [0,m_n]$ and let $T$ be the cell complex that results from subdividing $I^n$ into smaller cubes by cutting the $i$th axis into $m_i$ segments for each $i$.  Each vertex in $T$ can be described by a vector $(a_1,\dots,a_n)$ where each $a_i$ ranges from $0$ to $m_i$.  We will associate a K-disk $D_a$ with each vertex $a = (a_1,\dots,a_n)$ of the cube as follows:  Let $D_a$ be the result of isotoping $D$ across the first $a_1$ outermost disks in $B_1$, then across the first $a_2$ outermost disks in $B_2$ and so on for each $i$, then pulling the resulting surface tight with respect to $S$.  Because the disks $\{B_i\}$ are disjoint, this construction is well defined and does not depend on the order in which the isotopies are carried out.

For each vertex $a \in T$, consider the visible simplex in the descending link for $S$ defined by $D_a$.  Choose an arbitrary ordering of the vertices of the descending link of $S$ and define a map $\phi$ from the cube into the descending link by sending each vertex of the cube to the lowest ordered vertex of the visible simplex for $D_a$.

\begin{Lem}
\label{cubetolinkmaplem}
The map $\phi$ extends to a continuous map from a triangulation of $I^n$ into the descending link $L_v$.
\end{Lem}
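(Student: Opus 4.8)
The plan is to build, for each small cube of the subdivision $T$, a single simplex of the descending link $L_v$ containing the $\phi$-images of all its vertices, and then extend $\phi$ affinely over a triangulation refining $T$. Concretely, I would fix the standard simplicial subdivision $T'$ of the cubical complex $T$ associated to an arbitrary total order on the vertex set $T^{(0)}$: each simplex of $T'$ lies in a single small cube of $T$, and $T'$ has no new vertices, so $\phi$ is already defined on $(T')^{(0)}=T^{(0)}$. It then suffices to establish the following claim: $(\ast)$ for each small cube $\sigma$ of $T$, the vertices $\{\phi(a):a\in\sigma^{(0)}\}$ span a single simplex $\Delta_\sigma$ of $L_v$. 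Indeed, granting $(\ast)$ I extend $\phi$ over each simplex of $T'$ lying in $\sigma$ by the affine map into $\Delta_\sigma$ determined by its vertex values; since $\Delta_\sigma$ is convex, the pieces agree on the common faces of $T'$, so they glue to a continuous map $|T'|=I^n\to L_v$ restricting to $\phi$, as required.

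Since the descending link of any vertex of a height complex is a flag complex, to prove $(\ast)$ it is enough to show that any two visible K-disks arising from a common small cube $\sigma$ — that is, lying in $V(D_a)\cup V(D_{a'})$ for some $a,a'\in\sigma^{(0)}$, where I write $V(D_a)$ for the visible simplex of $D_a$ — either coincide or have boundaries that can be isotoped rel $\mT^1\cap S$ to be disjoint on $S$, equivalently determine a $2$-cell of $\mS(M,\mT^1)$. Flagness then forces all these disks to span one simplex $\Delta_\sigma$, and $\phi(a)\in V(D_a)\subseteq\Delta_\sigma$ for each $a\in\sigma^{(0)}$, which is $(\ast)$. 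So the task reduces to a purely geometric statement: isotoping $D$ across outermost subdisks of the disjoint embedded disks $B_i$ (and retightening) changes the pattern $D\cap S$ only in a way that keeps all visible disks mutually compatible.

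To prove the geometric statement I would induct along a monotone edge path in $\sigma$ from the minimal corner, the inductive step being the case $a'=a+e_i$, where $e_i$ is the $i$-th coordinate vector, so that $D_{a'}$ is obtained from $D_a$ by isotoping across the outermost subdisk $\Delta\subset B_i$ cut off by an outermost arc of $B_i\cap D_a$ and then pulling tight. The key points are that this isotopy is supported in a neighborhood of $\Delta$, hence disjoint from every $B_j$ with $j\ne i$, and lowers $|D\cap B_i|$ by one; that the essential curves and arcs of $D_a\cap S$ it disturbs all lie near the arc $\beta=\Delta\cap\partial B_i$; and that the trivial loops and arcs discarded in the retightening are innermost or outermost. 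Running this through the three possibilities for the type of $B_i$ (compression disk, with $\beta\subset S$; bridge disk, with $\beta$ possibly crossing $\mT^1$; tree disk) — each a short local picture, together with a little bookkeeping for the torus and bridge versions of ``determines a $2$-cell'' — one sees that every visible K-disk of $D_{a'}$ is isotopic rel $\mT^1\cap S$ to a K-disk lying in a regular neighborhood of $D_a\cup B_i$ that misses the visible disks of $D_a$ and misses $B_i$. Maintaining this through the induction (keeping the already-placed visible disks off a neighborhood of the $B_j$ for the not-yet-used directions $j$) yields the geometric statement, hence $(\ast)$, hence the Lemma.

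The step I expect to be the real obstacle is controlling what pulling tight does after each finger move: the move across $\Delta$ can create new intersections of the disk with $S$ beyond $\beta$, and the essentially unique tightening that removes them must be shown not to make the new visible disks link the old ones or $B_i$, and — crucially for the iteration — to leave $D_{a'}$ meeting each remaining $B_j$ in arcs only, so that the next move is again governed by the single-move analysis. That the $B_i$ are embedded and pairwise disjoint and that $D$ has been positioned so that each $D\cap B_i$ is a union of arcs with no loops is precisely what keeps all of this local, and is already what makes the intermediate disks $D_a$ — and hence the cube $T$ itself — well defined.
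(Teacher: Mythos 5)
Your framework---extend $\phi$ affinely over a triangulation with no new vertices, and reduce to pairwise compatibility via flagness of the descending link---is sound, but the heart of the lemma is missing, and you have set yourself a harder target than necessary. The gap is exactly the step you flag at the end: you assert, but do not prove, that a finger move across an outermost subdisk of $B_i$ followed by pulling tight leaves every visible K-disk of $D_{a'}$ compatible with every visible K-disk of $D_a$ (and, through the induction, with those of all other vertices of the small cube). Controlling what retightening does to the curve system $D_a\cap S$ and showing the new visible disks do not obstruct the old ones is the entire content of the lemma; naming it as ``the real obstacle'' is not the same as resolving it. Worse, your claim $(\ast)$ demands compatibility for all $2^n$ vertices of a small cube, including incomparable pairs such as $a_-+e_1$ and $a_-+e_2$. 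For such a pair the disks $D_a$, $D_b$ coincide away from the two finger-move regions and are pushed to the relevant side of the original disk in different places, so they cannot in general be perturbed to be disjoint in $M$; compatibility of their visible simplices is therefore a genuine additional assertion that your local single-move analysis does not obviously deliver.

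The paper avoids all of this with one device you do not use: a transverse orientation on $D$. Each edge of a small cube $q$ is oriented according to which side of the intermediate disk the guiding isotopy disk lies on; parallel edges point the same way, so $q$ has a unique source $a_-$ and sink $a_+$. Along any monotone directed path $a_-=a_0,\dots,a_n=a_+$ the successive pushes are coherently to one side, so the disks $D_{a_0},\dots,D_{a_n}$ are pairwise disjoint embedded disks in $M$; hence their intersections with $S$ are pairwise disjoint and the corresponding visible simplices automatically span a simplex of $L_v$, with no analysis of retightening required. The triangulation of $q$ is then chosen so that its maximal simplices are exactly the convex hulls of these monotone paths, so incomparable pairs never occur in a common simplex, and compatibility on shared faces of adjacent cubes is immediate. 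To repair your argument, either carry out the local analysis in full (including the incomparable case), or adopt the orientation and source--sink structure and weaken $(\ast)$ to apply only to chains of comparable vertices.
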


\begin{proof}
Choose a transverse orientation for $D$, i.e. choose one component of $N(D) \setminus D$ to be positive and the other negative, where $N(D)$ is a regular neighborhood in $M$ of $D$.  This determines a transverse orientation for each disk $D_a$.  We will use this orientation to define directions on the edges of $T$.

Consider a cell $q$ in $T$ and let $e$ be an edge in this cube between vertices $a$, $b$.  This edge corresponds to an isotopy from $D_a$ to $D_b$, possibly followed by one or more compressions, such that during the isotopy there is a single tangency with $D$.  This isotopy is determined by a disk $B'_a$ whose boundary consist of an arc in $S$ and an arc in $D_a$.  The reverse isotopy is determined by a second such disk $B'_b$ with boundary in $S$ and $D_b$.  If $B'_a$ is on the positive side of $D_a$ then $B'_b$ will be on the negative side of $D_b$ and vice versa.  If $B'_a$ is on the positive side then we will have the edge point towards $a$.  Otherwise, the edge will point towards $b$.

Orient every edge of $q$ in this way.  Parallel edges represent the same disks and thus point in the same direction.  This implies that there will be a single vertex $a_-$ and a single vertex $a_+$ in $q$ such that every edge adjacent to $a_-$ points away from it and every edge adjacent to $a_+$ points towards it.  

Any directed path $a_- = a_0,a_1,\dots,a_n = a_+$ in the boundary of $q$ determines a sequence of disks $D_{a_0},\dots,D_{a_n}$ isotopic to $D$.  Because the disks defining the isotopies are pairwise disjoint and on the same side of $D_{a_i}$, the disks will be pairwise disjoint.  The intersection of $D_{a_i}$ with $S$ contains the boundary of $\phi(u_i)$, so the points $\{\phi(u_i)\}$ determine a simplex in the descending like of $S$.

Triangulate the cube $q$ so that each such a path determines a simplex in this triangulation.  (This is a triangulation because every point in $q$ will be contained in the convex hull of one of these paths.)  By the above argument, $\phi$ extends to a simplicial map from this triangulation of $q$ into the descending link.  If we consider any face of $q$ as a cell on its own and apply this construction, the resulting triangulation of the face will be the same as that induced by the triangulation of $q$.  Thus the triangulation of $q$ will coincide with the triangulation of any adjacent cell along their intersection and we can extend $\phi$ to every cell of $T$ in this way.
\end{proof}

Consider a simplicial map from a triangulated $n$-sphere $S^n$ into the descending link for $S$.  For each face $\sigma$ of $S^n$, let $\phi_\sigma$ be the map defined above for the image in the descending link of $\sigma$.  Every face of $\sigma$ defines a face of the cube associated to $\sigma$, so we can identify faces of the cubes.  The resulting cell complex is homeomorphic to a cone over $S^n$, i.e. an $n$-ball and the maps $\{\phi_\sigma\}$ define a map from this ball into the descending link for $S$.  We will use this map to prove Lemma~\ref{cgsteponelem}.

\begin{proof}[Proof of Lemma~\ref{cgsteponelem}]
As in the statement of the Lemma, let $S$ be a surface in $M$ and assume the sphere or disk $D$ cannot be made disjoint from $S$ by sphere-blind isotopies.  We will show that every map $\xi : S^n \rightarrow L_v$ from a triangulated $n$-sphere into the descending link for $v$ is homotopy trivial.  

Begin by choosing a representative $B_u$ and a squeeze list for each vertex of $S^n$ and assume that adjacent vertices are represented by disjoint disks. Consider a simplex $\sigma$ spanned by vertices $\{u_1,\dots, u_k\}$.  Let $B_1,\dots,B_k$ be the K-disks representing $u_1,\dots,u_k$ and let $m_1,\dots,m_k$ be the number of arcs in their respective squeeze lists.  Let $q_\sigma$ be the rectangle $[0,m_1] \times \dots \times [0, m_k]$, cut into square cells with integer vertices, then triangulated as in Lemma~\ref{cubetolinkmaplem}.  For each vertex $(x_1,\dots, x_k) \in q_\sigma$, if $x_i = m_i$, we will attach an edge from $(x_1,\dots, x_k)$ to the vertex $v_i$.  Let $\kappa_i$ be flag complex defined by the union of $\sigma$, $q_\sigma$ and all these edges.  (In other words, we fill in every length-three loop of edges with a triangle, and so on for each dimension.)  The reader can check that this complex is a cone over $\sigma$. 

For each face $\tau$ of $\sigma$, there is a natural inclusion map from the cone $\kappa_\tau$ into $\kappa_\sigma$.  In particular, this map sends the face of $q_\tau$ into the face of $q_\sigma$ in with the coordinates corresponding to the vertices of $\tau \subset \sigma$ are zero.  The union of all the cones for the simplices of $S^n$, under this identification, forms a cone over $S^n$, i.e. a ball $B^{n+1}$.  We will call this ball the \textit{Bachman ball} for $S^n$.  (A very similar ball was constructed by Bachman in~\cite{bach:gordon}.  

Because $D$ cannot be made disjoint from $S$, there is a map $\Phi_\sigma : q_\sigma \rightarrow L_v$ promised by Lemma~\ref{cubetolinkmaplem}.  Because each vertex $u$ of $q_\sigma$ with $x_i = m_i$ corresponds to a representative for $D$ that is disjoint from $B_i$, the image $\Phi(u) \in L_v$ is a K-disk disjoint from $B_i$.  Thus $\Phi_\sigma$ extends to a map $\Phi_\sigma : \kappa_\sigma \rightarrow L_v$ that agrees with $\xi$ on $\sigma$.  

By construction, the map $\Phi_\sigma$ agrees with $\Phi_\tau$ for each subface $\tau$ of $\sigma$.  Thus the collection of maps $\{\Phi_\sigma\}$ determines a map $\Phi : B^{n+1} \rightarrow L_v$.  This map from the Bachman ball to the descending link will be called the \textit{Bachman map}.  The Bachman map agrees with $\xi$ on $S^n = \partial B^{n+1}$, so the map $\xi$ is homotopy trivial.  Since $S^n$ was arbitrary, every homotopy group of $L_v$ is trivial, so $S$ is floppy.
\end{proof}

\section{Disks in compression bodies}
\label{diskssect}

In order to prove that the complex of surfaces satisfies the Casson-Gordon axiom, we will need to know that equivalent paths in $\mS(M, \mT^1)$ correspond to the same K-compression body.  First note that if a surface $S$ in a compression body $H$ that results from K-compressing the positive boundary of $H$ and $S$ can be made disjoint from every K-disk for $H$ then $S$ must be parallel to the negative boundary of $H$.  This and Lemma~\ref{cgsteponelem} imply the following:

\begin{Lem}
\label{pathsincbodycomplexlem}
If $S$ is a surface in a compression body $H$ that results from K-compressing $\partial_+ H$ some number of times then either $S$ is floppy or $S$ consists of trivial spheres and a surface parallel to $\partial_- H$.
\end{Lem}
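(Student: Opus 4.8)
The plan is to run an induction on the number of K-compressions needed to obtain $S$ from $\partial_+ H$, using Lemma~\ref{cgsteponelem} as the engine that forces floppiness whenever $S$ genuinely obstructs a disk. First I would set up the base case: if $S$ is obtained by zero K-compressions, then $S$ is transversely isotopic to $\partial_+ H$ (together with possibly some trivial spheres coming from tree compressions), and since $\partial_+ H$ is itself the positive boundary of a compression body, it is clearly not parallel to $\partial_- H$ in general — but in this degenerate ``zero compressions'' case the conclusion is usually vacuous or handled directly, so the real content is the inductive step where $S$ results from K-compressing $\partial_+ H$ at least once.

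For the inductive step, suppose $S$ is obtained by K-compressing $\partial_+ H$ some number of times and that $S$ is \emph{not} floppy, i.e. $S$ is rigid with a well-defined index. I want to show $S$ consists of trivial spheres together with a surface parallel to $\partial_- H$. The key observation, stated just before the Lemma, is that if $S$ can be isotoped (by sphere-blind isotopy) off of every K-disk for $H$ — equivalently, off every K-disk for $\partial_+ H$ inside $H$ — then $S$ is parallel to $\partial_- H$ (plus trivial spheres), since a surface in a compression body disjoint from a complete collection of meridian disks must lie in the product region over the negative boundary. So it suffices to produce, for the rigid surface $S$, such a disjointifying isotopy. Suppose no sphere-blind isotopy makes $S$ disjoint from some K-disk $D$ for $H$ (equivalently for $\partial_+ H$, viewed inside $H$, or from a boundary sphere of $H$). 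Then $D$ is precisely a disk or sphere of the type appearing in the hypothesis of Lemma~\ref{cgsteponelem} — a K-disk for $\partial H$ (here the relevant part of $\partial H$ being $\partial_+ H$, since $\partial_- H$ is incompressible in $H$) or a sphere disjoint from $\mT^1$ — that cannot be made disjoint from $S$ by sphere-blind isotopy. Lemma~\ref{cgsteponelem} then immediately yields that $S$ is floppy, contradicting rigidity. Hence $S$ \emph{can} be made disjoint from every K-disk for $H$, and the structural observation finishes the argument.

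The main obstacle I anticipate is verifying that the obstructing disk $D$ actually fits the hypotheses of Lemma~\ref{cgsteponelem}: that lemma requires $D$ to be a sphere disjoint from $\mT^1$ or a K-disk for $\partial M$ in some ambient manifold $M$ whose boundary has no $\mT^1$-disjoint sphere components. Here the ambient manifold must be taken to be (a suitable subset associated to) the compression body $H$ with $\partial_+ H$ playing the role of the boundary being compressed, and one must check that the meridian disks of $H$ — which by definition of a compression body are compressing disks for $\partial_+ H$, together with the horizontal/bridge and tree disks recording the graph $K = H \cap \mT^1$ — are exactly K-disks for $\partial_+ H$ in the sense of the earlier definitions, and that $\partial_- H$ being incompressible in $H$ lets us ignore it. One should also be careful about sphere components of $\partial H$ (which the definition of compression body allows): a non-trivial such sphere is a sphere disjoint from $\mT^1$ and again triggers Lemma~\ref{cgsteponelem}, while trivial spheres are absorbed into the "trivial spheres" clause of the conclusion and into the sphere-blind isotopy class. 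Apart from this bookkeeping, the argument is a short deduction from Lemma~\ref{cgsteponelem} plus the elementary fact about surfaces disjoint from a complete meridian system.
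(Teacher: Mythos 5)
Your proposal is correct and follows essentially the same route as the paper: the paper derives this lemma directly from Lemma~\ref{cgsteponelem} together with the observation (stated immediately before the lemma) that a surface in $H$ which can be made disjoint from every K-disk for $H$ must be parallel to $\partial_- H$. Your dichotomy — either some K-disk obstructs and Lemma~\ref{cgsteponelem} forces floppiness, or none does and the surface is boundary-parallel up to trivial spheres — is exactly the intended argument; the inductive framing at the start is unnecessary but harmless.
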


Lemma~\ref{pathsincbodycomplexlem} can be interpreted as saying that every path in the complex of surfaces for a K-compression that descends from a surface parallel to the positive boundary either ends at the negative boundary or at a surface that does not have well defined index.  This is relevant in light of the following Lemma:

\begin{Lem}
\label{ifnoindexonelem}
Let $\mS$ be an oriented height complex, $v_0$ an index-zero vertex in $\mS$ and $v_1$ a second vertex such that there is no decreasing path from $v_1$ to an index-one vertex in $\mS$. Then any two oriented decreasing paths from $v_1$ to $v_0$ are equivalent.  Similarly, any two reverse oriented decreasing paths are equivalent.
\end{Lem}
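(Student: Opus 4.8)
The plan is to argue by induction on the integer $\ell(v_1)$ supplied by the net axiom, which bounds the length of decreasing paths issuing from $v_1$. If $\ell(v_1)=0$ then $v_1$ has no descending edge, so $v_1$ is index zero, $v_1=v_0$, and the only decreasing path from $v_1$ is constant. Before the inductive step I would record an auxiliary fact, proved by the same induction on $\ell$: \emph{if $v$ has no decreasing path to an index-one vertex, then all index-zero vertices reachable from $v$ by a decreasing path coincide.} Indeed, if $\ell(v)>0$ then $v$ is not index one, so its descending link $L_v$ is connected; any two descending edges $h,h'$ of $v$ are joined by an edge path in $L_v$, and if $h,h'$ cobound a single face below $v$ with lowest vertex $z$, then $z$ is reached by one descending edge from the terminal vertex of $h$ and by one from the terminal vertex of $h'$, so the inductive hypothesis forces the unique index-zero vertex reachable from $z$ to agree with the one reachable past $h$ and the one reachable past $h'$; chaining along the edge path shows every descending edge of $v$ leads to the same index-zero vertex, and a decreasing path down from $v$ starts with one of them. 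Applied to $v_1$: once $v_0$ is known to be reachable from $v_1$, it is the \emph{only} index-zero vertex reachable from $v_1$, so every vertex reachable from $v_1$ by an oriented decreasing path has an oriented decreasing path to $v_0$.

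For the inductive step, let $E,E'$ be oriented decreasing paths from $v_1$ to $v_0$ with first edges $e_1,e_1'$, ending at $u,u'$, both pointing away from $v_1$. Since $v_1$ is not index one, $L_{v_1}$ is connected; I would first argue that $e_1$ and $e_1'$ can be joined by an edge path $e_1=g_0,\dots,g_k=e_1'$ in $L_{v_1}$ \emph{all of whose edges are realized by faces below $v_1$ whose two lower edges both point toward the bottom vertex} (call these the ``descending'' faces, i.e.\ the ones admitting a horizontal slide that fixes $v_1$). Then I would show by induction on $j$ that $E$ is equivalent, via horizontal face slides, to an oriented decreasing path from $v_1$ to $v_0$ with first edge $g_j$: given such a path $P=g_{j-1}\cdot P_1$ with $g_{j-1}\colon v_1\to y$, let $q$ be the descending face joining $g_{j-1}$ to $g_j$, with bottom vertex $z$ and lower edges $a\colon y\to z$, $b\colon y'\to z$ where $g_j\colon v_1\to y'$; since $z$ is reachable from $v_1$, the auxiliary fact gives an oriented decreasing path $Q\colon z\to v_0$, and since $\ell(y)<\ell(v_1)$ and $y$ inherits the hypothesis, the inductive hypothesis makes $P_1$ equivalent to $a\cdot Q$, hence $P$ equivalent to $g_{j-1}\cdot a\cdot Q$; a single horizontal slide across $q$ turns this into $g_j\cdot b\cdot Q$. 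At $j=k$ we obtain $E$ equivalent to a path $E''$ with first edge $e_1'$; comparing the tails of $E''$ and $E'$, which are oriented decreasing paths from $u'$ to $v_0$ with $\ell(u')<\ell(v_1)$, the inductive hypothesis gives $E''\sim E'$, so $E\sim E'$. The reverse-oriented statement follows by applying this to the oriented height complex obtained from $\mS$ by reversing every edge orientation: this preserves the Morse, net and parallel-orientation axioms, turns reverse-oriented paths into oriented ones, and leaves the complexity, the index of each vertex, and the equivalence relation unchanged.

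The step I expect to be the main obstacle is hidden in the inductive step: changing the first edge of a path by a horizontal slide is only possible once the path's first two edges already form a side of a descending face below $v_1$, so one must first drag the tail of the path through the bottom vertex $z$ of the connecting face — legitimate only because $z$ has an oriented decreasing path to $v_0$, which is exactly the auxiliary fact — and, more delicately, one must justify that the two first edges lie in a single component of the ``descending'' part of $L_{v_1}$. The parallel-orientation axiom classifies each face below $v_1$ as descending or not, and I expect the required connectivity to follow from the hypothesis by an argument parallel to the auxiliary fact; establishing it cleanly is the heart of the proof. The remaining points should be routine: degenerate faces (triangles and bigons, where one or more of $y,y',z$ collapse) are handled by the same argument with the re-routing shorter or trivial, and a horizontal slide performed inside the tail of a path lifts verbatim to a slide of the whole path, since it only involves cells below vertices of complexity strictly less than $c(v_1)$.
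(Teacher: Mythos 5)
Your argument hinges on a claim you yourself flag as unestablished, and that claim is exactly where the proof fails: you need the two initial edges $e_1,e_1'$ to be joined by an edge path in $L_{v_1}$ all of whose edges come from ``descending'' faces, i.e.\ faces whose two upper edges both point away from $v_1$. This is essentially connectivity of the \emph{positive} descending link of $v_1$, and it does not follow from connectivity of the full descending link: a path in $L_{v_1}$ from $e_1$ to $e_1'$ may be forced to pass through inward-pointing edges, and the faces it crosses are then translation-type diamonds, which give vertical (not horizontal) slides and so cannot be used to change the first edge of an oriented path while keeping $v_1$ as its starting vertex. In the paper, connectivity of the positive and negative links is only obtained later (Corollary~\ref{linkconnectedcoro}) as a consequence of the barrier axiom, and the verification of that axiom for the complex of surfaces (Corollary~\ref{barrieraxiomlem}) goes through Lemma~\ref{equivalentcompbodylem}, which rests on the very lemma you are proving — so you cannot import it here without circularity; and the lemma is stated for an arbitrary oriented height complex, where nothing in the Morse, net and parallel-orientation axioms forces the positive descending link to be connected even when the whole descending link is. A second instance of the same difficulty appears in your re-routing step: the auxiliary fact produces a unique index-zero vertex reachable by \emph{unoriented} decreasing paths, but you then need an \emph{oriented} decreasing path $Q$ from the bottom vertex $z$ of the connecting face to $v_0$, and a maximal oriented decreasing path from $z$ need only terminate at a vertex with no outward-pointing descending edges, which need not be index zero.

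The paper's proof is organized precisely to avoid ever having to control orientations inside a descending link. It concatenates $E$ with the reverse of $E'$ to form an unoriented loop, thins that loop using Lemma~\ref{unorientedthinninglem} — every maximum created along the way is reachable from $v_1$ by a decreasing path, hence by hypothesis is not index one, hence can be thinned further — until the loop collapses to the constant loop at $v_0$. The union of the faces used is an immersed disk bounded by $E\cup E'$, and one then peels faces off this disk one at a time; each peeling is a horizontal face slide on $E$, and after finitely many steps $E$ becomes $E'$. It is the disk, not any connectivity statement about the positive part of $L_{v_1}$, that converts the unoriented nullhomotopy into a sequence of orientation-preserving horizontal slides. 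If you want to salvage your induction you would need to prove the positive-link connectivity statement from the hypothesis ``no decreasing path to an index-one vertex'' alone, and I do not see how to do that without reconstructing something like the immersed-disk argument.
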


\begin{proof}
Let $v_0$ be an index-zero vertex in $\mS$ and let $v_1$ be another vertex such that there are directed paths $E$, $E'$ from $v_0$ to $v_1$.  Construct an unoriented path $E^*$ by concatenating the path $E$ with the reverse of the path $E'$.  This path $E^*$ is a loop that starts and ends at $v_0$.  By Lemma~\ref{unorientedthinninglem}, this path can be thinned to a path in which every maximum has index one.  Every maximum in a path resulting from thinning $E$ is connected to $v_1$ by a decreasing path and by assumption no such vertices exist.  Thus $E^*$ can be thinned to a loop from $v_0$ to itself containing no maxima in its interior.  Because $v_0$ has index zero, the only such loop is the trivial loop from $v_0$ to itself, containing no edges.

The union $F$ of the faces along which we slide $E^*$ while thinning it is an immersed disk.  By construction, every edge in $F$ will be oriented to point towards its lower complexity endpoint.  The boundary of this disk is the union of the paths $E$ and $E'$.  The first edge of the path $E$ is a lower edge of a face in $F$ and the last edge is the upper edge of a face in $F$.  Thus either some consecutive pair of edges in $E$ are in the same diamond in $F$, or some edge in $E$ is the single edge on the side of a triangle or bigon.  In either case, the diamond, triangle or bigon defines a horizontal face slide that turns $E$ into a new path $E_1$.  There is an immersed disk with one fewer faces than $F$ whose boundary is $E_1 \cup E'$, and we can repeat the argument to find another horizontal edge slide.  Because there are finitely many faces in $F$, we will find a sequence of horizontal face slides that turn $E$ into $E'$.  An almost identical proof works in the case of two decreasing paths.
\end{proof}

We will say that two K-compression bodies with positive boundary $v$ are \textit{isotopic} if there is an isotopy that sends one compression body onto the other.  Because surfaces are only determined up to blind isotopy, a monotonic oriented path does not determine a unique isotopy class of K-compression bodies; the compression body will depend on the initial choice of representative for its positive boundary.  Once this surface is fixed, the negative boundary is only determined up to sphere-blind isotopies.  Thus we will say that two compression bodies with isotopic positive boundaries are \textit{sphere-blind isotopic} if they are related by sphere-blind isotopies of their negative boundaries.

\begin{Lem}
\label{equivalentcompbodylem}
Consider two strictly decreasing oriented or reverse oriented paths from $v$ to $v'$, defining K-compression bodies $H$, $H'$ whose positive boundaries are isotopic.  Then $E$ and $E'$ will be equivalent if and only if $H$ and $H'$ are sphere-blind isotopic.
\end{Lem}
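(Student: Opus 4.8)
I would prove the two implications separately, first reducing to the case that $E$ and $E'$ are both oriented; the reverse-oriented case is verbatim the same, and since a horizontal face slide preserves the property of a path being oriented, nothing is lost. For a strictly decreasing oriented path $F$ from $v$ to $v'$ let $H_F$ be the K-compression body determined by $F$ via Lemma~\ref{decpathkbodylem}, with $\partial_+ H_F$ a fixed representative of $v$; recall that once this representative is fixed $H_F$ is well defined up to sphere-blind isotopy.

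For the implication $E\sim E'\Rightarrow H\cong H'$ (with $\cong$ denoting sphere-blind isotopy), it is enough to check that one horizontal face slide on a strictly decreasing oriented path leaves the sphere-blind isotopy class of the associated K-compression body unchanged. Such a slide takes place across a diamond, triangle or bigon $q$ below some vertex $[T]$ of the path and reroutes a length-two subpath $[T]\to[T_1]\to[T_{12}]$ through the opposite interior vertex of $q$, which amounts to performing in the other order the two K-compressions that $q$ records. In the construction underlying Lemmas~\ref{edgekbodylem} and~\ref{decpathkbodylem} each K-compression attaches a ball to a collar of the current surface on a prescribed side; when the two K-disks are disjoint these balls can be attached independently, so either order produces the same submanifold of $M$, and in the degenerate cases the two compressions produce blind-isotopic surfaces by the very definition of the corresponding 2-cell, with the auxiliary sphere tubings affecting only the negative boundary. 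Iterating over the chain of slides gives $H\cong H'$. I expect this direction to be routine apart from the trivial-sphere bookkeeping in the triangle and bigon cases.

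For the converse, suppose $H\cong H'$. An identification of $H'$ with $H$ is carried by an ambient isotopy of $(M,\mT^1)$ together with sphere-blind isotopies of the negative boundary, both invisible to $\mS(M,\mT^1)$, so $E'$ also defines $H$; hence it suffices to show that any two strictly decreasing oriented paths $E,E'$ from $v$ to $v'$ with $H_E\cong H_{E'}\cong H$ are equivalent. The crucial point is that such a path is nothing but a sequence of K-compressions taking $\partial_+ H$ to $\partial_- H$ performed inside $H$: applying Lemma~\ref{decpathkbodylem} to its initial segments, whose compression bodies nest up to $H$, shows that every surface the path visits can be isotoped into $H$. Consequently $E$ and $E'$ are the images under the natural map $\mS(H,K)\to\mS(M,\mT^1)$, $K=H\cap\mT^1$ (a K-disk for a surface in $H$ is a K-disk in $M$, and blind isotopies in $H$ are blind isotopies in $M$), of strictly decreasing oriented paths $\widetilde E,\widetilde E'$ from $v$ to $v'$ in the complex of surfaces $\mS(H,K)$. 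Now $\mS(H,K)$ is an oriented height complex, $v'=[\partial_- H]$ is index zero in it, and by Lemma~\ref{pathsincbodycomplexlem} every vertex reachable from $v$ along a decreasing path of $\mS(H,K)$ — being a surface in $H$ obtained by repeatedly K-compressing $\partial_+ H$ — is either floppy or equal to $v'$, so none has index one. Thus Lemma~\ref{ifnoindexonelem} applies with $v_1=v$ and $v_0=v'$ and shows that $\widetilde E$ and $\widetilde E'$ are equivalent in $\mS(H,K)$; since each 2-cell of $\mS(H,K)$ maps to a 2-cell of $\mS(M,\mT^1)$, the chain of horizontal face slides realizing this equivalence maps to one between $E$ and $E'$.

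The main obstacle, and the step I would treat most carefully, is exactly the claim in the last paragraph that every strictly decreasing oriented path realizing $H$ lifts to $\mS(H,K)$, i.e. that all of its intermediate surfaces are simultaneously isotopic into $H$. This is what lets one replace the uncontrolled complex $\mS(M,\mT^1)$ — in which $v$ will in general admit decreasing paths to index-one vertices — by the compression-body complex $\mS(H,K)$, where Lemma~\ref{pathsincbodycomplexlem} supplies precisely the ``no index-one strictly below $v$'' hypothesis that Lemma~\ref{ifnoindexonelem} requires.
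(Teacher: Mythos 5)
Your proposal is correct and follows essentially the same route as the paper: the forward direction reduces to a single horizontal face slide realized by two orders of compression along disjoint K-disks, and the converse transfers both paths into the complex of surfaces of the common compression body $H$ and applies Lemma~\ref{ifnoindexonelem}. You are somewhat more explicit than the paper in verifying the ``no decreasing path to an index-one vertex'' hypothesis via Lemma~\ref{pathsincbodycomplexlem} and in justifying that the intermediate surfaces can be isotoped into $H$, but these are exactly the steps the paper intends.
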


\begin{proof}
First we will show that two equivalent, oriented decreasing paths from $v$ to $v'$ determine equivalent compression bodies in $M$.  If we can show that this is true when the paths are related by a single horizontal face slide, then the full result will follow by induction.  Let $q$ be the face along which we are sliding, $w$ its maximal vertex and $w'$ its minimal vertex.  

The edges of $q$ adjacent to $w$ determine disjoint K-disks $D_1$, $D_2$ for the surface $S$ corresponding to $w$.  If we compress along both disks simultaneously, we get a surface $S'$ representing $w'$.  Because both edges point away from $v$, the K-disks are on the same side of $S$, so the compressions determine a compression body $H$  between $S$ and $S'$.  The two edge paths in $q$ from $v$ to $v'$ correspond to compressing along $D_1$, $D_2$ in different orders, but for either order, the union of the two compression bodies determined by the two edges, is isotopic to $H$, as shown in Figure~\ref{horizontalfig}.
\begin{figure}[htb]
  \begin{center}
  \includegraphics[width=3.5in]{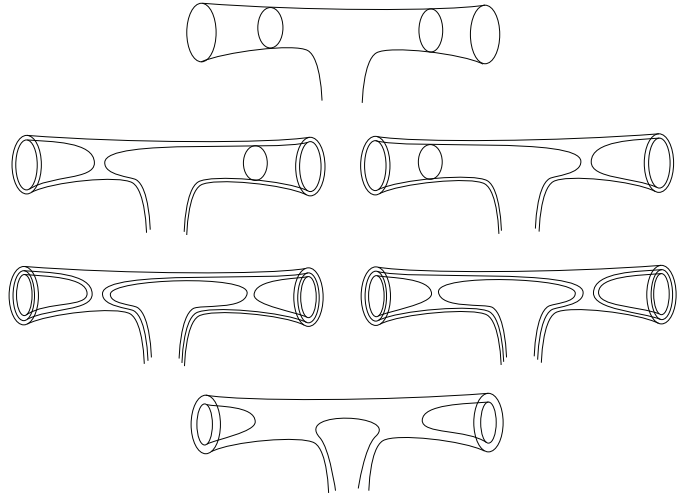}
  \caption{The two pairs of edges involved in a horizontal slide determine the same K-compression body.}
  \label{horizontalfig}
  \end{center}
\end{figure}

Conversely, if two decreasing paths determine sphere-blind isotopic compression bodies then we can assume they determine the same compression body $H$.  The two paths determine paths in the complex of surfaces for this $H$ starting at its negative boundary and ending at its positive boundary.  Since the paths have the same endpoints, Lemma~\ref{ifnoindexonelem} implies that they are equivalent in the complex of surfaces for $H$.  Every face in the complex of surface for $H$ corresponds to a face in the complex of surface for $M$, so the two paths are also equivalent in the complex of surfaces for $M$.  An identical proof works in the case of two increasing paths.
\end{proof}

\section{The Casson-Gordon axiom for surfaces}
\label{pathlinksect}

\begin{Lem}
\label{cgforsurfaceslem}
The complex of surfaces satisfies the Casson-Gordon axiom.
\end{Lem}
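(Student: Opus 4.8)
The plan is to realize the path link of the maximum of $E$ as a descending link in an auxiliary complex of surfaces and then to apply Lemma~\ref{cgsteponelem} there. Let $v$ be a maximum of an oriented path $E$, with $v_-$ and $v_+$ the minima immediately before and after it, and fix a surface $S$ representing $v$ (which we may take to be the common positive boundary surface of Lemma~\ref{alignkbodieslem}). By Lemmas~\ref{decpathkbodylem} and~\ref{alignkbodieslem}, the strictly increasing segment of $E$ from $v_-$ to $v$ determines a K-compression body $W_-$ with $\partial_+ W_- = S$ and $\partial_- W_- = S^-$ a representative of $v_-$, the strictly decreasing segment from $v$ to $v_+$ determines a K-compression body $W_+$ with $\partial_+ W_+ = S$ and $\partial_- W_+ = S^+$ a representative of $v_+$, and $W_-$ and $W_+$ lie on opposite sides of $S$. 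Set $X = W_- \cup_S W_+$, so that $\partial X = S^- \cup S^+$ and $S$ is a separating surface in $X$.

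The first step is to identify the path link $L^E_v$ with the descending link of $[S]$ in $\mS(X, \mT^1 \cap X)$. By Lemma~\ref{equivalentcompbodylem}, every oriented path $F$ equivalent to $E$ builds the same pair $W_-$, $W_+$ up to sphere-blind isotopy, so each vertex $u^-_F$ (respectively $u^+_F$) is a K-disk for $S$ contained in $W_-$ (respectively $W_+$); conversely, given any K-disk for $S$ in $W_-$, Lemmas~\ref{ifnoindexonelem} and~\ref{pathsincbodycomplexlem} let us extend it to a strictly decreasing path inside $W_-$ ending at $\partial_- W_-$ --- each intermediate surface obtained by K-compressing $\partial_+ W_-$ is either floppy, hence has a further K-disk, or is $\partial_- W_-$ together with trivial spheres --- and this path is equivalent to $E$ by Lemma~\ref{equivalentcompbodylem}. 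Since two K-disks for $S$ span a cell below $[S]$ exactly when their boundary curves are disjoint, both $L^E_v$ and the descending link of $[S]$ in $\mS(X)$ are the flag complex on this common vertex set with this common edge relation, hence are isomorphic.

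It then remains to verify the axiom. Suppose $v_-$ is compressible to the positive side while $v_+$ is incompressible to the positive side; the goal is to show $L^E_v$ is contractible. An edge below $v_-$ pointing away from $v_-$ is a K-disk $D$ for $S^-$ whose interior lies in the positive complement of $S^-$, namely in $W_- \cup S \cup W_+$ together with the part of $M$ beyond $S^+$. Pulling $D$ tight with respect to $S^+$, any piece it cuts off on the far side of $S^+$ is a K-disk for $S^+$ on its positive side, which does not exist by hypothesis, so after isotopy $D$ is disjoint from $S^+$ and therefore contained in $X$. If $D$ could be made disjoint from $S$ then, being connected with boundary on $S^-$, it would be an essential K-disk for $\partial_- W_-$ inside $W_-$; but this is impossible, since a compression body has incompressible negative boundary and, by the definition of a K-compression body, no bridge or tree disk for $\partial_- W_-$ lies in $W_-$. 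Hence $D$ is a K-disk for the component $S^-$ of $\partial X$ that cannot be made disjoint from $S$. Provided $\partial X$ has no sphere component disjoint from $\mT^1$ --- and if it does, such a sphere either cannot be isotoped off $S$, so that Lemma~\ref{cgsteponelem} already shows $[S]$ is floppy in $\mS(X)$, or is boundary-parallel and may be discarded without affecting $\mS(X)$ --- Lemma~\ref{cgsteponelem} applies to $(X, \mT^1 \cap X)$ and shows that every homotopy group of the descending link of $[S]$ in $\mS(X)$ vanishes. By the identification above, $L^E_v$ is weakly contractible, hence contractible by Whitehead's theorem. The second statement of the axiom follows from the mirror-image argument, exchanging $S^-$ with $S^+$ and ``positive'' with ``negative''.

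The step I expect to be the main obstacle is the identification of $L^E_v$ with the descending link in $\mS(X)$ --- specifically, the claim that the vertices $u^\pm_F$, as $F$ ranges over paths equivalent to $E$, exhaust \emph{all} K-disks for $S$ on each side of $S$ rather than merely a subcomplex of them. This is where Lemmas~\ref{equivalentcompbodylem}, \ref{ifnoindexonelem} and~\ref{pathsincbodycomplexlem} must be combined carefully, together with the bookkeeping of trivial sphere components introduced by tree compressions and the reconciliation of faces of $\mS(X)$ with faces of $\mS(M)$.
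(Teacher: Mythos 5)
Your overall route is the paper's: form $X = W_-\cup_S W_+$, identify the path link $L^E_v$ with the descending link of $S$ in the complex of surfaces for $X$ (the paper calls this $M^*$, with sphere boundary components capped off), and then invoke Lemma~\ref{cgsteponelem} using a K-disk for $\partial X$ that cannot be pushed off $S$. The identification of the path link via Lemma~\ref{equivalentcompbodylem}, the verification that $D$ cannot be made disjoint from $S$ (incompressibility of $\partial_- W_-$ in $W_-$), and the final contractibility conclusion all match the paper; you are in fact more explicit than the paper about why the hypothesis of Lemma~\ref{cgsteponelem} is satisfied.

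The one step that fails as written is the reduction to ``$D$ disjoint from $S^+$.'' You argue that any piece of $D$ cut off on the far (positive) side of $S^+$ is a K-disk for $S^+$ on its positive side, contradicting the hypothesis, hence $D$ can be isotoped off $S^+$. But after pulling tight, the pieces of $D\setminus S^+$ alternate sides of $S^+$, only the \emph{innermost} pieces are disks (the others are planar surfaces with several boundary curves, hence not K-disks), and it can happen that every innermost piece lies on the \emph{negative} side of $S^+$ --- for instance two nested circles of $D\cap S^+$, with the innermost disk back inside $X$. In that situation there is no K-disk for $S^+$ on its positive side, yet $D$ cannot be isotoped off $S^+$, so the inference breaks. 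The repair is the one the paper makes: take an innermost piece $D'$; if it lies on the positive side of $S^+$ you obtain the other conclusion of the axiom (your contradiction), and if it lies inside $X$ you replace $D$ by $D'$, which is a K-disk for the boundary component $S^+$ of $X$ with interior disjoint from $\partial X$ and which still cannot be made disjoint from $S$. With that substitution the rest of your argument goes through and agrees with the paper's.
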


\begin{proof}
Let $v$ be a local maximum of an oriented edge path $E = \{e_i\}$ and let $v^-$, $v^+$ be the minima before and after $v$, respectively.  Let $H^-$ be the compression body determined by the decreasing path from $v$ to $v^-$ and let $H^+$ be the compression body determined by the path to $v^+$.  

Let $e$ be an edge below $v_-$, corresponding to a K-disk $D$ for $S^-$.  This K-disk is independent of the path $E$, so it may intersect $S$ and $S^+$.  If $D \cap S^+$ contains a loop that is trivial in $S^+$ then by compressing $D$ at this loop, we can replace $D$ with a K-disk for $S^-$ that intersects $S^+$ in fewer curves.  Thus we can assume $D \cap S^+$ contains no loops that are trivial in $S^+$.

If, after minimizing the intersection, $D \cap S^+$ is non-empty then there is a loop or arc of $D \cap S^+$ bounding a disk $D' \subset D$ whose interior is disjoint from $S^+$ and $S^-$.  Because $\partial D'$ is essential in $S^+ \setminus \mT^1$, it is a K-disk for $S^+$.  If $D'$ is on the positive side of $S^+$ then it determines a positive edge descending from $S^+$, satisfying the second possible conclusion of the axiom.  Otherwise, it determines a K-disk in $H^- \cup H^+$.  This case is equivalent to having $D$ disjoint from $S^+$ (but with $S^+$ and $S^-$ interchanged) so we will focus, without loss of generality on the case when $D$ is disjoint from $S^+$.

Let $M^*$ be the result of removing $H^- \cup H^+$ from $M$ and capping off any sphere components in the boundary that are disjoint from $\mT^1$.  Every K-disk for $S \subset M$ determines a K-disk for $S \subset M^*$.  By Lemma~\ref{equivalentcompbodylem}, we can choose a path that starts with any such disk, so the descending link for $S \subset M^*$ is canonically isomorphic to the path link for $S \subset M$.  Because the boundary of $M^*$ is K-compressible, Lemma~\ref{cgsteponelem}, implies that every homotopy group of its descending link is trivial.  Thus every homotopy group of the path link $L^E_v$ is trivial. Since $L^E_v$ is a simplicial complex with all its homotopy groups trivial, $L^E_v$ is contractible.
\end{proof}

\section{Iterated thin position}
\label{iteratedsect}

In this section, we will construct a complex whose vertices are equivalence classes of paths in an oriented height complex $\mS$.  There is an obvious choice for what the edges of such a complex will be; they will correspond to vertical slides in $\mS$.  Since we always consider face slides that fix the endpoints of the given path, we will now fix a pair of endpoints and restrict our attention to paths between those endpoints.

\begin{Def}
Given an oriented height complex $\mS$ and a specified pair of index zero vertices $v^-$, $v^+$, a \textit{splitting path} is an oriented path in $\mS$ from $v^-$ to $v^+$.
\end{Def}

For the complex of surfaces $\mS(M, \mT^1)$, $v^-$ and $v^+$ can be chosen in a fairly straightforward fashion.  If $M$ is a closed 3-manifold then $v^-$ and $v^+$ will be the two vertices representing empty surfaces.  There are two ways to orient the empty surface:  The orientation in which $M$ is labeled $+$ will define the vertex $v^-$ and the orientation in which $M$ is labeled $-$ will represent $v^+$.  (This may seem like a terrible convention, but if we did it the other way, there would be no oriented paths from $v^-$ to $v^+$.)  

If $M$ has non-trivial boundary then there are multiple choices for $v^-$ and $v^+$.  Let $\partial_- M$ and $\partial_+ M$ be disjoint subsets of $\partial M$ such that every component of $\partial M$ is contained in one set or the other.  Let $v^-$ be represented by a surface $S^-$ parallel to $\partial_- M$ such that the side of $S^-$ not containing $\partial_- M$ is labeled $+$.  Let $v^+$ be represented by a surface $S^+$ parallel to $\partial_+ M$ such that the side of $S^+$ not containing $\partial_+ M$ is labeled $-$.  (Again, this convention is chosen so that oriented paths exist.)

Given an oriented height complex $\mS$ with distinguished vertices $v_-, v_+$, the \textit{path complex} $\mP(\mS, v_-, v_+) = \mP(\mS)$ is a cell complex in which vertices are equivalence classes of splitting paths in $\mS$.  To define the cells of $\mP(\mS)$, we will define the edges and 2-cells, then define the higher dimensional cells by projecting edges across 2-cells as in Section~\ref{surfacecomplessect}.  \\

\noindent
\textbf{Edges:}
If $E$ is a splitting path in $\mS$ and $E'$ is the result of a vertical slide on $E$ then $E$ and $E'$ will determine distinct vertices of $\mP(\mS)$ and we will include in $\mP(\mS)$ an edge connecting these vertices.  \\

\noindent
\textbf{2-cells:}
There will be two types of 2-cells in $\mP(\mS)$.  First consider a path $E$ that has two distinct maxima that can be reduced by vertical slides.  Each vertical slide defines an edge in $\mP(\mS)$, and the resulting path still has a maximum that can be reduced by the other vertical slide.  Performing the vertical slides in either order produces the same path, so the two vertical slides determine four edges in $\mP(\mS)$ which form a closed loop in $\mP(\mS)$.  We will include in $\mP(\mS)$ a 2-cell bounded by this loop.

For the second type of 2-cell, recall that by the Morse axiom, every $n$-cell in $\mS$ is the quotient of an oriented $n$-cube $q$ under a map that crushes some edges and faces to points.  The oriented cube $q$ contains vertices $u_-$, $u_+$ such that every edge adjacent to $u_-$ points away from it and every edge adjacent to $u_+$ points towards it.  If $E$ is a path in $\mS$ such that a subpath of $E$, contained in $q$ passes from $u_-$ to $u_+$ then we can create a set of paths by replacing this subpath by one of the other subpaths.  Thus $E$ and $q$ determine a set of points in $\mP(\mS)$.  (Note that some of these paths may determine the same point in $\mP(\mS)$ because of the equivalence relation.)

Consider a 3-cube $q$ in $\mS$.  If no edges of $q$ are crushed then there are two possibilities: If $q$ is below $u_-$ or $u_+$ then any two paths in $q$ from $u_-$ and $u_+$ will be related by horizontal slides and will all define the same vertex in $\mP(\mS)$.  Otherwise, the cube will define a loop in $\mP(\mS)$ consisting of four vertical slides and two horizontal slides as in Figure~\ref{cellcycle1fig}.  Because the horizontal slides do not change the equivalence class of the path, the cube defines a diamond in $\mP(\mS)$.  If the $q$ is a crushed cube then it will still define a path, but some of the vertical slides may become trivial.  Thus $q$ will still determine a triangle or a bigon.  For each 3-cell $q$ and each path $E$ containing a subpath in $q$ from $u_-$ to $u_+$, we will include in $\mP(\mS)$ a 2-cell bounded by this path.  \\
\begin{figure}[htb]
  \begin{center}
  \includegraphics[width=3.5in]{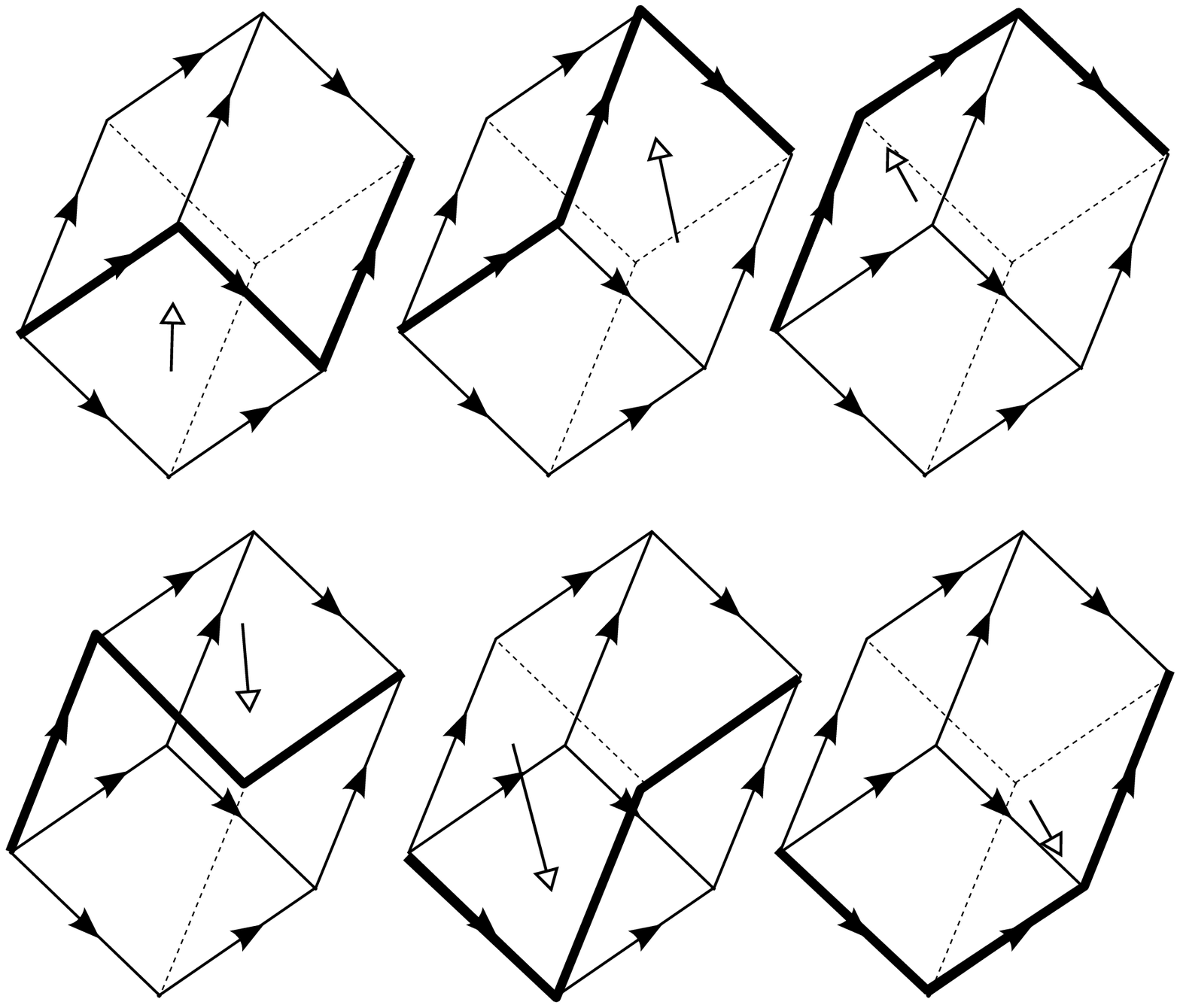}
  \caption{Faces in $\mP(\mS(M, \mT^1))$ defined by 3-cells in $\mS$.}
  \label{cellcycle1fig}
  \end{center}
\end{figure}

\noindent
\textbf{Higher-dimensional cells:}
Define the higher-dimensional cells by projecting the 2-cells as in Section~\ref{surfacecomplessect}.

\begin{Lem}
If $\mS$ is an oriented height complex then the path complex $\mP(\mS)$ satisfies the Morse axiom.
\end{Lem}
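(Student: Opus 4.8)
The plan is to verify, one at a time, the three substantive clauses of the Morse axiom for $\mP(\mS)$: that the complexities of the endpoints of any edge are comparable and distinct, that every $2$-cell is a diamond, triangle, or bigon, and that the projection of any two of three pairwise-cobounding edges across the third determines a face, an edge, or a vertex; the remaining clause --- that each $n$-cell arises by mapping an $n$-cube into the $(n-1)$-skeleton via projections --- then holds automatically, since that is exactly how the higher cells of $\mP(\mS)$ were defined. For the first clause I would equip a vertex of $\mP(\mS)$, the equivalence class of a splitting path $E$, with the path complexity $c(E)$ of Section~\ref{thinpathsect}. This is well defined on equivalence classes because horizontal face slides create no new maxima or minima, so equivalent paths have the same tuple. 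An edge of $\mP(\mS)$ is a vertical face slide, which in one direction replaces the two edges adjacent to a maximum $w$ of $E$ by the complementary pair of lower edges of a $2$-cell of $\mS$ --- deleting the entry $c(w)$ from the tuple and inserting only entries strictly below $c(w)$, hence strictly lowering $c$ --- and in the other direction strictly raises $c$; so the two complexities are distinct, and they are comparable because the paths agree outside a single $2$-cell of $\mS$ and the deleted and inserted maxima are comparable to one another (in the intended application $c$ is $\mathbb{Z}_{\ge 0}$-valued, so comparability is automatic).

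For the second clause I would run through the two types of $2$-cell in the definition of $\mP(\mS)$ and, using this complexity, identify the maximal and minimal vertex of each. A $2$-cell of the first type --- from a path with two distinct reducible maxima --- has the un-reduced path as maximal vertex, the doubly-reduced path as minimal vertex, and the two singly-reduced paths as the other two vertices; the complexity computation above shows it is a diamond, collapsing to a bigon exactly when the two singly-reduced paths are equivalent. A $2$-cell of the second type --- from a $3$-cube $q$ of $\mS$ lying below neither of its extreme corners --- has boundary a hexagon of four vertical and two horizontal slides, and after collapsing the two horizontal slides (which do not change the equivalence class of a path) this becomes a square with a unique complexity maximum and minimum, so it is a diamond; a crushed $3$-cube $q$ gives a triangle or bigon, exactly as asserted in the construction and pictured in Figure~\ref{cellcycle1fig}. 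In each case the $2$-cell is a diamond, triangle, or bigon.

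For the third clause I would take three vertical slides $s_1,s_2,s_3$ reducing a splitting path $E$, with each pair cobounding a $2$-cell of $\mP(\mS)$, and show that the projections of $s_1$ and $s_2$ across $s_3$ --- the slides obtained by performing $s_1$ and $s_2$ on $s_3(E)$ --- cobound a $2$-cell, coincide, or one of them projects to a vertex. The key observation is that compatibility of a pair of vertical slides on a path is already recorded inside $\mS$: either the two maxima are \emph{independent}, meaning the two slides take place across $2$-cells of $\mS$ sharing no edge of $E$ and hence commute outright, or the two slides are exactly the pair of vertical slides produced by a single $3$-cell of $\mS$, as in the second type of $2$-cell. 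Pairwise compatibility of $s_1,s_2,s_3$ therefore assembles, via the Morse axiom for $\mS$ --- in particular the clauses that pairwise-cobounding triples of edges project to faces, edges, or vertices and that the higher cells of $\mS$ are cube-maps-via-projections --- into a (possibly crushed) higher-dimensional cell of $\mS$ whose structure witnesses compatibility of the projected slides on $s_3(E)$, with the degenerations in $\mP(\mS)$ (a projected slide coinciding with another, giving an edge; a projected slide becoming trivial, giving a vertex) matching the crushings of cells in $\mS$.

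The step I expect to be the main obstacle is this last one: matching the combinatorics of how vertical slides along a path interact --- and especially which projected slides collapse to a vertex or coincide --- with the cell structure of $\mS$ and all of its degenerations into triangles and bigons, so that the projection property for $\mP(\mS)$ is genuinely deduced from the Morse axiom for $\mS$ rather than reproved by hand. Keeping track of the horizontal slides, which must be absorbed when passing from cubes of $\mS$ to cells of $\mP(\mS)$, and of which degenerate case occurs at each stage, is the technical heart of the argument.
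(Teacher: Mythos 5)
Your overall architecture matches the paper's: the first two clauses of the Morse axiom are essentially immediate from the construction of $\mP(\mS)$ (and your verification that a vertical slide strictly lowers the lexicographic complexity is correct), so the real content is the projection property for a triple of pairwise-compatible edges, and the only nontrivial case is when all three vertical slides occur at a single maximum $v$ of $E$. But that case is exactly where your argument stops being a proof. You write that pairwise compatibility of $s_1,s_2,s_3$ ``assembles, via the Morse axiom for $\mS$, into a (possibly crushed) higher-dimensional cell of $\mS$'' witnessing compatibility of the projected slides --- but the Morse axiom for $\mS$ does not deliver this. The three $2$-cells of $\mS$ below $v$ realizing the slides involve up to six distinct edges below $v$ (one in the positive and one in the negative path link for each slide), and compatibility in $\mP(\mS)$ only tells you that each pair of these $2$-cells lies in some $3$-cell of $\mS$; it does not tell you how those six edges are identified, nor that they span a common higher cell to project across. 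That is precisely the statement requiring proof, and it is the step you yourself flag as unresolved.

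The paper closes this gap with a combinatorial argument inside the path link $L^E_v$: each slide is an edge of $L^E_v$ with one endpoint in the positive and one in the negative path link; each $2$-cell of $\mP(\mS)$ containing two of the slides yields a triangle of $L^E_v$ containing the corresponding pair of edges; an edge count (nine edges among the three triangles, at least three identifications, and every boundary component of the resulting simplicial surface having at least two edges) forces the union of the three triangles to be a disk with a three-edge boundary --- a fan around a common vertex --- with the bipartite structure of $L^E_v$ ruling out the M\"obius band; the flag property of the descending link then supplies a fourth triangle completing a tetrahedron, i.e.\ one vertex on one side and three on the other. Only with this tetrahedron in hand does the paper invoke the projection clause of the Morse axiom for $\mS$ to conclude that the projected slides determine a face, an edge or a vertex of $\mP(\mS)$. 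Without some version of this argument, your third step is an assertion rather than a deduction.
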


\begin{proof}
We noted during the construction of the 2-cells that every face is a diamond, triangle or bigon.  The higher dimensional cells are defined by projecting 2-cells, so to prove that $\mP(\mS)$ satisfies the Morse axiom, we need only check that given three edges such that any two bound a 2-cell, the projection of any two across the third will determine a face, an edge or a vertex. 

Let $e_1,e_2,e_3$ be edges in $\mP(\mS)$ such that any two of them determine a face.  We will show, without loss of generality, that the projections of $e_2$ and $e_3$ across $e_1$ determine a vertex, an edge or a face in $\mP(\mS)$.  Each of the three edges determines a vertical slide of a path $E$.  If the vertical slides represented by $e_2$ and $e_3$ occur at different maxima from each other then they will still occur at different maxima after the vertical slide defined by $e_1$.  If both of these vertical slides are at the same maximum, but the vertical slide corresponding to $e_1$ is at a different maximum of $E$ then the first two will not be affected by the vertical slide corresponding to $e_1$.  

Thus the only non-trivial case is when all three vertical slides occur at the same maximum $v$ of $E$. Each vertical slide corresponds to an edge in the path link of $v$ with one endpoint in the positive path link and the other in the negative path link.  Each pair of edges is contained in a triangle in the path link and the union of the three triangles is a surface with boundary.  Because the path link of $v$ is a simplicial complex, each boundary component of this surface has at least two edges.  Since there are nine edges in the three triangles and at least three pairs are identified, the surface must have a single boundary component containing three edges.  This surface cannot be a Mobius band because each internal edge has one endpoint in the positive path link and the other in the negative path link.  Thus the surface must be a disk.  

The boundary of this disk in the path link consists of three edges, so there is a fourth triangle whose union with this disk forms the boundary of a tetrahedron.  This tetrahedron has one vertex in the positive link of $v$ and three in the negative line, or vice versa.  Assume, without loss of generality, that the three vertices are in the negative path link.  Let $u_i$ be the vertex in the negative path link corresponding to $e_i$.  Let $v'$ be the lower vertex of the edge in $L_v$ corresponding to $u_i$.  

After a vertical slide corresponding to $e_i$, the new path will have a maximum at $v'$.  Because $e_1$ and $e_i$ determine a 2-cell in $\mP$ for $i = 2,3$, each of the arcs corresponding to $e_2$, $e_3$ projects across $e_1$ to an arc in the path link of $v$.  Because $\mS$ satisfies the Morse axiom and the initial two arcs are contained in a triangle in $L_v$, their projections determine a face in $L_{v'}$.  Thus the edges $e_2$ and $e_3$ project to either a vertex, a single edge or a pair of edges that determine a face.
\end{proof}

\begin{Lem}
The path complex $\mP(\mS)$ satisfies the Net Axiom.
\end{Lem}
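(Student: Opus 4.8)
The plan is to exploit the fact that the edges of $\mP(\mS)$ are exactly the vertical face slides, together with the Net Axiom already established for $\mS$. The first step is to pin down precisely what a single vertical slide does to the complexity of a splitting path. Fix a $2$-cell $f$ of $\mS$ with maximal vertex $\hat u$ and minimal vertex $\hat u'$, and let $a,b$ be the one or two vertices of $f$ adjacent to $\hat u$; by the Morse axiom $c(a)$, $c(b)$ and $c(\hat u')$ are all strictly below $c(\hat u)$. A vertical slide across $f$ is of one of two kinds: a \emph{downward} slide, in which the current path runs along the two edges of $f$ through $\hat u$ (so $\hat u$ is a local maximum of it) and we replace that subpath by the complementary subpath through $\hat u'$; or an \emph{upward} slide, which reverses this. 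I would show that a downward slide deletes the local maximum $\hat u$ and creates at most two new local maxima, located at $a$ and $b$, each of complexity strictly less than $c(\hat u)$ (the only other position affected by the replacement, namely $\hat u'$, has both neighbours of larger complexity and so is a local minimum). Comparing the non-increasingly sorted tuples of maximum-complexities before and after the slide, and observing that the largest value in their symmetric difference is the deleted value $c(\hat u)$, one sees that a downward slide strictly \emph{decreases} the complexity of the path. Symmetrically, an upward slide inserts a local maximum at $\hat u$, of complexity larger than $c(a)$ and $c(b)$, while destroying at most two local maxima of smaller complexity, so it strictly \emph{increases} the complexity. (The degenerate two-edge bigon changes nothing and is irrelevant.) Hence every edge along a complexity-decreasing edge path in $\mP(\mS)$ is a downward slide.

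Next I would set up a bookkeeping device. Given a starting vertex $E$ of $\mP(\mS)$, a splitting path with maxima $v_1,\dots,v_k$, and a complexity-decreasing edge path $E=E^0,E^1,\dots,E^N$ in $\mP(\mS)$, I build a finite rooted forest whose roots are $v_1,\dots,v_k$, maintaining the invariant that after the $i$th step the leaves of the forest are exactly the local maxima of $E^i$ (counted with multiplicity of occurrence in the path). When $E^{i+1}$ is obtained from $E^i$ by a downward slide destroying the local maximum $\hat u$, I declare the leaf $\hat u$ internal and attach to it, as children, the at most two new local maxima of $E^{i+1}$. Step~1 shows the invariant is preserved and that each parent is joined to each of its children by a decreasing edge of $\mS$ lying below it. Therefore a path in the forest from a root $v_j$ to a node at depth $d$ is a strictly decreasing edge path of $\mS$ of length $d$ starting at $v_j$, so $d\le\ell(v_j)$ by the Net Axiom for $\mS$. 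Consequently the subtree at $v_j$ has depth at most $\ell(v_j)$ and out-degree at most $2$, hence at most $2^{\ell(v_j)+1}-1$ nodes, and the whole forest has at most $\ell(E):=\sum_{j=1}^{k}\bigl(2^{\ell(v_j)+1}-1\bigr)$ nodes. Since each of the $N$ slides turns one node from a leaf into an internal node, and no node is ever made internal twice, $N$ is at most the number of nodes of the forest, so $N\le\ell(E)$. As $\ell(E)$ depends only on $E$, this is the bound required by the Net Axiom.

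I expect the only real work to be in Step~1: showing that the change in complexity produced by a single vertical slide is governed entirely by the one maximum $\hat u$ that is deleted or inserted, with every other affected local maximum having strictly smaller complexity. That is what makes the lexicographic comparison of the sorted complexity tuples unambiguous and forces all edges of a complexity-decreasing path to be downward; once it is in hand, the forest argument and the appeal to the Net Axiom of $\mS$ are routine.
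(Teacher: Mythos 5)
Your proof is correct, but it takes a genuinely different route from the paper's. The paper assigns to each splitting path the integer whose base-two expansion records, edge by edge, whether the complexity increases or decreases; horizontal slides preserve this integer, and any complexity-reducing vertical slide replaces a substring $10$ by $01$, by a single digit, or by the empty string, so the integer strictly decreases. The length of a decreasing edge path in $\mP(\mS)$ is therefore bounded by this integer, and notably the paper never invokes the Net Axiom of $\mS$ itself --- only the finiteness of the path and the Morse axiom. Your argument instead leans on the Net Axiom of $\mS$: you track the multiset of maxima through a rooted forest in which each downward slide replaces a leaf by at most two children joined to it by descending edges of $\mS$, so that depth is controlled by $\ell(v_j)$ and the forest has at most $\sum_j(2^{\ell(v_j)+1}-1)$ nodes. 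The key step you correctly isolate --- that a downward slide deletes one maximum and creates only strictly lower ones, so that every edge of a complexity-decreasing path in $\mP(\mS)$ is a downward slide --- is also needed implicitly by the paper. Your bound is explicit but exponential in the $\ell(v_j)$, whereas the paper's potential function gives a bound depending only on the up/down word of the path; on the other hand your forest bookkeeping makes the mechanism by which maxima proliferate more transparent. One small quibble: the two-edge bigon slide is not vacuous --- it removes a spike $v_1\to v\to v_1$ and does decrease complexity --- but since it still deletes the maximum $\hat u$ and creates at most one lower maximum, it fits your Step 1 and does not affect the argument.
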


\begin{proof}
To show that the path complex satisfies the Net Axiom, we will assign to each splitting path $E$ in $\mP$ an integer $n$ as follows: Along each successive edge in $E$, the complexity of the vertex either increases or decreases.  We will write $n$ in base-two so that the $i$th digit from the left is a one if the $i$th edge in $E$ increases and a zero of the $i$th edge decreases.  Since the path $E$ is finite, the number $n$ is finite.  Horizontal slides do not change the number associated to a path, but vertical slides do.  In particular, any vertical slide that reduces the complexity of the path replaces a pair of digits $10$ with something.  By the Morse axiom, this new string has length at most two.  If it is shorter than two then the associated number decreases.  If it is exactly two then the digits $10$ are replaced by $01$, and again the number decreases.  Since we must end with a positive number, the number of vertical that can be done to reduce the complexity of a splitting path is bounded above by the number associated to it.
\end{proof}

Because the path complex satisfies the Morse axiom and the net axiom, it is by definition a height complex:

\begin{Coro}
If $\mS$ is an oriented height complex then the path complex $\mP(\mS)$ is a (non-oriented) height complex.
\end{Coro}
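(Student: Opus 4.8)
The plan is to observe that this corollary is essentially a bookkeeping step: it follows immediately from the two lemmas just proved together with the \emph{definition} of a height complex. Recall from Section~\ref{heightsect} that a cell complex equipped with a complexity function into a partially ordered set is a height complex exactly when it satisfies the Morse axiom and the net axiom. For $\mP(\mS)$ the relevant complexity is the path complexity $c(E)$ from Section~\ref{thinpathsect}, namely the non-increasing tuple of complexities of the maximal vertices of $E$, taking values in the lexicographically ordered set of finite tuples. So the only thing to verify is that $\mP(\mS)$, with this complexity, satisfies both axioms, and that has already been done.

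First I would invoke the first Lemma of this section, which establishes that $\mP(\mS)$ satisfies the Morse axiom: the endpoints of any edge (a vertical slide) are comparable and distinct, since a complexity-reducing vertical slide strictly lowers the path complexity while the reverse raises it; every $2$-cell is a diamond, triangle or bigon by the explicit construction of the $2$-cells of $\mP(\mS)$ from pairs of commuting vertical slides and from $3$-cells of $\mS$; and the projection property for triples of edges was checked there using the flag/simplicial structure of the path link in $\mS$. Then I would invoke the second Lemma, which attaches to each splitting path its base-two ``increase/decrease'' integer and shows that along a complexity-decreasing vertical slide this integer strictly drops, hence every strictly decreasing edge path in $\mP(\mS)$ from a given vertex has bounded length; this is precisely the net axiom for $\mP(\mS)$. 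Combining the two, $\mP(\mS)$ meets the definition of a height complex.

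The one point worth flagging, rather than a genuine obstacle, is the parenthetical ``(non-oriented)'': the statement deliberately does not assert that $\mP(\mS)$ is an \emph{oriented} height complex. A vertical slide in $\mS$ carries no preferred direction compatible with the parallel orientation axiom, so there is no canonical way to orient the edges of $\mP(\mS)$ and one should treat it only as a (possibly unoriented) height complex; this is all that is needed for the thinning arguments of Section~\ref{thinpathsect} to be applied to $\mP(\mS)$ in the iterated setting. Since everything reduces to matching the definition against the two preceding lemmas, I do not expect any hard step here.
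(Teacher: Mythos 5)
Your proposal is correct and is essentially identical to the paper's argument: the corollary is stated there as an immediate consequence of the two preceding lemmas (Morse axiom and Net axiom for $\mP(\mS)$) together with the definition of a height complex. Your additional remarks on the choice of complexity function and on the ``(non-oriented)'' parenthetical are accurate but not needed beyond what the paper already records.
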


\section{Projecting between complexes}
\label{projectionsect}

The path complex is not oriented, so to understand paths in $\mP(\mS)$, we return to the unoriented thin position defined in Section~\ref{thinpathsect}.  By Lemma~\ref{unorientedthinninglem}, every path in $\mP$ can be thinned to produce a path $E$ in which the maxima have index one.  Because $\mP$ is not an oriented height complex, this does not immediately imply that the minima are incompressible.  However, each minimum is connected to an index-zero vertex by a decreasing path.  We can insert into $E$ a path from each minimum down to an index zero vertex, then back to the original minimum.  The resulting path will have index-one maxima and index-zero minima.

A vertex in $\mP$ has index zero if no path in the equivalence class it represents can be thinned.  In other words, every path in $\mS$ represented by an index-zero vertex in $\mP$ is thin.  We would like to characterize paths in $\mS$ represented by index-one vertices $\mP$.

\begin{Lem}
\label{criticalpathonlyonewrvertexlem}
If $E = \{e_i\}$ is a splitting path representing a vertex $w$ in $\mP$ with index one then all but one of the maxima of $E$, are strongly irreducible.
\end{Lem}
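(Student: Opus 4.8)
The plan is to unwind the hypothesis that $w$ has index one into the statement that the descending link $L_w$ of $w$ in $\mP(\mS)$ is non-empty and disconnected, and then to argue by contradiction: if $E$ had two distinct maxima that are not strongly irreducible, then $L_w$ would be connected. So suppose $v_1\neq v_2$ are maxima of $E$ each with path index $\neq 1$. (In particular $E$ is not thin, so by Lemma~\ref{thinpathsimaxlem} at least one maximum is not strongly irreducible; the content of the statement is that there is at most one such.)

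First I would organize the vertices of $L_w$. Every edge of $\mP(\mS)$ below $w$ is a complexity-reducing vertical face slide, and such a slide replaces the two edges adjacent to a single maximum of $E$ by the complementary path of a $2$-cell below that maximum. Thus the vertex set of $L_w$ is a disjoint union $\bigsqcup_v A_v$, where $A_v$ is the set of edges below $w$ arising from vertical slides at the maximum $v$. The next point is that $A_{v_i}\neq\emptyset$. Since $v_i$ has path index $\neq 1$, the path link $L^E_{v_i}$ has trivial $\pi_0$, i.e.\ is connected; because $L^E_{v_i}$ is the full subcomplex spanned by the disjoint, non-empty positive and negative path links, connectivity forces an edge of $L^E_{v_i}$ from some vertex $u^-_F$ of the negative path link to some vertex $u^+_{F'}$ of the positive one. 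Exactly as in the discussion preceding Lemma~\ref{thinpathsimaxlem}, after horizontal slides one obtains a path $G\sim E$ for which this edge joins $u^-_G$ to $u^+_G$; the vertical slide across the corresponding $2$-cell of $\mS$ strictly reduces the complexity of $G$, so it determines an edge of $\mP(\mS)$ below $w$. Hence $A_{v_1}$ and $A_{v_2}$ are both non-empty.

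The key step is to show that every vertex of $A_{v_1}$ is joined, by an edge of $L_w$, to every vertex of $A_{v_2}$. Let $s\in A_{v_1}$ be realized by a vertical slide at $v_1$ on a path $F_s\sim E$ and let $s'\in A_{v_2}$ be realized by a vertical slide at $v_2$ on $F_{s'}\sim E$. I would produce a single path $F\sim E$ admitting both slides. Since $v_1\neq v_2$, a horizontal slide can change the local picture (the edges immediately before and after a maximum) at $v_1$ or at $v_2$ but never at both, and horizontal slides acting near $v_1$ commute with those acting near $v_2$; together with the connectivity of the positive and negative path links at $v_2$ --- each horizontal slide that alters $u^{\pm}$ at a maximum moves along an edge of the relevant path link, and conversely edges of that path link are realized by horizontal slides on suitable representatives --- this lets one modify $F_s$ by horizontal slides supported near $v_2$ until it agrees with $F_{s'}$ near $v_2$, without touching its picture near $v_1$. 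The resulting $F$ admits a vertical slide at $v_1$ representing $s$ and one at $v_2$ representing $s'$; performing them in either order yields the same path, so they span a ``type one'' $2$-cell of $\mP(\mS)$ below $w$ whose two edges below $w$ are exactly $s$ and $s'$. The same reasoning pairwise joins $A_{v_1}$, $A_{v_2}$, and every other non-empty $A_v$, so $L_w=\bigsqcup_v A_v$ is connected --- contradicting that $w$ has index one. Therefore at most one maximum of $E$ fails to be strongly irreducible.

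The main obstacle is the construction of this common representative $F$: realizing a prescribed vertical slide at $v_1$ and a prescribed vertical slide at $v_2$ simultaneously. This should be reduced to two auxiliary facts that I would establish first --- that the positive and negative path links of a maximum are connected (so that the required edges before and after $v_2$ can be reached by horizontal slides supported near $v_2$), and that horizontal slides acting near distinct maxima commute, with some extra bookkeeping in the case that $v_1$ and $v_2$ are adjacent maxima sharing a single minimum. The rest is a direct unpacking of the definitions of $\mP(\mS)$, of the path link, and of the index.
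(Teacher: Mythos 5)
Your proposal is correct and follows essentially the same route as the paper: both arguments show that vertical slides at two distinct weakly reducible maxima can be realized simultaneously on a single representative equivalent to $E$ (because the horizontal slides needed at one maximum are supported in the segments descending from that maximum and do not disturb the other), so the corresponding edges below $w$ cobound a $2$-cell of $\mP(\mS)$, forcing the descending link of $w$ to be connected and contradicting index one. Your version merely makes explicit a few points the paper leaves implicit (non-emptiness of each $A_{v_i}$ and the bookkeeping for adjacent maxima sharing a minimum).
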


\begin{proof}
An edge below $w$ in $\mP$ corresponds to a 2-cell $q$ in $\mS$ whose maximum vertex $v$ is a maximum of $E$ and such that, after a sequence of horizontal slides, the edges before and after $v$ are edges of $q$.  If $v'$ is a second maximum of $E$ and $v'$ is weakly reducible then $E$ is also equivalent to a path in which the edges before and after $v'$ are in a common 2-cell.  Turning the first of these paths into the second involves horizontal slides of edge that are in the segments of the path strictly descending from $v'$, so they do not affect the segments descending from $v$.  Thus $E$ is equivalent to a path in which the edges before and after $v$ and $v'$, respectively, simultaneously bound disjoint faces.

By this argument, every edge of $\mP$ corresponding to a vertical slide of $E$ across a square below a different maximum of $E$ is connected in the descending link of $w$ to the edge corresponding to the slide across $q$.  If there are such squares below two distinct maxima of $E$ then the descending link of $w$ is connected (in fact, it has diameter at most two) so $w$ does not have link index one.
\end{proof}

\begin{Lem}
\label{critpathcritmaxlem}
If $E$ is a splitting path representing a vertex in $\mP$ with link index one then the one weakly reducible maximum $v$ of $E$ has path index two.
\end{Lem}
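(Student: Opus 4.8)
Write $w$ for the vertex of $\mP$ represented by $E$. The aim is to show that the path link $L^E_v$ of the one weakly reducible maximum $v$ is connected with non-trivial fundamental group; that is exactly what it means for $v$ to have path index two. Connectedness is automatic, since $v$ is weakly reducible, so everything reduces to showing $\pi_1(L^E_v)\neq 1$ (which also rules out $v$ being floppy). The plan is to read this off the descending link $L_w$ of $w$ in $\mP$: since $w$ has link index one, $L_w$ is disconnected, and I will show that $L_w$ disconnected forces $\pi_1(L^E_v)\neq 1$. Intuitively this is the standard phenomenon that $\pi_0$ of a space of paths between two connected subcomplexes is a quotient of $\pi_1$ of the ambient complex.

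First I would describe $L_w$ combinatorially. Since $w$ has descending edges, some path equivalent to $E$ admits a complexity-reducing vertical slide; and every maximum of $E$ other than $v$ is strongly irreducible, so its path link is disconnected and it admits no vertical slide. Hence the vertices of $L_w$ are exactly the complexity-reducing vertical slides at $v$ of paths equivalent to $E$. Such a slide pushes a path across a $2$-cell of $\mS$ whose upper edges are the path's two edges at $v$; recording the edge entering $v$ as a vertex of the negative path link $L^-_v$ and the edge leaving $v$ as a vertex of the positive path link $L^+_v$, the slide is encoded by an edge of $L^E_v$ joining $L^-_v$ to $L^+_v$. The subcomplexes $L^-_v$ and $L^+_v$ are each connected, because equivalent paths differ by horizontal slides and under each such slide the edge of the path leaving (respectively entering) $v$ is either unchanged or replaced by the other upper edge of a common $2$-cell of $\mS$, hence joined to the old one inside $L^{\pm}_v$. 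Finally, the $2$-cells of $\mP$ below $w$ — coming from $3$-cells of $\mS$ descending from $v$, possibly crushed to triangles or bigons, together with the horizontal slides used to change a path's representative — realize exactly the elementary simplicial moves of the encoding edge inside $L^E_v$; in particular, since $L^E_v$ is a flag complex, they include sliding one endpoint of the encoding edge within $L^-_v$ or within $L^+_v$.

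With this dictionary in hand the conclusion is a short argument by contraposition. Suppose $L^E_v$ is simply connected, and let $a$, $b$ be any two vertices of $L_w$, encoded by edges $\{a^-,a^+\}$ and $\{b^-,b^+\}$ of $L^E_v$ with $a^-,b^-\in L^-_v$ and $a^+,b^+\in L^+_v$. Join $a^-$ to $b^-$ inside $L^-_v$ and $a^+$ to $b^+$ inside $L^+_v$; then $a$, these two arcs and $b$ close up to a loop in $L^E_v$, which is null-homotopic and so bounds a simplicial disk. Chopping that disk into elementary moves of paths from $L^-_v$ to $L^+_v$ and translating each move through the dictionary of the previous paragraph yields an edge path in $L_w$ from $a$ to $b$. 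Thus $L_w$ is connected, contradicting that $w$ has link index one; hence $\pi_1(L^E_v)\neq 1$, and since $L^E_v$ is connected the path index of $v$ is exactly two.

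The hard part is the dictionary of the second paragraph: one must check carefully that the $2$-cells of $\mP$ below $w$ — including the crushed $3$-cubes and those arising from horizontal changes of representative — match precisely the triangle moves of encoding edges in $L^E_v$, with no spurious identifications and none omitted, so that $\pi_0(L_w)$ genuinely is the set of homotopy classes of paths from $L^-_v$ to $L^+_v$ relative to that pair of subcomplexes. This requires unwinding the constructions of the $n$-cells of $\mS$ and of $\mP$ simultaneously, and in particular pinning down when two vertical slides of equivalent paths are joined by a $2$-cell of $\mP$. Once that combinatorial bookkeeping is in place the homotopy-theoretic conclusion is immediate.
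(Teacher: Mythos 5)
Your proposal is correct and follows essentially the same route as the paper: argue by contraposition that simple connectivity of $L^E_v$ forces the descending link of $w$ to be connected, by closing the two encoding edges into a loop via paths in the (connected) positive and negative path links, filling with a simplicial disk, and translating the disk back into a path in $L_w$. The ``combinatorial bookkeeping'' you defer is exactly what the paper's proof supplies: the triangles of the filling disk with vertices of mixed sign are threaded by a single separating arc, giving a chain of edges spanning $L^-_v$ and $L^+_v$ in which consecutive edges lie in a common triangle, hence come from a $3$-cell of $\mS$ and therefore bound a $2$-cell of $\mP$ below $w$.
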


\begin{proof}
Let $w$ be the vertex of $\mP$ representing $E$ and let $v$ be the single maximum in $E$ that does not have path-index one.  We will show that if the path link of $v$ is simply connected then the descending link of $w$ is connected.

Let $f$, $f'$ be any two edges descending from $w$.  By assumption, these correspond to faces $q$, $q'$ in $\mS$ below $v$.  Each face below $v$ determines an arc in the descending link of $v$, from a vertex in the positive path link to a vertex in the negative path link.  Let $p$, $n$ be the vertices in the path link defined by $q$ and let $p'$, $n'$ be the vertices determined by $q'$.  (We will let $p$ and $p'$ be in the positive path link and $n$, $n'$ in the negative path link.)  Because $p$ and $p'$ come from equivalent paths in $\mS$, there is a path in the positive path link from $p$ to $p'$.  Similarly, there is a path in the negative edge link from $n$ to $n'$.  These two paths, along with the edges between $p$, $n$ and $p'$, $n'$ define a closed edge path $\ell$ in the path link of $v$.  

If $L^E_v$ is simply connected then there is simplicial map from a triangulated disk $D$ into $L^E_v$ whose boundary is sent onto $\ell$.  Let $D_+$ be the set of vertices in $D$ that are sent into the positive path link of $v$.  Let $D_-$ be the set of vertices that are sent into the negative path link.  These sets are disjoint and every vertex in $D$ is in one set or the other.  Exactly two edges in $\partial D$ have endpoints with opposite signs (the edges between $p$, $n$ and $p'$, $n'$).  Every triangle has three vertices, so these are either all in the same set or two are in one set and the third is in the other.  

For each of the latter type of triangles, consider an arc between the endpoints of the edges whose vertices are in different sets.  Each endpoint of such an arc will connect to the endpoints of other arcs in adjacent triangles, so these arcs form a properly embedded, one-dimensional set in $D$.  It has exactly two endpoints (in the edges between $p$, $n$ and $p'$, $n'$) so there is a single arc in this set.  This arc defines a sequence of triangles, each with exactly one vertex in $D_+$ or one vertex in $D_-$, such that the first contains the edge between $p$, $n$ and the last contains the edge between $p'$, $n'$.

Let $\tau_1,\dots, \tau_k$ be this sequence of triangles in the path link and let $\alpha_0,\dots,\alpha_k$ be the edges spanning the positive and negative path links such that $\tau_i$ contains $\alpha_{i-1}$ and $\alpha_i$ for each $i$.  Each arc $\alpha_i$ determines a face in $\mS$ along which there is a vertical slide, so it determines an edge $e_i$ in $\mP$ descending from $w$.  Because $\alpha_{i-1}$ and $\alpha_i$ are edges of the same triangle $\tau_i$, these edges come from a 3-cell in $\mS$.  This 3-cell determines a face in $\mP$ below $w$ containing $e_{i-1}$ and $e_i$ in its boundary.  Thus the points corresponding to $e_{i-1}$, $e_i$ in the descending link of $w$ are connected by an arc.  This is true for each $i$, so there is a path in the descending link of $w$ connecting $e = e_0$ to $e' = e_k$.  Because this is true for any pair of edges $e$, $e'$ below $w$, the descending link is connected.
\end{proof}

This completes the characterization of the maximal vertices of any path in $\mS$ that represents an index-one vertex in $\mP(\mS)$.  To characterize the minima of such paths, we will employ the Casson-Gordon axiom.

\begin{Lem}
\label{cg1lem}
If $\mS$ satisfies the Casson-Gordon axiom and $E$ is a splitting path representing an index-one vertex in $\mP$ then every minimum in the path $E$ has index zero.
\end{Lem}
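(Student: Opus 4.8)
The plan is to derive Lemma~\ref{cg1lem} from Lemma~\ref{cgapplem}. All the real work --- running the Casson-Gordon axiom along $E$ to propagate a compression from one minimum to the next --- is already encapsulated in that lemma, so the task reduces to verifying its three hypotheses for a splitting path $E$ representing an index-one vertex $w$ of $\mP(\mS)$: the initial vertex of $E$ is incompressible to the negative side, the final vertex is incompressible to the positive side, and every maximum of $E$ has a well defined path index.

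The first two hypotheses are free. By definition of a splitting path, its endpoints are the distinguished index-zero vertices $v^-$ and $v^+$; an index-zero vertex has empty descending link, hence no edge below it at all, and is therefore incompressible to both sides. In particular $v^-$ is incompressible to the negative side and $v^+$ is incompressible to the positive side.

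For the third hypothesis I would invoke the two preceding lemmas. By Lemma~\ref{criticalpathonlyonewrvertexlem}, every maximum of $E$ except at most one is strongly irreducible, hence has path index one. The one possible exceptional (weakly reducible) maximum $v$ has path index two by Lemma~\ref{critpathcritmaxlem}. So in all cases every maximum of $E$ has a well defined path index, Lemma~\ref{cgapplem} applies to $E$, and its conclusion is exactly that every minimum of $E$ has index zero.

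The place where something genuinely has to be checked --- and where I expect the only subtlety to live --- is this last hypothesis, specifically that the single weakly reducible maximum does not have an \emph{undefined} path index (for instance a path link with all homotopy groups trivial, which would break the Casson-Gordon step). That is precisely what Lemma~\ref{critpathcritmaxlem} rules out, by showing that a simply connected path link at $v$ would force the descending link of $w$ in $\mP(\mS)$ to be connected, contradicting $w$ having index one. If one preferred a self-contained argument rather than quoting Lemma~\ref{cgapplem}, one could instead rerun its proof verbatim: traverse $E$ from $v^-$ to $v^+$, and at each maximum use the Casson-Gordon axiom --- legitimate since the path link has a nontrivial homotopy group, hence is not contractible --- to push a descending edge past that maximum into the next minimum; finitely many minima later this produces an edge descending away from $v^+$, contradicting that $v^+$ is incompressible to the positive side unless every minimum has index zero.
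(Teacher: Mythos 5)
Your proposal is correct and follows essentially the same route as the paper: both verify the hypotheses of Lemma~\ref{cgapplem} by using Lemma~\ref{criticalpathonlyonewrvertexlem} and Lemma~\ref{critpathcritmaxlem} to conclude that every maximum of $E$ has a well-defined path index (one, except for the single weakly reducible maximum, which has path index two), and then invoke Lemma~\ref{cgapplem}. Your extra remarks --- spelling out why the index-zero endpoints satisfy the incompressibility hypotheses, and identifying the well-definedness of the path index at the exceptional maximum as the only real point to check --- are accurate elaborations of what the paper leaves implicit.
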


\begin{proof}
If $E$ is a splitting path corresponding to an index-one vertex in $\mP(\mS)$ then by Lemma~\ref{critpathcritmaxlem}, every maximum of $E$ has index one or two.  Because every maximum has well defined path index and the endpoints have index zero, Lemma~\ref{cgapplem} implies that the minima of $E$ are incompressible.
\end{proof}

\section{Links and Path links}
\label{linksect}

We have described the properties of thin and index-one paths in $\mS$ in terms of their path links.  The path link is a subset of the descending link of the vertex, but in many cases we will be interested in the whole descending link.  In particular, this brings up a subtle distinction in the terminology of Heegaard splittings that is often misunderstood as duplicate terminology.  A thick surface in a generalized Heegaard splitting is called \textit{weakly reducible} if there are disks in the adjacent compression bodies whose boundaries are disjoint, i.e. when the path link is connected.  A general two-sided surface is called \textit{strongly compressible} if it admits compressing disks on opposite sides whose boundaries are disjoint, i.e. its descending link is connected.  Because the path link for a generalized Heegaard splitting is not necessarily equal to its descending link (in particular when the adjacent thin surfaces are compressible) this is an important distinction.

In this section, we introduce an axiom that will help to compare the path link and the entire descending link. \\

\noindent
\textbf{The barrier axiom}:  Given any vertex $v \in \mS$, there are vertices $v_-$, $v_+$ and paths $E_-$, $E_+$ starting in $v_-$ and ending in $v_+$, respectively such that the following hold: Any oriented path descending from $v$ can be extended to a decreasing path ending in $v_+$ that is equivalent to $E_+$.  Any reverse oriented path decreasing from $v$ can be extended to a decreasing path ending at $v_-$ that is equivalent to $E_-$. \\

\begin{Def}
For any vertex $v$, the vertices $v_-$ and $v_+$ guaranteed by this axiom will be called \textit{barrier vertices}.
\end{Def}

\begin{Lem}
Assume $\mS$ satisfies the barrier axiom.  If $v$ is a maximum in a path $E$ such that the minima before and after $v$ are index-zero then the path link of $v$ is equal to its descending link.
\end{Lem}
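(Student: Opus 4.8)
The plan is to show that every edge below $v$ occurs as $u^+_F$ or $u^-_F$ for some path $F$ equivalent to $E$. Since the path link $L^E_v$ is, by definition, the full subcomplex of the descending link $L_v$ spanned by the vertex set $\{u^+_F : F \sim E\} \cup \{u^-_F : F \sim E\}$, this immediately yields $L^E_v = L_v$. Moreover, by the symmetry between the two halves of the barrier axiom (interchanging $v_+$ with $v_-$, the minimum $m^+$ after $v$ with the minimum $m^-$ before $v$, and oriented with reverse-oriented paths), it suffices to prove that every edge $e$ below $v$ pointing away from $v$ occurs as $u^+_F$ for some $F \sim E$.

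First I would describe $\{u^+_F : F \sim E\}$ concretely. Write $E = A \cdot \beta \cdot C$, where $\beta$ is the strictly decreasing oriented subpath of $E$ from $v$ to $m^+$. Because a horizontal face slide never creates or destroys a maximum or minimum, the minimum immediately after $v$ is $m^+$ in every $F \sim E$; and a horizontal slide whose face has both of its extreme vertices on the $v$-to-$m^+$ segment is localized to that segment, while conversely such a slide on $\beta$ extends to one on $E$. It follows that $\{u^+_F : F \sim E\}$ is exactly the set of first edges of strictly decreasing oriented paths from $v$ to $m^+$ that are equivalent to $\beta$. So the goal becomes: every edge $e$ below $v$ pointing away from $v$ is the first edge of a strictly decreasing oriented path from $v$ to $m^+$ equivalent to $\beta$.

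This is where the barrier axiom and the index-zero hypothesis do the work. The path $\beta$ is an oriented path descending from $v$, so it extends to a strictly decreasing path $\widehat\beta = \beta \cdot \gamma$ ending at $v_+$ and equivalent to $E_+$; by Lemma~\ref{decpathkbodylem} this determines a K-compression body $\widehat H$ whose positive boundary represents $v$ and whose negative boundary represents $v_+$, and the surface representing $m^+$ sits inside $\widehat H$ as a K-compression of $\partial_+\widehat H$. The main obstacle is to deduce from this that $\gamma$ is trivial, i.e.\ that $m^+ = v_+$ in $\mS$: since $m^+$ has index zero it admits no K-disk in $M$, hence none in $\widehat H$, so it is not floppy in the complex of surfaces for $\widehat H$, and Lemma~\ref{pathsincbodycomplexlem} forces the surface representing $m^+$ to consist of trivial spheres together with a surface parallel to $\partial_-\widehat H$. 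As a strictly decreasing path from $v_+$ to $v_+$ must be empty (edges change complexity strictly, by the Morse axiom), $\gamma$ is empty and $\widehat\beta = \beta$; in particular $\beta$ ends at $v_+$ and $\beta \sim E_+$. Now any edge $e$ below $v$ pointing away from $v$ is itself an oriented path descending from $v$, so it extends to a strictly decreasing path $P_e$ ending at $v_+ = m^+$ with $P_e \sim E_+ \sim \beta$; its first edge is $e$, and replacing $\beta$ by $P_e$ in $E$ produces the desired $F \sim E$ with $u^+_F = e$. The mirror argument, using that $m^-$ is also index zero and the reverse-oriented half of the barrier axiom, shows every edge below $v$ pointing toward $v$ is some $u^-_F$, and hence $L^E_v = L_v$.
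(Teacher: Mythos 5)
Your overall strategy is the same as the paper's: use the index-zero hypothesis to identify the adjacent minima with the barrier vertices, then use the barrier axiom to extend any edge below $v$ to a decreasing path equivalent to the corresponding segment of $E$, so that every edge below $v$ appears in the path link. The reduction to first edges of paths from $v$ to $m^+$ equivalent to $\beta$ is a correct (and more careful than the paper's) unwinding of the definition of the path link. However, your argument that $\gamma$ is empty is both a detour and out of scope: the lemma is stated for an arbitrary oriented height complex $\mS$ satisfying the barrier axiom, whereas Lemmas~\ref{decpathkbodylem} and~\ref{pathsincbodycomplexlem} are statements about K-compression bodies, specific to $\mS(M,\mT^1)$. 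In the abstract setting the step is immediate and needs none of that machinery: $m^+$ has index zero, so its descending link is empty, so there is no edge below $m^+$ and hence no nonempty decreasing path starting at $m^+$; the extension $\gamma$ guaranteed by the barrier axiom is therefore forced to be empty, giving $m^+ = v_+$ and $\beta \sim E_+$ directly. With that substitution your proof matches the paper's and works in the stated generality.
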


\begin{proof}
Let $v_-$ and $v_+$ be the vertices before and after $v$, respectively.  Because these vertices are index-zero, the descending paths from $v$ to $v_\pm$ cannot be extended further.  Since any descending path can be extended to the barrier vertices of $v$, the vertices $v_-$ and $v_+$ must be the barrier vertices.  Any edge $e$ below $v$ determines an oriented, length one path, so this can be extended to a path from $v_-$ or a path to $v_+$ that is equivalent to the original.  Thus every edge below $v$ is in the path link of $v$.
\end{proof}

\begin{Coro}
\label{linkconnectedcoro}
If $\mS$ satisfies the barrier axiom then the positive and negative links of every vertex are connected.
\end{Coro}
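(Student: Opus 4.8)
The plan is to read ``the positive link of a vertex $v$'' as the subcomplex of the descending link of $v$ spanned by the edges below $v$ that point away from $v$, and ``the negative link'' as the subcomplex spanned by the edges below $v$ that point towards $v$; since the two cases are symmetric I will only discuss the positive link. The underlying idea is that the barrier axiom forces every edge below $v$ pointing away from $v$ to begin a descending path that terminates at the single barrier vertex $v_+$, and that all of these extended paths are equivalent to one another, so that tracking the first edge of such a path as one performs horizontal face slides will connect any two of these edges inside the positive link.

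First I would fix $v$ and invoke the barrier axiom to obtain barrier vertices $v_-$, $v_+$ together with descending paths $E_-$, $E_+$ issuing from $v$ and ending at $v_-$ and at $v_+$ respectively. Prepending the reverse of $E_-$ to $E_+$ produces an oriented path $E$ whose only locally maximal vertex is $v$ and which has no local minimum; for this $E$ the vertex $u^+_E$ is the first edge of $E_+$, and the positive path link $L^+_v$ is by definition the subcomplex of the descending link spanned by $\{u^+_F : F \text{ equivalent to } E\}$, which clearly lies inside the positive link. For the opposite inclusion, given any edge $e$ below $v$ pointing away from $v$, the length-one oriented path from $v$ along $e$ is an oriented path descending from $v$, so by the barrier axiom it extends to a descending path from $v$ to $v_+$ equivalent to $E_+$; prepending the reverse of $E_-$ to this extension gives a path $F$ equivalent to $E$ with $u^+_F = e$. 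Thus the vertices of the positive link are exactly the $u^+_F$, and the positive link coincides with $L^+_v$.

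It then remains to see that $L^+_v$ is connected. Given edges $e, e'$ below $v$ pointing away from $v$, write $e = u^+_F$ and $e' = u^+_{F'}$ with $F, F'$ equivalent to $E$, and join $F$ to $F'$ by a finite sequence of horizontal face slides. A horizontal slide whose defining $2$-cell does not have $v$ as its maximal vertex leaves the edge of the path immediately after $v$ unchanged, hence fixes $u^+$; a slide across a $2$-cell $f$ with maximal vertex $v$ replaces that edge by the other edge of $f$ lying below $v$, and since $f$ is a $2$-cell below $v$ those two edges bound an edge of the descending link, which lies in the positive link because an oriented face slide again yields an oriented path, so both edges still point away from $v$. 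Running through the sequence of slides therefore produces an edge path in the positive link from $e$ to $e'$, so the positive link is connected (vacuously so if $v$ has no edge below it pointing away from $v$). The negative link is handled identically, replacing $E_+$, oriented descending paths, and $u^+$ by $E_-$, reverse oriented descending paths, and $u^-$.

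I expect the only genuine difficulty to be bookkeeping: making precise the form of the paths supplied by the barrier axiom (in particular that $E_+$ and $E_-$ may be taken to issue from $v$), and verifying carefully in the last paragraph that a horizontal slide can move the edge adjacent to $v$ only by sliding across a $2$-cell that is below $v$, together with the parallel-orientation check that this cell contributes the required connecting edge lying in the positive link.
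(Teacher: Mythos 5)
Your proof is correct and follows essentially the same route as the paper: identify the positive/negative link with the positive/negative path link of the path $v_-\to v\to v_+$ supplied by the barrier axiom (this is the content of the lemma immediately preceding the corollary), then observe that the path links are connected. The only difference is that you spell out the connectivity of the path links by tracking $u^+_F$ through horizontal face slides, a step the paper dismisses with ``as noted above''; your explicit argument for it, including the parallel-orientation check, is valid.
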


\begin{proof}
Let $v$ be a vertex in $\mS$, let $v_-$, $v_+$ be its barrier vertices and let $E$ be a directed path from $v_-$ to $v_+$ to $v$.  Because $v_-$ and $v_+$ are barrier vertices, the path link of $v$ is equal to its link, so the positive and negative path links are equal to the positive and negative links, respectively.  As noted above, the positive and negative path links are connect, so this implies that the positive and negative links are connected.
\end{proof}

To show that the complex of surfaces satisfies the barrier axiom, we will need the following lemma:

\begin{Lem}
\label{incompsincompbodieslem}
Every K-incompressible surface in a K-compression body $H$ is boundary parallel or empty (modulo blind isotopy).
\end{Lem}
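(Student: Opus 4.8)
The plan is to adapt the classical fact that an incompressible closed surface in a compression body is boundary parallel, carrying the graph $K = H \cap \mT^1$ along throughout; since $K$-incompressibility is only defined for connected strongly separating surfaces, $S$ is connected. The structural input comes from the construction of $K$-compression bodies in Section~\ref{ksplittingsect}: one extracts a finite collection $\Delta$ of pairwise disjoint $K$-disks for $\partial_+ H$ --- the co-cores of the $1$-handles used to build $H$ (compressing disks), together with bridge disks and tree disks witnessing that the arc and tree components of $K$ can be isotoped into $\partial_+ H$ --- such that $H$ cut along $\Delta$ is a disjoint union of copies of $\partial_- H \times [0,1]$, in which $K$ restricts to vertical arcs $\{\mathrm{pt}\}\times[0,1]$, together with finitely many $3$-balls disjoint from $K$. (Lemma~\ref{edgekbodylem} identifies each disk of $\Delta$ with one of the three elementary moves, which is what makes this decomposition work.) I would also arrange that $\Delta$ meets the interior of $K$ only inside its tree-disk components.

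Next I would put $S$ transverse to $K$ in a position minimising $|S \cap \Delta|$ among surfaces in the sphere-blind isotopy class of $S$, and show $S \cap \Delta = \emptyset$. An innermost circle of $S \cap \Delta$ on $\Delta$ cuts off a subdisk $D' \subseteq \Delta$ with $\partial D' \subset S$ and interior disjoint from $S$; if $\mathrm{int}\,D'$ misses $K$ then $\partial D'$ is inessential in $S \setminus \mT^1$ (otherwise $D'$ is a compressing $K$-disk), so it bounds a disk $E \subset S$ disjoint from $\mT^1$, and surgering $S$ along $D'$ stays in the blind isotopy class while lowering $|S \cap \Delta|$ --- a contradiction; if $\mathrm{int}\,D'$ meets $K$ then $D'$ lies inside a tree disk of $\Delta$, and an innermost analysis there produces a tree disk for $S$, contradicting $K$-incompressibility. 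An arc of $S \cap \Delta$ has endpoints on $\partial\Delta \subseteq \partial_+ H \cup K$, hence on $K$ since $S$ misses $\partial H$; an outermost such arc cuts off a bridge disk for $S$ --- or a tree disk, if it runs through the vertex of a tree-disk component of $\Delta$ --- again a contradiction. So $S$ lies in $H \setminus \Delta$.

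Being connected, $S$ lies in one component of $H \setminus \Delta$. If it is a $3$-ball disjoint from $K$, then $S$ is a sphere bounding a ball whose interior misses $S$ and $\mT^1$, i.e.\ a trivial sphere, hence empty modulo blind isotopy. If it is a product $P = \partial_- H \times [0,1]$ with vertical arcs $K_0$, I would delete a regular neighbourhood $N(K_0)$: the resulting surface in the $I$-bundle $P \setminus N(K_0)$ is incompressible (as $S$ has no $\mT^1$-disjoint compressing disk) and $\partial$-incompressible along the vertical annuli $\partial N(K_0)$ (as $S$ has no bridge disk along $K_0$), so, being closed, it is horizontal by the classical classification of essential surfaces in $I$-bundles, whence $S$ is parallel to $\partial_- H$. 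In either case $S$ is boundary parallel or empty modulo blind isotopy.

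The main obstacle is the interaction of $\Delta$ with the tree components of $K$: one must confirm that $\Delta$ can be chosen meeting the interior of $K$ only inside tree-disk components and that cutting $H$ along it leaves only product pieces with vertical arcs and $K$-disjoint balls, and one must check that every subdisk thrown up by the innermost/outermost analysis is genuinely one of the three admissible kinds of $K$-disk for $S$ rather than something that only resembles one. The $I$-bundle step, though classical, must also be set up so as to accommodate the annuli $\partial N(K_0)$ coming from the vertical arcs.
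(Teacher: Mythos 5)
Your proposal is correct and follows exactly the route the paper sketches: choose a complete collection of K-disks cutting $H$ into $\partial_- H \times [0,1]$ (plus balls), use K-incompressibility to isotope $S$ off that collection via innermost-circle/outermost-arc arguments, and then invoke the classification of closed incompressible surfaces in a product. The paper gives only a two-sentence sketch of this argument, so your write-up simply supplies the details it omits.
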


The proof of this Lemma follows almost immediately from the fact that there is a collection of K-disks that cut $H$ into a manifold homeomorphic to $\partial_- H \times [0,1]$.  Every K-incompressible surface is disjoint from all these K-disks.  The only incompressible surfaces in $\partial_- H \times [0,1]$ are boundary parallel, so the only incompressible surfaces in $H$ must be parallel to the negative boundary.

\begin{Coro}
\label{barrieraxiomlem}
The complex of surfaces $\mS(M, \mT^1)$ satisfies the barrier axiom.
\end{Coro}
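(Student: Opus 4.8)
The plan is to use as barrier vertices the natural ``maximal compressions'' of a representative surface. Fix $v$ and a surface $S$ representing it. An oriented edge below $v$ corresponds to a K-disk on the positive side of $S$, so an oriented, strictly decreasing path starting at $v$ is exactly a sequence of K-compressions carried out inside the closure $W_+$ of the positive complement of $S$; since the complexity is a nonnegative integer (or by the Net axiom), any such path has bounded length. Choose one, $E_+$, that cannot be extended by a further positive-side K-compression. Its terminal vertex $v_+$ then has a representative $S_+$ that is incompressible to the positive side, and by Lemma~\ref{decpathkbodylem} the path $E_+$ realizes a K-compression body $H_+\subset W_+$ with $\partial_+H_+=S$ and $\partial_-H_+=S_+$. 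Define $v_-$, $S_-$, $H_-$, $E_-$ symmetrically from reverse-oriented descending paths (negative-side K-compressions). These are the candidates; it remains to show every oriented descending path from $v$ extends to a decreasing path ending at $v_+$, that any two such extensions are equivalent, and the mirror statements for $v_-$.

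The key step is that $v_+$ does not depend on the choices. If $\widetilde S_+$ (vertex $\widetilde v_+$) is obtained from $S$ by some other sequence of positive-side K-compressions ending at a dead end, realized by a K-compression body $\widetilde H_+\subset W_+$, I would first put $\widetilde S_+$ into efficient position with respect to $S_+=\partial_-H_+$: after sphere-blind isotopies and innermost-disk arguments, using that neither surface admits a positive-side compressing disk, the intersection $\widetilde S_+\cap S_+$ can be removed, so $\widetilde S_+$ is isotoped into $H_+$. There it is a surface obtained by K-compressing $\partial_+H_+$, and the region of $H_+$ between $\widetilde S_+$ and $\partial_-H_+$ is again a K-compression body whose positive boundary $\widetilde S_+$ is incompressible; by Lemma~\ref{incompsincompbodieslem} (equivalently, the product structure of a K-compression body cut along a complete K-disk system, which is the mechanism behind that lemma) this region is a product, so $\widetilde S_+$ is, modulo trivial spheres, parallel to $\partial_-H_+$ and $\widetilde v_+=v_+$. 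Granting this, any oriented strictly decreasing path $E'$ from $v$ realizes (Lemma~\ref{decpathkbodylem}) a K-compression body $H'$ with $\partial_+H'=S$; its terminal surface lies, up to the same isotopy, inside $H_+$ as a K-compression of $\partial_+H_+$, and using the complete K-disk system that splits $H_+$ as $\partial_-H_+\times[0,1]$ it can be K-compressed the rest of the way down to $S_+$. This yields the required extension of $E'$ to a decreasing path ending at $v_+$.

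Finally, any two oriented strictly decreasing paths from $v$ to $v_+$ realize K-compression bodies with isotopic positive boundary $S$, and by the uniqueness just established both are sphere-blind isotopic to $H_+$; so Lemma~\ref{equivalentcompbodylem} shows the two paths are equivalent, and in particular each extension is equivalent to $E_+$. The statements for reverse-oriented paths follow verbatim with $v_-$, $H_-$, $E_-$ in place of $v_+$, $H_+$, $E_+$. The main obstacle is the uniqueness step: because $M$ need not be irreducible and surfaces are only defined up to sphere-blind isotopy, care is needed to isotope a second maximal positive compression into $H_+$ without introducing essential intersections, and it is exactly here that Lemma~\ref{incompsincompbodieslem} --- i.e.\ the fact that a K-compression body is a product away from a complete system of K-disks --- carries the argument.
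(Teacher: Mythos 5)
Your proposal is correct and follows essentially the same route as the paper: take a maximal positive-side descending path realizing a K-compression body $H_+$, isotope any other maximal compression into $H_+$ using the incompressibility of $\partial_- H_+$, apply Lemma~\ref{incompsincompbodieslem} to conclude boundary-parallelism, and invoke Lemma~\ref{equivalentcompbodylem} for equivalence of the paths. The only cosmetic difference is that the paper isotopes the K-disks of the second path into $H_+$ one edge at a time rather than isotoping the terminal surface all at once, but the mechanism is identical.
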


\begin{proof}
Let $v$ be a vertex in $\mS(M, \mT^1)$ representing a surface $S \subset M$.  Let $E$ be a (possibly empty) directed path descending from $v$ such that $E$ cannot be extended.  Such a path exists by the net axiom.  By lemma~\ref{decpathkbodylem}, the path $E$ determines a K-compression body $H$ in $M$.  Because $E$ is maximal, the negative boundary $\partial_- H$ is K-incompressible to the positive side.  Because this surface is the negative boundary of a K-compression body, $\partial_- H$ is K-incompressible to both sides in the complement of $S$.

Let $E' = \{e'_1,\dots,e'_k\}$ be a second oriented path descending from $v$, and assume (via the net axiom) that we have extended $E'$ as far as possible.  Let $D$ be the K-disk on the positive side of $S$ representing $e'_1$.  Because $\partial_- H$ is incompressible in the complement of $S$, every loop of $D \cap \partial_- H$ must be trivial in both surfaces, so $D$ can be blindly isotoped into $H$.  Thus the compression body represented by the path $\{e'_1\}$ can be isotoped into $H$.  Repeating this argument for each edge in the path $E'$ implies that the compression body represented by $E'$ can be isotoped into $H$.  

After this isotopy, $\partial_- H'$ is contained in $H$.  By Lemma~\ref{incompsincompbodieslem}, every surface in a K-compression body is either K-compressible or boundary parallel.  The surface $\partial_- H'$ is K-incompressible on the negative side because it cobounds a compression body with $S$, and it is K-incompressible on the positive side because we have extended it as far as possible.  Thus $\partial_- H'$ must be parallel to $\partial_- H$, so $H'$ is transversely isotopic to $H$.  The surfaces $\partial_- H'$ and $\partial_+ H$ represent the same vertex of $\mS(M, \mT^1)$ so the paths $E$, $E'$ end at the same vertex $v_+$.  By Lemma~\ref{equivalentcompbodylem}, the paths $E$ and $E'$ are equivalent.  

Because the path $E'$ was arbitrary, any path descending from $v$ can be extended to a path ending at $v_+$ that is equivalent to $E$.  This, along with an almost identical argument for reverse directed paths implies that $\mS(M, \mT^1)$ satisfies the barrier axiom.
\end{proof}

Before we end the section, here is a somewhat surprising result about index-one vertices:

\begin{Coro}
\label{directedpathcoro}
If $\mS$ satisfies the Casson-Gordon axiom and the barrier axiom then every descending path from an index-one vertex in $\mS$ is either directed or reverse directed.
\end{Coro}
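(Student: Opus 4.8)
\section*{Proof proposal}

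The plan is to reduce the statement to a single ``propagation'' lemma and then induct along the path. Fix an index-one vertex $v_0$. Since $\mS$ satisfies the barrier axiom, Corollary~\ref{linkconnectedcoro} tells us that the positive and negative links $L_{v_0}^{+}$ and $L_{v_0}^{-}$ are each connected; since $v_0$ has index one, $L_{v_0}$ is disconnected, so in fact $L_{v_0}=L_{v_0}^{+}\sqcup L_{v_0}^{-}$ and no $2$-cell below $v_0$ has one lower edge pointing toward $v_0$ and the other pointing away. Now let $E=e_1,\dots,e_n$ be a descending path from $v_0$. Interchanging the roles of $+$ and $-$ if necessary (which replaces ``directed'' by ``reverse directed'' in the conclusion) we may assume that $e_1$ points away from $v_0$, i.e.\ $e_1\in L_{v_0}^{+}$. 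I would then show by induction on $i$ that each $e_i$ points away from its earlier endpoint, so that $E$ is directed.

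The inductive content is the following \emph{propagation lemma}: if $w$ is a vertex whose descending link splits as $L_w^{+}\sqcup L_w^{-}$ (for instance any index-one vertex, by the first paragraph), and $e\in L_w^{+}$ has other endpoint $w'$, then $w'$ is incompressible to the negative side, i.e.\ $L_{w'}^{-}=\varnothing$. Granting this, the induction runs as follows. Applied to $w=v_0$ it shows $v_1$ is incompressible to the negative side, so $e_2$ cannot point toward $v_1$ and hence points away from it. And once a vertex $v_i$ is incompressible to the negative side, the vertex $v_{i+1}$ reached by a $+$-edge from it is again incompressible to the negative side: a negative edge below $v_{i+1}$ would, after being extended by the barrier axiom and after comparing the resulting compression bodies using Lemma~\ref{equivalentcompbodylem} (and Lemma~\ref{decpathkbodylem}), yield a negative edge below $v_i$. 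So the property persists down the path and every $e_i$ is directed.

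To prove the propagation lemma I would argue by contradiction: suppose $w'$ carries an edge $f\in L_{w'}^{-}$ with other endpoint $w''$. Using the barrier axiom (and the net axiom to push barrier vertices down to index zero) I would extend $e$ to a maximal directed descending path $\hat E$ from $w$ to an index-zero vertex $v_+$, extend a chosen $g\in L_w^{-}$ to a maximal reverse-directed descending path from $w$ to an index-zero vertex, and extend $f$ to a maximal reverse-directed descending path from $w'$. Splicing the reverse of the $g$-extension onto $\hat E$ at $w$ produces an \emph{oriented} path $Q$ with $w$ as its unique maximum and index-zero endpoints; by the lemma of this section that a maximum flanked by index-zero minima has path link equal to its descending link, the path link of $w$ along $Q$ is $L_w=L_w^{+}\sqcup L_w^{-}$, which is disconnected, so $w$ has path index one and $Q$ is thin. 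The edge $f$ must now be made to contradict this picture: feeding the orientation reversal at $w'$ into the Casson-Gordon axiom — and iterating over the finitely many minima of a suitably enlarged oriented path, exactly as in the proof of Lemma~\ref{cgapplem} — should force one of the index-zero endpoints of that path to be compressible on a side on which, being index zero, it is incompressible. That contradiction rules out $L_{w'}^{-}\neq\varnothing$.

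The main obstacle, and the step where essentially all of the care is needed, is precisely this last splicing. A descending path that fails to be directed has its first orientation reversal at a \emph{through-vertex} (the complexity is strictly decreasing there), not at a maximum or a minimum, so neither the Casson-Gordon axiom nor Lemma~\ref{cgapplem} applies to it as it stands. The work is to reorganize the path — prepending reverses of barrier-axiom extensions to manufacture a genuine maximum with well-defined path index, and using horizontal face slides together with Lemma~\ref{equivalentcompbodylem} to track the associated compression bodies — so that the reversal is moved to a minimum flanked by such a maximum. Once this reorganization is in place, the Casson-Gordon axiom produces the forbidden descending edge at an index-zero endpoint and the proof closes; I expect this reorganization, rather than any single deep input, to be where the real content lies.
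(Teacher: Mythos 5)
Your overall strategy is the right one and matches the paper's in spirit: use the barrier axiom to prepend an increasing path from an index-zero vertex to $v_0$, so that $v_0$ becomes the maximum of an oriented path, feed the orientation reversal into the Casson--Gordon axiom to conclude the path link of $v_0$ is connected, and combine with Corollary~\ref{linkconnectedcoro} to contradict index one. Your first application of this (the ``propagation lemma'' at $w=v_0$, handling a reversal at $v_1$) is essentially the paper's argument. The gap is in the inductive step for $i\ge 1$: the claim that a negative descending edge at $v_{i+1}$ ``yields a negative edge below $v_i$'' does not follow. The Casson--Gordon axiom only gives a disjunction --- either the offending edge propagates back, \emph{or the path link of the relevant maximum is contractible} --- and the maximum of any oriented path you can build through $v_i$ is $v_0$, not $v_i$, so the second branch of the disjunction cannot be discharged at $v_i$. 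Moreover, the lemmas you invoke to close this step (Lemmas~\ref{decpathkbodylem} and~\ref{equivalentcompbodylem}) are statements about the complex of surfaces $\mS(M,\mT^1)$, whereas the corollary is about an arbitrary height complex satisfying the axioms; even in the surface setting, pulling a negative K-disk for $S_{i+1}$ tight against $S_i$ can leave you with a K-disk trapped in the compression body on the \emph{positive} side of $S_i$, which is exactly the case those lemmas do not rule out.

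The repair is to dispense with the vertex-by-vertex induction entirely, which is what the paper does: let $E'$ be the maximal directed initial segment of $E$, ending at $v'$, so the next edge of $E$ is below $v'$ and points toward it, i.e.\ $v'$ is compressible to the negative side. Prepend the increasing barrier path from $v_-$ to $v_0$ and apply the Casson--Gordon axiom once, with $v_0$ as the maximum, $v_-$ (index zero, hence incompressible to the negative side) as the preceding minimum, and $v'$ as the terminal vertex. Since $v_-$ cannot be compressible to the negative side, the path link of $v_0$ must be contractible, hence connected; an edge of it joins the positive and negative path links, and Corollary~\ref{linkconnectedcoro} then forces the entire descending link of $v_0$ to be connected, contradicting index one. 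This handles a reversal at any depth in one application of the axiom, with no propagation needed.
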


\begin{proof}
Let $v$ be an index-one vertex in $\mS$ and assume for contradiction there is a descending path $E$ from $v$ that is neither directed nor reverse directed.  Assume without loss of generality that the first edge in $E$ points away from $v$ and let $E'$ be the largest directed subpath of $E$ containing this initial edge.  Because $E$ is not directed, there is an edge $e$ in $E$ after the last vertex $v'$ in $E'$.  

Let $v_-$, $v_+$ be barrier vertices for $v$ and let $E''$ be an increasing directed path from $v_-$ to $v$.  Let $E^*$ be the concatenation of $E''$ and $E'$.  The edge $E$ descends from the last vertex $v'$ of the path $E$ and points towards it.  Because the initial vertex $v_-$ has index-zero, the Casson-Gordon axiom implies that $v$ does not have a well defined path index.  In particular, its path link is connected so there is an arc from its positive path link to its negative path link.  By Corollary~\ref{linkconnectedcoro}, the positive and negative descending links of $v$ are connected, so this implies that the entire link is connected, contradicting the assumption that $v$ has index one.
\end{proof}

\section{Heegaard paths}
\label{splittingpathsect}

In the context of Scharlemann-Thompson thin position, a Heegaard splitting is a generalized Heegaard splitting with a single thick surface.  This notion can be translated directly into the axiomatic setting:

\begin{Def}
A \textit{Heegaard path} in $\mS$ is a splitting path with a single maximum.
\end{Def}

We would like to turn a splitting path into a Heegaard path by ``undoing'' a sequence of thinning moves.  To do this, we need the following axiom: \\

\noindent
\textbf{The translation axiom}: Let $q^+$ be an $n$-cell in $\mS$ such that the  edges of $q^+$ all point away from $v$ and let $q^-$ be an $m$-cell in $\mS$ such that the edges of $q^-$ all point towards $v$.  Then there is an $(n+m)$-cell $C$ isomorphic to $q^+ \times q^-$ such that $q^+ = q^+ \times \{v\}$ and $q^- = \{v\} \times q^+$. Moreover, $C$ is unique up to automorphisms of $\mS$ fixing the vertices of the complex. \\

\begin{Lem}
The complex of surfaces satisfies the translation axiom.
\end{Lem}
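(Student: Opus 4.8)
The plan is to translate the hypothesis into geometry using the results of the previous sections, and then to build $C$ from the union of two tube systems lying on opposite sides of a representative of $v$. First I would fix a representative surface $S$ for $v$. Because every edge of $q^{+}$ points away from $v$, the parallel orientation axiom forces $v$ to be the source of $q^{+}$; taking a monotonic path in $q^{+}$ from the maximal vertex of $q^{+}$ down to $v$ — which is a decreasing \emph{reverse} oriented path, since its edges point from $v$ outward — Lemma~\ref{decpathkbodylem} produces a K-compression body $H^{+}$ with $\partial_{-}H^{+}$ a representative of $v$, which by Lemma~\ref{equivalentcompbodylem} we may take to be $S$. Reversing this path gives an oriented path issuing from $v$, so $H^{+}$ lies on the positive side of $S$; concretely, $\partial_{+}H^{+}$ is obtained from $S$ by attaching a system of pairwise disjoint tubes on the positive side (with the bridge and tree K-disks contributing the usual isotopies of $S$ across $\mT^{1}$). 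Symmetrically, $q^{-}$ yields a K-compression body $H^{-}$ with $\partial_{-}H^{-}=S$ lying on the negative side of $S$. (We treat the sign configuration that arises in the intended application, where $v$ is the minimal vertex of each of $q^{+}$ and $q^{-}$; the other configurations are handled the same way, replacing tubings by compressions where appropriate.)

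Since $H^{+}$ and $H^{-}$ sit on opposite sides of $S$, the two tube systems are disjoint. Let $\Sigma$ be the surface obtained from $S$ by attaching both systems, and let $w$ be the vertex it represents. The meridian disks of the two tube systems (and the corresponding bridge and tree K-disks) give $n+m$ pairwise disjoint K-disks for $\Sigma$: disjoint within each family because they come from the cells $q^{+}$, $q^{-}$, and disjoint across families because the two tube systems occupy disjoint regions. By the construction of the higher-dimensional cells of $\mS(M,\mT^{1})$, a collection of edges below a vertex spans a cell as soon as each pair does, so these $n+m$ K-disks determine an $(n+m)$-cell $C$ below $w$.

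It then remains to identify $C$ with $q^{+}\times q^{-}$. The defining cube of $C$ is $\{0,1\}^{n}\times\{0,1\}^{m}$, where the first $n$ coordinates record which positive tubes have been compressed and the last $m$ which negative ones; because compressing a positive tube and compressing a negative tube are independent operations on $\Sigma$, the surface and the cell attached to a given coordinate point split as a product of their dependence on the first $n$ and on the last $m$ coordinates, matching the defining cubes of $q^{+}$ and $q^{-}$. Tracking these identifications, the face of $C$ on which all negative tubes are compressed recovers $\partial_{+}H^{+}$ together with its sub-tubings, i.e.\ it is exactly $q^{+}$, sitting as $q^{+}\times\{v\}$ with $v$ the minimal vertex of $q^{-}$; symmetrically for $q^{-}$. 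Finally, $C$ was built entirely from the data of $q^{+}$ and $q^{-}$, so any cell with the stated properties agrees with $C$ up to the relabelling ambiguity intrinsic to the construction of $\mS(M,\mT^{1})$ — parallel edges, and distinct cells sharing a common boundary — and every such relabelling is realized by an automorphism of $\mS$ fixing all vertices; this gives the uniqueness clause.

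The step I expect to be the main obstacle is the bookkeeping in the first paragraph: correctly deducing from ``all edges point away from (resp.\ toward) $v$'' that $q^{+}$ and $q^{-}$ are realized by K-compression bodies meeting a common representative of $v$ along its negative boundary on \emph{opposite} sides of it. This requires keeping careful track, within each cube quotient, of which vertex is the maximum, which is the source, and which is the sink, and invoking Lemmas~\ref{decpathkbodylem} and~\ref{equivalentcompbodylem} to pin the representatives down. Once that is in place, the remainder is the essentially formal observation that compressions (equivalently tubings) carried out on opposite sides of a surface commute, and hence assemble into a product cell.
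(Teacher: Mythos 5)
Your existence argument is essentially the paper's: both realize $q^+$ and $q^-$ as systems of tubes (together with the bridge/tree balls) attached to a common representative $S$ of $v$ on opposite sides, arrange that their feet in $S$ are disjoint, and observe that the $n+m$ pairwise disjoint K-disks dual to these handles on the amalgamated surface $\Sigma$ span a cell below the vertex $w$ it represents, with $q^+$ and $q^-$ as the two prescribed faces. That half is sound.

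The gap is in the uniqueness clause, which you dispose of by asserting that any cell with the stated properties ``agrees with $C$ up to the relabelling ambiguity intrinsic to the construction.'' That is precisely what has to be proved. A second $(n+m)$-cell $C'$ containing $q^+$ and $q^-$ as the prescribed faces has some maximal vertex $w'$, and nothing in your argument rules out that $w'$ is represented by a surface $\Sigma'$ that is not isotopic to your $\Sigma$ --- for instance one in which the handles are attached in a knotted or linked fashion but which still admits descending paths realizing $q^+$ and $q^-$. The fact that your $C$ is canonically built from the data of $q^+$ and $q^-$ says nothing about a competitor built some other way. The paper closes this with an actual topological argument: the maximal vertex of $C'$ is a Heegaard surface for the submanifold cobounded by $S^-$ and $S^+$; it must meet the K-disk systems defining $q^+$ and $q^-$ in loops and arcs; and the complement of those K-disks in that submanifold is a product $S\times(0,1)$ in which both $\Sigma$ and $\Sigma'$ sit unknotted with the same boundary pattern, hence are isotopic. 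Only after the maximal vertices are thus identified does the residual ambiguity reduce to a choice of isotopy, i.e.\ a vertex-fixing automorphism of $\mS$. Since this uniqueness is exactly what is invoked later (e.g.\ to conclude $q_k = r_j$ in the proof of Lemma~\ref{equivfillingsthenequivlem}), it cannot be waved through.
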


\begin{proof}
Let $q^+$ be an $n$-cell in $\mS$ such that the  edges of $q^+$ all point away from $v$ and let $q^-$ be an $m$-cell in $\mS$ such that the edges of $q^-$ all point towards $v$.  Let $v^+$, and $v^-$ be the vertices such that $q^+$ and $q^-$ are below $v^+$ and $v^-$, respectively.  Let $S^+$ represent $v^+$, let $S^-$ represent $v^-$ and let $S$ represent $V$.

Because there is a descending path from $v^+$ to $v$, we can isotope $S^+$ so that it coincides with $S$ away from a collection of disks in $S$ and such that between $S$ and $S'$, there is a collection of balls on the negative side of $S$ such that each is either disjoint from $\mT^1$ or intersects $\mT^1$ in an unknotted arc or a regular neighborhood of a vertex.  We can isotope $S^+$ to intersect $S$ similarly, so that between $S^+$ and $S^-$, there are a collection of balls.  Because any two embedded disks in a surface $S$ are ambient isotopic, we can assume that the disk(s) where $S^+$ misses $S$ are disjoint from the disk(s) where $S^-$ misses $S$.  

Let $S^*$ be the surface that results from taking the union of $S'$ and $S''$, then removing the collection of disks where they miss $S$.  Let $v^*$ be the corresponding vertex in $\mS(M, \mT^1)$.  The resulting set is a surface and its isotopy class is uniquely determined.  Such as surface is shown in Figure~\ref{amalgfig}.  The balls between $S^+$ and $S$ and between $S^-$ and $S$ determine K-compressions for $S^*$ and the corresponding edges descending from $v$ determine an $n$-cell $C$ in $\mS$, containing $q^+$ and $q^-$, whose minimum vertex is $v$.
\begin{figure}[htb]
  \begin{center}
  \includegraphics[width=2.5in]{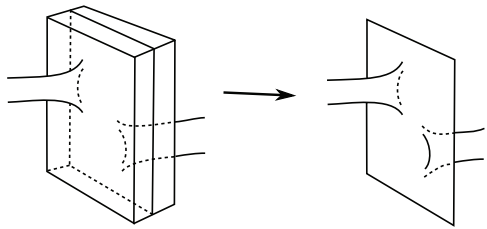}
  \caption{The surfaces $S^+$, $S$ and $S^-$, shown on the left, determine a surface $S^*$ on the right that can be compressed to produce any of the originals.}
  \label{amalgfig}
  \end{center}
\end{figure}

To see that $C$ is unique up to automorphisms of $\mS$ fixing the vertices, let $C'$ be a second $n$-cell containing $q^+$ and $q^-$. There are descending paths in $C'$ from the maximum vertex $v'$ in $C'$ to $v^-$ and $v^+$ so $v'$ is represented by a Heegaard surface $S'$ for the submanifold bounded by $S^-$ and $S^+$. 

There is a series of compressions on the positive side of $S'$ that turn $S'$ into a surface parallel to $S^-$. Every K-disk on the positive side of $S^-$ will intersect this parallel surface in a loop (in the case of a compressing disk), an arc (in the case of a bridge disk) or a collection of arcs (for a tree disk). In particular, this is the case for the collection of K-disks that define $q^-$. This implies that $S'$ also intersects these K-disks in loops and arcs. A similar argument shows that $S'$ intersects the K-disks defining $q^+$ in loops and arcs.

In the submanifold bounded by $S^-$ and $S^+$, the complement of the collection of K-disks defining $q^-$ and $q^+$ is homeomorphic to $S \times (0,1)$. The intersection of $S$ and $S'$ with the complement are unknotted and intersect the boundary in the same pattern, so $S$ and $S'$ are isotopic. This implies that the maximal vertices $v$ and $v'$ of $C$ and $C'$ are the same vertex. There may be different choices of isotopy from $S$ to $S'$, but each isotopy determines an automorphism of $\mS$ that permutes the descending link of $v$, but fixes the vertices and takes $C'$ to $C$.
\end{proof}

\begin{Lem}
\label{heegaardfromsplittinglem}
If $\mS$ is an oriented height complex satisfying the translation axiom  then every splitting path in $\mS$ is the result of thinning a Heegaard path in $\mS$.
\end{Lem}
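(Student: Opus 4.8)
The plan is to induct on the number of maxima of the splitting path $E$. If $E$ has a single maximum it is by definition a Heegaard path and is the result of thinning itself (the empty sequence of face slides); this is the base case. For the inductive step, suppose $E$ has $k\ge 2$ maxima. I will produce a splitting path $E'$ with $k-1$ maxima such that $E$ is the result of thinning $E'$. Since ``is the result of thinning'' is transitive (concatenate the two sequences of horizontal and complexity-reducing vertical face slides), the inductive hypothesis applied to $E'$ then yields a Heegaard path $H$ with $E$ a thinning of $H$.

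To build $E'$, pick two consecutive maxima $v_1,v_2$ of $E$ with the minimum $u$ between them. Let $P_1$ be the (strictly decreasing, oriented) subpath from $v_1$ to $u$, and $P_2$ the subpath from $u$ to $v_2$, so that $P_2$ reversed is strictly decreasing and reverse oriented. By Lemma~\ref{decpathkbodylem}, $P_1$ and $P_2$ determine K-compression bodies $H_1,H_2$ with $\partial_+ H_i$ representing $v_i$ and $\partial_- H_1=\partial_- H_2$ a surface $F$ representing $u$; by Lemma~\ref{alignkbodieslem} they have disjoint interiors and meet along $F$, hence lie on opposite sides of $F$. Each $H_i$ is cut into $\partial_- H_i\times[0,1]$ by a disjoint collection of K-disks (the collection from the proof of Lemma~\ref{incompsincompbodieslem}); a disjoint collection of K-disks spans a cube of $\mS$, giving cells $q_1,q_2$ below $v_1,v_2$ whose common complexity-minimal vertex is $u$. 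Because $P_1$ is oriented while $P_2$ reversed is reverse oriented, the edges of $q_1$ all point toward $u$ and those of $q_2$ all point away from $u$. Finally, the monotone descent of $q_i$ from $v_i$ to $u$ is a strictly decreasing path realizing $H_i$, so by Lemma~\ref{equivalentcompbodylem} it is equivalent to $P_i$, i.e. obtained from $P_i$ by horizontal face slides.

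Now apply the translation axiom with $v=u$, $q^+=q_2$, $q^-=q_1$: there is a cell $C\cong q_2\times q_1$ containing $q_1$ and $q_2$, with complexity-minimal vertex $u$ and complexity-maximal vertex a new vertex $\hat v$. Replacing $P_1,P_2$ in $E$ by the monotone descent/ascent of $q_1,q_2$ is a sequence of horizontal face slides, hence does not change the complexity, and the resulting path $\tilde E$ contains, as a subpath, the monotone boundary path of $C$ from $v_1$ to $v_2$ through $u$. Both boundary paths of $C$ from $v_1$ to $v_2$ (the one through $u$, and the one through $\hat v$, which runs $v_1\to\hat v\to v_2$) are monotone paths from the source of $C$ to its sink, so they are joined by a sequence of $2$-face flips; by the parallel orientation axiom each flip is a vertical face slide, and running from the $\hat v$-path toward the $u$-path each flip strictly lowers the complexity (it replaces a local complexity-maximum by a local complexity-minimum; a bubble-sort argument shows the flips can always be taken in this direction). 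Let $E'$ be the result of sliding $\tilde E$ from the $u$-path to the $\hat v$-path; it is still a splitting path (same endpoints, still oriented), in it $v_1$ lies on an increasing run $\cdots\to v_1\to\hat v$ and $v_2$ on a decreasing run $\hat v\to v_2\to\cdots$ while $\hat v$ is a single new maximum, so $E'$ has $k-1$ maxima. Reading the construction backwards — vertical face slides from the $\hat v$-path to the $u$-path (each strictly decreasing the complexity) followed by horizontal slides — exhibits $E$ as the result of thinning $E'$, completing the induction.

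The main obstacle, and the only place where genuine $3$-manifold input enters, is the second paragraph: manufacturing the cubes $q_1,q_2$ from the compression bodies $H_1,H_2$ with the correct orientations relative to $u$ so the translation axiom applies, and identifying each $P_i$ with a monotone descent of $q_i$ up to horizontal slides. This leans on the compression-body machinery of Sections~\ref{diskssect}--\ref{linksect}, in particular the existence of a disjoint K-disk collection cutting a K-compression body into a product and the equivalence statement of Lemma~\ref{equivalentcompbodylem}; the remaining combinatorics (the $2$-face flip argument inside $C$ and transitivity of thinning) are routine.
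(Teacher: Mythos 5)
Your induction on the number of maxima and the idea of cancelling a minimum against its neighboring maxima via the translation axiom is the right shape, but there is a real gap: the lemma is stated (and in the paper proved) for an \emph{arbitrary} oriented height complex satisfying the translation axiom, whereas your second paragraph manufactures the cubes $q_1,q_2$ out of compression bodies, disjoint K-disk systems, and Lemma~\ref{equivalentcompbodylem}. None of that exists in the axiomatic setting. In a general height complex the axioms give you no reason that a strictly decreasing oriented path $P_1$ is equivalent to a monotone traversal of a single cube below $v_1$ with minimal vertex $u$ --- the translation axiom only produces cells from a pair of cells meeting at a common \emph{lower} vertex with opposite orientations, and the Morse axiom does not force two consecutive edges of $P_1$ (which share a vertex as upper-end-of-one, lower-end-of-the-other) to cobound any $2$-cell at all. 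So your construction of $q^-=q_1$ and $q^+=q_2$ simply isn't available from the hypotheses, and the proof as written only covers $\mS(M,\mT^1)$.

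The paper avoids this entirely by never needing more than the $n=m=1$ case of the translation axiom: take the first maximum $v_j$ and the first minimum $v_k$ after it; the two edges adjacent to $v_k$ have $v_k$ as common lower endpoint with opposite orientations, so the translation axiom yields a single diamond above them; slide up across it, which turns $v_{k-1}$ into the new minimum; repeat, walking the minimum leftward one diamond at a time until it is absorbed into $v_j$, and observe the process terminates because the position of the first maximum strictly increases. Your argument is essentially a macro version of this (building the whole $q^+\times q^-$ cell at once and jumping across it), and restricted to the complex of surfaces it is essentially sound --- the cube combinatorics in your third paragraph (shuffles of up/down moves related by adjacent transpositions, each a complexity-reducing vertical slide) is fine --- but to prove the lemma as stated you must replace the compression-body step with the diamond-at-a-time argument, or else weaken the statement to $\mS(M,\mT^1)$.
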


\begin{proof}
The translation axiom implies that given two edges $e^+$, $e^-$ with lower endpoint $v$, one pointing towards $v$ and the other away from $v$, there is a unique diamond $q$ in $\mS$ whose minimum vertex is $v$ and such that the two edges adjacent to $v$ in $q$ are the edges $e^+$, $e^-$.  A vertical slide across $q$ is the opposite of a thinning move. 

Given a splitting path $\{e_i\}$, let $v_j$ be the first maximum in the path and let $v_k$ be the first minimum after $v_j$.  The edges $e_k$, $e_{k+1}$ before and after $v_k$ are above $v_k$.  One points towards $v_k$ and the other away so by the translation axiom, there is a square $q_1$ such that the edges of $q_1$ adjacent to its minimum are $e_k$ and $e_{k+1}$, as in Figure~\ref{simplefanfig}.
\begin{figure}[htb]
  \begin{center}
  \includegraphics[width=2.5in]{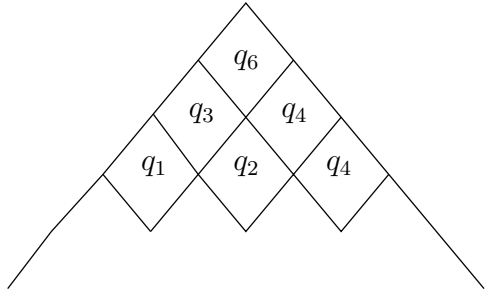}
  \put(-130,45){$q_1$}
  \put(-95,45){$q_2$}
  \put(-112,65){$q_3$}
  \put(-60,45){$q_4$}
  \put(-77,65){$q_4$}
  \put(-95,85){$q_6$}
  \caption{Every splitting path can be turned into a Heegaard path by a sequence of vertical face slides.}
  \label{simplefanfig}
  \end{center}
\end{figure}

If we slide these two edges up across $q_1$, the vertex $v_{k-1}$ will become a minimum between the edge $e_{k-1}$ and the edge of $q_1$ opposite $v_{k+1}$.  By the translation axiom, there is a diamond $q_2$ containing these edges and we can push the path up again.  We can continue in this fashion until we get to the edge $e_{j+1}$.  After we push this edge up across a diamond, the vertex $v_j$ will not be a maximum of the new path, but the next vertex in the new path will be a maximum.  Thus in the new path, the first maximum occurs one step later than in the original path, while the number of edges is the same.

We can continue pushing the minima up in this fashion.  After each step, the number of edges has stayed the same, but the index of the first maximum has increased by one.  Thus the process must eventually terminate, and this can only happen when there are no minima in the interior of the path, so the final path is a Heegaard path.
\end{proof}

\section{Thinning Heegaard paths}
\label{thinheegsect}

In this section, we will look more carefully at the process of producing a Heegaard path from a splitting path.  In particular, we will show that every splitting path determines a unique Heegaard path, up to horizontal slides.  In the case of the complex of surfaces for $\mT^1$ empty, this theorem has long been assumed, though it appears that no explicit proof was given until fairly recently by Lackenby~\cite[Proposition 3.1]{lacknb:1eff}.

Let $E$ be a Heegaard path and let $E'$ be a splitting path that results from thinning $E$.  By definition, there is a sequence $E = E_0, \dots, E_k = E'$ such that each $E_i$ is the result of a face slide on $E_{i-1}$ and the complexities of these paths are non-increasing.  Let $q_i$ be the face in $\mS$ that we slide $E_{i-1}$ across to produce $E_i$, as in Figure~\ref{fanfig}.
\begin{figure}[htb]
  \begin{center}
  \includegraphics[width=2.5in]{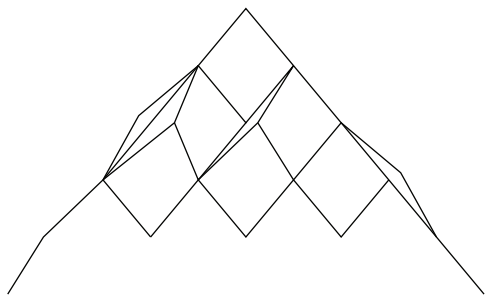}
  \caption{A \textit{fan} records a sequence of horizontal and vertical face slides that turn a Heegaard path into a splitting path.}
  \label{fanfig}
  \end{center}
\end{figure}

\begin{Def}
We will call the collection of faces $Q = \{q_i\}$ defined above a \textit{fan} from $E$ to $E'$.
\end{Def}

\begin{Lem}
\label{allverticallem}
Assume $\mS$ is an oriented height complex satisfying the translation axiom.  Let $E$ be a Heegaard path and $E'$ a splitting path that results from thinning $E$.  Then there is a Heegaard path $E^*$ equivalent to $E$ such that $E'$ is the result of thinning $E^*$ using only vertical slides.
\end{Lem}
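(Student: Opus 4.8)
The plan is to analyze the fan $Q = \{q_i\}$ from $E$ to $E'$ and rearrange the face slides so that all horizontal slides are absorbed into the equivalence class of $E$, leaving only vertical slides. The key observation is that a fan records a sequence in which complexity is non-increasing, so every slide in it is either horizontal (complexity-preserving) or a complexity-decreasing vertical slide; there are no complexity-increasing slides. First I would consider the first vertical slide $q_j$ in the sequence, so that $E_0, \dots, E_{j-1}$ are related to $E$ by horizontal slides only. Since horizontal slides do not change the equivalence class, $E_{j-1}$ is equivalent to $E$, and it is still a Heegaard path (horizontal slides create no new maxima or minima). So we may as well assume the fan begins with a vertical slide.

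The heart of the argument is a commutation lemma: if a horizontal slide across a face $q_i$ is followed by a vertical slide across $q_{i+1}$, then (after possibly enlarging the equivalence class or using the translation axiom to produce the relevant higher-dimensional cell) one can perform the vertical slide first and the horizontal slide afterward, on an equivalent path. The translation axiom is exactly what supplies the ambient cell in which two such slides commute: a vertical slide undoes a thinning, i.e. it reconstructs a diamond from a pair of oppositely-oriented edges below a common minimum, and the translation axiom guarantees that such a diamond exists and is essentially unique. I would use this to push every horizontal slide past every subsequent vertical slide, one adjacent transposition at a time, reducing the total number of horizontal-then-vertical inversions in the fan. After finitely many such transpositions, all the vertical slides come first and all the horizontal slides come last. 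The composite of the leading vertical slides, applied to some path equivalent to $E$ (obtained by absorbing any horizontal slides that had to migrate to the front), is a sequence of thinning moves producing a path $E''$; and the trailing horizontal slides show $E'' $ is equivalent to $E'$. Finally I would note that a sequence of vertical slides starting from a Heegaard path and keeping complexity non-increasing is exactly a "thinning using only vertical slides," so setting $E^*$ to be the Heegaard path equivalent to $E$ from which these vertical slides start completes the proof.

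The main obstacle I anticipate is making the commutation step fully rigorous: when a horizontal slide across $q_i$ and a vertical slide across $q_{i+1}$ share edges, the two faces need not be disjoint, and one must carefully identify the 3-cell (or product cell from the translation axiom) that contains both, check that the alternative ordering really is a legal sequence of slides in $\mathcal{S}$, and verify that complexity stays non-increasing throughout the rearranged sequence — in particular that no intermediate path has a maximum of higher complexity than appears in the original fan. A secondary subtlety is bookkeeping the equivalence class: each time a horizontal slide is pushed to the front, it changes which Heegaard path the vertical slides emanate from, so one must check these remain within a single equivalence class, which again follows because horizontal slides preserve the equivalence relation by definition. Once the commutation lemma and this bookkeeping are in place, the termination of the rearrangement is immediate since each transposition strictly decreases a finite count of inversions.
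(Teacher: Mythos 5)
Your overall strategy --- absorb the leading horizontal slides into the equivalence class of $E$, then commute the remaining horizontal slides past the vertical ones using cells supplied by the translation axiom --- is the right family of ideas, but your commutation runs in the wrong direction, and this leaves a real gap at the end. You push each horizontal slide \emph{past the subsequent} vertical slides, so that the fan ends with all vertical slides first and all horizontal slides last. The vertical slides then take a path equivalent to $E$ down to some $E''$, and the trailing horizontal slides only show $E'' $ is \emph{equivalent} to $E'$. That is a strictly weaker statement than the lemma, which asserts that the specific path $E'$ is reached from $E^*$ by vertical slides alone (and the downstream use in Lemma~\ref{equivfillingsthenequivlem} genuinely needs the two fans to terminate at the \emph{same} path $E'$, since that proof matches up the last faces of the two all-vertical fans). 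To upgrade $E'' \sim E'$ to the stated conclusion you would have to lift the trailing horizontal slides back up through all the vertical slides --- which is exactly the commutation in the opposite direction. The paper avoids this by running the induction the other way from the start: it fixes the number of horizontal slides, takes the \emph{first} horizontal slide $q_j$ in the fan, and shows $j$ can be decreased until the horizontal slide sits at the front, where it is absorbed into the equivalence class of the Heegaard path; an outer induction on the number of horizontal slides finishes the argument.

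The second problem is that the commutation is not an adjacent transposition, so your termination argument by counting inversions does not match the actual combinatorics. When the horizontal slide $q_j$ shares an edge with the preceding vertical slide $q_{j-1}$, the two faces share their minimal vertex $v$, and the translation axiom produces a $3$-cube $C$ at $v$ containing both. Three faces of $C$ realize the pattern (vertical, vertical, horizontal) and the complementary three faces realize (horizontal, vertical, vertical); the replacement is a three-for-three exchange that moves the first horizontal slide \emph{two} positions earlier while preserving the number of horizontal slides. Making this work also requires the observation that the third face of $C$ (the one containing the edges $e_6$ and $e_1$ in the paper's notation) must already occur in the fan, because a Heegaard path cannot contain two consecutive edges on which the complexity respectively increases and decreases. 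Your proposal correctly anticipates that identifying the ambient cell is the delicate point, but without specifying the cube and the direction of the exchange, the ``one transposition at a time'' scheme cannot be carried out as described.
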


\begin{proof}
Let $Q = \{q_i\}$ be a fan from $E$ to $E'$.  If there are no horizontal slides in $Q$ then $E'$ results from a sequence of vertical slides on $E$ and we're done.  Otherwise, we will replace $E$ with an equivalent Heegaard path $E''$ such that there is a fan from $E''$ to $E'$ with one fewer horizontal slide.  By induction, this will imply the result.

Let $q_j$ be the first face in $Q$ that corresponds to a horizontal slide.  Assume we have chosen the fan $Q$ so as to minimize the index $j$ over all fans with the same number of horizontal slides.  If $j = 1$ then this horizontal slide is the first move in the fan, so $E_1$ is equivalent to $E_0 = E$, and there is one fewer horizontal slides in the fan from $E_1$ to $E_0$.  If $j > 1$, we will show that we can choose a fan with the same number of horizontal slides in which $j$ is lower.

The face $q_j$ is a diamond, in which either both edges adjacent to the top vertex point towards it or both point away.  Two of the four edges are in the path $E_{j-1}$ and the other two are in $E_j$.  Let $e_1, e_2$ be the edges contained in $E_{j-1}$ and $e_3, e_4$ the edges in $E_j$ such that $e_1$ and $e_3$ are adjacent to the maximal vertex in $q_j$.  If the face $q_{j-1}$ does not contain one of the edges $e_1$, $e_2$ then the horizontal slide corresponding to $q_{j-1}$ can be performed before the vertical slide corresponding to $q_j$ and we can choose a fan in which $j$ is lower.  Thus $q_{j-1}$ must contain $e_1$ or $e_2$.
\begin{figure}[htb]
  \begin{center}
  \includegraphics[width=3.5in]{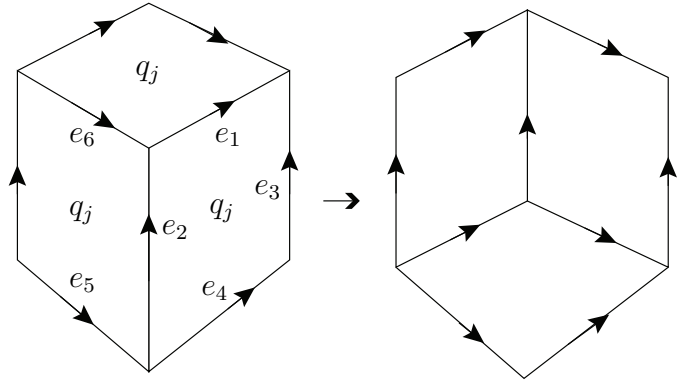}
  \put(-175,90){$e_1$}
  \put(-195,55){$e_2$}
  \put(-160,70){$e_3$}
  \put(-180,32){$e_4$}
  \put(-230,35){$e_5$}
  \put(-230,90){$e_6$}
  \put(-177,63){$q_j$}
  \put(-230,62){$q_j$}
  \put(-205,115){$q_j$}
  \caption{Two vertical slides followed by a horizontal slide can be replaced by a horizontal slide followed by vertical slides.}
  \label{cubefig}
  \end{center}
\end{figure}

Because the move corresponding to $q_{j-1}$ is vertical, the path $E_{j-1}$ contains the two edges of $q_{j-1}$ adjacent to its minimal vertex.  If $e_1$ is one of these edges, then the edge $e_2$ cannot be adjacent to $e_1$ in $E_{j-1}$.  Thus the face $q_{j-1}$ must contain the edge $e_2$ and the faces $q_j$ and $q_{j-1}$ have the same minimum vertex $v$.  Let $e_5$ be the other edge in $q_{j-1}$ adjacent to the minimum.  If $e_2$ and $e_4$ point towards $v$ then $e_5$ will point away, and vice versa.  The edges $e_2$ and $e_4$ are contained in the face $q_j$, so by the translation axiom, there is a 3-dimensional cube $C$ containing the edges $e_2$, $e_4$, $e_5$ and with $v$ as its minimum. 

Let $e_6$ be the edge above $e_2$ in the face $q_{j-1}$.  In the path $E_{j-2}$, the edges $e_6$ and $e_1$ are consecutive.  Because the complexity decreases across $e_6$ and increases across $e_1$ (or vice versa), the Heegaard path $E_0$ cannot contain $e_6$ and $e_1$, so the face of $C$ containing $e_6$ and $e_1$ must appear in the fan $Q$.  By reordering the faces in $Q$, we can assume that this is the face $q_{j-2}$.

The faces $q_{j-2}$, $q_{j-1}$, $q_j$ are three of the six faces in the cube $C$.  The other three faces are bounded by the same path in the 1-skeleton of $C$, but they correspond to a horizontal slide followed by two vertical slide.  Thus if we replace the faces $q_{j-2}$, $q_{j-1}$, $q_j$ in the fan $Q$ with the other three faces of $C$, the resulting fan will have the same number of horizontal slides, but the first horizontal slide with have index two less than the original.  By induction, we can repeat this process until $j = 1$, then replace $E$ with an equivalent Heegaard path in which fewer horizontal slides are required to produce $E'$.
\end{proof}

\begin{Lem}
\label{equivfillingsthenequivlem}
Assume $\mS$ is an oriented height complex satisfying the translation axiom.  If two Heegaard paths in $\mS$ can be thinned to produce equivalent splitting paths then they are equivalent.
\end{Lem}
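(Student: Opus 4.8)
The plan is to reduce everything, via Lemma~\ref{allverticallem}, to the case of vertical slides only, and then establish a confluence statement for ``un-thinning'' (the thinning moves read backwards) by an induction on the combined length of two such sequences. First I would do the reduction. Given Heegaard paths $E_1$, $E_2$ with $E_1$ thinning to a splitting path $E_1'$, $E_2$ thinning to $E_2'$, and $E_1'$ equivalent to $E_2'$, Lemma~\ref{allverticallem} applied to $E_1$ produces a Heegaard path $\hat E_1$ equivalent to $E_1$ that thins to $E_1'$ using only vertical slides; concatenating with the horizontal slides from $E_1'$ to $E_2'$ shows $\hat E_1$ thins (non-increasingly) to $E_2'$, so a second application of Lemma~\ref{allverticallem} gives $\tilde E_1\sim E_1$ thinning to $E_2'$ by vertical slides only, and symmetrically $\tilde E_2\sim E_2$ thinning to $E_2'$ by vertical slides only. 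Thus it suffices to show: if two Heegaard paths each thin to a common splitting path $E'$ using only vertical slides, they are equivalent.

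Reading those thinnings backwards, $E'$ is obtained from each of $E_1$ and $E_2$ by a sequence of complexity-\emph{increasing} vertical slides, each of which takes a local minimum $w$ of the current path together with its two incident edges (one pointing toward $w$, one away) and replaces that sub-path by the two upper edges of the diamond supplied by the translation axiom. I would induct on the sum $a+b$ of the lengths of the two un-thinning sequences $E'\to E_1$ and $E'\to E_2$. If $a=0$ then $E'=E_1$ is itself a Heegaard path, hence unimodal in complexity with no interior minimum, hence admits no un-thinning move, forcing $b=0$ and $E_1=E'=E_2$. Suppose $a,b\ge 1$ and let $w$, $w'$ be the minima of $E'$ at which the two sequences begin. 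If $w=w'$, the uniqueness clause of the translation axiom shows the two first moves produce equivalent paths $F\sim G$ (any two admissible diamonds with minimum vertex $w$ and the prescribed lower edges differ by an automorphism of $\mS$ fixing all vertices, which merely replaces edges by parallel ones and so, by Lemma~\ref{equivalentcompbodylem}, preserves the equivalence class of a path); transporting the horizontal slides relating $F$ and $G$ through the remaining un-thinning moves — the cube/fan manipulation already used in the proof of Lemma~\ref{allverticallem} — exhibits $E_1$ and some $E_2^\ast\sim E_2$ as un-thinnings of $F$ of lengths $a-1$ and $b-1$, and the inductive hypothesis gives $E_1\sim E_2^\ast\sim E_2$.

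If instead $w\ne w'$, then $F$ (the un-thinning of $E'$ at $w$) still contains $w'$ as a minimum and so is not a Heegaard path, whence $a\ge 2$, and symmetrically $b\ge 2$. The two un-thinning moves at $w$ and $w'$ have disjoint support and commute, producing a common path $H$; moreover in the sequence $F\to\cdots\to E_1$ the (persistent) minimum $w'$ must be pushed up at some stage, and since every earlier move is at a minimum distinct from $w'$ it commutes to the front, so $H$ un-thins to $E_1$ in $a-2$ moves, and likewise to $E_2$ in $b-2$ moves; the inductive hypothesis then gives $E_1\sim E_2$. Either way the induction closes, and unwinding the reduction yields the lemma.

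The main obstacle is the move-commutation bookkeeping in the inductive step: checking that un-thinning moves at distinct minima genuinely commute, that a still-present minimum can be commuted to the front of a sequence of moves, and that horizontal slides can be carried through a sequence of un-thinning moves — each an instance of the $3$-cell manipulations already used for Lemma~\ref{allverticallem}, but needing to be reassembled carefully (one should also note that the canonical un-thinning move is a diamond move, so one may assume the given vertical-slide sequences consist of such moves). A secondary care-point is the translation-axiom uniqueness in the $w=w'$ case, where one must verify that passing to an automorphic copy of the diamond only changes the resulting path by replacing edges with parallel ones and hence does not alter the associated sequence of $K$-compression bodies.
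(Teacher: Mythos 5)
Your proof is correct and follows essentially the same route as the paper's: reduce to vertical-only fans via Lemma~\ref{allverticallem}, then induct by peeling off the last thinning move of each fan and matching the corresponding faces using the uniqueness clause of the translation axiom. Your explicit commutation/confluence bookkeeping in the inductive step is just a more detailed version of the paper's step ``we can reorder the fan $R$ so that $r_j$ is the last face.''
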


\begin{proof}
Let $E$ and $F$ be Heegaard paths and let $E'$, $F'$ be splitting paths that result from thinning $E$ and $F$, respectively.  If $E'$ and $F'$ are equivalent then $E'$ is the result of a sequence of horizontal moves on $F'$.  If we append these to the fan from $F$ to $F'$ we get a fan from $F$ to $E'$, so $E'$ is also the result of thinning $F$.  By Lemma~\ref{allverticallem}, there are paths $E^*$, $F^*$ that are equivalent to $E$ and $F$, respectively, such that $E'$ results from thinning each of $E^*$, $F^*$ by only vertical moves.

Let $Q$ be a fan from $E^*$ to $E'$ and let $R$ be a fan from $F^*$ to $E'$ such that each fan contains only vertical moves.  Let $q_k$ be the last face in the fan $Q$ and let $e_1$, $e_2$ be the edges of $q_k$ adjacent to the minimal vertex of $q_k$.  The edges $e_1$ and $e_2$ cannot be in $F^*$ because $F^*$ is a Heegaard path, so $e_1$ and $e_2$ must be contained in a face $r_j$ of the fan $R$.  We can reorder the fan $R$ so that $r_j$ is the last face in the sequence.  Because one of $e_1$, $e_2$ points towards the minimum of the face $q_k$ and the other points away, the translation axiom implies that there is a unique face containing both $e_1$ and $e_2$, so $q_k = r_j$.  Thus the second-to-last paths defined by the fans $Q$ and $R$ are the same.  If we repeat this process, induction implies that the initial paths must be the same, completing the proof.
\end{proof}

The Heegaard path that results from a splitting path by vertical slides along diamonds will be called the \textit{Heegaard path associated to} the splitting path.  Lemma~\ref{equivfillingsthenequivlem} implies that if two splitting paths are equivalent then their associated Heegaard paths are equivalent.  In fact, something slightly stronger is true.

\begin{Lem}
If two splitting paths are related by a sequence of horizontal face slides and vertical face slides across diamonds then their associated Heegaard paths are equivalent.
\end{Lem}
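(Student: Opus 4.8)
The plan is to reduce to a single move and induct. Being related by horizontal face slides is an equivalence relation on oriented paths (a horizontal slide is reversible), so it suffices to prove the following: if two splitting paths $E'$, $F'$ are related by a \emph{single} horizontal face slide, or by a single vertical face slide across a diamond, then the Heegaard paths $E$, $F$ associated to them are equivalent. Applying this to each step of the given sequence $E' = E'_0, \dots, E'_m = F'$ and using transitivity of equivalence then yields the statement. Note that each intermediate $E'_i$ is again a splitting path: a face slide fixes the endpoints of the path and, by the parallel orientation axiom, produces an oriented path, so its associated Heegaard path is well defined via Lemma~\ref{heegaardfromsplittinglem}.

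If $E'$ and $F'$ are related by a horizontal slide then they are equivalent by definition, and $E$, $F$ are Heegaard paths that thin to the equivalent splitting paths $E'$, $F'$; Lemma~\ref{equivfillingsthenequivlem} then shows $E$ and $F$ are equivalent. (This case is exactly the remark preceding the statement.)

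Now suppose $F'$ results from $E'$ by a vertical slide across a diamond $q$. Such a slide replaces the two edges of $q$ meeting the maximal vertex of $q$ by the two meeting its minimal vertex, or vice versa; in the first case the complexity strictly decreases and in the second it strictly increases. Since the reverse of a complexity-increasing slide across $q$ is a complexity-decreasing slide across $q$, after possibly interchanging $E'$ and $F'$ I may assume $F'$ is obtained from $E'$ by a complexity-decreasing vertical slide across $q$. By definition of the associated Heegaard path, $E'$ results from thinning $E$, i.e. there is a fan from $E$ to $E'$. Appending the single complexity-decreasing slide across $q$ extends this to a sequence of face slides from $E$ to $F'$ along which the complexity is still non-increasing, hence a fan from $E$ to $F'$; so $F'$ also results from thinning $E$. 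But $F'$ results from thinning its own associated Heegaard path $F$ as well. Thus $E$ and $F$ are two Heegaard paths that thin to the \emph{same} splitting path $F'$, in particular to equivalent splitting paths, so Lemma~\ref{equivfillingsthenequivlem} gives that $E$ and $F$ are equivalent, completing the induction.

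The only point requiring care is the bookkeeping in the vertical case: checking that appending a complexity-decreasing vertical slide across a diamond to an existing fan still yields a fan (so that the composite genuinely records a thinning), and correctly reducing the complexity-increasing sub-case to the complexity-decreasing one. Beyond that, the argument is a direct appeal to Lemma~\ref{equivfillingsthenequivlem} together with the definition of the associated Heegaard path, and needs no hypotheses on $\mS$ beyond the translation axiom already in force for those lemmas.
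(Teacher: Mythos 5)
Your proof is correct and follows essentially the same route as the paper: induct down to a single slide, dispose of the horizontal case by Lemma~\ref{equivfillingsthenequivlem} directly, and handle a vertical diamond slide by observing that both associated Heegaard paths thin to the common lower splitting path, so Lemma~\ref{equivfillingsthenequivlem} applies again. The paper phrases the vertical case as ``undoing'' the slide on the way up rather than appending it to a fan on the way down, but this is the same argument.
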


\begin{proof}
By inducting on the number of face slides, we need only show that if two paths are related by a single face slide across a diamond then their associated Heegaard paths are equivalent.  If the face slide is a horizontal slide then the paths are equivalent, so Lemma~\ref{equivfillingsthenequivlem} implies that the associated Heegaard paths are equivalent.  If the slide is a vertical slide across a diamond then we can turn the lower path into a Heegaard path by first undoing this vertical slide.  Since the sequence of paths leading to the Heegaard path passes through the other path, the associated Heegaard paths are the same.
\end{proof}

\section{Heegaard splittings}
\label{heegsect}

We now return to the case when $\mS = \mS(M, \mT^1)$.  Define the \textit{genus} of a vertex $v \in \mS(M, \mT^1)$ to be the sum of the genera of the components of a surface representing $v$.  Note that the genus of a vertex does not depend on how the surface intersects $\mT^1$, so it does not change along an edge determined by a bridge disk or tree disk.  The genus is also constant along an edge corresponding to a compressing disk whose boundary separates a component of the surface.

The \textit{jump genus} of an edge in $\mS(M, \mT^1)$ is the (absolute value of the) difference in genus between its endpoints.  The \textit{genus} of an oriented edge path in $\mS(M, \mT^1)$ is one half the sum of the jump genera of its edges plus half the sum of the genera of its endpoints.

\begin{Lem}
The maximal vertex in a Heegaard path in $\mS(M, \mT^1)$ is a Heegaard surface whose genus is equal to the genus of the path.
\end{Lem}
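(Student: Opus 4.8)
The plan is to cut the Heegaard path $E$ at its unique maximum $v$, recognize the two resulting pieces as K-compression bodies, glue them to recover a Heegaard splitting of $M$, and then obtain the genus identity by telescoping the jump genera of $E$.

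Label the vertices of $E$ as $v^-=w_0,w_1,\dots,w_n=v^+$, with $v=w_j$ the unique maximum. First I would observe that a Heegaard path has no minimum in its interior: an interior minimum of a splitting path forces a local maximum of the complexity both before and after it, which would give $E$ at least two maxima. Since the Morse axiom makes the complexities of consecutive vertices distinct, the complexity therefore strictly increases along $w_0,\dots,w_j$ and strictly decreases along $w_j,\dots,w_n$. Reading the first piece from $w_j$ back to $w_0$ gives a strictly decreasing reverse-oriented path from $v$ to $v^-$, and the second piece is a strictly decreasing oriented path from $v$ to $v^+$, so Lemma~\ref{decpathkbodylem} produces K-compression bodies $H^-$ and $H^+$ with $\partial_+ H^- = \partial_+ H^+ = S$ for some surface $S$ representing $v$, $\partial_- H^-$ a representative of $v^-$, and $\partial_- H^+$ a representative of $v^+$. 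Applying Lemma~\ref{alignkbodieslem} to $E$ (one maximum, no interior minimum) shows that $H^-$ and $H^+$ have disjoint interiors and meet precisely along $S$; since $v^-$, $v^+$ are represented by surfaces parallel to $\partial_- M$, $\partial_+ M$ (respectively the empty surface in the closed case), the boundary of $H^-\cup H^+$ is parallel to $\partial M$, and connectedness of $M$ forces $M=H^-\cup_S H^+$. Hence $S$ is a Heegaard surface for $M$.

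For the genus, I would use that a K-compression never raises the genus of a vertex: bridge and tree compressions, and compressions along a disk whose boundary separates a component, leave the genus unchanged, while a compression along a disk non-separating in its component strictly lowers it (a disk whose boundary is trivial in $S$ but essential in $S\setminus\mT^1$ again changes nothing). Each edge of $E$, traversed from its higher-complexity endpoint to its lower one, is such a K-compression, so the genus is non-decreasing along $w_0,\dots,w_j$ and non-increasing along $w_j,\dots,w_n$. The jump genera then telescope on each piece: they sum to $g(v)-g(v^-)$ over the edges of the first piece and to $g(v)-g(v^+)$ over the second. Substituting into the definition of the genus of a path gives
$$\tfrac12\big((g(v)-g(v^-))+(g(v)-g(v^+))\big)+\tfrac12\big(g(v^-)+g(v^+)\big)=g(v),$$
which is the genus of the Heegaard surface $S$.

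The only part I expect to need genuine care is the first one: checking that $H^-$ and $H^+$ truly exhaust $M$ and assemble into a Heegaard splitting in the standard sense, rather than into an abstract stack of compression bodies. Once $M=H^-\cup_S H^+$ is established, the monotonicity of genus under K-compression and the telescoping of jump genera are routine bookkeeping.
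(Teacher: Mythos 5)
Your proposal is correct and follows essentially the same route as the paper: both split the Heegaard path at its unique maximum into two monotonic arcs, identify these with K-compression bodies meeting along their positive boundary (via Lemmas~\ref{decpathkbodylem} and~\ref{alignkbodieslem}) to obtain the Heegaard splitting, and then telescope the jump genera on each side using the fact that K-compression never increases genus. Your added care about the absence of interior minima and about $H^-\cup H^+$ exhausting $M$ makes explicit what the paper leaves implicit, but it is the same argument.
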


\begin{proof}
By Lemma~\ref{alignkbodieslem}, the two monotonic arcs in a Heegaard path determine a pair of K-compression bodies that coincide along their positive boundary.  These compression bodies determine a Heegaard splitting for $M$.  The genus of the positive boundary of a compression body defined by an edge in $\mS$ is the genus of its negative boundary plus the jump genus.  By induction, the same is true for a compression body defined by any monotonic path.  Thus the genus of the single maximum in a Heegaard path is equal to the genus of either of its endpoints plus sum of the jump genera of the edges connecting it to that vertex.  Since there are two such paths, the total is twice its genus, so we divide by two.
\end{proof}

Note that opposite edges of any diamond in $\mS(M, \mT^1)$ have the same jump genera.  Thus if we change a path by a slide across a diamond or a triangle, the genus of the path does not change.  When we slide across a bigon, the genus of the path will change if and only if the jump genus of each of the edges is one.  Recall that given two Heegaard splittings for a 3-manifold, the \textit{stable genus} of the pair of Heegaard splittings is the genus of the smallest common stabilization.  The rest of this section will be devoted to proving the following:

\begin{Lem}
\label{sisimplyconnectedlem}
Every Heegaard splitting of $M$ is represented by a Heegaard path in $\mS(M, \mT^1)$.  Any two splitting paths are related by some sequence of face slides such that the genus of any intermediate is at most the (oriented) stable genus of the pair of Heegaard splittings.
\end{Lem}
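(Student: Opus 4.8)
The plan is to prove the two assertions in turn. For the first, I would start from a Heegaard splitting $M = C_- \cup_\Sigma C_+$ into K-compression bodies with respect to $\mT^1$ whose negative boundaries represent $v_-$ and $v_+$. As in the proof of Lemma~\ref{incompsincompbodieslem}, each $C_\pm$ is cut by a finite collection of K-disks into a product $\partial_- C_\pm \times [0,1]$; performing those K-compressions of $\Sigma$ one at a time inside $C_-$ gives a strictly decreasing reverse oriented path from $\Sigma$ to $v_-$, and inside $C_+$ a strictly decreasing oriented path from $\Sigma$ to $v_+$. Concatenating the reverse of the first with the second yields an oriented path whose only maximum is the blind isotopy class of $\Sigma$, that is, a Heegaard path; by Lemma~\ref{alignkbodieslem} its two monotonic arcs reassemble exactly $C_-$ and $C_+$, so it represents the given splitting, and the genus lemma immediately above shows its genus is $\mathrm{genus}(\Sigma)$.

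For the second assertion I would first collapse the distinction between splitting paths and Heegaard paths. By Lemma~\ref{heegaardfromsplittinglem} each given splitting path $E_i$ is obtained from its associated Heegaard path $\mathcal{H}_i$ by vertical slides across diamonds, and (as in the discussion preceding this lemma) these, together with any horizontal slides or triangle slides, preserve the genus of the path, since opposite edges of a diamond or a triangle carry equal jump genera. Thus $E_i$ is joined to $\mathcal{H}_i$ by face slides through paths of the single genus $\mathrm{genus}(\mathcal{H}_i)$, which is at most the stable genus. It remains to join the two Heegaard paths $\mathcal{H}_1$ and $\mathcal{H}_2$, representing Heegaard surfaces $\Sigma_1$ and $\Sigma_2$. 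By the Reidemeister--Singer theorem, in its oriented form, there is a common stabilization $\Sigma$ of $\Sigma_1$ and $\Sigma_2$ of genus equal to the oriented stable genus $g_s$, reached from each $\Sigma_i$ by a chain of single stabilizations along which the genus increases by one at each step and hence never exceeds $g_s$.

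The heart of the argument is then the claim that if $\Sigma'$ is a single stabilization of $\Sigma$, then a Heegaard path for $\Sigma$ and a Heegaard path for $\Sigma'$ are related by face slides through splitting paths of genus at most $\mathrm{genus}(\Sigma') = \mathrm{genus}(\Sigma)+1$. Here I would exploit the fact that a stabilization is supported in a ball: there is a ball $B$ contained in one of the two compression bodies of the splitting, disjoint from $\mT^1$, and an unknotted torus $T \subset B$ such that $\Sigma'$ is blind isotopic to the result of tubing $\Sigma$ to $T$. The disjoint union $\Sigma \sqcup T$ is a vertex of $\mS(M,\mT^1)$; the two meridian disks of $T$ form a stabilizing pair lying in a torus component, so by construction they cobound a bigon below $\Sigma \sqcup T$, and the vertical slide across that bigon inserts, at the maximum of a Heegaard path for $\Sigma$, a peak of genus $\mathrm{genus}(\Sigma')$, namely $\Sigma \to \Sigma \sqcup T \to \Sigma$ across those meridian disks. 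This single slide raises the genus of the path from $\mathrm{genus}(\Sigma)$ to $\mathrm{genus}(\Sigma')$ and no higher. The tubing disk is a jump-genus-zero edge from $\Sigma'$ to $\Sigma \sqcup T$, so a horizontal slide at the peak turns it into a peak at $\Sigma'$ across the cocore disks of the two new handles; finitely many further horizontal slides then bring the path to a Heegaard path for $\Sigma'$ — to see that such slides suffice, one runs the argument of Lemma~\ref{equivalentcompbodylem} and Lemma~\ref{incompsincompbodieslem} inside the two compression bodies of the stabilized splitting, where all the relevant decreasing paths are forced to be equivalent. Since every slide in this step either fixes the genus of the path or is the single bigon slide raising it by exactly one to $\mathrm{genus}(\Sigma')$, no intermediate path has genus above $\mathrm{genus}(\Sigma')$. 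Concatenating the stabilization moves along both chains and then the genus-preserving reductions $E_i \leftrightarrow \mathcal{H}_i$ gives a sequence of face slides from $E_1$ to $E_2$ through splitting paths of genus at most $g_s$.

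I expect the main obstacle to be this localization-and-conversion step: one must choose the ball $B$ and torus $T$ so that the bigon slide genuinely produces a path equivalent to the desired stabilization, and then check that converting the peak at $\Sigma \sqcup T$ into a Heegaard path for $\Sigma'$ requires only genus-preserving slides and no second genus increase. Carrying out the conversion entirely inside the submanifold bounded by the two K-compression bodies of the stabilized splitting — where Lemma~\ref{incompsincompbodieslem} pins down the incompressible surfaces and Lemma~\ref{equivalentcompbodylem} pins down the equivalence classes of decreasing paths — seems to be the cleanest way to keep this bookkeeping under control.
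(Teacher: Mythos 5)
Your argument for the two halves is structured sensibly, and the single-stabilization step via the bigon below $\Sigma \sqcup T$ is a workable substitute for the paper's (equally brief) remark that stabilizations correspond to bigon slides. But there is a genuine gap at the end of your second half, and it is precisely the point where all the difficulty of the lemma lives when $\mT^1 \neq \emptyset$. After running your two chains of stabilizations, you have two Heegaard paths whose maxima are Heegaard surfaces that Reidemeister--Singer guarantees are \emph{ambiently} isotopic in $M$ --- but vertices of $\mS(M,\mT^1)$ are \emph{transverse} (sphere-blind) isotopy classes rel $\mT^1$. The ambient isotopy will in general sweep the surface through edges and vertices of $\mT^1$, changing the bridge position of the graph, so the two maxima are typically \emph{different vertices} of the complex. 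Lemma~\ref{equivalentcompbodylem} only identifies decreasing paths between the \emph{same} pair of vertices, so it cannot close this gap, and your concluding sentence silently assumes the two chains meet. Note that the problem already arises with zero stabilizations: two Heegaard paths for the same splitting but with $\mT^1$ in different bridge positions must be connected by constant-genus face slides, and your proposal gives no mechanism for this. The paper's proof devotes most of its length to exactly this step: it realizes the ambient isotopy as a generic one-parameter family of Morse functions $\{f_t\}$, arranges (after a perturbation near the vertices of $\mT^1$) that $(f_t,\mT^1)$ is a Morse pair for all but finitely many $t$, and reads off a face slide from each bifurcation of $f_t|_{\mT^1}$ (critical points of edges passing each other, cancelling, or passing vertices). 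Some such sweep-out or bifurcation analysis is unavoidable and is missing from your write-up.

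A secondary, smaller gap: in the first assertion you start ``from a Heegaard splitting into K-compression bodies with respect to $\mT^1$,'' but a Heegaard splitting of $M$ is a priori just a splitting into compression bodies, and arranging that $\mT^1$ meets each of them in vertical arcs and boundary-parallel arcs/trees is itself something to prove. The paper does this via general position for Morse pairs (Lemmas~\ref{gpformorselem} and~\ref{morsethenpathlem}) before invoking Lemma~\ref{heegaardfromsplittinglem}; you should either cite that mechanism or supply an isotopy putting $\mT^1$ in bridge position with respect to the given splitting.
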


Note that if two Heegaard splittings have stable genus equal to their own genus then they two splittings are isotopic.  Thus Lemma~\ref{sisimplyconnectedlem} also applies to isotopies between Heegaard surfaces.

In the case when $\mT^1$ is empty, this is almost immediate:  Every Heegaard surface can be compressed to a sphere in either direction, so this determines a path in which the maximal vertex represents the Heegaard surface.  By Reidemeister~\cite{reid} and Singer~\cite{sing}, any two Heegaard surfaces are related by stabilization, so there is a sequence of faces slides across bigons after which any two paths will have the same maximum.  Lemma~\ref{equivalentcompbodylem} implies that the two paths are related by horizontal slides.  In the case when $\mT^1$ is non-empty, we need to consider how this graph intersects the compression bodies of the Heegaard splitting.

\begin{Def}
Given a Morse function $f : M \rightarrow \mathbf{R}$, the pair $(f, \mT^1)$ will be called a \textit{Morse pair} if the restriction of $f$ to $\mT^1$ is Morse on the interior of each edge or loop in $\mT^1$, each vertex is a local minimum of $f|_{\mT^1}$, and any level contains at most one critical point of $f$, critical point of $f|_{\mT^1}$ or vertex of $\mT^1$.  
\end{Def}

Every Heegaard splitting for $M$ can be represented by a Morse function.  (This follows from the fact that a Morse function can be constructed on any handlebody containing an index-zero critical point and some number of index-one critical points.)  A general position argument implies the following:

\begin{Lem}
\label{gpformorselem}
If $f$ is a Morse function on $M$ then $f$ can be isotoped by an arbitrarily small amount so that $(f, \mT^1)$ is a Morse pair.
\end{Lem}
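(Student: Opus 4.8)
The plan is to obtain the three defining conditions of a Morse pair one at a time, through a finite succession of arbitrarily small isotopies, each of which is either a tiny $C^\infty$-perturbation of $f$ supported near the critical points of $f$, or an ambient isotopy supported in a small neighborhood of $\mT^1$ that is disjoint from $\mathrm{Crit}(f)$. The role of ``arbitrarily small'' here is that moves of the first kind do not change the number, indices, or cyclic order of the critical points of $f$, and moves of the second kind do not touch $\mathrm{Crit}(f)$ at all; hence the critical-point data of $f$, and therefore the Heegaard splitting it represents, is preserved throughout.

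First I would handle the conditions involving $\mathrm{Crit}(f)$. A standard small $C^\infty$-perturbation of $f$ near its finitely many critical points makes all critical values distinct while keeping $f$ Morse. Since $\mathrm{Crit}(f)$ is finite and $\mT^1$ is a $1$-complex, a further small ambient isotopy pushes $\mT^1$ off $\mathrm{Crit}(f)$; equivalently, we may now assume every point of $\mT^1$ is a regular point of $f$. In particular $df_p \neq 0$ at every vertex $p$ of $\mT^1$.

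Next I would make $f|_{\mT^1}$ Morse on the interior of each edge and loop: this is the standard fact that a generic smooth function on an interval or a circle is Morse, and since the interiors of the edges and loops of $\mT^1$ have pairwise disjoint neighborhoods avoiding $\mathrm{Crit}(f)$, a single small perturbation of $f$ supported in their union achieves it for all of them at once. To make a vertex $p$ a strict local minimum of $f|_{\mT^1}$, note that $df_p \neq 0$, so the directions $w$ at $p$ with $df_p(w) > 0$ form a nonempty open cone; a small ambient isotopy supported in an arbitrarily small ball about $p$ (hence disjoint from $\mathrm{Crit}(f)$) can be chosen so that afterwards each of the finitely many edge-germs at $p$ emerges into $\{f > f(p)\}$, which makes $p$ a strict local minimum of $f|_{\mT^1}$; one makes the ``turn-around'' of each bent edge a nondegenerate critical point of $f|_{\mT^1}$ so that the previous step is not undone. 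Finally, there are now only finitely many ``events'' along $\mT^1$ --- the critical points of $f$, the critical points of $f|_{\mT^1}$, and the vertices of $\mT^1$ --- and since each of the latter two kinds can be moved by a small isotopy so as to change the level at which it occurs, while the levels of $\mathrm{Crit}(f)$ form a finite set, one last small isotopy of $\mT^1$ supported near these points (and away from $\mathrm{Crit}(f)$) puts them all at pairwise distinct levels.

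I expect the only genuinely delicate point to be the vertex condition together with the word ``small''. The isotopy that bends the edge-germs upward at $p$ need not be $C^1$-small, since a bent edge may have to descend steeply again just outside the small ball about $p$, so the resulting function $f \circ \psi$ is only $C^0$-close to $f$. This is nonetheless harmless precisely because $\psi$ is supported in a small neighborhood of $\mT^1$ that misses the finitely many critical points of $f$: then $f \circ \psi$ has exactly the same critical points as $f$, with the same indices and at the same levels, so it still represents the given Heegaard splitting. Everything else is routine transversality and Sard's theorem, which is why the statement can be dispatched in the paper as ``a general position argument''.
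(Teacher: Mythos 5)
Your proof is correct, and it fleshes out exactly the argument the paper intends: the paper offers no proof of this lemma beyond the sentence ``a general position argument implies the following,'' and your three-stage perturbation (separating critical values and pushing $\mT^1$ off $\mathrm{Crit}(f)$, making $f|_{\mT^1}$ Morse by genericity, then bending edge-germs upward at each vertex and separating the remaining finitely many levels) is the standard way to carry that out. Your observation that the vertex step is only $C^0$-small but is harmless because it is supported away from $\mathrm{Crit}(f)$ is the one genuinely non-routine point, and you handle it correctly.
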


This gives us the first half of Lemma~\ref{sisimplyconnectedlem}:

\begin{Lem}
\label{morsethenpathlem}
If $(f, \mT^1)$ is a Morse pair then there is a splitting path $E$ in $\mS(M, \mT^1)$ such that each vertex of $E$ is represented by a level set $f^{-1}(t)$ for some $t$.  Conversely, for any splitting path $E$, there is a Morse pair $(f, \mT^1)$ such that every vertex of $E$ is represented by a level set of $f$ and vice versa.
\end{Lem}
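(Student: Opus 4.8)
The plan is to run the standard dictionary between Morse functions and handle attachments and to match it, event by event, against the definition of the edges of $\mS(M,\mT^1)$. For the forward direction, given a Morse pair $(f,\mT^1)$, I would list in decreasing order all the critical values of $f$, the critical values of $f|_{\mT^1}$, and the $f$-values of the vertices of $\mT^1$; by the Morse pair hypothesis each of these records exactly one event. Choose regular values interleaved with them (in the closed case, with the extreme ones above $\max f$ and below $\min f$; when $\partial M\neq\emptyset$ take $f$ adapted to $\partial M$ so that the extreme level sets are parallel to $\partial_\pm M$). Each level set $S_t=f^{-1}(t)$ is a closed surface transverse to $\mT^1$; it separates $M$ into its sublevel and superlevel sets, and each of its components lies between one sublevel component and one superlevel component, so it is strongly separating. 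Orient $S_t$ by declaring the sublevel set $\{f<t\}$ to be its positive side. This choice is forced: with the opposite convention the level sets would trace a reverse oriented path, running from $v^+$ to $v^-$ --- which is precisely the phenomenon flagged in the parenthetical remarks defining $v^\pm$. With the chosen convention the top level set represents $v^-$ and the bottom one $v^+$.

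Next I would check that, passing from one chosen level set to the next, the two classes in $\mS(M,\mT^1)$ either coincide or are joined by an edge oriented consistently with the reading direction, according to the type of the intervening event. An index-$0$ or index-$3$ critical point of $f$ adds or removes a sphere bounding a ball disjoint from $\mT^1$, i.e.\ a trivial sphere, so the two classes are equal. An index-$1$ critical point of $f$ attaches a tube: if it joins a $2$-sphere component to another component the move is a sphere tubing, hence a blind isotopy and the classes coincide; otherwise the meridian of the new tube is essential, disjoint from $\mT^1$, and lies on the sublevel ($=$ positive) side of the higher-complexity level set, so it is the K-disk of a compressing-disk edge between the two classes. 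An index-$2$ critical point of $f$ is this same event seen from the other side and again yields a compressing-disk edge. A critical point of $f|_{\mT^1}$ (a local maximum or minimum along an edge or loop of $\mT^1$) is crossed over a bridge disk and yields a bridge-disk edge. Finally, a vertex $w$ of $\mT^1$ is, by hypothesis, a local minimum of $f|_{\mT^1}$, so the cone of $\mT^1$ just above $w$ is a one-vertex, simply connected piece of $\mT^1\setminus S_t$ cut off by the level set immediately above $f(w)$, and it lies on the sublevel ($=$ positive) side; it is therefore the tree of a tree disk \emph{on the positive side} --- exactly the kind of tree disk not excluded from $\mS(M,\mT^1)$ --- so the two classes are joined by a tree-disk edge. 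Discarding the events of the first two (trivial) types leaves an honest oriented edge path from $v^-$ to $v^+$, i.e.\ a splitting path, whose vertices are precisely the classes of the chosen level sets.

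For the converse, given a splitting path $E$ with vertices $v^-=v_0,\dots,v_n=v^+$, I would first use Lemmas~\ref{edgekbodylem},~\ref{decpathkbodylem} and~\ref{alignkbodieslem} to realize the surfaces along $E$ as pairwise disjoint surfaces cutting $M$ into K-compression bodies, one per edge $e_j$, each differing from a product $\partial_- H\times[0,1]$ by a single $1$-handle, a single boundary-parallel arc, or a single boundary-parallel one-vertex tree (plus a $0$- or $3$-handle for the handlebody pieces, where $\partial_-H=\emptyset$). On such a model piece there is a Morse function with prescribed constant values on $\partial_\pm$ whose only critical behaviour is the matching event: a single critical point of $f$ (of index $1$, or $2$, or --- for a handlebody piece --- $0$) for a compressing-disk edge, a single index-$1$ critical point of $f|_{\mT^1}$ for a bridge-disk edge, a single vertex of $\mT^1$ at a local minimum of $f|_{\mT^1}$ for a tree-disk edge --- with the orientation conventions as above. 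Rescaling the target intervals and gluing these functions along the common level surfaces produces a function $f$ on $M$ that is Morse away from the gluing surfaces; a small perturbation supported near them (the general-position argument of Lemma~\ref{gpformorselem}) makes $(f,\mT^1)$ a Morse pair without altering any level set representing a vertex of $E$. By construction each vertex of $E$ is a regular level set of $f$, and the only events of $f$ are the ones we inserted (together with one index-$0$ and one index-$3$ critical point of $f$ in the closed case, which represent no new vertex), which gives the asserted converse.

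The handle theory here is routine; the real work is the bookkeeping. The delicate points I expect are: verifying that a vertex of $\mT^1$ --- which the Morse pair condition forces to be a local minimum of $f|_{\mT^1}$ --- is crossed by precisely a \emph{positive-side} tree disk, which is the single class of tree disk retained in the construction of $\mS(M,\mT^1)$; checking that index-$0$/index-$3$ critical points of $f$ and sphere tubings correspond to blind isotopies rather than to edges, so they do not disrupt the path; and keeping the orientation conventions consistent so that the level-set path genuinely runs from $v^-$ to $v^+$ --- exactly the subtlety the paper's own parenthetical warnings about $v^\pm$ point to.
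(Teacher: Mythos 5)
Your proposal is correct and follows essentially the same route as the paper: interleaving regular values between the critical events of the Morse pair for the forward direction, and building Morse functions on the K-compression bodies supplied by Lemma~\ref{alignkbodieslem} for the converse. You supply more detail than the paper does on the trivial events (index-$0$/$3$ points, sphere tubings) and on the positive-side tree-disk orientation check, but the underlying argument is the same.
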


\begin{proof}
Let $t_1,\dots, t_k$ be the values such that $f^{-1}(t_i)$ contains either an index-one or index-two critical point of $f$, a critical point of $f|_{\mT^1}$ or a vertex of $\mT^1$.  Assume the values $\{t_i\}$ are labeled so that $t_{i+1} > t_i$ for each $i$.  Let $s_0, \dots, s_k$ be regular values of $f$ such that $t_i < s_i < t_{i+1}$ for each $i$.  For each $i$, $f^{-1}(s_i)$ is a surface in $M$ transverse to $\mT^1$.  Because each surface is a level set of $f$, it is strongly separating and there is an induced transverse orientation.  Thus each $s_i$ determines a vertex $v_i$ in $\mS(M, \mT^1)$.  

Between each $s_{i-1}$, $s_i$, there may be one or more index-zero or index-three critical points, plus a single value $t_i$ at which there is a critical value of $f$, a critical value of $f|_{\mT^1}$ or a vertex of $\mT^1$.  The index-zero and index-three critical points of $f$ correspond to adding or removing trivial sphere components, so these do not change the corresponding vertices in $\mS(M, \mT^1)$.  The edge is determined by the other critical point in this interval.  In the first case, the surfaces representing $v_{i-1}$ and $v_i$ are related by a compression.  In the second case, the surfaces are related by a bridge compression, and in the last case they are related by a tree compression.  Each surface is on the positive side of the previous surface, so the Morse pair $(f, \mT^1)$ determines a splitting path.  

Conversely, for any splitting path $E$ in $\mS(M, \mT^1)$, Lemma~\ref{alignkbodieslem} implies that there is a family of K-compression bodies whose boundaries represent the maxima and the minima in the path.  We can define a Morse function on each compression body such that the critical points of the Morse function correspond to the edges in the path.  The union of these Morse functions is a function on all of $M$ representing $E$.
\end{proof}

We will now make a subtle but important switch from sphere-blind isotopies to isotopies.  In particular, because the complement of a Heegaard surface is irreducible, two Heegaard surfaces will be isotopic if and only if they are blind isotopic.

\begin{Lem}
Every splitting path in $\mS(M, \mT^1)$ determines a Heegaard splitting for $M$ that is unique up to isotopy transverse to $\mT^1$.
\end{Lem}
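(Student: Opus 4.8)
The plan is to route the construction through the associated Heegaard path. Given a splitting path $E$ in $\mS(M,\mT^1)$, Lemma~\ref{heegaardfromsplittinglem} shows that $E$ is obtained by thinning some Heegaard path, and the proof of that lemma produces such a Heegaard path using only vertical slides across diamonds; call it $E^H$. First I would check that the relevant output of $E^H$ is independent of choices: if $E^H_1$ and $E^H_2$ are Heegaard paths both thinning to $E$, then they thin to (tautologically) equivalent splitting paths, so Lemma~\ref{equivfillingsthenequivlem} gives that $E^H_1$ and $E^H_2$ are equivalent, i.e. related by horizontal face slides. Since a horizontal face slide changes neither the set of maxima nor the set of minima of a path (Section~\ref{thinorsect}), the equivalence class of $E^H$ has a single well-defined maximal vertex $v=v(E)$ depending only on $E$.

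Next I would apply the Lemma immediately preceding this one: the maximal vertex $v$ of the Heegaard path $E^H$ is represented by a Heegaard surface $\Sigma$ for $M$, and by Lemma~\ref{alignkbodieslem} the two monotone subarcs of $E^H$ determine K-compression bodies $H^-$ and $H^+$, each with positive boundary $\Sigma$, which share only $\Sigma$ and whose union is $M$. Together with the transverse orientation recorded by the oriented vertex $v$ (which labels the two sides of $\Sigma$), this exhibits the Heegaard splitting $M=H^-\cup_\Sigma H^+$ determined by $E$, and all of this data depends only on the vertex $v$.

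It remains to show that this splitting is unique up to isotopy transverse to $\mT^1$, i.e. that any two representatives of $v$ give splittings carried onto one another by an isotopy of $M$ keeping the surface transverse to $\mT^1$. By the definition of the complex of surfaces, any two representatives of $v$ are related by a finite sequence of transverse isotopies, sphere tubings, and additions or removals of trivial sphere components. A transverse isotopy is by definition an isotopy through surfaces transverse to $\mT^1$; each sphere tubing and each trivial-sphere move is supported away from $\mT^1$ and hence also preserves transversality. When the surface is a Heegaard surface, $M$ minus that surface is irreducible, so every trivial sphere appearing in such a sequence bounds a ball disjoint from $\mT^1$; hence each sphere tubing and each trivial-sphere move can be realized by an honest isotopy supported near that ball, transverse to $\mT^1$ throughout. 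Concatenating these with the transverse isotopies in the sequence gives an isotopy of $M$, transverse to $\mT^1$, carrying one representative of $v$ to the other, and therefore one Heegaard splitting to the other.

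The step I expect to require the most care is this last one: promoting ``$E$ determines the vertex $v$, and two Heegaard surfaces representing $v$ are blind isotopic'' to ``the associated Heegaard splittings are isotopic transverse to $\mT^1$''. The key inputs are irreducibility of the Heegaard-surface complement (so the only ambiguity in $v$ comes from trivial spheres, none of which is $\Sigma$ itself) and the fact that the tubes and trivial spheres occurring in a blind isotopy are disjoint from $\mT^1$, so the isotopies that undo them may be taken transverse to $\mT^1$; one should also note in passing that $\Sigma$ has no sphere components, so a blind isotopy between two such representatives never actually performs a sphere tubing along a component of the surface.
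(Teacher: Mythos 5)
Your proposal is correct and follows essentially the same route as the paper: pass to the associated Heegaard path via Lemma~\ref{heegaardfromsplittinglem}, note it is unique up to horizontal slides (Lemma~\ref{equivfillingsthenequivlem}), and then use irreducibility of the Heegaard surface's complement to upgrade blind isotopy of the maximal vertex to honest isotopy transverse to $\mT^1$. Your write-up just makes explicit a few steps the paper's proof leaves implicit.
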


\begin{proof}
By Lemma~\ref{heegaardfromsplittinglem}, every splitting path determines a Heegaard path, and this path is unique up to horizontal slides.  In particular, the maximum of this path is uniquely determined.  This maximum corresponds to a blind isotopy class of surfaces in $M$, so this surface is only determined up to sphere tubing.  However, K-compression bodies are irreducible (in the complement of the graph) so any surface can only be tubed to a trivial sphere.  Tubing to a trivial sphere produces an isotopic surface, so the maximum of the Heegaard path determines a surface that is unique up to isotopy transverse to $\mT^1$.
\end{proof}

\begin{proof}[Proof of Lemma~\ref{sisimplyconnectedlem}]
Let $(\Sigma, H^-, H^+)$ be a Heegaard splitting for $M$ and let $f$ be a Morse function representing $(\Sigma, H^-, H^+)$.  By Lemma~\ref{gpformorselem}, we can perturb $f$ so that $(f, \mT^1)$ is a Morse pair, and by Lemma~\ref{morsethenpathlem}, there is a path $E$ in $\mS(M, \mT^1)$ such that each vertex is represented by a level set of $f$.  By Lemma~\ref{heegaardfromsplittinglem}, there is a Heegaard path $E^*$ such that $E$ results from thinning $E^*$.  Because the genera of the vertices in $E$ increase monotonically to a maximum, then decrease monotonically, the maximal vertex of $E^*$ is isotopic in $M$ to the maximum vertex of $E$ (though this isotopy may not be transverse to $\mT^1$.  Thus the Heegaard path $E^*$ also represents $(\Sigma, H^-, H^+)$.

Given two splitting paths $E$, $E'$ in $\mS(M, \mT^1)$, we can replace $E$ and $E'$ by Heegaard paths by Lemma~\ref{heegaardfromsplittinglem}.  In this case, the maxima of $E$ and $E'$ are Heegaard surfaces.  By Reidemeister and Singer's Theorem~\cite{reid}~\cite{sing}, we can stabilize each of these surfaces some number of times, (in a ball disjoint from $\mT^1$) after which they will be ambient isotopic.  These stabilizations correspond to sliding the paths $E$ and $E'$ across bigons in $\mS(M, \mT^1)$, so we can slide $E$ and $E'$ to Heegaard paths $E_H$, $E'_H$ such that their maxima represent isotopic surfaces in $M$ (though not necessarily transversely isotopic).

By Lemma~\ref{heegaardfromsplittinglem}, there is a Morse function $f$ on $M$ representing the path $E_H$, and there is a level in this Morse function representing the maximum vertex of $E_H$.  Because the maxima of $E_H$ and $E'_H$ are isotopic, there is a Morse function $h$ on $M$ that is isotopic to $f$ such that a level surface of $h$ represents the maximum vertex in $E'_H$.  We can isotope $h$ slightly so that $(h, \mT^1)$ is a Morse pair, defining a path $E''$.  Since the maxima of $E'_H$ and $E''$ are the same vertex in $\mS(M, \mT^1)$, Lemma~\ref{equivalentcompbodylem} implies that $E'_H$ and $E''$ are equivalent.  Thus we need only show that $E_H$ and $E''$ are related by face slides.

The isotopy from $f$ to $h$ determines a family $\{f_t\}$ of Morse functions, each isotopic to $f$.  If we choose a generic family, the critical points of $f_t$ will be disjoint from $\mT^1$ for every $t$.  Choose a ball neighborhood $N_v$ for every vertex $v$ of $\mT^1$ such that the critical points of each $f_t$ are also disjoint from $N_v$.  Let $N'_v$ be a smaller ball neighborhood within each $N_v$.  Because the critical points are disjoint from these balls, we can perturb the family of functions to a new family $\{f'_t\}$ so that within $N'_v$ the restriction of $f'_t$ is monotonically decreasing on each edge, and so that outside $N_v$, $f'_t$ agrees with $f_t$. 

The final function $f'_1$ will agree with $f_1$ outside each sphere-cross-interval region $N_v \setminus N'_v$.  On each sphere leaf in this product, the restriction of $f'_1$ or $f_1$ contains a single local maximum and a single local minimum.  The union of local minima forms an arc between the boundary components.  There is exactly one isotopy class of arcs connecting the boundary spheres of a sphere cross an interval, so there is an isotopy from $f'_1$ to $f_1$ that is the identity outside of each $N_v \setminus N'_v$.  Thus we can assume that $f'_1 = f_1$.  If we replace the family $\{f_t\}$ with $\{f'_t\}$ we will have found a family of Morse functions such that each vertex of $\mT^1$ is a local maximum of the restriction of each $f_t$ to $\mT^1$.

By general position, we can assume that the restriction of $f_t$ to $\mT^1$ is smooth for each $t$ and is Morse for all but finitely many $t$.  At the non-Morse values, the function $f_t|_{\mT^1}$ will change in one of two ways: two vertices or two critical points in edges of $\mT^1$ may pass through the same level, or two critical points in an edge may cancel/uncancel.  These changes correspond to face slides in the paths determined by the Morse pairs, so there is a sequence of face slides that turn $E_H$ into $E''$.
\end{proof}

\section{The derived complex}
\label{derivedsect}

In order to sum up the picture that comes from this axiomatic framework, we will construct the complex $\mD(\mS)$ that was promised in the introduction.  Theorem~\ref{mainthm} states a certain subcomplex $\mD^2(\mS)$ of $\mD(\mS(M, \mT^1))$ encodes all the combinatorics need to understand $\mS$, and we will present the proof at the end of this section.  

Assume $\mS$ is an oriented height complex satisfying the Casson-Gordon axiom, the barrier axiom and the translation axiom. 

\begin{Def}
The \textit{derived complex} $\mD = \mD(\mS)$ is the complex constructed as follows:
\begin{enumerate}
    \item The vertices of $\mD(\mS)$ are the rigid vertices of $\mS$, i.e. those with well defined index.
    \item For each equivalence class of oriented or reverse oriented paths in $\mS$ decreasing from an index-$n$ vertex to a lower index vertex, we will include in $\mD(\mS)$ a single edge connecting the corresponding vertices in $\mD$.
    \item Any decreasing path in the 1-skeleton of $\mD$ passes through vertices whose indices decrease and any two vertices in such a path are connected by a single path (possibly outside the path.)  We will include in $\mD$ a simplex spanned by these vertices.  
\end{enumerate}
\end{Def}

\begin{Def}
For each positive integer $i$, define $\mD^i(\mS)$ as the subcomplex of $\mD(\mS)$ spanned by vertices of index at most $i$.
\end{Def}

In particular, the complex $\mD^2(\mS(M, \mT^1))$ is a two-dimensional simplicial complex whose vertices are index-zero, -one and -two surfaces and whose faces are determined by triples in which a given index-two surface can be K-compressed to the index-one surface and then to the index-zero surface.

Recall that Theorem~\ref{mainthm} states that any path in $\mS$ can be thinned to a path that is represented in $\mD^1(\mS)$ and any sequence of face slides in $\mS$ are represented by face slides in $\mD^2(\mS)$.

\begin{proof}[Proof of Theorem~\ref{mainthm}]
First we will show that every path in $\mS$ can be thinned to a path that is represented in $\mD(\mS)$.  Because $\mS$ satisfies the net axiom, every splitting path can be weakly reduced to a thin path. By Lemma~\ref{thinpathsimaxlem}, the locally maximal vertices in a thin path are strongly irreducible and by Lemma~\ref{cgforsurfaceslem}, the locally minimal vertices are index-zero.  Because $\mS$ satisfies the barrier axiom and the minima are index-zero, the path link of each vertex is equal to the descending link, so each vertex is, in fact, index-one.  Thus a thin path in $\mS$ consists of a sequence of monotonic sub-paths between index-zero and index-one vertices. 

Given paths $E$ and $E'$ that are represented in $\mD$, let $E = E_0, \dots, E_k = E'$ be a sequence of paths in $\mS$, related by face slides.  This sequence determines a path $F$ in the path complex $\mP(\mS)$.  By Lemma~\ref{unorientedthinninglem}, there is a thin path $F'$ in $\mP(\mS)$ between the vertices representing $E$ and $E'$ that results from thinning $F$.  The local maxima of $F'$ will be index-one vertices of $\mP(\mS)$ and the local minima will be index-zero.  This translates to a sequence of paths $E = E'_0, \dots, E'_k = E'$ in $\mS$. 

The minimal vertices of $F'$ are thin paths in $\mS$, so these are represented in $\mP(\mS)$ as in the above argument.  By Lemma~\ref{critpathcritmaxlem}, each maximal vertex of $F'$ represents a path with a single path-index-two maximum and some number of index-one maxima.  By Lemma~\ref{cg1lem}, this implies that each maximum represents a path with index-zero minima, so by the barrier axiom, the maxima of a path representing a maximum of $F$ consist of one index-two vertex and some number of index-one vertices.  Since the maxima are index-one or index-two and the minima are index-zero, this is represented by a path in $\mD(\mS)$.

Thus a sequence of paths in $\mS$ from $E$ to $E'$ is represented by a sequence of paths in $\mD$.  To see that these are related by face slides in $\mD$, note that each monotonic path in $\mP(\mS)$ corresponds to thinning the path in $\mS$ corresponding to the local maximum at the end of the monotonic path.  Because the local maximum has a single weakly reducible maximum, there is exactly one maximum that can be thinned.  Thus each monotonic segment in the path $F'$ in $\mP(\mS)$ corresponds to sliding a path across a face in $\mD(\mS)$.
\end{proof}

\section{Surfaces and the derived complex}
\label{surfderivedsect}

In this section, we will interpret thin position in terms of surfaces in a 3-manifold $M$ in order to prove Theorem~\ref{mainthm1}.  We will prove each of the statements in the theorem separately, beginning with the claims about incompressible surfaces.  Let $\mD = \mD(M, \mT^1)$ be the derived complex $\mD(\mS(M, \mT^1))$ for the complex of surfaces.

\begin{Lem}
\label{compcriterialem}
A vertex $v$ in $\mD$ represents a compressible surface if and only if there is a path in $\mD$, starting at this vertex, and passing through index-zero and index-one vertices such that the genus does not increase, but the final vertex has genus lower than that of $v$.
\end{Lem}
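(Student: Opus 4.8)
The plan is to run the genus bookkeeping of Section~\ref{heegsect} through the thin-position machinery. The basic observation is that an edge of $\mD$ joining a vertex $w$ of index $n$ to a vertex $w'$ of strictly smaller index is, by construction, represented by a monotonically decreasing edge path in $\mS$, hence (Lemma~\ref{decpathkbodylem}) by a K-compression body whose positive boundary represents $w$ and whose negative boundary represents $w'$. Since bridge and tree compressions preserve genus and an ordinary compression never raises it, $g(w')\le g(w)$, with equality precisely when every $2$-handle of this compression body runs along a separating or inessential curve. Consequently, along a genus-non-increasing path in $\mD$ the genus can strictly decrease only when one crosses an edge from a higher-index vertex down to a lower-index one, and then only via a non-separating compression. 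Since the asserted equivalence is exactly the negation of the incompressibility criterion of Theorem~\ref{mainthm1}, it suffices to prove: $v$ has index zero if and only if every genus-non-increasing path in $\mD$ through index-zero and index-one vertices that starts at $v$ is constant in genus.

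For the forward implication, suppose the surface $S$ representing $v$ is compressible. I would first reduce $S$, by a sequence of K-compressions, to a K-incompressible surface of strictly smaller genus: if $S$ has a non-separating compressing disk, compress along it (lowering the genus) and continue K-compressing until K-incompressible; otherwise compress along a separating compressing disk, splitting off components of smaller genus, one of which is again compressible unless it is a sphere, and iterate. This yields a genus-non-increasing decreasing edge path $E$ in $\mS$ from $v$ to an index-zero vertex $w_0$ with $g(w_0)<g(v)$. I would then convert $E$ into a path in $\mD^1$ with the same endpoints and still non-increasing genus, by replacing each maximal directed or reverse-directed subpath of $E$ by the corresponding edge of $\mD$ (its endpoints identified via Corollary~\ref{barrieraxiomlem} and Lemma~\ref{equivalentcompbodylem}) and, where an intermediate rigid vertex has index exceeding one, reorganizing using Lemma~\ref{unorientedthinninglem} and Theorem~\ref{mainthm} so that all intermediate vertices have index zero or one; genus remains non-increasing because every replacement subpath has endpoints of equal or strictly smaller genus.

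For the reverse implication I argue the contrapositive. Let $v$ have index zero and let $E$ be a genus-non-increasing path in $\mD$ from $v$ through index-zero and index-one vertices; because edges of $\mD$ join vertices of distinct index, the indices along $E$ read $0,1,0,1,\dots$. The first edge, from $v$ to an index-one vertex $\Sigma_1$, comes from a decreasing $\mS$-path from $\Sigma_1$ down to $v$, so $g(v)\le g(\Sigma_1)$; combined with non-increasing genus this forces $g(\Sigma_1)=g(v)$, i.e.\ $\Sigma_1$ compresses to $v$ through a genus-preserving K-compression body on one side, and by the barrier axiom $v$ is one of the two barrier vertices of $\Sigma_1$. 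The crux is to show that the \emph{other} barrier vertex $a_2$ of $\Sigma_1$ again has genus $g(v)$; then $g$ is constant across the subpath $v,\Sigma_1,a_2$, and inducting down $E$ shows $E$ is constant in genus. This step is where I expect the real difficulty: a genus-preserving K-compression body $H$ with $\partial_+ H=\Sigma_1$ and $\partial_- H$ incompressible has all its $2$-handles along separating or inessential curves, and I would try to combine this with the fact that $\Sigma_1$, being index one, has disconnected descending link — so a separating disk realizing a $2$-handle of $H$ meets every compressing disk on the opposite side — to conclude that the opposite-side compression body is genus-preserving as well. In the model case $\mT^1=\emptyset$ with $\Sigma_1$ connected this is forced, since an incompressible negative boundary of the same genus makes $H$ a product, whence $\Sigma_1$ would be incompressible, contradicting index one; the disconnected case is the substance of the lemma and the place where I would spend the most care.
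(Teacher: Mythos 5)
Your reduction at the end of the first paragraph --- ``it suffices to prove: $v$ has index zero if and only if every genus-non-increasing path \dots is constant in genus'' --- is where the argument goes wrong, and it misses the entire point of the lemma. ``Index zero'' means K-incompressible: no compressing disks \emph{disjoint from} $\mT^1$, no bridge disks, no tree disks. ``Compressible'' in the statement means compressible in the ordinary sense, where the compressing disk is allowed to meet $\mT^1$. These are not equivalent: for example, if $\mT^1$ is a nontrivial knot $K$ and $S=\partial N(K)$, then $S$ is index zero (it has no K-disks) but it is compressible via a meridian disk meeting $K$. For exactly this $S$ there \emph{is} a genus-non-increasing, eventually genus-decreasing path in $\mD$ (isotope $S$ through $K$ via bridge moves, which is what the genus-constant edges encode, then compress), so the statement you reduce to is false in both directions. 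The lemma exists precisely to detect compressing disks that intersect $\mT^1$ by means of the genus-constant edges of $\mD$, which represent isotopies of the surface across pieces of the graph.

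This error propagates into both halves of your argument. In the forward direction you ``compress along'' a compressing disk for $S$ without first making it disjoint from $\mT^1$; but such a compression is not an edge of $\mS(M,\mT^1)$ (edges are K-compressions, whose disks avoid $\mT^1$). The paper's proof inserts the essential missing step: isotope $S\cup D$ off of $\mT^1$ first, record that isotopy as a (genus-constant) path in $\mS$, then compress along $D$ and K-compress maximally, and finally thin via Theorem~\ref{mainthm} to land in $\mD$. In the reverse direction you are arguing for the false implication ``index zero $\Rightarrow$ every genus-non-increasing path is genus-constant,'' and even within that framework you acknowledge that the key step (controlling the genus of the second barrier vertex) is unproved. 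The correct reverse direction is much softer: along any path in $\mD$ the genus-constant edges are isotopies of the surface (through $\mT^1$), so the first edge where the genus strictly drops exhibits a compression of a surface isotopic to the one representing $v$, certifying compressibility. I recommend restructuring the proof around the distinction between K-incompressible and incompressible rather than around the index of $v$.
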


(Note that this is the contrapositive of the statement in Theorem~\ref{mainthm1}.)

\begin{proof}
An edge in $\mD$ in which the genus does not increase represents an isotopy of the surface through $\mT^1$.  If such a path exists, then the initial edges, in which the genus is constant, represent an isotopy of a surface representing $v$.  The first edge along which the genus decreases represents a compression of this surface, so $v$ represents a compressible surface.

Conversely, assume $S$ represents a compressible surface and let $D$ be a compressing disk for $S$.  This disk may intersect $\mT^1$, but there is an isotopy of $S \cup D$ after which it is disjoint from $\mT^1$.  This isotopy is represented by a path in $\mS(M, \mT^1)$ starting at $v$.  After this isotopy, compress $S$ across $D$, then K-compress $S$ as far as possible with respect to $\mT^1$.  This sequence of K-compressions is also represented by a path in $\mS$, and the final vertex has index zero in $\mS$.  By Theorem~\ref{mainthm}, we can thin this path so that it is represented in $\mD$.  The genera of the vertices are never above that of $v$, but the final vertex is below that of $v$, so this path characterizes $v$ as compressible.
\end{proof}

\begin{Lem}
Two index-zero vertices $v, v' \in \mD$ representing incompressible surfaces will represent isotopic surfaces if and only if there is a path from one to the other, passing through index-one and index-zero vertices in which the genus is constant.
\end{Lem}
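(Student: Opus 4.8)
The plan is to mimic the proof of Lemma~\ref{compcriterialem}, combining Theorem~\ref{mainthm} with the description of edges of $\mD$ as K-compression bodies (Lemma~\ref{decpathkbodylem}), and to treat the two implications separately.

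For the ``only if'' direction I would start from an ambient isotopy of $M$ carrying the surface $S$ representing $v$ to the surface $S'$ representing $v'$, and put it in general position with respect to $\mT^1$ as in Lemma~\ref{gpformorselem}, so that the family of surfaces it sweeps out is transverse to $\mT^1$ except at finitely many times, where the surface either becomes tangent to an edge of $\mT^1$ or passes through a vertex of $\mT^1$. Each such event is a bridge or tree (de)compression, so the isotopy is recorded by an edge path $P$ in $\mS(M,\mT^1)$ from $v$ to $v'$ every edge of which has jump genus zero; hence every vertex of $P$ has genus equal to that of $v$. The key point is that every vertex of $P$ represents a surface isotopic to $S$ in $M$, hence incompressible in $M$, and therefore \emph{all} its K-disks have jump genus zero: a bridge or tree disk never changes genus, and a compressing disk for the surface whose boundary were essential in the surface itself cannot exist. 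Consequently, thinning $P$ (via Lemma~\ref{unorientedthinninglem}, exactly as in the proof of Theorem~\ref{mainthm}) only ever slides across faces all of whose edges have jump genus zero, so the resulting thin path still has every vertex of genus equal to that of $v$. By Corollary~\ref{thinpathcharlem} together with the barrier axiom its maxima are index-one rigid vertices and its minima index zero, so by Theorem~\ref{mainthm} it is represented in $\mD$; this is the desired constant-genus path in $\mD^1$ from $v$ to $v'$.

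For the ``if'' direction, suppose $v=x_0,x_1,\dots,x_m=v'$ is a path in $\mD^1$ along which the genus is constant, equal to $g$. Each edge $x_{i-1}x_i$ joins an index-one vertex to an index-zero vertex and, by Corollary~\ref{directedpathcoro}, is represented by a strictly decreasing oriented or reverse-oriented path in $\mS$; since its endpoints have equal genus $g$, every edge of that path has jump genus zero, and by Lemma~\ref{decpathkbodylem} it determines a K-compression body $H_i$ whose positive and negative boundaries represent the two endpoints and have equal genus. The heart of the argument is to show that each $H_i$ is a trivial product $\partial_-H_i\times[0,1]$, possibly with balls containing trivial arcs or trees of $\mT^1$ attached, so that its two boundary components are transversely isotopic; chaining these isotopies then gives $v\sim v'$. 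To prove the product statement I would use the rigidity of \emph{both} endpoints of the whole path: the surface at a maximum $x_i$ is incompressible because its two neighbouring minima are index zero and genus is preserved (barrier axiom, as in Section~\ref{linksect}), so any essential compression inside some $H_i$ must be along a separating curve, and tracking such a curve along the remainder of the path should produce a compression of the incompressible surface $v$ or $v'$, a contradiction. The final refinement from transverse isotopy to isotopy in $M$ is handled as in the discussion preceding Lemma~\ref{sisimplyconnectedlem}: K-compression bodies are irreducible in the complement of $\mT^1$, so blind isotopy of incompressible surfaces is isotopy.

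The main obstacle is precisely this product statement in the ``if'' direction. Equal genus of the two boundaries of a single K-compression body is \emph{not} enough to force a product — a genus-two surface compresses to a pair of tori along a separating curve with no change of genus — so one cannot argue edge by edge; the hypothesis that the entire path lies in $\mD^1$ between two incompressible surfaces must be used globally to exclude such ``separating-curve'' compression bodies, and making that global argument precise is where the real work lies.
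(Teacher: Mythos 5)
Your ``only if'' direction is essentially the paper's argument (an isotopy gives a path in $\mS$, which is then thinned to a constant-genus path represented in $\mD$), carried out in more detail, and it is fine. The problem is the ``if'' direction, and you have diagnosed it correctly yourself: you never prove the ``product statement,'' and without it the proof is incomplete. The paper's own treatment of this direction is the single assertion that each constant-genus edge determines an isotopy, which is exactly the claim you are unable to establish, so the difficulty you isolate is genuine and nothing in your write-up closes it. Concretely, a constant-genus edge of $\mD^1$ is (by Lemma~\ref{decpathkbodylem}) a K-compression body $H$ with $\mathrm{genus}(\partial_+H)=\mathrm{genus}(\partial_-H)$; this forces only that the $1$-handles of $H$ join \emph{distinct} components of $\partial_-H$ in a forest pattern (your two-tori-tubed-to-genus-two example), not that $H$ is a product, so the desired conclusion genuinely fails edge-by-edge.

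What is missing is the global use of the hypotheses, and there is one observation you do not make that supplies most of it: by Lemma~\ref{compcriterialem}, \emph{every} intermediate index-zero vertex $x_{2i}$ on the constant-genus path represents an incompressible surface, since otherwise concatenating the constant-genus path from $v$ to $x_{2i}$ with a genus-nonincreasing, genus-decreasing-at-the-end path out of $x_{2i}$ would exhibit $v$ itself as compressible. Hence each index-one vertex of the path is a Heegaard surface, of genus equal to that of its boundary, for a submanifold cobounded by two incompressible surfaces, and each adjacent compression body has its tubes arranged in a forest joining distinct incompressible components of its negative boundary. One must then argue that such a splitting is trivial --- that the forest of tubes can be undone, using that the thick surface has path index one --- to conclude that consecutive index-zero vertices are transversely isotopic. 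Your proposal stops exactly at this point, so as written it does not establish the ``if'' direction.
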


\begin{proof}
Because both vertices represent incompressible surfaces, any path in which the genus does not increase must have constant genus.  If there is a constant genus path from one to the other then each edge determines an isotopy, and the sequence of isotopies takes one surface to the other.  Conversely, if the surfaces representing $v$ and $v'$ are isotopic, then the isotopy from one surface to the other determines a path connecting the vertices in $\mS$.  We can thin this path to a path that is represented in $\mD$, and this path will have constant genus.
\end{proof}

\begin{Lem}
\label{splittingindlem}
Every splitting path in $\mD$ represents a Heegaard splitting for $M$ amd every unstabilized Heegaard splitting for $M$ is represented by a splitting path in $\mD$ passing through index-one and index-zero vertices.
\end{Lem}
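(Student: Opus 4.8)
The plan is to prove the two directions separately, leaning on the Heegaard-path machinery of Sections~\ref{splittingpathsect}--\ref{thinheegsect}.

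For the forward direction, let $P$ be an oriented path in $\mD$ from $v_-$ to $v_+$. By the construction of $\mD$, each edge of $P$ is an equivalence class of monotonic oriented (or reverse oriented) paths in $\mS(M,\mT^1)$ joining its endpoints, with the higher-complexity endpoint the positive boundary of the associated K-compression body (Lemmas~\ref{edgekbodylem} and~\ref{decpathkbodylem}). I would choose a representative monotonic path for each edge of $P$ and concatenate them along the shared vertices; since the orientations on the edges of $\mD$ are by definition the ones induced from $\mS$, the concatenation is a genuine oriented splitting path $E$ in $\mS$ from $v^-$ to $v^+$. The Lemma of Section~\ref{heegsect} that every splitting path in $\mS(M,\mT^1)$ determines a Heegaard splitting of $M$, unique up to transverse isotopy, then applies to $E$, so $P$ represents a Heegaard splitting.

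For the converse, let $(\Sigma,H^-,H^+)$ be an unstabilized Heegaard splitting. First I would invoke Lemma~\ref{sisimplyconnectedlem} to represent it by a Heegaard path $E$ in $\mS(M,\mT^1)$, that is, a splitting path whose unique maximum is the vertex $v_\Sigma$ representing $\Sigma$. Since $\mS(M,\mT^1)$ satisfies the Casson--Gordon, barrier, and translation axioms (Lemma~\ref{cgforsurfaceslem}, Corollary~\ref{barrieraxiomlem}, and the translation-axiom lemma), Theorem~\ref{mainthm} lets me thin $E$ to a thin splitting path $E'$ that is represented in $\mD(\mS)$. By Corollary~\ref{thinpathcharlem} the maxima of $E'$ are strongly irreducible and its minima are incompressible; since the minima are index zero, the barrier axiom forces the path link of each maximum to coincide with its descending link, so the maxima in fact have index one. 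Thus $E'$ is exactly a splitting path in $\mD$ --- in fact in $\mD^1$ --- passing through index-one and index-zero vertices.

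The last thing to check, and the place where the unstabilized hypothesis enters, is that the Heegaard splitting represented by $E'$ (its associated Heegaard splitting, in the sense of Section~\ref{thinheegsect}) is $(\Sigma,H^-,H^+)$ rather than a proper destabilization. By Lemma~\ref{allverticallem} I may assume, after replacing $E$ by an equivalent Heegaard path, that $E'$ is obtained from $E$ by vertical slides alone. A vertical slide across a diamond or a triangle preserves the genus of the path, so the only way $E'$ could have genus smaller than $g(\Sigma)$ is through a vertical slide across a bigon both of whose edges have jump genus one; but such a move is precisely a destabilization, and unwinding it back to $E$ would exhibit $\Sigma$ itself as a stabilized surface, contrary to hypothesis. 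Hence the genus stays constant along the thinning, and by Lemma~\ref{equivfillingsthenequivlem} the Heegaard path associated to $E'$ is equivalent to $E$, so $E'$ represents $(\Sigma,H^-,H^+)$. I expect this identification to be the main obstacle: one must be sure that thinning the Morse-derived Heegaard path cannot silently trade $\Sigma$ for a lower-genus splitting, and the cleanest packaging should be the uniqueness clause of Lemma~\ref{equivfillingsthenequivlem} --- the Heegaard path that a given splitting path thins from is unique up to equivalence --- combined with the genus bookkeeping for face slides; everything else is routine translation between paths in $\mS$, paths in $\mD$, and the K-compression bodies they name.
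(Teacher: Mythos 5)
Your proposal is correct and follows essentially the same route as the paper: the forward direction concatenates monotonic representatives of the edges of the path in $\mD$ and applies the lemma that every splitting path in $\mS(M,\mT^1)$ determines a Heegaard splitting, and the converse represents the splitting by a Heegaard path via Lemma~\ref{sisimplyconnectedlem}, thins it into $\mD$, and identifies the result via Lemma~\ref{equivfillingsthenequivlem}. Your extra genus bookkeeping, showing that the unstabilized hypothesis rules out any bigon (destabilizing) slide during thinning, makes explicit a point the paper's proof leaves implicit, but it is the same argument.
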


\begin{proof}
Every splitting path in $\mD$ determines a splitting path in $\mS$, which determines a Heegaard splitting for $M$ by Lemma~\ref{sisimplyconnectedlem}.  Conversely, every Heegaard splitting for $M$ is represented by a path in $\mS$ by Lemma~\ref{sisimplyconnectedlem}, and this path can be thinned to a path in $\mD$.  By Lemma~\ref{equivfillingsthenequivlem}, this thinned path represents the same Heegaard path in $\mS$, and therefore the same Heegaard splitting.
\end{proof}

\begin{Lem}
\label{stabilizedindlem}
A genus $g$ splitting path in $\mD$ represents a stabilized Heegaard splitting if and only if there is a sequence of face slides, passing through index-zero, index-one and index-two vertices, in which the genera of the intermediate paths does not increase, but the genus of the final path is less than $g$.
\end{Lem}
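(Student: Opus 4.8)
The plan is to run the argument of Lemma~\ref{compcriterialem} "one level up": there, a genus-non-increasing descending path in $\mD^1$ terminating at lower genus detected a compressing disk; here a genus-non-increasing sequence of face slides in $\mD^2$ terminating at lower genus will detect a \emph{destabilizing pair} of disks. The bridge between the two complexes is Theorem~\ref{mainthm}, which identifies face slides in $\mD$ with (thinnings of) paths in the path complex $\mP(\mS)$, i.e. with sequences of face slides in $\mS$. So the first thing I would isolate is a genus-bookkeeping lemma for $\mS(M,\mT^1)$: among the face slides of $\mS$, those across diamonds, triangles, and "sphere" bigons preserve both the genus of a path and the equivalence class of its associated Heegaard path (this is essentially the remark in Section~\ref{heegsect} together with Lemma~\ref{equivfillingsthenequivlem}), whereas a \emph{vertical} slide across a "torus" bigon — the one whose two K-disks are compressing disks meeting in a point inside a torus component, so that each of its edges has jump genus one — strictly lowers the genus by one. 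On a Heegaard path such a slide is exactly a destabilization of the Heegaard surface; on a general splitting path it is a destabilization of one of the thick surfaces and so lowers the genus of the amalgamated splitting by one. Consequently, the genus of a path in $\mS$ drops along a sequence of face slides if and only if that sequence contains at least one torus-bigon slide, i.e. a destabilization.

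For the "if" direction: a sequence of face slides in $\mD^2$ with non-increasing genus ending below $g$ lifts, via Theorem~\ref{mainthm}, to a sequence of face slides in $\mS$ joining the splitting path that $E$ represents to a strictly lower-genus splitting path; by the bookkeeping lemma this sequence must contain a torus-bigon slide, so the Heegaard splitting represented by $E$ is obtained from a Heegaard splitting of genus $<g$ by a positive number of stabilizations, hence is stabilized. (Here I am using that a splitting path in $\mD$ determines a Heegaard splitting through its associated Heegaard path, as in Lemma~\ref{splittingindlem} and Lemma~\ref{heegaardfromsplittinglem}.)

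For the "only if" direction: suppose $E$ represents a stabilized genus-$g$ Heegaard splitting $\Sigma$. Choose a destabilization of $\Sigma$ carried out in a ball disjoint from $\mT^1$. This furnishes, on the maximum of the associated Heegaard path, a compressing disk $\alpha$ on the positive side (the meridian of the stabilizing handle) and a compressing disk $\delta$ on the negative side (a disk of the opposite compression body) with $\alpha\cap\delta=\emptyset$; thinning across the diamond they span is genus-preserving and isolates the genus-one summand as a torus thick surface, which is then removed by a vertical torus-bigon slide taking the genus from $g$ to $g-1$. Put differently: diamond and triangle slides alone preserve both the genus and the amalgamated splitting, so a stabilized genus-$g$ splitting can never be represented by a reduced (thin) splitting path of genus $g$ in $\mD^1$ — some destabilizing bigon slide is forced during thinning. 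In either formulation we obtain a sequence of face slides in $\mS$ from the splitting path of $E$ to one of genus $g-1$ that is non-increasing in genus and whose intermediate paths are thin enough to pass only through index-$\le 2$ vertices; translating it back through Theorem~\ref{mainthm} (thinning never raises genus) gives the required sequence of face slides in $\mD^2$ through index-zero, index-one and index-two vertices.

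The main obstacle is this "only if" direction, and within it the genus bookkeeping across the two translations of Theorem~\ref{mainthm}: one must ensure the sequence produced in $\mD^2$ is genus-\emph{non-increasing}, not merely genus-bounded by $g$ (which is all that Lemma~\ref{sisimplyconnectedlem} supplies on its own), and one must establish the sharp statement that thinning a stabilized Heegaard path necessarily passes through a genus-reducing torus-bigon slide — equivalently, that a stabilized genus-$g$ splitting is never represented by a genus-$g$ path in $\mD^1$. This is precisely the assertion that destabilizations are "visible" to thin position, and it is the one step that is not purely formal manipulation of the axioms; it relies on the concrete description of K-compression bodies and of sphere tubings developed in Sections~\ref{ksplittingsect}--\ref{heegsect}.
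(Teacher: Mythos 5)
Your ``if'' direction is essentially the paper's: genus-constant slides are isotopies, and the only genus-decreasing face slide is a destabilizing bigon slide, so the first genus drop exhibits the splitting as stabilized. The ``only if'' direction, however, has a genuine gap, and it is exactly the step the paper avoids by taking a different route. You propose to realize a single destabilization by hand: take the meridian $\alpha$ of the stabilizing handle and a disjoint disk $\delta$ on the other side, slide down across the diamond they span, and then perform a torus-bigon slide. But the bigon of $\mS(M,\mT^1)$ that effects a destabilization is only defined when a stabilizing pair of disks lies in a \emph{torus component} of the thick surface, and one diamond slide does not produce such a component: the two new thick surfaces are $\Sigma$ compressed along $\alpha$ and $\Sigma$ compressed along $\delta$, and for $g\ge 2$ neither carries the stabilizing pair in an isolated torus component. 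Isolating the genus-one summand requires compressing along complete disk systems for the genus-$(g-1)$ part on both sides, interleaved so that the genus never increases; this is essentially the content of Lemma~\ref{sisimplyconnectedlem}, i.e.\ Reidemeister--Singer realized by bigon slides. The paper therefore does not redo this construction: it takes $\Sigma'$ to be the \emph{maximal} destabilization of $\Sigma$, observes that the stable genus of the pair $(\Sigma,\Sigma')$ equals $g$ because $\Sigma$ is itself a common stabilization, and then quotes Lemma~\ref{sisimplyconnectedlem} and Theorem~\ref{mainthm} to produce the sequence in $\mD^2$. Your worry that this only yields ``genus at most $g$'' rather than ``non-increasing'' is legitimate (the paper glosses over it), but it is resolved by truncating the sequence at the first path whose genus is less than $g$: every earlier path then has genus exactly $g$, so the truncated sequence is constant in genus and then drops, which is all the lemma requires.

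Separately, your ``put differently'' reformulation is false and should be deleted: a stabilized genus-$g$ splitting is in general represented by a genus-$g$ thin splitting path in $\mD^1$, since thinning reduces the complexities of the maxima by diamond and triangle slides and need never pass through a genus-reducing bigon. Indeed, the entire point of Lemma~\ref{stabilizedindlem} is that detecting a stabilization requires arbitrary genus-non-increasing face slides in $\mD^2$, not merely the thinning moves that produce the representative in $\mD^1$.
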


\begin{proof}
If such a sequence of slides exists in $\mD$ then every slide in which the genus remains constant represents an isotopy of the Heegaard splitting.  The only way in which the genus of the path can decrease is a destabilization, so the first slide in which the genus decreases shows that the original Heegaard splitting is stabilized.

Conversely, assume the path $E$ represents a stabilized Heegaard surface $\Sigma$.  Let $\Sigma'$ be the result of destabilizing $\Sigma$ as much as possible.  The Heegaard surface $\Sigma'$ is unstabilized so it is represented by a path $E'$ in $\mD$ by Lemma~\ref{splittingindlem}.  The paths $E$ and $E'$ represent paths in $\mS$.  Their stable genus is $g$ because $\Sigma$ is  a common stabilization for $\Sigma$ and $\Sigma'$.  Thus Lemma~\ref{sisimplyconnectedlem} implies that they are related by a sequence of paths in which the genera of the intermediate paths is at most $g$.  By Theorem~\ref{mainthm}, this sequence of slides is represented in $\mD$, so there is a sequence of slides in $\mD$ from $E$ to $E'$ in which the genus does not increase, but the final genus is strictly less than $g$.
\end{proof}

\begin{Lem}
Two splitting paths represent Heegaard splittings with stable genus at most $g$ if and only if the paths are related by face slides in $\mD$ such that the genus of any intermediate path is at most $g$.  In particular, two paths representing unstabilized Heegaard splittings will be isotopic if and only if there is such a sequence of face slides in which the genus is constant.
\end{Lem}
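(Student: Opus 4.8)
The plan is to assemble the statement from the structural results already in place, the crucial input being the behaviour of genus under face slides from Section~\ref{heegsect}: slides across diamonds and triangles, and horizontal slides, preserve the genus of a path, while the only slides that change it are the bigon slides corresponding to a single stabilization or destabilization of the associated Heegaard splitting. For the forward direction, suppose the splitting paths $E$, $E'$ in $\mD$ represent Heegaard splittings $\Sigma$, $\Sigma'$ (Lemma~\ref{splittingindlem}) with stable genus $s\le g$. Reading $E$, $E'$ as splitting paths in $\mS$, Lemma~\ref{sisimplyconnectedlem} produces a sequence of face slides in $\mS$ from $E$ to $E'$ in which every intermediate path has genus at most $s$; this is a path in $\mP(\mS)$, so by Theorem~\ref{mainthm} it is represented by a sequence of face slides in $\mD$. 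Since the Heegaard path associated to a splitting path is obtained by diamond slides (Lemmas~\ref{heegaardfromsplittinglem} and~\ref{allverticallem}), the genus of any path equals the genus of the Heegaard splitting it represents; the splittings represented by the paths in the $\mD$-sequence are precisely the associated Heegaard splittings of the intermediate $\mS$-paths (thinning preserves the associated splitting, by Lemma~\ref{equivfillingsthenequivlem}), hence have genus at most $s\le g$. So the $\mD$-sequence stays below genus $g$.

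For the converse, suppose $E$ and $E'$ are joined by a sequence of face slides in $\mD$ with every intermediate genus at most $g$, and let $\Sigma=\Sigma_0,\dots,\Sigma_n=\Sigma'$ be the Heegaard splittings of the intermediate paths (Lemma~\ref{splittingindlem}). By the genus analysis of Section~\ref{heegsect} together with Lemmas~\ref{equivfillingsthenequivlem} and~\ref{equivalentcompbodylem}, a genus-preserving face slide in $\mD$ leaves the associated Heegaard splitting unchanged, while a genus-changing one (as in the proof of Lemma~\ref{stabilizedindlem}) is a single stabilization or destabilization; so consecutive $\Sigma_i$ differ by an isotopy or a single (de)stabilization, and all have genus at most $g$. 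A peak-reduction argument then finishes: because a stabilization of a fixed Heegaard splitting is unique up to isotopy, a destabilization immediately followed (up to intervening isotopies) by a stabilization either cancels or is a no-op, so the chain can be rearranged without increasing the maximal genus into one that first stabilizes $\Sigma$ up to some $K$ and then destabilizes $K$ down to $\Sigma'$. Then $K$ is a common stabilization of $\Sigma$ and $\Sigma'$ of genus at most $g$, so the stable genus of the pair is at most $g$.

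For the ``in particular'': if $\Sigma$ and $\Sigma'$ are unstabilized then $\Sigma\simeq\Sigma'$ forces them to have a common genus $g$ and stable genus $g$, so the first part gives a sequence of face slides in $\mD$ of genus at most $g$; conversely such a sequence forces stable genus at most $g=\max(g(\Sigma),g(\Sigma'))$, which for unstabilized splittings of equal genus means $\Sigma\simeq\Sigma'$. To upgrade ``genus at most $g$'' to ``genus constant'', realize the isotopy $\Sigma\simeq\Sigma'$ by an ambient isotopy of $M$: this gives a family of Morse functions all representing a genus-$g$ Heegaard splitting, and, running the argument of Lemma~\ref{sisimplyconnectedlem} along this family, every face slide that occurs is across a diamond or a triangle or cancels a pair of jump-genus-zero bridge edges, hence preserves genus. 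Prepending and appending the genus-preserving diamond slides that relate $E$ and $E'$ to their associated Heegaard paths yields a constant-genus sequence from $E$ to $E'$.

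The main obstacle throughout is the genus bookkeeping: checking that passing between $\mS$, $\mP(\mS)$ and $\mD$, and the peak-reduction in the converse, never raise an intermediate path above genus $g$. Once one knows that diamond, triangle and horizontal slides preserve the genus of a path, that the associated-Heegaard-path construction uses only such slides, and that stabilization of a Heegaard splitting is unique, the argument becomes a matter of organizing these inputs rather than of new geometry.
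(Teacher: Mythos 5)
Your proof is correct and follows essentially the same route as the paper: Lemma~\ref{sisimplyconnectedlem} combined with Theorem~\ref{mainthm} for one direction, and reading the genus-changing slides in $\mD$ as single (de)stabilizations for the other. You also supply two details the paper leaves implicit---the valley-elimination argument via uniqueness of stabilization to extract a single common stabilization of genus at most $g$, and the upgrade from ``genus at most $g$'' to ``genus constant'' in the \emph{in particular} clause---and both are sound.
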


\begin{proof}
As in Lemma~\ref{stabilizedindlem}, a sequence of face slides in $\mD$ in which the genus increases then decreases represent a squence of stabilizations and destabilizations that turn one Heegaard splitting into the other, so the stable genus is at most the maximum genus of the intermediate paths.

Conversely, if two paths represent Heegaard splittings with stable genus $g$ then Lemma~\ref{sisimplyconnectedlem} implies that they are represented by a sequence of face slides in $\mS$, and Lemma~\ref{mainthm1} implies that they are represented by face slides in $\mD$ in which the intermediate paths have genus at most $g$.
\end{proof}

\section{Odds and ends}
\label{oddsect}

In this section, we prove two Lemmas that, while not directly applicable in this paper, are generalizations of results that have proved useful in other contexts.  The first Lemma is specifically about the complex of surfaces defined above.

\begin{Lem}
\label{disksetcontractible}
The positive and negative links of any vertex in the complex of surface $\mS(M, \mT^1)$ is contractible.
\end{Lem}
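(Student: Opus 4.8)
The plan is to identify the positive link of a vertex $v$ (represented by a surface $S$) with a ``disk complex'' for one side of $S$, and then to apply a now-standard contractibility argument for such complexes. First I would fix a representative $S$ for $v$ and recall that, by Corollary~\ref{linkconnectedcoro} together with the barrier axiom (Corollary~\ref{barrieraxiomlem}), the positive link $L^+_v$ is the full subcomplex of the descending link spanned by the edges pointing away from $v$; its vertices correspond to (isotopy classes rel $\mT^1$ of) K-disks on the positive side of $S$, and a collection of them spans a simplex exactly when the corresponding K-disks can be made pairwise disjoint (and pairwise non-parallel) in $S$. So $L^+_v$ is the ``complex of K-disks'' for the positive side, an analogue of the disk complex of a handlebody enriched by bridge disks and tree disks for $\mT^1$.

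The main step is to prove this complex is contractible. Following McCullough's argument for the disk complex of a handlebody (and Cho's refinement in~\cite{cho}, which is the reference the author cites as the source of this Lemma), I would fix one ``base'' K-disk $D_0$ representing a vertex of $L^+_v$ and show that $L^+_v$ deformation retracts onto the star of $D_0$, which is contractible since it is a simplicial cone. The retraction is built by a surgery/combing operation: given any simplex $\sigma$ of K-disks, put a chosen representative system in minimal position with respect to $D_0$, and if some disk $D$ of $\sigma$ meets $D_0$, surger $D$ along an outermost arc or innermost loop of $D\cap D_0$ to produce K-disks meeting $D_0$ in fewer components. One checks that the resulting disks are still K-disks on the positive side (an outermost arc surgery of a bridge or tree disk along $D_0$ produces bridge/tree or compressing disks; an innermost loop surgery of a compressing disk produces compressing disks), that they are disjoint from the undisturbed members of $\sigma$ and from $D_0$ ``as far as they were before,'' and that this assignment is simplicial after passing to a suitable subdivision. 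The disjointness-with-$D_0$ quantity strictly decreases, so iterating lands every simplex in the star of $D_0$; packaging the individual surgery moves as a homotopy (exactly as in the cited references) gives the deformation retraction onto $\mathrm{St}(D_0)$, hence contractibility. The negative link is handled identically, using that tree disks on the negative side are excluded uniformly from the complex (so the class of allowed K-disks is closed under the relevant surgeries on that side too), or simply by reversing the transverse orientation.

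The hard part will be the bookkeeping in the surgery step: verifying that surgering an arbitrary member of a simplex of K-disks along the base disk $D_0$ always yields K-disks of a type that is still permitted on that side of $S$ (in particular that no tree disk on the negative side is ever produced, and that the sphere-tubing ambiguities built into the definition of edges of $\mS(M,\mathcal T^1)$ do not obstruct the move being well-defined on the simplicial quotient), and that the composite of these moves over all simplices assembles into a continuous, indeed simplicial-after-subdivision, self-map that is homotopic to the identity. Once the surgery move is shown to be well-behaved, invoking the argument of~\cite{cho} (or McCullough) to conclude the deformation retraction onto a cone is essentially formal.
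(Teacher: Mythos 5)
Your proposal is correct and follows essentially the same route as the paper: both reduce the statement to Cho's surgery criterion for contractibility of subcomplexes of the disk complex of a compression body, the key point being that outermost-arc surgery of one K-disk along another stays within the allowed class of K-disks on that side. The paper streamlines the type-checking you flag as the "hard part" by drilling out a neighborhood of $K$ from the K-compression body, so that all K-disks become ordinary compressing disks of the resulting compression body $H'$ and Cho's Theorem 4.2 applies verbatim to the full subcomplex they span (with the disconnected case handled by taking joins).
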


This follows almost immediately from work of Sangbum Cho, who has developed a very effective method for showing that complexes related to the disk complex for a 3-manifold are contractible.  The most general statement of his method is in Proposition 3.1 of~\cite{cho}, but we will use the following corollary, which is also stated in his paper.  Below, $H$ is a compression body and $D(H)$ is the disk complex for $H$.  Recall that a subcomplex $L$ of a simplicial complex is \textit{full} if whenever the corners of a simplex are in $L$, the entire simplex is in $L$.

\begin{Thm}[Theorem 4.2 in~\cite{cho}]
If $L$ is a full subcomplex of $D(H)$ satisfying the following condition, then $L$ is contractible:

Suppose $E$ and $D$ are any two disks in $H$ representing vertices of $L$ such that $E \cap D$ is non-empty.  If $C \subset D$ is a disk cut off from $D$ by an outermost arc $\alpha$ of $D \cap E$ in $D$ such that $C \cap E = \alpha$, then at least one of the disks obtained from surgery on $E$ along $C$ also represents a vertex of $L$.
\end{Thm}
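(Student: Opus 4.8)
The plan is to prove that $L$ is contractible by showing it is weakly contractible: assuming as we may that $L$ is nonempty, every continuous map $f\colon S^k\to L$, $k\ge 0$, extends over $B^{k+1}$, whence $L$ is contractible by Whitehead's theorem. By simplicial approximation we may take $f$ to be simplicial for some triangulation of $S^k$, so each vertex of $S^k$ is labelled by a vertex of $L$, i.e.\ by an isotopy class of essential disk in $H$, and each simplex of $S^k$ is labelled by a collection of such disks realizable pairwise disjointly. The whole argument is a surgery-driven complexity reduction relative to a fixed base disk.

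Fix a base vertex $D_0$ of $L$. For a disk $E$ representing a vertex of $L$, put $E$ and $D_0$ in minimal position and let $n(E)=|E\cap D_0|$; define the complexity of $f$ to be the lexicographically ordered pair $\bigl(\max_v n(f(v)),\ \#\{v:\ n(f(v))=\max_u n(f(u))\}\bigr)$. In the base case $\max_v n(f(v))=0$, every labelling disk is disjoint from $D_0$; using that the disk complex $D(H)$ of the compression body $H$ is a flag complex (a standard innermost-disk/outermost-arc exchange), pairwise disjointness of $D_0$ with each $f(v)$ and of the $f(v)$ with one another on a simplex $\sigma$ of $S^k$ promotes to joint disjointness, so $\{f(v):v\in\sigma\}\cup\{D_0\}$ spans a simplex of $D(H)$; since $L$ is full and all these vertices lie in $L$, it is a simplex of $L$. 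Hence $f$ maps into the star of $D_0$ in $L$, which is a cone on a link and so contractible, and $f$ extends over $B^{k+1}$.

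For the inductive step, suppose $\max_v n(f(v))>0$ and choose a vertex $v_0$ of $S^k$ attaining this maximum; write $E=f(v_0)$. Pick an outermost arc $\alpha$ of $E\cap D_0$ in $D_0$, cutting off a disk $C\subset D_0$ with $C\cap E=\alpha$, and surger $E$ along $C$ into two disks $E_1',E_2'$. These are disjoint from one another and from $E$, each meets $D_0$ in strictly fewer arcs than $E$ does, and since $C\subset D_0$ is disjoint from $f(v)$ for every $v$ in the link of $v_0$, a regular neighborhood of $E\cup C$ can be taken disjoint from all those $f(v)$, so each $E_i'$ is disjoint from all of them. The hypothesis of the theorem provides an index $i$ with $E':=E_i'$ representing a vertex of $L$; by fullness, $E'$ together with the image of the closed star of $v_0$ spans simplices of $L$, so a combinatorial homotopy of $f$ (subdivide $v_0$ into an edge, push across $\overline{\mathrm{st}}(v_0)$, collapse) replaces the label $E$ at $v_0$ by $E'$ and changes no other vertex label. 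Since $n(E')<n(E)=\max_v n(f(v))$, the complexity strictly decreases; iterating reaches the base case, completing the proof.

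The step I expect to be the main obstacle is making these disjointness upgrades precise while keeping the bookkeeping honest: in the base case one must run the flag-complex argument so that isotoping one label off $D_0$ does not recreate intersections with the other labels on a simplex, and in the inductive step one must check that $E'$, built from $E\cup C$ by surgery, is simultaneously (not just pairwise) disjoint from the whole labelled star of $v_0$, and that the surgered piece guaranteed by the hypothesis is the essential one, so that it is genuinely a vertex of $L$. A secondary point is to spell out the "homotopy across a star" move and confirm it introduces only vertices whose $n$-value is at most that of $v_0$ (the new label $E'$ has strictly smaller $n$; any subdivision vertices inherit existing labels), so that the complexity really does drop at each stage.
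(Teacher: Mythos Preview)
The paper does not prove this theorem; it is quoted from Cho and used as a black box in the proof that positive and negative links in $\mS(M,\mT^1)$ are contractible. There is therefore no proof here to compare your attempt against, but your argument has a genuine gap.

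In the inductive step you assert that the half-disk $C\subset D_0$ cut off by an outermost arc of $E\cap D_0$ is ``disjoint from $f(v)$ for every $v$ in the link of $v_0$,'' so that the surgered disk $E'\subset N(E\cup C)$ avoids those neighbors. But the neighbors $f(v)$ are only known to be disjoint from $E=f(v_0)$, not from the fixed base disk $D_0$; since $C$ lives in $D_0$, there is no reason it misses $f(v)$, and in general it will not. Then $E'$ need not be disjoint from the link labels and the star homotopy replacing $E$ by $E'$ is unavailable. You flag exactly this point at the end as a matter of bookkeeping, but it is structural: a fixed external base disk gives you no control over how the other labels sit relative to it.

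The standard repair, which is essentially Cho's (and McCullough's) argument, is to surger among the labels themselves rather than against a fixed $D_0$. Put all labelling disks in general position, take the total number of intersection arcs as complexity, and when it is positive choose some label $D_w$ and an arc $\alpha$ in $D_w$ that is outermost among \emph{all} arcs of $\bigl(\bigcup_u f(u)\bigr)\cap D_w$; say $\alpha\subset f(v)\cap D_w$. The half-disk $C\subset D_w$ then has interior disjoint from every label, so the surgered $f(v)'$ is disjoint from every neighbor of $v$, the theorem's hypothesis applies (since $\alpha$ is a fortiori outermost among the arcs of $f(v)\cap D_w$ alone), and the star homotopy goes through with strictly smaller complexity. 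Alternatively you can keep your $D_0$ but choose $\alpha$ outermost in $D_0$ among \emph{all} label intersections $\bigl(\bigcup_u f(u)\bigr)\cap D_0$ and use $\sum_v n(f(v))$ as the complexity; then $C$ again avoids every label and the same homotopy works.
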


(Cho uses the symbol $K$ where we use $L$.)  Thus to show that the positive or negative link is contractible, we need only check that it satisfies the condition in Cho's Theorem.

\begin{proof}[Proof of Lemma~\ref{disksetcontractible}]
If $(H, K)$ is a connected K-compression body then the complement $H'$ in $H$ of a regular neighborhood of $K$ is a compression body.  The intersection of any K-disk for $H$ with $H'$ is a compression disk for $H$ and we will let $L$ be the set of compression disks for $H'$ induced by K-disks for $H$.  If $E$ and $D$ are two K-disks for $(H, K)$ and $C \subset D$ is an outermost disk as in the condition in Cho's Theorem, then one of the disks that results from surgery on $E$ across $C$ is also a K-disk for $(H, K)$.  The result of surgery in $H'$ is the image of the result of the surgery in $H$, so the resulting disk is in $L$.  Thus by Cho's theorem, the complex of K-disks for a connected K-compression body is contractible.  The complex of K-disks for a disconnected K-compression body is the join of the complexes for its components, (i.e. the union of the two complexes with edges connecting every vertex of one to every vertex of each other, plus the induced higher dimensional cells) so this set is also contractible.
\end{proof}

The second result is about height complexes in general.  We begin with the following definition, which translates the notion of ``amalgamation'', introduced by Schultens~\cite{sch:cross}, into the axiomatic setting.

\begin{Def}
A Heegaard path $E$ is an \textit{amalgamation} along a vertex $v$ if it can be thinned to a path $E'$ that passes through $v'$.
\end{Def}

In particular, if $E$ can be thinned to a splitting path $E'$ then $E$ will be an amalgamation along each of the locally minimal vertices in $E'$.

\begin{Lem}
Assume $\mS$ is an oriented height complex satisfying the Casson-Gordon axiom.  If $v$ is the maximum of a Heegaard path $E$ and there is a (not necessarily oriented) path decreasing from $v$ to $v'$ then $E$ is an amalgamation along $v'$.
\end{Lem}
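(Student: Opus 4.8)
The plan is to argue by induction on the length $n$ of the decreasing path $P$ from $v$ to $v'$, and in fact to prove the slightly stronger statement in which $E$ is allowed to be an arbitrary splitting path and $v$ an arbitrary maximum of $E$, provided that at least one of the two minima of $E$ adjacent to $v$ is an index-zero vertex. The Lemma is then the case where $E$ is a Heegaard path, so that the adjacent minima are the distinguished index-zero vertices $v^-$ and $v^+$. When $n=0$ we have $v'=v$, which already lies on $E$, and there is nothing to prove.

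For the inductive step, write $P=p\cdot P'$, where $p\colon v\to w$ is the first edge of $P$ (an edge below $v$) and $P'\colon w\to v'$ is a decreasing path of length $n-1$. After interchanging the words ``positive'' and ``negative'' if necessary, assume $p$ points away from $v$ and that it is on the same side as an index-zero minimum of $E$ adjacent to $v$. The first move I would make is a sequence of horizontal face slides turning $E$ into an equivalent path whose edge immediately after $v$ is $p$ itself. This is legitimate because, when the minimum on the positive side of $v$ is index-zero, the positive half of the path link of $v$ is all of the positive half of its descending link --- this is the content of the (unlabelled) lemma just before Corollary~\ref{linkconnectedcoro}, and that half-link is connected by Corollary~\ref{linkconnectedcoro} and contractible by Lemma~\ref{disksetcontractible}; hence $p$ occurs as the ``$u^+$'' edge of some path equivalent to $E$, and horizontal slides, which alter neither the complexity nor the maxima and minima, move $E$ onto it. We may now assume $E=A\cdot p\cdot B$ with $A$ a monotone arc ending at $v$ and $B$ a monotone decreasing arc starting at $w$.

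Next I would try to perform, possibly after one further horizontal slide of the edge of $E$ preceding $v$, a vertical face slide at $v$ across a $2$-cell whose two edges adjacent to $v$ are $p$ and that preceding edge. If such a $2$-cell exists, the vertical slide replaces the sub-path ``$(\text{edge before }v)\cdot p$'' by the complementary pair of lower edges of the cell; the resulting splitting path $E_1$ has $c(E_1)<c(E)$, it is $E$ with ``one step of untelescoping carried out along $p$,'' and --- the reason for insisting that the slide involve $p$ --- the vertex $w$ is now one of the maxima of $E_1$, with the minimum of $E_1$ lying just beyond $w$ on the $B$-side still being index-zero. The inductive hypothesis applied to the triple $(E_1,w,P')$ then shows $E_1$, and hence $E$, thins to a path through $v'$. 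If no such $2$-cell is available for any admissible positioning, then $v$ must be strongly irreducible with respect to $E$, so $E$ is already thin and we must instead produce a path equivalent to $E$ that passes through $v'$. In this case the Casson--Gordon axiom, with the connectedness of the two halves of the descending link, forces $P$ to be monotone as in Corollary~\ref{directedpathcoro}; then $P$ is a partial unwinding of the compression body cut off by $B$, so by Lemma~\ref{equivalentcompbodylem} it extends to a decreasing directed path from $v$ to $v^+$ equivalent to $B$, and replacing $B$ by this extension produces a path equivalent to $E$ through $v'$.

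I expect the main obstacle to be the compatibility of the untelescoping move with $P$ in the first case: one needs the new maximum produced by the vertical slide to be exactly the endpoint $w$ of $p$, not some unrelated vertex of smaller complexity, which is precisely why $p$ must first be brought next to $v$ and why one must keep careful track of which pairs of edges below $v$ span a $2$-cell. A closely related point that will need care is that the weakened hypothesis --- one index-zero minimum adjacent to the distinguished maximum --- survives the passage from $(E,v,P)$ to $(E_1,w,P')$, and that the first edge of $P'$ is oriented toward the side of $w$ carrying that index-zero minimum (or can be handled directly when it is not), so that the repositioning step of the second paragraph can be repeated at each stage of the induction.
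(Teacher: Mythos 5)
Your strategy --- inducting edge-by-edge along $P$ and untelescoping $E$ one vertical slide at a time so that the endpoint of each successive edge of $P$ becomes a maximum of the current path --- is genuinely different from the paper's proof, which decomposes $P$ into maximal directed and reverse-directed subpaths, incorporates each directed piece all at once by extending it maximally and using the barrier axiom to identify the extension with a leg of the (suitably thinned) path, and invokes the Casson--Gordon axiom to show that maximal weak reduction of the relevant subpath must terminate in an incompressible minimum. As it stands, though, your induction has two gaps, both sitting exactly where that machinery does its work. The first is that the dichotomy in your third paragraph is not exhaustive: the negation of ``there is a $2$-cell below $v$ whose upper edges are $p$ and an edge of the negative path link'' is not ``$v$ has path index one.'' The path link can be connected (so $v$ is weakly reducible and a vertical slide exists somewhere) even though every edge of the negative path link meets $p$, since weak reducibility only requires \emph{some} disjoint pair. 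In that situation you can neither perform the slide ``along $p$'' (so $w$ never becomes a maximum of the new path) nor appeal to Corollary~\ref{directedpathcoro} to force $P$ to be monotone.

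The second and more serious gap is that your weakened inductive hypothesis does not survive a direction change of $P$. After one untelescoping, the new minimum $x$ separating the two legs of $E_1$ at $w$ is in general compressible, so the negative path link of $w$ is a proper subcomplex of its negative descending link --- by Lemma~\ref{equivalentcompbodylem} it consists only of the K-disks lying in the compression body between $x$ and $w$. If the next edge of $P$ points toward $w$, it need not lie in that path link, and then no sequence of horizontal slides brings it adjacent to $w$; your parenthetical ``or can be handled directly when it is not'' is precisely the missing step. Filling it requires the paper's argument: maximally weakly reduce the leg of the path ending at the current vertex of $P$, apply the Casson--Gordon axiom (iterated back to the index-zero endpoint) to conclude that the last extremum of the reduced leg is an incompressible minimum, and then use the barrier axiom to match the maximal extension of the reverse-directed subpath of $P$ with that reduced leg. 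Relatedly, Corollary~\ref{directedpathcoro} concerns index-one vertices, whereas at later stages of your induction $w$ is only known to be path-index-one with respect to the current path; the two notions coincide only when both adjacent minima are index-zero, which holds at the first stage of your induction but not afterwards.
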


This Lemma is used, in the context of generalized Heegaard splittings, in~\cite{me:stabs}.  In fact, the version here is slightly more general because the original version required that the manifold be irreducible.

\begin{proof}
Let $E$ be an oriented path with a single maximum $v$ and let $P$ be a (not necessarily oriented) decreasing path from $v$ to a vertex $v'$.  Without loss of generality, assume that the initial edge of $P$ points away from $v$.  Let $P_1$ be the longest directed subpath of $P$ starting at $v$.  Let $Q_1$ be the result of extending $P_1$ as far as possible.  Because $\mS$ satisfies the barrier axiom, $Q_1$ ends at the same vertex as $E$ and is related to the second leg of $E$ by horizontal slides.  Let $E_1$ be the path consisting of the first leg of $E$ and the path $Q_1$.  This path is equivalent to $E$ and contains the path $P_1$ as a subpath.  Let $v_1$ be the endpoint of $P_1$.

If $P_1 = P$ then $E$ is equivalent to a path passing through $v' = v_1$ as promised.  Otherwise, let $P_2$ be the longest reverse oriented subpath of $P$ starting at $v_1$.  Let $E'_2$ be the result of weakly reducing the subpath of $E$ from $v_-$ to $v_1$ as far as possible.  The last edge of $P_2$ is below $v_1$ and points towards it.  If the last extremal vertex of $E'_1$ is a maximum then the Casson-Gordon axiom would imply that this maximum is weakly reducible.  Thus the last extremum of $E'_2$ must be an incompressible local minimum.  Extend $P_2$ to this minimum (by the Barrier axiom) and then let $E_2$ be the result of a sequence of horizontal slides on $E'_2$, after which it contains the path $P_2$.

If the endpoint $v_2$ of $P_2$ is the endpoint of $P$ then $v' = v_2$, so we have thinned $E$ tp a path passing through $v'$.  Otherwise, we can repeat the argument, weakly reducing $E_2$ to a path that contains the endpoint of the next directed subpath $P_3$ of $P$.  By continuing in this manner, we will eventually weakly reduce $E$ to a path containing the lower endpoint of $P$.
\end{proof}

\section{Categories}
\label{catsect}

In the hopes that axiomatic thin position will eventually be applied in other settings, it might be useful to take a brief look at the situation from the point of view of category theory.  We would like to define a \textit{theory of thin position} as a functor from a given category to a category of height complexes.  To define this category of height complexes, we need to determine what our functors will be.

\begin{Def}
A \textit{homomorphism} between height complexes $\mS$ and $\mS'$ is a map from the 2-skeleton of $\mS$ to the 2-skeleton of $\mS'$ with the following properties:
\begin{itemize}
    \item Each vertex of $\mS$ is sent to a vertex of $\mS'$.
    \item Each edge of $\mS$ is sent to either a vertex of $\mS'$ or to an edge of $\mS'$ such that the endpoints are ordered in the same way.
    \item Each face $f$ of $\mS$ is sent to a vertex of $\mS'$, an edge of $\mS'$, or to a face $f'$ of $\mS'$ such that the edges of $f$ adjacent to the top vertex are sent to the edges of $f'$ adjacent to the top vertex.
\end{itemize}
\end{Def}

While we have only defined a homomorphism between the 2-skeletons of the height complexes, the Morse axiom implies that the higher dimensional cells are completely determined by the 2-cells, so one can always extend the map to the entire complexes.  Define $\mathcal{H}$ as the category whose objects are height complexes and whose functors are height complex homomorphisms.

For the complex of surfaces defined above, consider the category $\mathcal{MG}$ of pairs $(M, \mT^1)$ where $M$ is a compact, connected, orientable 3-manifold and $\mT^1 \subset M$ is a properly embedded graph in $M$.  If $M$ has boundary, we will fix a partition $\{\partial_- M, \partial_+ M\}$ of its boundary components, though we will not notate this partition in the ordered pair. A functor $f : (M, \mT^1) \rightarrow (M', \mT'^1)$ is an embedding (possibly a homeomorphism) of $M$ into $M'$ that sends $\mT^1$ onto $\mT'^1$ and such that the frontier of each component of $M' \setminus M$ is contained in either  $\partial_- M$ or $\partial_+ M$.

Define $STH : \mathcal{MG} \rightarrow \mathcal{H}$ as the function that associates to each pair $(M, \mT^1)$ in $\mathcal{MG}$, its complex of surface $\mS(M, \mT^1)$.  

\begin{Lem}
Every morphism from $(M, \mT^1)$ to $(M', \mT'^1)$ induces a height complex homomorphism from $\mS(M, \mT^1)$ to $\mS(M', \mT'^1)$.  This makes the map $STH$ a functor from $\mathcal{MG}$ to $\mathcal{H}$.
\end{Lem}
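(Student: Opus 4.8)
The plan is to define $STH$ on morphisms by ``pushing surfaces and $K$-disks forward along the embedding'', and then to verify the three conditions in the definition of a height complex homomorphism together with functoriality. First I would fix a morphism $f : (M,\mT^1)\to(M',{\mT'}^1)$, regarded as an inclusion $M\subset M'$ with ${\mT'}^1=f(\mT^1)$. Since every component $C$ of $M'\setminus M$ has its frontier contained in $\partial_- M$ or in $\partial_+ M$, I can assign $C$ the corresponding sign; this is the one place the frontier hypothesis is used. Now, given a surface $S$ in $M$ transverse to $\mT^1$ representing a vertex $v$, the same $S$ lies in $M'$ transverse to ${\mT'}^1$ (it is closed, lies in the interior of $M$, and meets ${\mT'}^1$ exactly where it met $\mT^1$). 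I extend the given $\pm$-labeling of $M\setminus S$ to $M'\setminus S$ by giving each component of $M'\setminus M$ the sign of the boundary piece along which it is attached. I would check this is again a valid labeling: because $S$ is closed in the interior of $M$, each component of $\partial M$ lies in a single component of $M\setminus S$, and the frontier condition keeps each collar on one side, so no collar merges a $+$-component with a $-$-component and the image surface is again an oriented, strongly separating surface in $M'$. The assignment descends to blind-isotopy classes, since an isotopy transverse to $\mT^1$, a sphere tubing, or an addition/removal of a trivial sphere in $M$ is also such a move in $M'$ (a ball in $M$ disjoint from $\mT^1$ is a ball in $M'$ disjoint from ${\mT'}^1$). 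This defines $STH(f)$ on vertices.

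Next I would extend $STH(f)$ over edges and $2$-cells. A $K$-disk $D$ for $S$ in $M$ is again a $K$-disk for $S$ in $M'$ of the same type: essentiality of $\partial D$ in $S\setminus{\mT'}^1=S\setminus\mT^1$ is unchanged, the bounding-ball condition defining a tree disk still holds because that ball lies inside $M$, and the side of $S$ on which $D$ lies is read off from the labeling, which I have arranged to be preserved; in particular the single excluded case, a tree disk on the negative side, is excluded on both ends. So the edge below $v$ determined by $D$ is sent to the edge below $STH(f)(v)$ determined by $D$, with endpoints ordered the same way, because a $K$-compression still strictly lowers the complexity in $M'$ — unless the $K$-compression becomes blindly trivial in $M'$, in which case the edge is sent to a vertex, which the definition permits. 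Likewise each $2$-cell below $v$ is determined by a pair of $K$-disks with a prescribed intersection pattern (disjoint, or meeting in a single point, with the sub-configurations that distinguish diamonds, triangles, and bigons), and the embedding preserves these patterns, so the cell maps to a cell of the same type below $STH(f)(v)$, or degenerates to one of its edges or to its minimal vertex; in every case the two edges adjacent to the top vertex of the cell are sent to the edges adjacent to the top vertex of the image cell. Since the Morse axiom forces the higher-dimensional cells of $\mS$ from its $2$-skeleton, this map of $2$-skeleta is automatically a height complex homomorphism.

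Finally I would check functoriality. The identity morphism fixes every surface and every $K$-disk, so $STH(\mathrm{id})$ is the identity homomorphism. For composable morphisms $f : (M,\mT^1)\to(M',{\mT'}^1)$ and $g : (M',{\mT'}^1)\to(M'',{\mT''}^1)$, the composite embedding $g\circ f$ pushes a surface of $M$ to the same surface of $M''$ that applying $f$ then $g$ produces, and the two resulting extended labelings on $M''\setminus M$ agree because the signs of the intermediate pieces $\partial_\pm M'$ are, by definition of a morphism, inherited from those of $\partial_\pm M$ along the frontiers; hence $STH(g\circ f)=STH(g)\circ STH(f)$, so $STH$ is a functor from $\mathcal{MG}$ to $\mathcal{H}$.

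I expect the real content to be the consistency of the extended transverse orientation — verifying that the frontier condition genuinely prevents a collar component of $M'\setminus M$ from joining two oppositely-signed components of $M\setminus S$, so that the image surface is still strongly separating and $STH(f)$ is well-defined on vertices — together with the somewhat tedious bookkeeping of exactly which edges and $2$-cells (and hence which higher cells) degenerate to lower-dimensional cells under the map. Everything past that is a direct transfer of the definitions of $K$-disk, $K$-compression, and the cells of $\mS$ from $M$ to $M'$.
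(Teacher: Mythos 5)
Your proposal is correct and follows essentially the same route as the paper's proof: push surfaces, labelings, and K-disks forward along the embedding, use the frontier condition to preserve strong separation, and observe that edges and faces map to cells of the same or lower dimension. Your write-up is in fact more thorough than the paper's, which leaves the edge/face verification to the reader and does not spell out the functoriality check.
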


\begin{proof} 
Let $f : M \rightarrow M'$ be an embedding that sends $\mT^1$ onto $\mT'^1$ such that the frontier of each component of $M' \setminus M$ is contained in either  $\partial_- M$ or $\partial_+ M$.  If $S$ is a surface in $M$ that is transverse to $\mT^1$ then $f(S)$ will be an embedded surface transverse to $\mT'^1$.  If $S$ is strongly separating in $M$ then the condition on the frontier of $M' \setminus M$ implies that the image of $S$ will be strongly separating in $M$.  Thus $f$ induces a map from the vertices of $\mS(M, \mT^1)$ into the vertices of $\mS(M', \mT'^1)$.  Every edge in $\mS$ corresponds to a K-compression of a surface $S$ and the reader can check that if $S'$ is the result of a K-compression of $S$ in $M$ then $f(S')$ is either sphere-blind isotopic to $f(S)$ or the result of a K-compression.  This implies that edges are sent to vertices or edges.  Finally, if two K-disks for $S$ are disjoint in $M$ then their images in $M'$ will be disjoint, so faces are sent to faces.
\end{proof}

For example, the identity map on $M$ defines a morphism from $(M, \mT^1)$ to $(M, \emptyset)$ that induces a height complex homomorphism from $\mS(M, \mT^1)$ to $\mS(M, \emptyset)$.  This homomorphism sends each edge of $\mS(M, \mT^1)$ that corresponds to a bridge compression or tree compression to a vertex in $\mS(M, \emptyset)$.  In other words, we can construct $\mS(M, \emptyset)$ from $\mS(M, \mT^1)$ by taking the quotient that crushes all these edges to points.  (This is implicit in the proof of Theorem~\ref{mainthm1}.

Functors between height complexes can also be used to compare different theories of thin position.  For example, given a knot $K$ in $S^3$, let $\mS_G(K)$ be the height complex whose vertices are transversely oriented spheres in $S^3$ transverse to $K$ and whose edges are bridge disks with respect to $K$.  This complex encodes the original thin position for knots defined by Gabai~\cite{gabai}.  The reader can check that it satisfies all the axioms defined so far.  There is a natural embedding of $\mS_G(K)$ into $\mS(S^3, K)$ because every sphere in $S^3$ is strongly separating.  Every vertex of $\mS_G(K)$ defines a vertex of $\mS(M, K)$ and every edge defines an edge.  There is thus a one-to-one height complex homomorphism from $\mS_G(K)$ into $\mS(M, K)$.

\begin{Def}
We will say that a theory of thin position $B : \mathcal{B} \rightarrow \mathcal{H}$ \textit{generalizes} a theory $A : \mathcal{A} \rightarrow \mathcal{H}$ if $\mathcal{A}$ is a subcategory of $\mathcal{B}$ and for every $X \in \mathcal{A}$, there is a canonical injective height homomorphism of $A(X)$ into $B(X)$.  
\end{Def}

Under this definition, the Hayashi-Shimokawa thin position defined by the complex of surfaces generalizes both Gabai's thin position for knots and Scharlemann-Thompson thin position for 3-manifolds (without knots).  Tomova's thin position with cut disks~\cite{tom:brcompare} and Taylor-Tomova's thin position for graphs~\cite{tomovataylor} are further generalizations of Hayashi-Shimokawa's thin position under this definition.

\section{Gordon's conjecture}
\label{gordonsect}

One goal in developing axiomatic thin position was to generalize the methods developed by Bachman in his proof of the Gordon conjecture~\cite{bach:gordon}.  (The conjecture was also proved independently by Qiu~\cite{qiuscharl}, using vastly different methods.)  In this section, we present an exposition of Bachman's proof in terms of the complex of surfaces.  This is essentially the same proof with different terminology, but we believe that this exposition makes the proof clearer. 

Let $M_1$, $M_2$ be 3-manifolds and let $M_1 \# M_2$ be their connect sum, i.e. the result of removing a ball from each manifold and gluing the manifolds along the resulting sphere boundary components.  Given Heegaard surfaces $\Sigma_1 \subset M_1$, $\Sigma_2 \subset M_2$, we can choose the balls that we remove to be in the complements of these surfaces, so that both become closed surfaces in the connect sum.  These two surfaces and the connect sum sphere define the two maxima and one minima, respectively, for a splitting path in the complex of surfaces for $M_1 \# M_2$.  In fact, there are four such paths depending on which side of each surface we remove the balls from.  These different choices may produce different Heegaard splittings, but we will choose one of them.  The splitting path defines a (unique) Heegaard splitting, with Heegaard surface $\Sigma$, by Lemma~\ref{heegaardfromsplittinglem}.  

\begin{Thm}[Gordon's conjecture~\cite{bach:gordon},~\cite{qiuscharl}]
\label{gordonconj}
The Heegaard surface $\Sigma$ will be stabilized if and only if one of $\Sigma_1$, $\Sigma_2$ is stabilized.
\end{Thm}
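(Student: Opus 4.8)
The plan is to prove the two implications of Theorem~\ref{gordonconj} separately; the ``if'' direction is elementary and the ``only if'' direction recasts Bachman's argument through Lemma~\ref{stabilizedindlem}. Throughout write $\mS=\mS(M,\emptyset)$ with $M=M_1\#M_2$, let $P$ be the connect sum sphere, and let $E$ be the splitting path defining $\Sigma$, so $E$ has maxima $\Sigma_1,\Sigma_2$ and unique minimum $P$; directly from the definition of the genus of a path, $\operatorname{genus}(\Sigma)=\operatorname{genus}(E)=\operatorname{genus}(\Sigma_1)+\operatorname{genus}(\Sigma_2)=:g$. For the ``if'' direction, suppose $\Sigma_1$ is stabilized. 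A destabilizing pair of disks for $\Sigma_1$ is recorded by a bigon below the maximum vertex representing $\Sigma_1$, and sliding $E$ across this bigon lowers the genus of the path by one while fixing its endpoints. After thinning the result into $\mD$ via Theorem~\ref{mainthm}, this produces a genus-non-increasing sequence of face slides from a path representing $\Sigma$ to a path of genus $g-1<g$, so Lemma~\ref{stabilizedindlem} shows $\Sigma$ is stabilized. The case of $\Sigma_2$ is symmetric.

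For the ``only if'' direction, assume $\Sigma$ is stabilized. If some $M_i\cong S^3$ the claim is immediate: $P$ bounds a ball, $\Sigma$ is up to isotopy a (possibly trivial) stabilization of the other factor, and every positive-genus Heegaard surface of $S^3$ is stabilized. So assume neither $M_i$ is $S^3$; then $P$ is an essential sphere, hence an index-zero vertex of genus $0$ in $\mS$, and it is the unique minimum of $E$. First I would thin $E$ to a path $\bar E$ represented in $\mD^2$ in which $P$ is still one of the index-zero minima: since $\Sigma_1$ and $\Sigma_2$ lie on opposite sides of $P$, the thinning only acts at those two maxima, and none of those moves touches $P$, so the two halves of $\bar E$ on either side of $P$ are thin generalized Heegaard splittings of $M_1\setminus B$ and $M_2\setminus B$ that amalgamate back to $\Sigma_1$ and $\Sigma_2$. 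By Lemma~\ref{stabilizedindlem} there is then a genus-non-increasing sequence of face slides in $\mD^2$, each of which records the thinning or unthinning of a single thick surface, running from $\bar E$ to a splitting path of genus strictly less than $g$.

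The heart of the proof is to arrange that $P$ survives as a thin level along this entire sequence. Because $P$ is an essential sphere, each compressing disk and each thick surface involved in a thinning move can be isotoped to be disjoint from $P$, and each move acts at a single thick surface, which lies entirely on one side of $P$; granting this, every path $E_t$ in the sequence has $P$ as a thin level and hence splits along $P$ into a generalized Heegaard splitting of $M_1\setminus B$ of genus $g^{(1)}_t$ and one of $M_2\setminus B$ of genus $g^{(2)}_t$, with $\operatorname{genus}(E_t)=g^{(1)}_t+g^{(2)}_t$ (the endpoints and $P$ all contribute genus $0$). Since a move affects only the side containing the thick surface it acts on, and the total genus is non-increasing while the other side is unchanged, each $g^{(i)}_t$ is individually non-increasing. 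As $g^{(1)}_0=\operatorname{genus}(\Sigma_1)$, $g^{(2)}_0=\operatorname{genus}(\Sigma_2)$, and the sum strictly drops, some side strictly drops; say the $M_1$ side. The $M_1$-side subsequence is then a genus-non-increasing sequence of face slides in $\mD^2(M_1\setminus B,\emptyset)$ from the splitting path representing $\Sigma_1$ to one of strictly smaller genus, so Lemma~\ref{stabilizedindlem} applied to $M_1\setminus B\cong M_1$ shows that $\Sigma_1$ is stabilized.

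The step I expect to be the main obstacle is precisely the persistence of $P$: that both the thinning of $E$ and the subsequent genus-non-increasing face-slide sequence supplied by Lemma~\ref{stabilizedindlem} can be chosen so that the connect sum sphere remains a thin level throughout. This is exactly Bachman's key insight, and carrying it out will require (i) repeatedly isotoping the relevant disks and thick surfaces off $P$, exploiting that $P$ does not bound a ball, and (ii) checking that a thinning or unthinning move at a thick surface on one side of $P$ never disturbs $P$ or the generalized splitting on the other side. Secondary, more routine points are the genus bookkeeping $\operatorname{genus}(E_t)=g^{(1)}_t+g^{(2)}_t$ when $P$ is a thin level, the verification that amalgamating the $M_i$-side of $\bar E$ recovers $\Sigma_i$, and the dispatch of the degenerate cases in which some $M_i\cong S^3$.
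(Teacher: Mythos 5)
Your ``if'' direction and the overall shape of your ``only if'' direction (decompose a genus-non-increasing face-slide sequence into one sequence per summand and argue that one of them must end in a destabilization) match the paper. But the mechanism you propose for the decomposition --- that the connect sum sphere $P$ persists as a thin level of every intermediate path, with every thick surface lying entirely on one side of $P$ --- is not the paper's mechanism, and it has a genuine gap. The sequence of face slides supplied by Lemma~\ref{stabilizedindlem} may (and in general must) include unthinning moves that amalgamate a thick surface on the $M_1$ side with one on the $M_2$ side across the thin level $P$, after which $P$ is no longer a minimum of the intermediate path; nothing in your argument rules this out, and forcing $P$ to survive would essentially beg the question, since the destabilizing bigon may only become visible after such an amalgamation. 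Relatedly, an index-two thick surface of an intermediate path can project to an index-one surface in \emph{each} summand, so it does not lie entirely on one side of $P$, and your claim that each move changes only one of $g^{(1)}_t$, $g^{(2)}_t$ is unjustified as stated.

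The paper's route avoids this entirely: Lemma~\ref{disjointfromcnctspherelem} shows that \emph{every} rigid (index at most two) surface --- not just the thin levels --- can be isotoped off $P$ and hence decomposes as $S_1 \sqcup S_2$ with $S_i \subset M_i$; the following lemma then shows that every edge and every face slide of $\mD^2(M_1 \# M_2)$ projects to an edge or face slide (or the identity) in each $\mD^2(M_i)$. The decomposition thus happens vertex-by-vertex in the derived complex, with no requirement that $P$ appear as a thin level of any intermediate path, and the bookkeeping (``neither projected sequence contains a stabilization, but one of them ends with a destabilization'') goes through. To repair your argument, replace ``keep $P$ as a thin level throughout'' with ``every index-zero, -one or -two vertex appearing anywhere in the sequence is, up to isotopy, disjoint from $P$,'' which is exactly what Lemma~\ref{disjointfromcnctspherelem} provides.
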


To prove this, we will show that the derived complex of the connect sum is the 2-skeleton of the direct product of the derived complexes of $M_1$ and $M_2$.  We can then analyze Heegaard splittings of $M_1 \# M_2$ by projecting splitting paths into the derived complexes of $M_1$ and $M_2$.  Here, we will take $\mT^1$ to be the empty graph, and will shorten our notation to be $\mS(M) = \mS(M, \emptyset)$.

\begin{Lem}
\label{disjointfromcnctspherelem}
Every rigid surface in $M_1 \# M_2$ can be isotoped disjoint from the connect sum sphere.
\end{Lem}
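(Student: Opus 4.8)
The plan is to derive the lemma directly from Lemma~\ref{cgsteponelem}, using that $\mT^1 = \emptyset$ throughout this section. Write $\Sigma_0 \subset M_1 \# M_2$ for the connect sum sphere; it is an embedded separating sphere, and since $\mT^1$ is empty it is in particular ``a sphere disjoint from $\mT^1$'' in the sense of Lemma~\ref{cgsteponelem}. Moreover $M_1 \# M_2$ is closed, so the hypothesis of that lemma that no boundary component is a sphere disjoint from $\mT^1$ is satisfied vacuously. Thus Lemma~\ref{cgsteponelem} applies with $D = \Sigma_0$ and $S$ any representative of a vertex of $\mS(M_1 \# M_2)$.

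Now let $S$ represent a rigid vertex $v$. By definition $v$ has a well-defined index, hence $v$ is not floppy. The contrapositive of Lemma~\ref{cgsteponelem} then says that $\Sigma_0$ can be made disjoint from $S$ by a sphere-blind isotopy. I would next upgrade this to an honest isotopy. If one of $M_1$, $M_2$ is $S^3$ then $\Sigma_0$ bounds a ball in $M_1 \# M_2$, and the boundary sphere of an embedded ball can always be isotoped off the compact surface $S$, so the conclusion is immediate. Otherwise $\Sigma_0$ is essential, hence is not a trivial sphere, and being a single sphere it admits no sphere tubing; consequently the only sphere-blind isotopies available to $\Sigma_0$ are ordinary isotopies. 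In either case we obtain an ambient isotopy $\phi_t$ of $M_1 \# M_2$ with $\phi_0 = \mathrm{id}$ and $\phi_1(\Sigma_0) \cap S = \emptyset$; then $\phi_1^{-1}(S)$ is isotopic to $S$ and disjoint from $\Sigma_0$, as desired.

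The step I expect to require the most care is precisely this passage from ``sphere-blind isotopy'' to ``honest isotopy.'' In full generality blind isotopies genuinely differ from honest ones, and since $v$ is only a blind-isotopy class one must also account for sphere tubings and trivial-sphere moves applied to $S$ rather than to $\Sigma_0$. The observations that control this are that $\Sigma_0$ is a single sphere (so it has no second component to be tubed to) and, in the nontrivial case, is never a trivial sphere (so it cannot be deleted), which forces every non-honest move to act on $S$ alone; any such residual move on $S$ can be arranged to take place in a ball disjoint from $\Sigma_0$ once $S$ has been pushed off $\Sigma_0$, so it does not reintroduce intersections. The conceptual core of the argument is just the contrapositive of Lemma~\ref{cgsteponelem}; I anticipate only this bookkeeping to be delicate.
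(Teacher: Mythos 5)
Your argument is correct in its essentials, but it takes a genuinely different route from the paper. You apply Lemma~\ref{cgsteponelem} directly in the ambient manifold $M_1 \# M_2$ with $D$ equal to the connect sum sphere and read off the contrapositive. The paper instead first invokes the barrier axiom (via Corollary~\ref{barrieraxiomlem}) to produce incompressible surfaces $S_-$, $S_+$ bounding a submanifold $M_S$ in which $S$ is a Heegaard surface, isotopes $S_\pm$ off the connect sum sphere, and only then applies Lemma~\ref{cgsteponelem} inside $M_S$; the payoff of that detour is structural — a sphere lying in $M_S$ and separable from its Heegaard surface must be parallel to a component of $\partial M_S$, which gives honest (not merely sphere-blind) disjointness for free and simultaneously sets up the decomposition $S = S_1 \cup S_2$ used in the very next paragraph. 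Your shortcut therefore has to do by hand exactly the step you flag: upgrading sphere-blind disjointness to isotopy. Your handling of it is acceptable — a connected sphere admits no sphere tubing and, when essential, cannot be deleted, so sphere-blind isotopies of $\Sigma_0$ in the sense of the paper's definition are honest isotopies (tubing to an added trivial sphere is itself an isotopy) — but two remarks are in order. First, the moves the proof of Lemma~\ref{cgsteponelem} actually performs on $D$ include ``pulling tight,'' i.e.\ surgering $D$ along trivial loops of $D \cap S$; for a sphere in a connected sum with reducible summands this can replace $\Sigma_0$ by a non-isotopic essential sphere, so if one insists on disjointness from the original $\Sigma_0$ your case analysis does not quite close this door (the paper's $M_S$ argument does). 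Second, for the use made of this lemma immediately afterward, a representative of the \emph{vertex} $v$ disjoint from $\Sigma_0$ is all that is needed, and vertices are sphere-blind isotopy classes, so the blind conclusion already suffices for the application; the bookkeeping you anticipate being delicate is largely avoidable.
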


\begin{proof}
Let $v$ be a vertex in $\mS(M_1 \# M_2)$ representing a rigid surface $S$.  Let $v_-$, $v_+$ be the barrier vertices for $v$ promised by the Barrier axiom.  (The complex of surfaces satisfies this axiom by Lemma~\ref{barrieraxiomlem}.)  Each of the surfaces $S_-$, $S_+$ representing these vertices is incompressible, so each can be isotopied disjoint from the connect sum sphere.  The two surfaces bound a submanifold $M_S$ of $M_1 \# M_2$ and $S$ is a Heegaard surface for $M_S$.  If the connect sum sphere is disjoint from $M_S$ then it is disjoint from $S$ and we are done.  Otherwise, if the connect sum sphere is contained in $M_S$ then Lemma~\ref{cgsteponelem} implies that $S$ is blind isotopic to a surface disjoint from the connect sum sphere, since $S$ is topologically minimal.  This occurs if and only if the connect sum sphere is parallel to a boundary component of $M_S$, in which case $S$ is again disjoint from the connect sum sphere.
\end{proof}

This implies that every rigid surface $S$ in $M_1 \# M_2$ is the union of the images of a surface $S_1$ in $M_1$ and a surface $S_2$ in $M_2$.  The descending link of $S$ is the join of the descending links of $S_1$ and $S_2$ so if either of these descending links is contractible, the descending link of $S$ is contractible.  Since $S$ is rigid, the surfaces $S_1$, $S_2$ must be rigid as well.  Let $\rho_i: \mD(M_1 \# M_2) \rightarrow \mD(M_i)$ be the map that sends a vertex representing a rigid surface $S \subset M_1 \# M_2$ to $S_i = S \cap M_i$.  Note that this result does not hold for the $\mS(M, \emptyset)$.

\begin{Lem}
Every path in $\mD^2(M_1 \# M_2)$ projects to a pair of paths in $\mD^2(M_1)$, $\mD^2(M_2)$.  Every face slide in $\mD^2(M_1 \# M_2)$ projects to either a face slide in $\mD^2(M_1)$, a face slide in $\mD^2(M_2)$, or the identity on both.
\end{Lem}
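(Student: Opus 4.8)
The plan is to promote the restriction-to-$M_i$ map $\rho_i$ from vertices to edges and to $2$-cells of $\mD^2(M_1\#M_2)$, the point throughout being that everything in sight can be isotoped off the connect sum sphere $\Sigma_0$. By Lemma~\ref{disjointfromcnctspherelem} and the remark following it, a rigid surface $S$ splits as $S=S_1\cup S_2$ with $S_j\subset M_j$, its descending link is the join $L_{S_1}*L_{S_2}$, the surfaces $S_1,S_2$ are themselves rigid, and $\mathrm{index}(S)=\mathrm{index}(S_1)+\mathrm{index}(S_2)$; in particular $\rho_i$ sends vertices of $\mD^2(M_1\#M_2)$ to vertices of $\mD^2(M_i)$.

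First I would handle edges. Represent an edge $e$ of $\mD^2(M_1\#M_2)$ by a monotonic oriented (or reverse oriented) path in $\mS(M_1\#M_2)$, that is, a finite sequence of K-compressions beginning with a rigid surface, which I push off $\Sigma_0$. Each successive K-disk $D$ (here just a compressing disk, since $\mT^1=\emptyset$) can be made disjoint from $\Sigma_0$ by repeatedly replacing a subdisk of $D$ that $\Sigma_0$ cuts off inside $D$ by a subdisk of $\Sigma_0$ pushed slightly off; this changes neither $\partial D$ nor the side of the surface containing $D$, so the surgered disk represents the same edge of $\mS(M_1\#M_2)$. Once this is arranged, each K-compression of the sequence occurs entirely in $M_1$ or entirely in $M_2$, so restricting the whole sequence to $M_i$ yields a monotonic path in $\mS(M_i)$ from $\rho_i$ of one endpoint of $e$ to $\rho_i$ of the other. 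This restricted path is either constant, in which case $\rho_i$ collapses $e$ to a vertex of $\mD^2(M_i)$, or it represents an edge of $\mD^2(M_i)$ (the index bound above keeps us inside $\mD^2$). Hence every edge-path in $\mD^2(M_1\#M_2)$ projects, for each $i$, to an edge-path in $\mD^2(M_i)$, with some edges possibly collapsed.

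Next, the $2$-cells. A face of $\mD^2(M_1\#M_2)$ is the triangle spanned by an index-two surface $v$, an index-one surface $v_1$, and an index-zero surface $v_0$, with edges $vv_1$, $v_1v_0$ and $vv_0$. Since $\mathrm{index}(v)=\mathrm{index}(\rho_1(v))+\mathrm{index}(\rho_2(v))=2$, the index drops are concentrated so that each of $vv_1$ and $v_1v_0$ is, after the push-off of the previous paragraph, localized in a single factor. If $vv_1$ and $v_1v_0$ are localized in the same factor $M_i$, the triangle projects to a genuine triangle of $\mD^2(M_i)$, while in $M_{3-i}$ all three of $v,v_1,v_0$ have a common $\rho$-image and the triangle is degenerate; if the two are localized in different factors, then in each of $M_1$ and $M_2$ one of $vv_1$, $v_1v_0$ is collapsed, so the projected triangle is degenerate in both. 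A face slide is carried out across a triangle, and a face slide across a degenerate triangle is the identity move, so a face slide in $\mD^2(M_1\#M_2)$ projects under $\rho_1,\rho_2$ either to a face slide in one factor together with the identity in the other, or to the identity in both.

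The hard part, I expect, is making the ``push off $\Sigma_0$'' step genuinely compatible with the combinatorics of $\mS$: one must check that the surgered K-disks represent the same edges in the sense in which edges of $\mS$ were defined (up to the sphere-blind bookkeeping), that this can be arranged simultaneously for all disks occurring in a given path or triangle, and hence that $\rho_i$ descends to a well-defined operation on equivalence classes of paths. The companion point needing care is the index arithmetic: one must use index-additivity under the join decomposition, together with a verification that the two edges of a face each drop index in exactly one factor and leave the surface in the other factor unchanged, so that the case analysis above is exhaustive and each non-collapsed projected edge genuinely lies in the $1$-skeleton of $\mD^2(M_i)$.
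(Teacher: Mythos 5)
Your proposal is correct and follows essentially the same route as the paper: index additivity from the join decomposition of descending links, localization of each edge's compressions into a single summand, and the same two-case analysis of faces (both descending edges in one summand gives a face slide there; edges in different summands gives a reordering of compressions, hence the identity on both projections). The extra detail you supply on surgering K-disks off the connect sum sphere is a point the paper leaves implicit, but it does not change the argument.
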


\begin{proof}
The index of a surface in $M_1 \# M_2$ is the sum of the indices of the surfaces in $M_1$ and $M_2$.  Thus every index-one vertex in $\mD^2(M_1 \# M_2)$ projects to an index-one vertex and an index-zero vertex.  An index-two vertex projects to either two index-one vertices, or an index-two and an index-zero vertex.  Every edge in $\mD^2$ goes from an index-one surface or index-one, to an index-one to index-sero surface.  If the indexin $\mD(M_1 \# M_2)$ changes by one then the index only changes in $M_1$ or $M_2$ so all the compressions take place in one of the summands.  If the index changes by two in $\mD(M_1 \# M_2)$ then either it changes by two in one of the summands, or it changes by one in each summand.  In any of these cases, the edge in $\mD(M_1 \# M_2)$ projects to an edge or a vertex in each summand.

A face in $\mD^2(M_1 \# M_2)$ is defined by three edges.  If these edges correspond to compressions in different manifolds then the face corresponds to changing the order of the compressions.  In this case, the face slide projects to the identity in each summand.  Otherwise, the slide occurs inside one of the summand and thus corresponds to a face slide in $\mD^2(M_1)$ or $\mD^2(M_2)$. 
\end{proof}

\begin{proof}[Proof of Theorem~\ref{gordonconj}]
A Heegaard splitting will be stabilized if and only if the Heegaard path defined by the Heegaard splitting can be slid, without changing its genus, to a splitting path such that there is a bigon below one of its maxima.  Consider such a sequence of face slides in $\mD^2(M_1 \# M_2)$.  This projects to sequences of paths in each of $\mD^2(M_1)$ and $\mD^2(M_2)$.  Since the sequence of paths in $\mD^2(M_1 \# M_2)$ does not involve a stabilization, neither of the projected sequences involves a stabilization.  However, the original sequence of paths does involve a destabilization.  The pair of disks defining this bigon below the maximum in the path in $\mD^2(M_1 \# M_2)$ determine a pair of disks in a maximum of one of the final projected paths.  Thus one of the projected sequence of paths ends with a stabilization.  Since the sequence of projected paths does not involve a stabilization, but ends with a destabilization, the original Heegaard splitting must have been stabilized.
\end{proof}

\section{Normal Surfaces}
\label{normalsect}

We now switch to the task of proving Theorem~\ref{mainthm2}.  In this and the next two sections, we define normal, almost normal and index-two normal surfaces, then show that the vertices of $\mD(M, \mT^1)$ are represented by such surfaces.  In Section~\ref{bentsurfacesect}, we describe an algorithm for calculating the edges and faces spanned by these vertices.  Then in Section~\ref{thm2sect}, we combine these results to prove Theorem~\ref{mainthm2}.

\begin{Def}
A \textit{triangle complex} is an embedded two-dimensional cell complex $\mT^2 \subset M$ such that the 2-cells of $\mT^2$ are triangles.
\end{Def}

In other words, a triangle complex is a 2-dimensional simplicial complex in which we do not require that edges have distinct endpoints.  We will often arrange that the complementary regions of a triangle complex are tetrahedra, but we expect there will be applications of these results in more general situations.  For this reason, we will carry out our analysis in full generality.

A \textit{3-cell} of a triangle complex is the closure of a component of the complement $M \setminus \mT^2$.  If $S \subset M$ is a surface transverse to $\mT^2$ and $C$ is a 3-cell of $\mT^2$ then each component of $S \cap C$ will be called a \textit{piece} of $S$.  A K-disk for a piece $F$ of $S$ is a K-disk for $S$ that is contained in $C$ and whose intersection with $S$ is contained in $F$.  The \textit{descending link} for a piece $F$ is the subcomplex of the descending link for $S$ spanned by the K-disks for $F$.

K-disks in distinct pieces of $S$ will be disjoint or isotopic, so there is a projection map from the join of the descending links for the pieces of $S$ into the descending link for $S$.  If every piece has a well defined index, then so will their join, whose index will be the sum of the indices of the pieces.

Consider a boundary loop $\ell$ of an index-$n$ normal piece in a 3-cell $\sigma$.  If any arc of $\ell$ in the complement of the 1-skeleton of $\partial \sigma$ has both endpoints in the same edge then this arc determines a K-disk for the normal piece that is disjoint from all other K-disks for this piece.  Thus every arc of $\ell$ in the complement of the edges of $\partial \sigma$ must have its endpoint in different edges.  We will call such an arc a \textit{straight arc}.  An arc with its endpoints in different edges will be called a \textit{bent arc}.  A loop made up of straight arcs will be called a \textit{straight arc}.  From the above argument, it follows that every boundary component of an index-$n$ normal piece must be a straight loop.

\begin{Def}
A \textit{index-$n$ normal surface} with respect to a triangle complex $\mT^2$ is a surface $S$ such that $S \cap \mT^2$ does not contain a simple closed curve in a triangle, each piece of $S$ has well defined index and the sum of these indices is $n$.
\end{Def}

We will see that when the 3-cells of $\mT$ are tetrahedra, this corresponds to the usual definition of normal and almost normal surfaces.

\begin{Lem}
\label{normalizerlem}
If $\mT$ is a simple 2-complex in $M$ then every index-$n$ vertex in $\mS(M, \mT^1)$ is represented by an index-$n$ normal surface with respect to $\mT$.
\end{Lem}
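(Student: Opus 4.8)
The plan is to use the thinning machinery of the earlier sections together with a ``normalization'' move that decreases the intersection with $\mT^2$ while not increasing the index. Fix an index-$n$ vertex $v$ of $\mS(M,\mT^1)$, represented by a surface $S$. The strategy is to run a standard normalization argument on $S$, as in the work of Jaco-Oertel, Stocking, and Bachman, but phrased entirely in terms of moves in the complex $\mS(M,\mT^1)$ so that control of the index is automatic. First I would put $S$ in general position with respect to $\mT^2$ and define a complexity (for instance, the number of intersection curves of $S$ with the $1$-skeleton of $\mT^2$, or the weight of $S$ together with the number of components of $S\cap\mT^2$, ordered lexicographically). If some component of $S\cap\mT^2$ is a simple closed curve in a triangle, then — since the triangle is a disk — an innermost such curve bounds a disk in the triangle whose interior is disjoint from $S$; compressing $S$ along this disk (or isotoping $S$ across it, if the curve is trivial in $S$) is a move along an edge or across a bigon/triangle of $\mS(M,\mT^1)$ that strictly decreases the complexity. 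Such a move does not increase the index: a compression along a disk always corresponds to a descending edge, and by the barrier axiom (Corollary~\ref{barrieraxiomlem}) and Lemma~\ref{cgsteponelem}, a disk that cannot be made disjoint from $S$ witnesses that $S$ is floppy, which contradicts $v$ being rigid of index $n$; hence the disk can be made disjoint, the relevant edge/face lies in the link, and the rigid vertex with its index is preserved along a constant-complexity (in the $\mS$-sense) segment. So I may assume $S\cap\mT^2$ has no simple closed curve in a triangle.

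Next I would show that each piece of $S$ in each $3$-cell has well-defined index. The key point, established in the paragraph preceding the Lemma, is that if a boundary loop of a piece contains a bent arc — an arc in the complement of the $1$-skeleton of $\partial\sigma$ with both endpoints on the same edge of $\partial\sigma$ — then that arc cuts off a K-disk for the piece disjoint from all other K-disks for that piece. This K-disk gives a cone point in the descending link of the piece, so the descending link is a cone, hence contractible, hence has all homotopy groups trivial, so the piece is floppy. But the projection of the join of the descending links of the pieces into the descending link of $S$ shows that if one piece is floppy then $S$ is floppy: the descending link of $S$ is (a quotient of) the join of the piece-links, and a contractible join factor makes the whole join, hence the whole descending link, contractible. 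Again this contradicts $v$ being rigid. So every boundary loop of every piece is a straight loop. Once all pieces are straight — i.e. every boundary arc runs between distinct edges of its $3$-cell — the surface $S$ is normal in the required sense, and its index $n$ is the sum of the indices of the pieces by the join-of-descending-links observation (each piece being rigid because its descending link, being a factor whose join has the non-contractible homotopy type of $L_v$, must itself be non-contractible in the right degree; more precisely the join formula $\pi_{k}(A*B)$ in terms of $\pi_*(A),\pi_*(B)$ forces the bookkeeping).

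The one gap to close carefully is termination: I must argue that the normalization process — removing closed curves in triangles and eliminating bent arcs — actually reduces the chosen complexity and cannot continue forever. Removing an innermost trivial curve in a triangle clearly drops the number of components of $S\cap\mT^2$ (or the weight). Eliminating a bent arc is the delicate case: the K-disk it cuts off, when we K-compress or isotope across it, is an isotopy of $S$ pushing a ``finger'' or ``fold'' of $S$ back across an edge of $\mT^2$, which reduces the weight (number of intersection points of $S$ with the $1$-skeleton) exactly as in the classical normalization lemmas. I would invoke the net axiom to guarantee that the corresponding descending segments in $\mS(M,\mT^1)$ are finite, and a Haken-style weight argument to guarantee the geometric process terminates. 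The main obstacle, then, is setting up this complexity so that both the ``closed curve in a triangle'' moves and the ``bent arc'' moves are simultaneously non-increasing on index and strictly decreasing on complexity, and checking that when no move applies the surface is genuinely index-$n$ normal — i.e. that no index has been lost or created. This last verification is exactly the join-of-descending-links computation, which is where the earlier observation ``if every piece has a well defined index, then so will their join, whose index will be the sum of the indices of the pieces'' does the real work, and which I would cite rather than reprove.
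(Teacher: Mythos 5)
Your proposal takes a genuinely different route from the paper, and the route has a gap that I do not think can be closed. The paper does not normalize $S$ by a monotone sequence of simplifying moves. Instead it runs a parametrized argument: since $v$ has index $n$ there is a homotopically non-trivial map $\xi : S^n \to L_v$; the paper builds a Bachman ball $B^{n+1}$ of representatives \emph{of the same vertex} $v$ (exactly as in Lemma~\ref{cgsteponelem}, with the triangles of $\mT^2$ playing the role of $D$), and observes that if every representative in the ball had non-empty visible link, the Bachman map would null-homotope $\xi$. Hence some representative $S_a$ of $v$ meets $\mT^2$ with no returning arcs and no essential closed curves, and a retraction $L_v \to L^a_v$ onto the K-disks disjoint from $\mT^2$ shows $S_a$ has index exactly $n$. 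This is the higher-index analogue of the Rubinstein--Stocking sweepout/minimax argument, and that structure is essential: for $n \geq 1$ there is no Haken-style monotone normalization of a strongly irreducible (or index-two) surface.

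Concretely, two steps of your procedure fail. First, your moves do not preserve the vertex: a vertex of $\mS(M,\mT^1)$ is a sphere-blind \emph{transverse} isotopy class, so compressing along an essential curve in a triangle, or isotoping $S$ across an edge of $\mT^1$ to eliminate a returning arc, traverses an edge of $\mS(M,\mT^1)$ to a \emph{different} vertex; the normal surface you end with need not represent $v$ at all, which is what the lemma requires. Second, your index control is circular: a returning arc in a piece shows that the \emph{piece's} descending link is a cone, but this does not make $S$ floppy, because $L_v$ is only the target of a projection from the join of the piece links --- it also contains K-disks that cross $\mT^2$, so a contractible join factor for the pieces says nothing about $\pi_*(L_v)$. (An index-one surface in general position certainly has returning arcs and is certainly not floppy.) Likewise, after a compression the resulting vertex's index is unrelated to $n$, so "non-increasing on index" is not something the local moves give you. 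Termination, which you flag as the main worry, is actually the easy part; the missing idea is the $n$-parameter family and the descending-link retraction.
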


\begin{proof}
If $S$ has index zero then $S$ has no K-disks so no piece of $S$ can contain a K-disk for $S$.  If $S \cap \mT^2$ contains a simple closed curve in a triangle then this curve bounds a disk in the triangle, and therefore this loop must be trivial in $S$.  A sphere blind isotopy of $S$ will remove this loop of intersection, and by continuing this process, we can ensure that $S \cap \mT^2$ will contain no simple closed curves.

For $n > 0$, let $v$ be an index-$n$ vertex in $\mS(M, \mT^1)$ and let $S$ be a surface representing $v$.  Because $v$ has index $n$, there is a map $\xi$ from an $n$-sphere $S^n$ into the descending link $L_v$ such that $\xi$ is not homotopy trivial.  As in the proof of Lemma~\ref{cgsteponelem}, define a triangulated Bachman ball $B^n$ whose boundary is $S^n$ such that every vertex corresponds to a representative for $v$ transverse to $\mT^2$ and between any two vertices that are connected by an edge in $B^n$, there is an isotopy between the associated surfaces that is tangent to a 2-cell in $\mT^2$ at exactly one point.  The construction is nearly identical to that in Lemma~\ref{cgsteponelem}, but with $D$ replaced by the triangles in $\mT^2$.  Because K-disks for $S$ only intersect the interiors of the triangles, the argument is identical.

Let $V(B^n)$ be the subcomplex of $B^n$ spanned by the set of vertices whose associated surfaces have non-trivial visible link.  In other words, this is the set of surfaces whose intersection with $\mT^2$ contains the boundary of a K-disk.  As in the proof of Lemma~\ref{cgsteponelem}, the descending links define a continuous Bachman map $\Psi$ from $V(B^n)$ into $L_v$ that agrees with $\xi$ on $S^n$.  If $V(B^n) = B^n$ then $\Psi$ defines a homotopy from $\xi$ to the constant map, contradicting the assumption that $\xi$ is homotopy non-trivial.  Thus there is at least one vertex $a \in B^n$ that is not contained in $V(B^n)$.

Let $S_a$ be a surface representing this vertex $a$.  Because the visible link of $S_a$ is empty, every arc of intersection between $S_a$ and a 2-cell of $\mT^2$ has its endpoints in different edges of the triangle.  Moreover, any loop of intersection between $S_a$ and a 2-cell must be trivial in $S_a \setminus \mT^1$.  

Define $L^a_v \subset L_v$ to be the sub-complex spanned by the set of K-disks for $S_a$ whose interiors are disjoint from the 2-cells in $\mT^2$.  We will define a projection $\pi : L^a_v \rightarrow L_v$ as follows:  Given a K-disk $D$ representing a vertex $u \in L_v$, if $u$ is already in $L^a_v$ then we define $\pi(u) = u$.  If $u$ is not in $L^a_v$ then $D$ must intersect the 2-cells of $\mT^2$ in a collection of arcs and loops.  We can remove all loop intersections so that $D \cap \mT^2$ consists of arcs with boundary in $S_a$.  

Let $O$ be an outermost disk cut off by one of these arcs.  The boundary of $O$ consists of an arc in $S_a$ and an arc $\alpha$ in a 2-cell $C$ of $\mT^2$.  The endpoints of $\alpha$ are contained in straight arcs $\gamma_1$, $\gamma_2$ of $S_a \cap C$.  Because $C$ is a simple cell, a zero-surgery in $C$ of $\gamma_1 \cup \gamma_2$ produces either a trivial loop or a bent arc in $C$, bounding a compressing disk or a bridge disk, respectively, in $S_a$.  The disk $D$ is disjoint from this K-disk and we will have $\pi$ send $D$ to one of the K-disks defined in this way.  In particular, if we choose an ordering on the vertices of $L_v$, we will have $\pi$ send $u$ to the K-disk with lowest order, defined in this way.

Note that if K-disks $D$, $D'$, representing $u$, $u'$, are disjoint then the K-disks representing $\pi(u)$ and $\pi(u')$ will be disjoint.  Thus the map $\pi$ extends to a continuous map between the simplicial complexes $L_v$ and $L^a_v$.  This map is a retraction so if $L^a_v$ has a non-trivial homotopy group of dimension less than $n$ then so does $L_v$.  Because we assumed $S$ has index $n$, the surface $S_a$ cannot have normal index less than $n$.  Since the map $\xi : S^n \rightarrow L_v$ is non-trivial and $L^a_v$ is contained in $L_v$, the map $\pi \circ \xi$ must be homotopy non-trivial in $L^a_v$, implying that $L^a_v$ has a non-trivial homotopy group in dimension $n$.  Thus $S_a$ has normal index $n$.
\end{proof}

\section{Normal disks in tetrahedra}
\label{triangsect}

In this section we classify the low index normal pieces in tetrahedra that are incompressible.  Because a tetrahdron is a ball, the only incompressible surfaces it contains are disks.  In fact, every loop $\ell$ in the boundary of a tetrahedron $\sigma$ bounds a unique (up to isotopy fixing $\ell$) disk, so we must classify the loops that bound index-$n$ normal disks.  Since we are only interested in the complex $\mD^2(M, \mT^1)$, we will restrict our attention to pieces with index at most two.  As noted above, the boundary of an index-$n$ normal piece must be a straight loop.  Thus we will consider all straight loops in the boundary of a tetrahedron and determine which bound index-$n$ normal disks for $n \leq 3$.

To classify straight loops, we will consider the complement in $\partial \sigma$ of its four vertices.  Given any loop $\ell \subset \partial \sigma$ in the complement of the four vertices, if $\ell$ contains a bent arc then there is an isotopy of $\ell$ that removes this bent arc.  (This isotopy will not be transverse to the 1-skeleton of $\partial \sigma$.)  If we isotope $\ell$ so as to minimize its intersection with the edges, the result will be a straight loop that is isotopic to $\ell$ in the complement of the vertices of $\sigma$.  Thus for each isotopy class of loops in the complement of the vertices, there is a unique straight loop up to transverse isotopy, and this loop bounds a unique disk in $\sigma$  

Around each vertex there is a straight loop bounding a disk that contains only that vertex.  The normal loop in each of these isotopy classes consists of three normal arcs.  A disk bounded by this loop will be called a \textit{normal triangle}.  The four normal disks in a tetrahedron are shown on the left in Figure~\ref{triandquadfig}.
\begin{figure}[htb]
  \begin{center}
  \includegraphics[width=4.5in]{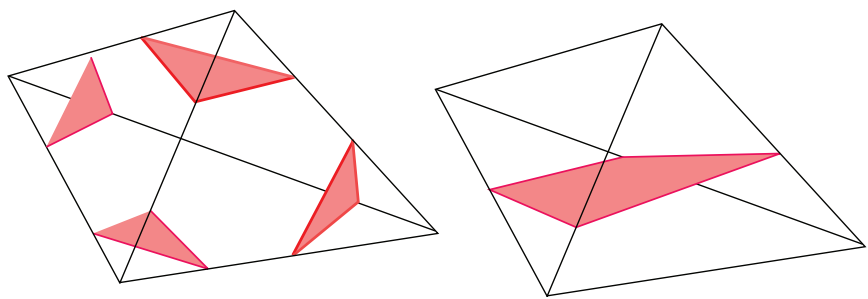}
  \caption{The two types of index-zero normal pieces are triangles and quadrilaterals.}
  \label{triandquadfig}
  \end{center}
\end{figure}

Let $\ell$ be a normal loop in one of the remaining isotopy classes.  The complement $\partial \sigma \setminus \ell$ consists of two disks, $D^+, D^-$ each containing two vertices.  In each disk, there is up to isotopy a unique arc $\alpha^+$, $\alpha^-$ connecting the two vertices.  If one of these arcs is isotopic to an edge of the triangulation of $\partial \sigma$ then the loop $\ell$ consists of four straight arcs.  The disk bounded by this $\ell$ is called a \textit{normal quadrilateral}.  For each pair of vertices, there is a unique edge connecting them and thus a unique normal quadrilateral.  One of the three quadrilaterals is shown on the right in Figure~\ref{triandquadfig}.

Normal triangles and quadrilaterals are incompressible.  A bridge disk for a normal piece intersects an edge of $\partial \sigma$ in two distinct points contained in the boundary of the normal piece.  The boundary of a normal triangle or quadrilateral intersects each edge of $\sigma$ at most once so it has no bridge disks.  Thus normal triangles and quadrilaterals are index-zero normal pieces.

Consider a straight loop $\ell$ defined by an $\alpha$ that intersects the 1-skeleton of $\sigma$ in exactly one point.  Any pair of vertices is connected by a unique such $\alpha$ and the corresponding straight loop consists of eight normal arcs.  A disk bounded by such an $\ell$ will be called a \textit{flat octagon}.  One such disk is shown on the left in Figure~\ref{octagonfig}, and is shown twisted in Figure~\ref{octbridgesfig} so that the bridges are clearer.
\begin{figure}[htb]
  \begin{center}
  \includegraphics[width=4.5in]{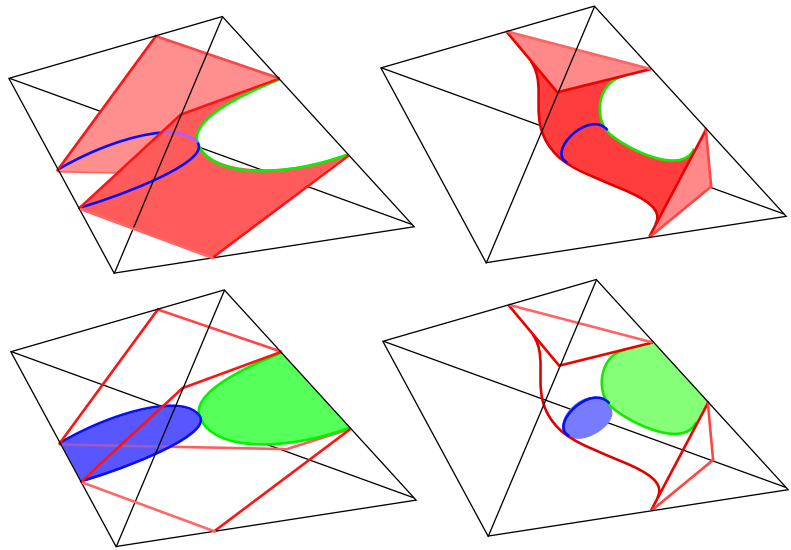}
  \caption{The two types of index-one pieces and their compressing disks.}
  \label{octagonfig}
  \end{center}
\end{figure}
\begin{figure}[htb]
  \begin{center}
  \includegraphics[width=4.5in]{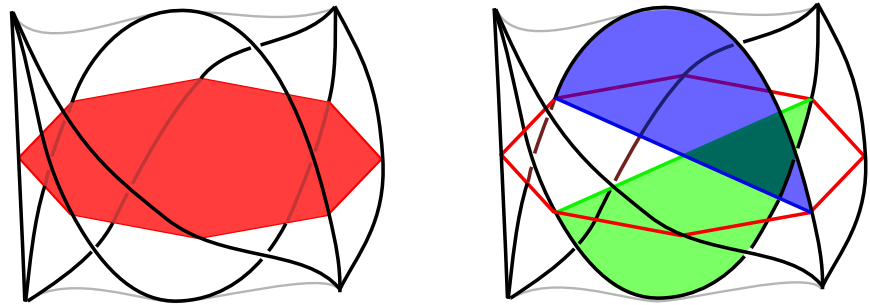}
  \caption{A normal octagon in a tetrahedron, drawn so that the octagon is horizontal and the edges form bridges.}
  \label{octbridgesfig}
  \end{center}
\end{figure}

The boundary of a normal octagon intersects two edges in two points and intersects the remaining four edges in one point each.  Because a bridge disk requires two points of intersection, there are exactly two bridge disks.  These two bridge disks intersect in the interior of the normal disk, so they are not connected by an edge in the descending link for the associated surface.  The zero homotopy group of an octagon is thus non-trivial, making the normal octagon an index-one normal piece for the tetrahedron.

Next consider the case when the arc $\alpha$ intersects the 1-skeleton of $\sigma$ in more than one point.  No arc between distinct vertices intersects exactly two edges so the next type of arc intersects three edges and the corresponding loop $\ell$ consists of twelve normal arcs and bounds a disk shown in Figure~\ref{dodecagonfig} that we will call a \textit{normal dodecagon}.  .  Again, there is a unique such arc and a unique dodecagon for each pair of vertices.
\begin{figure}[htb]
  \begin{center}
  \includegraphics[width=4.5in]{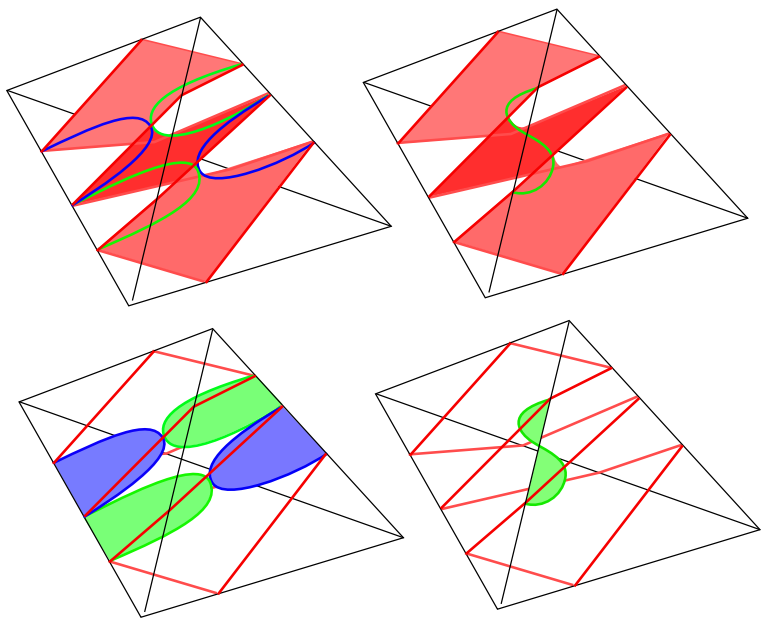}
  \caption{The two types of almost normal pieces and their compressing disks.}
  \label{dodecagonfig}
  \end{center}
\end{figure}

The boundary of a normal dodecagon intersects two opposite edges of $\sigma$ in three points, two edges in two points and two edges in one point.  There are two bridge disks intersecting each of the edges with three intersections and one bridge disk in each of the edges with two, for a total of six bridge disks.  The subcomplex of the descending link spanned by these six disks is connected, but contains a non-trivial loop of length four, so a normal dodecagon is an index-two normal piece in the tetrahedron.

All that remains is to show that disks with straight boundaries that intersect the 1-skeleton of $\sigma$ in more than 12 points are not normal disks of index two or less.  A complete classification of the indices of disks in tetrahedra has been worked out by Dave Bachman, but has not been published at the time of this writing.  The following proof came out of a conversation with Ryan Derby-Talbot using some of the ideas in Bachman's proof.  It is included here with the permission of both Ryan and Dave.

\begin{Lem}
\label{normaloctbridgeslem}
Every index-zero normal disk is a normal triangle or quadrilateral.

Every index-one normal disk is a normal octagon.

Every index-two normal disk is a normal dodecagon.
\end{Lem}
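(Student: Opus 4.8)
The plan is to exploit the classification of straight loops in $\partial\sigma$ that the preceding paragraphs set up. Since the tetrahedron $\sigma$ is a ball, a normal disk is determined by the isotopy class of its boundary loop rel the four vertices of $\sigma$, i.e.\ by an isotopy class of simple closed curve in the four-times-punctured sphere obtained from $\partial\sigma$ by deleting its vertices. Such a curve either encircles a single puncture --- giving a normal triangle, whose descending link is empty and hence of index zero --- or separates the four punctures into two pairs. In the latter case, measuring such a curve by its weight $w=|\ell\cap\mT^1|$, I would first record (it follows from the analysis already given) that the only such curves with $w\le 12$ are the normal quadrilaterals ($w=4$, no bridge disks, index zero), the flat octagons ($w=8$, two bridge disks crossing in the interior, descending link $\simeq S^0$, index one), and the normal dodecagons ($w=12$, six bridge disks, descending link a connected graph with a nontrivial loop, index two). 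This proves the three ``only if'' statements of the Lemma in the range $w\le 12$, so the whole Lemma reduces to the single assertion that \emph{a normal disk of weight $w\ge 16$ has index neither zero, one, nor two}.

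To prove this assertion it suffices, by the definition of index, to show that the descending link $L_F$ of such a disk piece $F$ is nonempty (ruling out index zero), connected (ruling out index one), and simply connected (ruling out index two). The vertices of $L_F$ are the bridge disks of $F$: an edge $e$ of $\sigma$ met by $\partial F$ in $k$ points contributes the $k-1$ bridge disks determined by the segments of $e$ between consecutive intersection points, and two bridge disks span an edge of $L_F$ precisely when the disks are disjoint or meet in a single point of $\mT^1\cap F$. A convenient model is the polygon $F$ with the $w$ points of $\partial F\cap\mT^1$ marked on its boundary: a bridge disk is determined by an arc of $F$ joining two marked points consecutive along a common edge of $\sigma$, and $L_F$ is the full subcomplex of the arc complex of this polygon spanned by these ``edge-adjacent'' arcs. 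The core of the argument is that once $w\ge 16$ there are enough edge-adjacent arcs that this subcomplex is connected and simply connected; I would establish this by exhibiting, in the complement of any given bridge disk, another disjoint bridge disk, and then running a surgery/retraction argument in the spirit of Cho's criterion (Theorem~4.2 of~\cite{cho}, used already in the proof of Lemma~\ref{disksetcontractible}) to push arbitrary loops and spanning disks from the (highly connected) arc complex of the polygon into the subcomplex $L_F$.

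I expect this last step to be the main obstacle, for two reasons. First, one must rule out sparse boundary patterns: a priori $\partial F$ could meet only one or two edges of $\sigma$ with high multiplicity and every other edge just once, which might leave $L_F$ with few vertices and poor connectivity, so one needs the edge-by-edge description (as in the $w\le 12$ cases, the straight loops of a fixed weight $w\ge 16$ form a finite list, indexed by complementary arcs $\alpha^{\pm}$ crossing the $1$-skeleton $5,7,\dots$ times) together with a monotonicity statement in $w$. Second, the adjacency relation in $L_F$ is not quite that of a flag complex inside the arc complex: two bridge disks on the \emph{same} edge sharing an endpoint cross rather than being disjoint --- exactly the phenomenon that disconnects the octagon's link --- so the comparison with the arc complex of the polygon must be carried out carefully by hand. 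Should the direct argument become unwieldy, a workable alternative is to bridge-compress $F$ across a bridge disk, reducing $w$ by two, and induct on $w$: the bridge compression is an edge of $\mS(M,\mT^1)$ joining $F$ to a normal disk of smaller weight, and one tracks the effect of this move on the descending links.
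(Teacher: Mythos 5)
Your reduction is the right one and matches the paper's: the classification of straight loops gives triangles, quadrilaterals, octagons and dodecagons as the only candidates of weight at most $12$, their indices are computed directly, and the whole lemma then rests on showing that a straight-boundary disk $D$ with $n>12$ corners has descending link that is nonempty, connected and simply connected. The gap is that you never actually prove this last statement; you propose a Cho-style surgery/retraction into a subcomplex of the arc complex of a polygon, correctly flag that this is ``the main obstacle,'' and leave it there. That obstacle is the entire content of the hard case, and the proposed route has real problems: Cho's criterion is engineered to prove \emph{contractibility} of a subcomplex of a disk complex via boundary surgery, whereas here one only wants (and in general only has) $1$-connectedness, and surgery of one bridge disk along another does not obviously produce a bridge disk of $D$ again, so the hypothesis of the criterion is not available. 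Your fallback (induct on weight via bridge compression) is also unexamined: bridge-compressing $D$ does not yield a straight-boundary disk in general, and there is no evident relation between the descending links before and after the move.

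What the paper actually exploits, and what your model misses, is that the bridge disks on a fixed side of $D$ correspond to the crossings of a single embedded arc $\alpha^{\pm}$ with the $1$-skeleton, hence cut $D$ in a family of \emph{pairwise disjoint parallel} arcs $\beta_1,\dots,\beta_k$ (resp.\ $\gamma_1,\dots,\gamma_k$). So $L_D$ is two simplices $c_-$, $c_+$ joined by whichever edges $\beta_i\gamma_j$ have disjoint representatives, and by van Kampen it suffices to show the ``equidistant set'' $C$ between $c_-$ and $c_+$ is nonempty and connected. Nonemptiness is a pigeonhole count: only six corners of $D$ miss the endpoints of the $\beta_i$ and six miss the $\gamma_j$, which forces some disjoint pair once $n>12$. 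Connectedness uses that each $\beta_i$ separates $D$, so every vertex of $C$ connects to the vertex $(\beta_0,\gamma_0)$ or to $(\beta_k,\gamma_k)$, and a short case analysis (with one exceptional $k=5$ configuration checked by hand) joins those two. If you want to complete your write-up, replace the retraction plan with this bipartite analysis; as it stands the proposal establishes the $w\le 12$ half of the lemma but not the exclusion of higher-weight disks.
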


\begin{proof}
The indices of triangles, quadrilaterals, octagons and dodecagons were calculated above, so the statement will follow from showing that if $D$ is a disk with straight boundary in a tetrahedron $\sigma$ and $\partial D$ intersects the 1-skeleton of $\partial \sigma$ in more than 12 points then $D$ does not have index zero, one or two.  In particular, we must show that the descending link for such a disk $D$ is connected and simply connected.

The boundary of $D$ is determined by arcs $\alpha^+$ and $\alpha^-$ as above.  Let $k$ be the number of points where the interior of $\alpha^+$ intersects the 1-skeleton of $\partial \sigma$.  (This $k$ will always be an odd number.)  Then $\partial D$ will intersect the 1-skeleton of $\partial \sigma$ in $n = 2k + 6$ points, and we will call these points the \textit{corners of $D$}.

For every pair of corners of $D$ that are adjacent in a single edge of $\partial \sigma$, there is a single intersection of either $\alpha^+$ or $\alpha^-$ with that edge.  Thus the bridge disks on the positive/negative side of $D$  correspond to points where the interior of $\alpha^+$/$\alpha_-$, respectively, crosses an edge in $\partial \sigma$.  In particular, on either side the bridge disks intersect $D$ in a collection of pairwise-disjoint, parallel arcs.  Let $\beta_1,\dots,\beta_k$ be the arcs coming from disks on the positive side and let $\gamma_1,\dots,\gamma_k$ be the arcs coming from disks on the positive side.  There are exactly six corners of $D$ that are not endpoints of some $\beta_i$ and six corners that are not endpoints of some $\gamma_i$.  Each set of six consists of two clumps of three.

The set of bridge disks on each side of $D$ form a simplex in the descending link $L_D$.  Let $c_-$, $c_+$ be these simplices.  The entire link consists of $c_-$, $c_+$ and a collection of cells spanned by edges between $c_-$ and $c_+$.  In particular, two vertices in opposite simplices will be connected by an edge if and only if the corresponding arcs $\beta_i$, $\gamma_j$ have disjoint interiors in $D$.  (If  $\beta_i$, $\gamma_j$ intersect in an endpoint, there will be an edge between them.)

Assign a linear metric to $L_D$ such that every edge has length one and let $C \subset L_D$ be the set of points that are distance $\frac{1}{2}$ from both $c_-$ and $c_+$.  The complement of this set is the union a regular neighborhood of $c_-$ and a regular neighborhood of $c_+$.  Each of these regular neighborhoods is contractible since each of $c_-$, $c_+$ is a simplex.  Thus we can calculate the homotopy type of $L_D$ from the homotopy type of $C$ using Van Kampen's Theorem.  In particular, $L_D$ will be connected and simply connected if and only if $C$ is non-empty and connected.

The set $C$ is not a simplicial complex, but it is a cell complex and every vertex is given by a pair $\beta_i$, $\gamma_j$ with disjoint interiors.  The only way this set will be empty is if the endpoints of all the arcs $\{\gamma_i\}$ are contained in the six points that are not endpoints of $\{\beta_i\}$ and vice versa.  This is only possible if there are at most twelve corners of $D$.  Thus if $n > 12$ then $C$ is non-empty.

If $n > 12$ then $\beta_0$ will be disjoint from either $\gamma_0$ or $\gamma_k$, and $\beta_k$ will be disjoint from one or both of them.  Assume we have chosen labels so that $\beta_0$ is disjoint from $\gamma_0$ and $\beta_k$ is disjoint from $\gamma_k$.  For any other disjoint pair $\beta_i$, $\gamma_j$, note that $\beta_i$ separates $D$ so $\gamma_j$ will also be disjoint from either $\beta_0$ or $\beta_k$.  Assuming without loss of generality that it is disjoint from $\beta_0$, there will br an edge in $C$ from the vertex defined by $\beta_i$, $\gamma_j$ to the vertex defined by $\beta_0$, $\gamma_j$.  Repeating the argument with $\beta_0$, $\gamma_j$ switched, we see that every vertex in $C$ is connected to either the vertex defined by $\beta_0$, $\gamma_0$ or to the vertex defined by $\beta_k$, $\gamma_k$.

If $\beta_0$ is disjoint from both $\gamma_0$ and $\gamma_k$ then there is an edge between these two vertices, so $C$ is connected and we're done.  If there is a different arc $\gamma_i$ that is disjoint from both $\beta_0$ and $\beta_k$ then these vertices will be connected by a path of length three and $C$ will again be connected.  Otherwise, every arc $\gamma_i$ must intersect either $\beta_0$ or $\beta_k$.  Because there are three vertices between the endpoints of $\beta_0$, there are at most three arcs $\gamma_i$ that intersect the interior of $\beta_0$.  There are at most three more arcs that intersect the interior of $\beta_k$, so $k \leq 6$.  In fact, $k$ must be an odd number and there is (up to symmetries) exactly one pair of arcs $\alpha_+$, $\alpha_-$ in $\partial \sigma$ with $k = 5$.  The reader can check that for this disk, the arc $\gamma_3$ is disjoint from both $\beta_0$ and $\beta_5$.  Thus $C$ is connected for every $k > 3$, so there are no index-zero, -one or -two disks for $n > 12$.
\end{proof}

\section{Normal tubes in tetrahedra}
\label{ntubessect}

In this section, we classify the index-one and index-two normal pieces in a tetrahedron that are compressible.  A compressible normal piece must have index at least one since it will have a non-empty descending link.  

\begin{Lem}
\label{compdiskintetlem}
If $F$ is an index-$n$ normal piece of $S$ in a tetrahedron $\sigma$ and $F$ is not a disk then the subcomplex of the disk complex for $F$ spanned by compressing disks is contractible with diameter at most 2.
\end{Lem}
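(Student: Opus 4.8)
The plan is to first pin down the topology of $F$ using the classification of incompressible normal disks already established, and then to read off its compressing disks directly.

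Since $\sigma$ is a $3$-ball it is irreducible and simply connected, so any connected, properly embedded, incompressible surface in $\sigma$ with nonempty boundary is $\pi_1$-injective into a simply connected manifold and hence a disk. As $F$ is a connected normal piece that is not a disk, it must therefore be compressible, so the subcomplex in question is nonempty. Compressing $F$ repeatedly until the result is incompressible in $\sigma$ produces a normal surface whose components, by the previous remark, are disks; by Lemma~\ref{normaloctbridgeslem} together with the classification of index-zero normal disks, these components are normal triangles, quadrilaterals, octagons or dodecagons. Reversing the compressions exhibits $F$ as the result of attaching tubes to a disjoint union of such normal disks, the cocore of each tube being a compressing disk for $F$; and because $F$ lies in a tetrahedron, each tube is unknotted, so every compressing disk for $F$ is isotopic to the meridian of one of these tubes. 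In the cases relevant to $\mD^2(M,\mT^1)$, where the index is at most two, one checks that no self-tube can occur, so $F$ is an annulus and there is, up to isotopy, a single compressing disk: the subcomplex is a single vertex, which is contractible and of diameter $0$.

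For the general statement I would proceed as follows. To bound the diameter, given two compressing disks $D_1,D_2$, use irreducibility of $\sigma$ to remove all circles of $D_1\cap D_2$ by innermost-disk isotopies, reducing to the case that $D_1\cap D_2$ is a union of arcs; an outermost-arc band-surgery of one disk along the other, plus the standard fact that at least one of the resulting disks stays essential, together with the limited supply of essential curves on a tubed-up normal disk, produces a compressing disk disjoint from both $D_1$ and $D_2$. Contractibility then follows from Cho's surgery criterion (Theorem 4.2 of~\cite{cho}) applied on the positive and negative sides of $F$, exactly as in the proof of Lemma~\ref{disksetcontractible}: the compressing disks on each side form a full subcomplex of a disk complex satisfying the band-surgery hypothesis, hence each side is contractible, and the diameter bound then identifies the whole complex as a cone over one side. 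The step I expect to be the main obstacle is the structural one — ruling out a genus-two worth of compressions on a normal piece inside a tetrahedron, so that its compressing-disk complex stays this small — since the naive disk-surgery induction on $|D_1\cap D_2|$ by itself yields only diameter three and bare connectivity.
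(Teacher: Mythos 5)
There is a genuine gap, and the missing idea is the one the paper's argument turns on. The paper does not need any structural classification of $F$ as tubes attached to normal disks: it simply observes that an innermost component of $\partial\sigma\setminus\partial F$ is a disk in the sphere $\partial\sigma$ which, pushed slightly into the interior, becomes a compressing disk $D$ for $F$ whose boundary lies in a collar neighborhood of $\partial F$ inside $F$. Every other compressing disk for $F$ has boundary disjoint from $\partial F$, hence can be isotoped out of that collar and so off $\partial D$. Thus $D$ is joined by an edge to every vertex of the subcomplex; since the descending link is a flag complex, the subcomplex is the star of $D$, i.e.\ a cone, which is contractible and of diameter at most $2$. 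Your proposal never produces such a distinguished ``boundary-parallel'' compressing disk, and without it none of your three routes closes: (a) the claim that every compressing disk of $F$ is isotopic to a meridian of one of the tubes is unjustified (band sums of meridians, and boundary-parallel curves of the underlying normal disks, are also candidates); (b) the reduction ``index at most two $\Rightarrow$ $F$ is an annulus with a unique compressing disk'' is both false --- a pair of pants obtained by tubing three index-zero disks is an index-two piece --- and circular, since this lemma is invoked precisely to establish that classification; (c) as you concede, outermost-arc surgery on $D_1\cap D_2$ yields a disk disjoint from one of the two, not both, so it gives diameter $3$ at best, and ``diameter $2$ plus contractible sides'' does not identify the complex as a cone. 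So the proposal as written does not prove the statement; the fix is the paper's innermost-disk observation, after which the entire lemma is three lines.
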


\begin{proof}
The boundary of $F$ is a collection of simple closed curves in the sphere boundary of $\sigma$.  Each of these loops is isotopy non-trivial in $F$ and bounds a disk in $\partial \sigma$.  An innermost loop of $\partial F$ in $\partial \sigma$ bounds a disk in the boundary of the tetrahedron, which is parallel to a compressing disk $D$ for $F$.  Any other compressing disk for $F$ has boundary disjoint from $\partial S$, so its boundary can be isotoped out of any regular neighborhood of $\partial S$.  In particular, any compressing disk can be isotoped disjoint from $\partial D$.  Thus every vertex of the descending link for $F$ represented by a compressing disk is connected to the vertex for $D$ by an edge.  This implies that this subcomplex is contractible with diameter at most two.
\end{proof}

\begin{Lem}
Every index-one normal piece $S$ in a tetrahedron is either a normal octagon or the result of attaching an unknotted tube between two index-zero disks.
\end{Lem}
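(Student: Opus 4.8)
The plan is to argue by whether $S$ is a disk. If $S$ is a disk, then $S$ is an incompressible, index-one normal piece, i.e.\ an index-one normal disk, so Lemma~\ref{normaloctbridgeslem} already identifies it as a normal octagon. So from now on assume $S$ is not a disk.

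Since a tetrahedron $\sigma$ is a ball, a surface in $\sigma$ that is not a disk is compressible, so $S$ has a compressing disk. By Lemma~\ref{compdiskintetlem}, the subcomplex $L^c$ of the descending link $L_S$ of $S$ spanned by compressing disks is contractible, in particular connected. On the other hand $S$ has index one, so by definition $L_S$ is disconnected. Since $L^c$ contains \emph{every} compressing disk for $S$, any vertex of $L_S$ lying in a component other than the one containing $L^c$ must be a bridge disk (or a tree disk, if such occur in $\sigma$ — the argument below is the same). Fix such a $K$-disk $B$ and let $b=B\cap S$ be its arc in $S$. Because $B$ is joined to no compressing disk by an edge of $L_S$, the arc $b$ cannot be isotoped off the boundary of any compressing disk; equivalently, $b$ meets every essential loop of $S$ that bounds a disk in $\sigma$.

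The heart of the proof is to turn this into the statement that $S$ is an annulus whose two boundary circles are of triangle or quadrilateral type. First, $b$ is essential in $S$: a trivial arc cuts a disk off $S$, and every compressing loop could then be isotoped off that disk, hence off $b$. Next, $b$ cannot have both endpoints on the same boundary circle of $S$, nor can $\chi(S)<0$: cutting $S$ along such a $b$ (or, if $\chi(S)<0$, along $b$ together with a further essential loop or arc) produces a surface that still carries an essential simple closed curve disjoint from $b$, and since an essential simple closed curve on a normal piece bounds a disk in $\sigma$, this would be a compressing loop missing $b$ — a contradiction. Hence $S$ is an annulus and $b$ is a spanning arc joining its two boundary circles $c_1$, $c_2$ (indeed an annulus does carry an essential loop, its core, meeting any spanning arc, so this picture is consistent with $L_S$ being disconnected). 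Finally, if one of $c_1$, $c_2$ were of octagon type or larger, it would carry a bridge disk whose arc has both endpoints on that circle; such a bridge disk is disjoint both from the core compressing loop of the annulus and, after a small isotopy, from $b$, so it would connect $B$ to $L^c$ inside $L_S$, contradicting disconnectedness. Therefore $c_1$ and $c_2$ are of triangle or quadrilateral type, so by Lemma~\ref{normaloctbridgeslem} each bounds an index-zero normal disk, $D_1$ and $D_2$ respectively.

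It remains to package $S$ as a tube. An annulus has a unique isotopy class of essential simple closed curve (its core), so up to isotopy $S$ has a single compressing disk $D$, and compressing $S$ along $D$ yields two disks with boundaries $c_1$ and $c_2$; since $\sigma$ is a ball these are isotopic rel boundary to $D_1$ and $D_2$. Thus $S$ is recovered from $D_1\sqcup D_2$ by attaching the tube dual to $D$, and this tube is a regular neighborhood of a properly embedded arc in the ball $\sigma$, hence unknotted. The step I expect to be the main obstacle is the third paragraph: converting the purely combinatorial condition ``$b$ meets every compressing loop'' into the geometric conclusion that $S$ is an annulus with simple boundary. This requires the input that essential simple closed curves on a normal piece are unknotted in the tetrahedron (so that ``essential'' promotes to ``compressing''), together with a careful accounting of which arcs and curves on $S$ can be made disjoint and of how the extra bridge disks carried by a large boundary circle reconnect $L_S$.
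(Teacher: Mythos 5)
Your argument follows the same route as the paper's: Lemma~\ref{compdiskintetlem} makes the subcomplex of compressing disks connected, so index one forces a bridge disk $B$ whose arc $b$ meets every compressing loop, and the neighborhood/cutting argument then pins $S$ down as either a disk (handled by Lemma~\ref{normaloctbridgeslem}) or an annulus with $b$ spanning. The one step that fails as written is the last one: a properly embedded arc in a ball can be knotted, so ``the tube is a regular neighborhood of an arc in $\sigma$, hence unknotted'' is not a proof. The correct certificate is already in your hands and is exactly what the paper uses: since $b$ is a spanning arc of the annulus, the bridge disk $B$ meets the unique compressing disk $D$ transversely in a single point, and this dual disk is what shows the tube is unknotted.

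Separately, your step ruling out octagon-type (or larger) boundary circles --- which the paper's own proof omits, even though it is needed for the ``index-zero disks'' part of the statement --- rests on the assertion that an extra bridge disk $B'$ carried by a large boundary circle can be made disjoint from $b$ ``after a small isotopy.'' This is not automatic: the arc $b'=B'\cap S$ cuts a disk off the annulus, and $b$ can be pushed off $b'$ only if the endpoint of $b$ on $c_1$ avoids the arc of $c_1$ that this disk cuts off; a priori that endpoint could be trapped by every such $b'$, in which case $B$ stays in its own component and no contradiction results. To close this you need to locate the corners of a large boundary circle relative to the tube and to the bridge arcs it carries, and show that at least one extra bridge disk really is disjoint from both $b$ and the core --- that is the edge that connects $B$ to $L^c$ and contradicts disconnectedness.
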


\begin{proof}
By Lemma~\ref{compdiskintetlem}, if $S$ is not a disk, then the subcomplex of the descending link spanned by compressing disks is connected.  Thus if the disk complex for $S$ is disconnected then there must be a bridge disk $D'$ for $S$ that intersects every compressing disk.

Let $\alpha$ be the arc $D' \cap S$ and let $N$ be a closed regular neighborhood in $S$ of $\alpha$ and the boundary component or components of $S$ containing the endpoints of $\alpha$.  Let $\ell$ be the boundary of $N$ in the interior of $S$.  This may be one or two loops, each of which bounds a disk in $\sigma$ disjoint from $D'$.  Let $E$ be one of these disks.  If $\partial E$ is essential in $S$ then $E$ is a compressing disks for $S$, contradicting the fact that $D'$ intersects every compressing disk.  Thus each component of $\ell$ must be trivial in $S$.  This implies that $S$ is either a disk (in the case there $\ell$ is two loops) or an annulus.  In the later case, $S$ has a single compressing disk $D$ that intersects $D'$ in a single point, so the annulus is unknotted, as in Figure~\ref{octagonfig}.
\end{proof}

\begin{Lem}
Every index-two normal piece $S$ in a tetrahedron is either a normal dodecagon, the result of attaching a tube from an index-one disk to an index-zero disk, or the result of attaching two unknotted tubes between three index-zero disks.
\end{Lem}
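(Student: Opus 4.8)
The plan is to separate the case where $F$ is a disk from the case where it is not.

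\emph{Case 1: $F$ is a disk.} Since $F$ has index two it has a $K$-disk, and a disk in a ball has no compressing disk, so every $K$-disk for $F$ is a bridge disk and, as in the discussion preceding Lemma~\ref{normaloctbridgeslem}, $\partial F$ is a straight loop. Hence $F$ is one of the straight-boundary disks classified in Lemma~\ref{normaloctbridgeslem}, and the only one of index two is the normal dodecagon.

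\emph{Case 2: $F$ is not a disk.} A tetrahedron is a ball, so $F$ is compressible, and by Lemma~\ref{compdiskintetlem} the compressing disks for $F$ span a contractible subcomplex $\mathcal{C}\subset L_F$; fix the compressing disk $D_0$ produced there from an innermost loop $c$ of $\partial F$ in $\partial\sigma$, so that $\partial D_0$ is parallel in $F$ to the boundary component $c$. $K$-compressing $S$ along $D_0$ replaces $F$ by $F_1=P\sqcup F'$, where $P$ is the straight-boundary disk that $c$ bounds in $\sigma$ and $F'$ is obtained from $F$ by capping $c$ off with $D_0$, with $\chi(F')=\chi(F)+1$. The crux of the argument is a sub-lemma: \emph{$K$-compressing a normal piece along one of its $K$-disks does not raise, and when that $K$-disk represents a vertex of a homotopically essential sphere in the descending link it lowers, the total normal index.} Granting this, $\operatorname{ind}(P)+\operatorname{ind}(F')\le 1$.

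Now $P$ is a straight-boundary disk, so Lemma~\ref{normaloctbridgeslem} forces $P$ to be a triangle or quadrilateral (if $\operatorname{ind}(P)=0$) or a normal octagon (if $\operatorname{ind}(P)=1$); likewise, by Lemma~\ref{normaloctbridgeslem} and the preceding classification of index-one normal pieces, each component of $F'$ is a triangle, a quadrilateral, a normal octagon, or an unknotted tube joining two index-zero disks. Reattaching to $P$ and to these components the tube removed by the $D_0$-compression, and using $\chi(F')=\chi(F)+1$ to conclude that $F$ is an annulus or a pair of pants, leaves exactly the asserted possibilities: an octagon (an index-one disk) tubed to a triangle or quadrilateral (an index-zero disk), or three index-zero disks joined by two unknotted tubes. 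The remaining sub-cases are excluded: a tube between two index-zero disks would make $F$ index one by the preceding lemma, not two; a knotted tube cannot occur because a tube in a ball between two normal disks is unknotted; and a positive-genus piece is ruled out because it carries two disjoint, non-isotopic compressing disks together with handle structure forcing index at least three.

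The step I expect to be the main obstacle is the index sub-lemma in Case 2: that $K$-compression behaves well with respect to the homotopy of descending links. I would prove it using the Bachman-ball construction already employed for Lemma~\ref{cgsteponelem} and Lemma~\ref{normalizerlem}, arranged so that the $D_0$-compression realizes a ``handle cancellation'' at the level of descending links; verifying that a knotted tube cannot sit inside a single tetrahedron is a secondary technical point handled by a direct general-position argument in the ball.
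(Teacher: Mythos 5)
Your Case 1 is fine, and reducing the disk case to Lemma~\ref{normaloctbridgeslem} is exactly what the classification of straight-boundary disks is for. The problem is Case 2, which rests entirely on the unproven ``index sub-lemma'' that $K$-compression does not raise, and in the relevant situation strictly lowers, the normal index. Nothing in the paper supplies this: the Bachman-ball construction of Lemma~\ref{cgsteponelem} shows that a surface which cannot be made disjoint from a fixed disk is \emph{floppy} (all homotopy groups of its descending link vanish); it does not compare the homotopy type of $L_F$ with that of the join of the descending links of the pieces obtained after compressing $F$. Such a monotonicity statement is a substantial theorem in its own right, and ``handle cancellation at the level of descending links'' is a plan, not an argument. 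Worse, even granting the sub-lemma as stated, the inequality $\operatorname{ind}(P)+\operatorname{ind}(F')\le 1$ needs $D_0$ to lie on a homotopically essential sphere in $L_F$. For index two that sphere is a loop, and by Lemma~\ref{compdiskintetlem} the compressing disks span a \emph{contractible} subcomplex, so every essential loop must pass through bridge disks and may well avoid the particular compressing disk $D_0$. Without strictness you only get $\operatorname{ind}(P)+\operatorname{ind}(F')\le 2$, which admits, for instance, a dodecagon tubed to a triangle --- not on the allowed list --- and the case analysis collapses.

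The paper's proof avoids compression altogether. It takes an essential loop $E$ in the descending link, uses Lemma~\ref{compdiskintetlem} together with the bridge-disk structure from the index-one classification to homotope $E$ until it contains two consecutive bridge disks $D$, $D'$ such that \emph{every} compressing disk meets $D\cup D'$, and then examines a regular neighborhood $N$ of the arcs $D\cap S$, $D'\cap S$ and the adjacent boundary components: the loops of $\partial N$ interior to $S$ bound disks in $\sigma$ disjoint from $D\cup D'$, hence must be inessential in $S$, which forces $S$ to be a pair of pants (both arcs non-separating), a tubed octagon (one arc separating), or a dodecagon (both separating). That argument also disposes of positive genus and of knotted tubes, two points your proposal only asserts. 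As written, your proof has a genuine gap at the index sub-lemma.
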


\begin{proof}
By the construction in the previous Lemma, if $S$ is not a disk or an unknotted tube then every bridge disk $D$ for $S$ is disjoint from some compressing disk $C(D)$ and any compressing disk disjoint from $D$ is disjoint from $C(D)$.  By Lemma~\ref{compdiskintetlem}, any essential loop $E$ in the disk complex for $S$ must pass through at least one bridge disk.  If $E$ passes through three bridge disks in a row, we can insert between the two edges connecting them a pair of edges to a compressing disk, then back to the bridge disk.  Thus we can assume that $E$ passes through at most two bridge disks in a row.

If there is a single bridge disk $D$ in $E$ such that the vertices before and after $D$ are compressing disks then $C(D)$ is disjoint from both these disks, so we can isotope the two edges to a path that passes through $C(D)$ instead.  Thus we can assume that whenever $E$ passes through a bridge disk, it passes through two in a row.

Let $D$, $D'$ be the consecutive bridge disks in $E$.  We can isotope $E$ so that the vertex before $D$ is $C(D)$ and the vertex before $D'$ is $C(D')$.  If there is a compression disk $D''$ that is disjoint from both $D$ and $D'$ then this disk will be disjoint from both $C(D)$ and $C(D')$.  Since $D''$ is disjoint from all four disks, there are three triangles spanned by $D''$ and the three edges of the path $E$ from $C(D)$ to $C(D')$.  These triangles define a homotopy of the three edge of $E$ into the subcomplex of the disk complex spanned by compressing disks.  By assumption this cannot happen, so every compressing disk must intersect one of $D$ or $D'$.

Let $\alpha$, $\alpha'$ be the arcs or intersection $D \cap S$, $D' \cap S$.  Let $N$ be a regular neighborhood of these arcs and the boundary components of $S$ containing their endpoints.  The intersection of $\partial N$ with the interior of $S$ consists of one, two or three loops, each of which bounds a disk in $\sigma$ disjoint from $D$ and $D'$.  Because there are no compressing disks disjoint from $D$ and $D'$, these disks must have trivial boundary in $S$.  If $\alpha$ and $\alpha'$ are both non-separating in $S$ then $S$ is a pair of pants consisting of three index-zero normal disks connected by unknotted tubes.  If one is separating and the other non-separating then $S$ is the result of attaching a tube to a normal octagon.  If both are separating, then $S$ is incompressible, so $S$ is a normal dodecagon.
\end{proof}

\section{Bent surfaces}
\label{bentsurfacesect}

Now that we have characterized the vertices of $\mD^2(M, \mT^1)$ in terms of normal surfaces, we will turn to the task of calculating the edges and faces of $\mD^2$.

\begin{Def}
A \textit{bent surface} with respect to a triangulation $\mT$ is a surface $S$ that is transverse to the 1-skeleton $\mT^1$, intersects the 2-skeleton in a collection of bent and normal arcs, and intersects the interior of each tetrahedron in a collection of (open) disks.
\end{Def}

Note that every normal surface is a bent surface and an index-one or index-two normal surface will be a bent surface if and only if it has no tube pieces.  If $S$ is an index-one or index-two normal surface with one or two tube pieces, then we can isotope it to a bent surface by pushing the tube or tubes into faces of the 2-skeleton.  

Unlike a normal surface, a bent surface may contain extraneous pieces.  A \textit{skewered sphere} is a sphere bounding a ball $B$ such that $B \cap \mT^1$ is unknotted.  Such a sphere can be a component of a bent surface.  We will say that a bent surface $S'$ represents a normal surface $S$ if after removing all skewered spheres, either $S'$ is isotopic to the union of $S$, transverse to $\mT^2$ or $S'$ is the result of isotoping one or more tube pieces of $S$ into triangles of $\mT^2$.

The \textit{edge weight vector} of a bent surface $S$ is the vector of integers given by the intersection of $S$ with each edge in the triangulation.  Note that removing a skewered sphere reduces one of the coordinates of the edge weight vector by two.

\begin{Lem}
\label{finitebentsurfaceslem}
For each vector $w$ of edge weights there are finitely bent surfaces with edge weight vector $w$.
\end{Lem}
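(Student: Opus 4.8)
The plan is to show that, for a fixed edge weight vector $w$, the ambient isotopy class (rel $\mT^1$) of a bent surface with edge weight vector $w$ is pinned down by a finite combinatorial datum, so that only finitely many such classes occur. The datum is the isotopy class of the pattern of arcs the surface cuts on the $2$-skeleton $\mT^2$ of $\mT$.

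First I would normalize the intersection with $\mT^1$: fix once and for all a set of $w_e$ interior points on each edge $e$ of $\mT^1$. Since any two finite subsets of $e$ of the same size are related by an isotopy of $e$, and such isotopies extend to ambient isotopies of $(M,\mT^1)$, every bent surface with edge weight vector $w$ can be assumed to meet $\mT^1$ exactly in these points. Now in each $2$-cell (triangle) $t$ of $\mT$, the set $S\cap t$ is, by the definition of a bent surface, a finite disjoint union of properly embedded arcs with endpoints among the chosen points on $\partial t$. Being disjoint and embedded in the disk $t$, these arcs realize a non-crossing matching of the cyclically ordered boundary points, and a finite cyclically ordered set has only finitely many non-crossing matchings; moreover, once the matching is fixed, the system of arcs is unique up to an isotopy of $t$ rel $\partial t$. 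One sees this by peeling off an arc whose two endpoints are cyclically consecutive (so it cuts off a sub-disk of $t$ containing no other arc) and inducting on the number of arcs, using that an embedded arc in a disk with fixed endpoints is unique up to isotopy rel endpoints. Since $\mT$ has finitely many $2$-cells, there are finitely many possibilities for the arc pattern $S\cap\mT^2$ up to isotopy rel $\mT^1$; choose one bent surface realizing each realizable pattern.

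It then remains to check that a bent surface $S$ with edge weight vector $w$ is ambient isotopic rel $\mT^1$ to one of these finitely many representatives. After the normalization above and a further isotopy of $\mT^2$ rel $\mT^1$ (supplied by the uniqueness of arc systems in each triangle and extended to $M$), we may assume $S\cap\mT^2$ equals the chosen arc pattern of the matching it realizes. For each $3$-cell $C$ of $\mT$ — a tetrahedron, hence a ball — the curves $S\cap\partial C$ are then exactly the loops built from the fixed arcs on the faces of $C$, so they are determined, and $S\cap C$ is by definition a disjoint union of properly embedded disks in $C$ with these boundary curves. Properly embedded disks in a $3$-ball are unknotted and, as a system, determined up to isotopy rel boundary by their boundary curves: take an innermost boundary curve on $\partial C$, use irreducibility of $C$ (Alexander's theorem) to see the disk it bounds is unique rel boundary, cut along it, and induct. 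Performing these isotopies inside each $C$ rel $\partial C$ and assembling them, together with the earlier triangle isotopies, into one ambient isotopy of $(M,\mT^1)$, we conclude that $S$ coincides with the chosen representative for its arc pattern, and the lemma follows. (Skewered sphere components need no separate argument: a skewered sphere also meets $\mT^2$ in arcs and each $3$-cell in disks, so it is subsumed by the same data.) The routine but slightly delicate ingredients are the two uniqueness-up-to-isotopy statements — for arc systems in a disk and for disk systems in a ball — together with the bookkeeping needed to patch the cell-by-cell isotopies, taken relative to their frontiers, into a single ambient isotopy; I expect the disk-system statement in the ball, which leans on irreducibility of $B^3$, to be the only point worth spelling out with care.
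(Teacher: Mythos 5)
Your proof is correct and follows essentially the same route as the paper: the paper's two-sentence argument likewise reduces finiteness to counting arc patterns on the triangles of $\mT^2$, taking for granted that a bent surface is determined up to isotopy by its intersection with the $2$-skeleton. You have simply supplied the standard justifications (non-crossing matchings in a disk, uniqueness of disk systems in a ball) that the paper leaves implicit.
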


\begin{proof}
A bent surface is determined, up to isotopy, by its intersection with the 2-skeleton $\mT^2$.  For any finite collection of (an even number of) points in the boundary of a triangle, there are finitely many ways to connect these points with bent and normal arcs.  Thus for each vector of edge weights, there are finitely many ways of connecting the points in all the triangles of the 2-skeleton, and thus finitely many bent surfaces.
\end{proof}

Define $G(w)$ to be the graph whose vertices are tubed bent surfaces and edges are defined as follows:  Consider a bent surface $S$ representing a vertex in $G(w)$ and an arc $\alpha$ in a triangle $\tau$ of $\mT^2$ whose endpoints are contained in arcs of $S \cap \mT^2$ and whose interior is disjoint from $\mT^2$.  If we pinch the two arcs in $S \cap \mT^2$ across $\alpha$, the resulting collection of arcs in $\mT^2$ will define a new bent surface $S'$.  (This pinching is sometimes called zero surgery.)  We will call this construction a \textit{pinch move}.

Note that vertices of $G(w)$ that are connected by an edge may not correspond to isotopic surfaces.  In particular, one of the surfaces may be the result of compressing the other along a disk that intersects the 2-skeleton $\mT^2$ in the arc $\alpha$.  We will therefore keep track of the complexity of the surfaces represented by the vertices of $G(w)$.

\begin{Lem}
\label{indexonealglem}
Consider an index-one normal surface $S$ represented by a vertex $v \in G(w)$ and an index-zero normal surface $S'$.  Then there will be a vertex $v'$ in $G(w)$ representing $S'$ and a path in $G(w)$ from $v$ to $v'$ along which the complexity is non-increasing  if and only if there is a decreasing path in the complex of surfaces from $S$ to $S'$.
\end{Lem}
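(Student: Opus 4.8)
The plan is to identify a single pinch move in $G(w)$ with a single elementary move of the complex of surfaces and then read the equivalence off this dictionary. The first step is to record the following trichotomy for a pinch move on a (tubed) bent surface $S$: the band along which we pinch is a regular neighbourhood, in a triangle, of an arc whose interior misses $\mT^1$, and surgering $S$ along it produces, up to blind isotopy, either (i) a transversely isotopic surface --- here I include sliding a tube piece into or out of a triangle and adding or deleting a skewered sphere --- so that the vertex of $\mS(M,\mT^1)$ and the complexity are unchanged; or (ii) a K-compression of $S$ (a compression, bridge compression, or tree compression, according to how the boundary of the band meets $\mT^1$ and whether it bounds in $S$), so that the new surface represents a vertex joined to that of $S$ by an edge below it and the complexity strictly drops; or (iii) the inverse of a move of type (i) or (ii), so that the complexity strictly increases. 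Consequently, a path in $G(w)$ along which the complexity is non-increasing uses only moves of types (i) and (ii).

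For the forward implication, take a path $v=v_0,\dots,v_k=v'$ in $G(w)$ with non-increasing complexity, replace each edge by the corresponding move, and use the trichotomy: every step is a transverse isotopy or a K-compression. Collapsing the isotopy steps, which do not change the vertex of $\mS(M,\mT^1)$, yields a strictly decreasing edge path in $\mS(M,\mT^1)$ from the vertex of $S$ to the vertex of the normal surface represented by $v'$, namely $S'$; that the two ends of the $G(w)$-path represent $S$ and $S'$ as vertices of $\mS(M,\mT^1)$ is immediate from the definition of representing a normal surface.

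For the reverse implication, suppose there is a strictly decreasing edge path $P$ in $\mS(M,\mT^1)$ from the vertex of $S$ (which has index one) to the vertex of $S'$ (which has index zero). By Corollary~\ref{directedpathcoro}, $P$ is directed or reverse directed; assume directed. By Lemma~\ref{decpathkbodylem}, $P$ determines a K-compression body $H$ with $\partial_+H$ representing $S$ and $\partial_-H$ representing the incompressible surface $S'$, and by Lemma~\ref{equivalentcompbodylem} the equivalence class of $P$ is determined by $H$ together with the fixed surface $\partial_+H=S$. It then suffices to produce a path in $G(w)$, of non-increasing complexity, from $v$ to a bent surface representing $S'$, by sweeping $S$ down through $H$. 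Each K-compression of $P$ is performed along a K-disk that meets the $2$-skeleton $\mT^2$ in a collection of arcs, and compression along that disk decomposes into the pinch moves along those arcs, taken in an order that keeps the intermediate complexities non-increasing; interleaved with these are the pinch moves realizing the normalization of Lemma~\ref{normalizerlem} (straightening bent arcs, removing trivial loops, splitting off skewered spheres, sliding tube pieces), which likewise never raise the complexity. Because $S$ has index one, the first such compression can be taken along a bridge disk of the single special piece of $S$, which by Lemma~\ref{normaloctbridgeslem} and the results of Section~\ref{ntubessect} is a normal octagon or a tube between two index-zero pieces, so the first step of the sweep is completely explicit. Finiteness of $G(w)$ (Lemma~\ref{finitebentsurfaceslem}) guarantees the sweep terminates, necessarily at a bent surface representing $S'$.

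The hard part is the bookkeeping in the reverse implication: verifying that the entire descent through $H$ --- the initial compression together with all the subsequent K-compressions and normalizing isotopies --- can be ordered into pinch moves whose complexities are monotonically non-increasing, so that the path never leaves the non-increasing part of $G(w)$. This rests on two facts proved earlier: the normalization procedure inside a K-compression body is built out of exactly the moves recognized here as pinch moves and each of these preserves or strictly lowers the complexity (Lemma~\ref{normalizerlem} and the piece classifications), and the rigidity of decreasing paths from an index-one vertex (Corollary~\ref{directedpathcoro} together with Lemma~\ref{equivalentcompbodylem}), which guarantees that the pinch-move path we construct is genuinely a realization of $P$, and hence ends at a bent surface representing the prescribed surface $S'$ rather than some other index-zero surface.
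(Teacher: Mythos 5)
Your forward direction is fine and matches the paper's. The reverse direction, however, leaves its central difficulty unresolved. You propose to realize the \emph{given} decreasing path $P$ by pinch moves, and you correctly identify the problem: the pinches coming from isotoping a K-disk across outermost subdisks must be ordered so that the complexity of the intermediate bent surfaces never increases. You then assert that such an ordering exists, citing Lemma~\ref{normalizerlem} and the piece classifications, but neither of these proves it; an isotopy of the surface across an outermost subdisk of a K-disk can perfectly well \emph{raise} the complexity of the associated bent surface (this is exactly the phenomenon that forces the delicate reordering argument in the proof of Lemma~\ref{indextwoalglem}). So as written, the ``hard part'' you name is named but not done.

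The paper's proof shows that for the index-one case this bookkeeping can be avoided entirely, and this is the idea missing from your proposal. Since $S$ has index one and $S'$ has index zero, the barrier axiom (Corollary~\ref{barrieraxiomlem}) guarantees that \emph{any} maximal decreasing directed path from $v$ on the relevant side of $S$ ends at the vertex representing $S'$ --- you are free to perform every compression that becomes available, in any order, and you are guaranteed to land on $S'$. So one does not realize $P$ at all: one picks any K-disk on the correct side, pinches along an outermost arc (an isotopy of the surface, tangent to $\mT^2$ at one point), and whenever the resulting surface fails to be bent one immediately compresses it inside the tetrahedra. Each step is then by construction either an isotopy (complexity constant) or a compression (complexity drops), so non-increase is automatic, and the barrier axiom identifies the terminal index-zero vertex with $S'$. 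Your appeal to Corollary~\ref{directedpathcoro} and Lemma~\ref{equivalentcompbodylem} gets you the one-sidedness and the uniqueness of the compression body, but it is the barrier axiom's uniqueness of the \emph{endpoint} of a maximal descent that lets you discard $P$ and with it the ordering problem. (Your remark that the first compression ``can be taken along a bridge disk of the single special piece'' is also not justified and not needed.)
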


This is not the easiest way to calculate the two edges below an index-one vertex, but it is in the same line as the method we will use to calculate the edges below an index-two vertex in the next Lemma.

\begin{proof}
First note that a path in $G(w)$ along which the complexity does not increase determines a sequence of isotopies and compressions from $S$ to $S'$.  Thus the implication from a path in $G(w)$ to a path in $\mS$ is immediate.  The converse direction will be the focus of the remainder of the proof.

If $S$ is an index-one surface and $S'$ is an index-zero surface such that $S'$ is the result of K-compressing $S$ then by Lemma~\ref{barrieraxiomlem} any maximal collection of compressions on the same side of $S$ as the compressions producing $S'$ will also produce $S'$.  Let $D_0$ be a K-disk on the side of $S$ along which we would like to compress.  We can isotope $D_0$ so that $D_1 \cap \mT^2$ is a collection of arcs.  Let $E \subset D_0$ be an outermost disk cut off by an arc in $D_1 \cap \mT^2$.  The intersection $\partial E \cap \mT^2$ is an arc connecting two arcs of $S \cap \mT^2$.  The disk $E$ defines an isotopy of $S$ that is tangent to $\mT^2$ at one point and induces an arc pinch on the intersection $S \cap \mT^2$.  If the resulting surface is not flat then it is compressible on the same side as $D_1$ and we will replace $S$ with the bent surface $S_1$ that results from this compression.  Otherwise, if the resulting surface is bent, we will let $S_1$ be this bent surface.

In the latter case, when $S_1$ is isotopic to $S$, the isotopy takes $D_0$ to a compressing disk $D_1$ for $S_1$ that intersects $\mT^2$ in one fewer arcs than $D_0$.  If we continue the process, the disk will eventually become disjoint from $\mT^2$, so at some point the disk will define an arc pinch corresponding to a compression.  If the resulting bent surface $S_j$ is K-incompressible then it represents $S'$.  Otherwise, we will choose a K-disk $D_j$ for $S_j$ and continue the process.  Because each compressing disk intersects $\mT^2$ in a finite number of points and there is a bound on the number of times we can compress $S$, the process must terminate with a vertex $v' \in G(w)$ representing $S'$.  The sequence of surfaces induces a path in $G(w)$ and by construction, the complexity is non-increasing along this path.
\end{proof}

We will next show that $G(w)$ also determines all paths from an index-two to a lower index surface.  In this case, however, we cannot perform every compression that arises, so the argument is much more delicate.

Given an arc $\alpha$ along which we would like to pinch a bent surface $S$, there is a relatively simple criteria to determine whether the pinch will correspond to an isotopy, a compression, or adding a tube.  Let $\sigma_+$, $\sigma_-$ be the tetrahedra adjacent to the triangle containing $\alpha$.  The intersection $S \cap \partial \sigma_\pm$ is a collection of simple loops in the sphere boundary and $\alpha$ either connects two of these loops or connects a loop to itself.  The pinch will correspond to a compression if $\alpha$ connects a loop to itself in both tetrahedra.  It will correspond to a tubing if it attaches distinct loops in both tetrahedra, and it corresponds to an isotopy otherwise.

\begin{Lem}
\label{indextwoalglem}
Consider an index-two normal surface $S$ represented by a vertex $v \in G(w)$ and a lower index normal surface $S'$.  Then there will be a vertex $v'$ in $G(w)$ representing $S'$ and a path in $G(w)$ from $v$ to $v'$ along which the complexity is non-increasing  if and only if there is a decreasing path in the complex of surfaces from $S$ to $S'$.
\end{Lem}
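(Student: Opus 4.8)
The easy direction is handled exactly as in Lemma~\ref{indexonealglem}: along a path in $G(w)$ on which the complexity does not increase, every edge is a pinch move that is either an isotopy, a sphere tubing, or a genuine compression (an ordinary tubing between distinct components raises the complexity), so such a path determines a decreasing path in $\mathcal S$ from $S$ to a surface blind isotopic to $S'$.

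For the converse the plan is to re-run the proof of Lemma~\ref{indexonealglem}, converting each K-compression in the given decreasing path into a finite string of pinch moves, the only new feature being the one the text already flags: after the index has dropped we may no longer perform an arbitrary compression that happens to be produced by an outermost disk, so the compressions must be scheduled in a compatible order. First I would arrange that $S$ itself is a vertex of $G(w)$ by pushing any tube pieces into faces of $\mathcal T^2$; this is harmless because the only K-compressions of a bent surface that can occur along a path in $G(w)$ are isotopies and honest compressions — bridge and tree compressions change the edge-weight vector — and the classification of index-$\le 2$ normal pieces (Lemma~\ref{normaloctbridgeslem} together with the two tube lemmas of Section~\ref{ntubessect}) shows these are exactly the pieces that fail to be bent and that bridge/tree moves are never forced in passing from $S$ to $S'$. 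Then, given a K-disk $D$ on one side of the current surface that is used in the given decreasing path, I would isotope $D$ so that $D\cap\mathcal T^2$ is a union of arcs with endpoints on $S$ (surgering $D$ along innermost loops of $D\cap\mathcal T^2$, which does not move $\partial D$), take an outermost subdisk $E\subset D$ cut off by $\mathcal T^2$, and use the arc $\alpha=\partial E\cap\mathcal T^2$ to define a pinch move. Here the pinch-type criterion stated just before the lemma does the work: since the arc $\partial E\cap S$ joins the two ends of $\alpha$ through a single disk piece of $S$ in the tetrahedron containing $E$, $\alpha$ joins a boundary loop to itself in that tetrahedron, so the pinch can only be an isotopy (lowering $|D\cap\mathcal T^2|$) or a compression, never a tubing; and when it is a compression it is a compression on the side of $D$, exactly as in the index-one argument.

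The main obstacle is the scheduling. An index-two surface is strongly compressible — its descending link is connected — so one cannot simply split the given path into one-sided halves; instead I would follow the given decreasing path and, whenever it reaches a K-compression body $H$, realize the $H$-side K-compressions one at a time by the outermost-disk procedure above. The point is that Corollary~\ref{barrieraxiomlem} (the barrier axiom for $\mathcal S(M,\mathcal T^1)$) guarantees that any maximal collection of same-side K-compressions of a surface produces a fixed surface up to blind isotopy and skewered spheres, so as long as at each stage the compression we are about to perform, together with those still owed on that side, forms part of a maximal same-side collection, we never overshoot; and once the current surface has index one, Corollary~\ref{directedpathcoro} makes the rest of the path one-sided and Lemma~\ref{indexonealglem} finishes it. Finiteness is as before: each step either strictly decreases $\sum_i |D_i\cap\mathcal T^2|$ over the finitely many K-disks being processed or performs one of the boundedly many compressions, so the procedure terminates at a vertex $v'\in G(w)$ representing $S'$, reached by a path in $G(w)$ along which the complexity never increases. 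I expect the genuinely delicate bookkeeping to be checking that after each compression the remaining K-disks of the old path — in particular the ones on the opposite side — persist as K-disks of the new bent surface, so that the induction continues; this again rests only on the pinch criterion and on the classification of index-$\le 2$ pieces in Sections~\ref{triangsect} and~\ref{ntubessect}, not on anything new about $\mathcal S$.
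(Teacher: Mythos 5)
The easy direction and the general setup (pushing tubes into faces so that $S$ is a vertex of $G(w)$, surgering a K-disk $D$ so that $D\cap\mT^2$ is a union of arcs, and realizing each outermost subdisk as a pinch move) match the paper. But the converse direction has a genuine gap at exactly the point you flag as ``the scheduling.'' Your proposed resolution --- that the barrier axiom prevents overshooting so long as ``the compression we are about to perform, together with those still owed on that side, forms part of a maximal same-side collection'' --- is circular: the barrier axiom only controls \emph{maximal} same-side collections, and when $S'$ has index one the given decreasing path is generally not maximal (the octagon or tube surface $S'$ may still be compressible on that side in $M$), so ``we never overshoot'' is precisely the statement that needs proof, not a consequence of anything quoted. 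Moreover, the outermost-disk procedure forces compressions you do not control: each isotopy of the ambient surface is recorded in $G(w)$ only through the associated bent surface (the maximal compression in the complement of $\mT^2$), and an isotopy can pull a tube into or out of a tetrahedron, so the induced pinch on the bent surface can be a tubing --- i.e.\ the complexity in $G(w)$ can go \emph{up} --- even though nothing has been ``compressed'' in $M$. Appealing to Corollary~\ref{directedpathcoro} and Lemma~\ref{indexonealglem} once the index drops does not help, since mid-procedure the current object is a bent surface, not a normal surface, and the monotonicity problem occurs before the index drops.

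The paper's actual mechanism, which is absent from your proposal, is: fix at the outset the distinguished K-disks $D_1$, $D_2$ of the target $S'$ (the crossing bridge disks for an octagon, or the dual compressing disk and a meridional bridge disk for a tube piece), take the compressing disks $D_3,\dots,D_n$ of the given decreasing path disjoint from them, perform \emph{only} isotopies of $S$ across outermost subdisks, and track the complexity of the induced bent surfaces $B_i$. The heart of the proof is then an explicit commutation argument: whenever $c_{k+1}>c_k$, the two adjacent moves can be reordered so that the offending increase either disappears or occurs earlier, and a same-side sequence of outermost isotopies cannot both create and then destroy a compression in the complement of $\mT^2$, so induction kills all increases. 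Finally one must verify that the terminal bent surface actually represents $S'$ --- immediate for the octagon (its only representative with crossing bridge disks in a tetrahedron is the bent one) but requiring an extra rearrangement for the tube case, where the tube may need to be pushed back into the correct triangle. Without the reordering argument and this endpoint identification, the proof is incomplete.
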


\begin{proof}
As in the previous proof, a path in $G(w)$ along which the complexity does not increase determines a sequence of isotopies and compressions from $S$ to $S'$.  The implication from a path in $G(w)$ to a path in $\mS$ is immediate so the converse direction will be the focus of the remainder of the proof.

First assume that $S'$ is an octagon index-one normal surface.  Then there is a pair of bridge disks $D'_1$, $D'_2$ on opposite sides $S'$ that are contained in a tetrahedron $\sigma$ and intersect in a single point.  Because $S'$ is the result of K-compressing $S$, there are bridge disks $D_1$, $D_2$ for $S$ such that after the compressions, $D_1$ and $D_2$ become isotopic to $D'_1$, $D'_2$, respectively.  The sequence of K-compressions from $S$ to $S'$ is defined by a family of pairwise disjoint compressing disks $D_3,\dots, D_n$ for $S$ that are disjoint from $D_1$ and $D_2$.  If some $D_i$ (for $i \geq 3$) is a bridge disk, we will replace it with the compressing disk surrounding the bridge disk, so we can assume $D_3,\dots,D_n$ are compressing disks.

To get from $S$ to a representative for $S'$, we must isotope $S$ so that each $D_i$ is contained in a tetrahedron, then compress along $D_3,\dots,D_n$.  We can isotope the disks $\{D_i\}$ so that for each $i$, $D_i \cap \mT^2$ is a collection of arcs in $D_i$.  There will be an outermost arc $\alpha$ cutting off a disk $E \subset D_i$ such that $\partial E$ consists of $\alpha$ and an arc in $S$.  If the second arc of $\partial E$ is boundary parallel in $S \setminus \mT^2$ then we can isotope $D_i$ so as to eliminate $\alpha$, while fixing $S$.  Otherwise, the disk $E$ determines an isotopy of $S$ that is tangent to $\mT^2$ at exactly one point, and such that the image of $D_i$ after the isotopy intersects $\mT^2$ in one fewer arcs.  If we repeat this process with each disk in the set $\{D_i\}$, we will find a sequence of surfaces $S_0,\dots,S_m$ such that $S_0 = S$ and $S_m$ is the result of attaching tubes to $S'$ in the complement of $\mT^2$.  

Each $S_i$ intersects the triangles of $\mT^2$ in a collection of straight arcs, bent arcs and simple closed curves.  The collection of bent arcs and straight arcs determine a bent surface $B_i$ and we will define the complexity $c_i$ to be the complexity of this bent surface $B_i$.  Each $B_i$ is the result of compressing $S_i$ maximally in the complement of $\mT^2$, so if we can choose the sequence $\{S_i\}$ so the the complexity is non-increasing then the corresponding sequence of bent surfaces will define a sequence of isotopies and compressions from $S$ to $S'$.

The sequence $\{S_i\}$ is determined by the collection of disks $\{D_i\}$ and the order in which we isotope across outermost disks.  If there is a step where the complexity increases, let $k$ be the first value where $c_{k+1} > c_k$.  Let $D_i$ be the disk containing the outermost disk $E$ that defines the isotopy from $S_k$ to $S_{k+1}$.

In this case, there is a disk $E'$ whose boundary consists of an arc in $\mT^2$ and an arc in $S_k$ such that an isotopy across $E'$ takes $S_k$ back to $S_{k-1}$.  If the isotopy from $S_{k-1}$ to $S_k$ is defined by a disk on the side opposite $D_i$, the disks $E$ and $E'$ will be disjoint.  Because the isotopy defined by $E$ increases the complexity, $E$ is contained in a non-disk component of $S_k$.  If we isotope along $E'$ to get back to $S_{k-1}$ and the image disk $E$ still has this configuration then reversing the orders of the moves will make the move that increases complexity happen earlier.  Otherwise, the isotopy defined by $E'$ must turn the non-disk component touching $E$ into a disk component, so that the isotopy across $E$ disk not increase the complexity.  In this case, the move from $S_{k-1}$ to $S_k$ must decrease the complexity by pulling out the compression that is removed by $E$.  In this case, switching the order creates two moves along which the complexity stays the same.  So, switching the orders either eliminates a move in which the complexity increases or makes such a move happen earlier, without creating any new such moves.

We can thus assume that the isotopy from $S_{k-1}$ to $S_k$ is defined by an outermost disk on the same side as $D_i$.  A similar argument applies to each of the isotopies leading up to $S_k$, so we can assume that each of these isotopies was defined by an outermost disk on the same side as $D_i$.  If an isotopy defined by an outermost disk in $D_i$ pulls a compression disk for $S$ into the complement of $\mT^2$ then this compression disk must be on the same side as $D_i$.  If such an isotopy pulls a compression disk into the 2-skeleton then it must be on the opposite side.  Thus a sequence of compressions defined by the same $D_i$ cannot both create a compression in the complement of $\mT^2$ and then eliminate it.  This contradiction implies that we must have gotten rid of the isotopy that increased the complexity already.

We can thus find a sequence of surfaces $\{S_i\}$ such that the complexities of these surfaces form a non-increasing sequence.  If we compress the final $S_i$ along all the disks $D_3,\dots,D_k$, the resulting surface is isotopic to $S'$ and has a bridge disk on each side contained in a tetrahedron.  Because $S'$ is isotopic to an octagon normal surface, the only representative with this property is the bent surface.  This implies that the bent surfaces $\{B_i\}$ corresponding to $\{S_i\}$ define a path in $G(w)$ from a bent surface representing $S$ to a bent surface representing $S'$ along which the complexity does not increase, as promised by the Lemma.

In the case when $S'$ is an index-zero surface, the proof is almost identical, but without the disks $D_1, D_2$.  In particular, we let $D_1,\dots, D_n$ be compressing disks that we compress across to get $S'$, then isotope $S$ along outermost disks in $D_1,\dots,D_n$ in a way that the complexity is non-increasing.

For the final case, assume $S'$ is a tube index-one normal surface.  Let $D_1$ be the image in $S$ of the compressing disk for $S'$ dual to the tube and let $D_2$ be the image in $S$ of a bridge disk that intersects this tube in a single point and is contained in a tetrahedron.  (Note that the disk $D_2$ may not be contained in a tetrahedron, but after the isotopy to $S'$, it will.)  Assume that we have isotoped $\partial D_1$ so that the one point of intersection between $D_1$ and $D_2$ is adjacent in $\partial D_2$ to the arc $\partial D_2 \cap \mT^1$.  Let $D_3,\dots,D_k$ be a complete collection of compressing disks, disjoint from $D_1$, $D_2$, that turn $S$ into $S'$.

We will define a sequence of surfaces $S_1,\dots,S_n$ with $S = S_1$, by isotoping across outermost disks in $\{D_i\}$, as with the octagon normal surface $S'$, but with one difference:  In this sequence, we will only isotope $D_1$ to a disk that intersects $\mT^2$ in a single arc, adjacent to the point of intersection with $D_2$.  By repeating the argument above, we can choose this sequence of surfaces so that the complexity forms a non-increasing sequence.

If we push the tube dual to $D_1$ into the tetrahedron containing $D_2$ then as with the octagon surface, we will get a tube normal surface representing $S'$.  We thus only need to check that $S'$ is the result of pushing this tube into a triangle of $\mT^2$.  The only way it will not is if the tube dual to $D_2$ is already contained in the tetrahedron, i.e. the disk $D_2$ can be isotoped away from $\mT^2$ without changing $S_n$.  In this case there is one or two isotopies that push the tube back into the triangle.  If we append these to the original, the complexities will no longer be non-increasing.  However, these moves are disjoint from all the original disks, so they commute with all of the previous moves.  Thus we can again rearrange the sequence so that it the complexity is non-increasing and the final surface represents $S'$.
\end{proof}

Finally we note that an index-one or index-two surface with tubes will, in general, be represented by more than one bent surface, depending on which triangle in $\mT^2$ we push the tube(s) into.  We therefore must determine which of the vertices in the graph $G(w)$ represent the same surface.

\begin{Lem}
\label{bentsurfaceisotopiclem}
Two bent surfaces with weight vector $w$ will represent the same index-zero, -one or -two surface (i.e. up to isotopy transverse to $\mT^1$) if and only if they are connected by a path in $G(w)$ with constant complexity.
\end{Lem}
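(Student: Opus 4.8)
The plan is to single out, among all pinch moves, those that arise from isotopies, to show these are exactly the complexity-preserving ones, and then to realise any isotopy between bent surfaces of weight $w$ by a sequence of such moves inside the graph $G(w)$.

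First I would dispatch the ``if'' direction. Suppose $S$ and $S'$ are bent surfaces of weight $w$ joined by a path in $G(w)$ along which the complexity is constant. Each edge of the path is a pinch move across some arc $\alpha$ lying in the interior of a triangle $\tau$ of $\mT^2$, with $\alpha$ disjoint from $\mT^2$ away from its endpoints. By the trichotomy recorded just before Lemma~\ref{indextwoalglem}, such a pinch is a compression, a tubing, or an isotopy, according to whether $\alpha$ joins a loop of the surface's intersection with $\partial\sigma$ to itself or to a different loop in each of the two tetrahedra $\sigma$ adjacent to $\tau$. A compression raises $\chi(S\setminus\mT^1)$ by at least two while raising the number of components by at most one, hence strictly lowers the complexity defined in Section~\ref{heightsect}; a tubing, being its reverse, strictly raises it. Therefore a complexity-preserving pinch is an isotopy, and since $\alpha$ avoids the $1$-skeleton this isotopy is transverse to $\mT^1$. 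Concatenating the isotopies along the path yields an isotopy of $(M,\mT^1)$ taking $S$ to $S'$, so the two bent surfaces represent the same surface up to isotopy transverse to $\mT^1$.

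For the ``only if'' direction, let $S_1,S_2$ be bent surfaces of weight $w$ representing the same index-$\leq 2$ surface, and choose an ambient isotopy $h_t$ of $(M,\mT^1)$ with $h_0=\mathrm{id}$ and $h_1(S_1)=S_2$; this exists and is transverse to $\mT^1$ by hypothesis. Put $\{h_t\}$ in general position with respect to $\mT^2$. Because the isotopy never meets $\mT^1$, and the edges of $\mT^2$ lie in $\mT^1$, the moving surface $h_t(S_1)$ is never tangent to an edge of $\mT^2$, so its intersection numbers with the edges stay equal to $w$ throughout and the only non-generic events are isolated tangencies with the interior of a $2$-cell. I would then show, after a further small perturbation concentrating each such event along a short arc in the relevant triangle, that each event changes the arc pattern $h_t(S_1)\cap\mT^2$ precisely by a pinch move, with every intermediate surface still a bent surface of weight $w$; here the hypotheses built into ``bent surface'' (no closed curves of intersection in a triangle, disk intersection with each tetrahedron) are used to rule out intermediate configurations -- a trivial loop appearing in a triangle, or a finger of the surface running through a tetrahedron -- that would leave the vertex set of $G(w)$. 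Since each such pinch occurs along an ambient isotopy it preserves the homeomorphism type of the surface and the isotopy class of $S\setminus\mT^1$, so by the trichotomy and the complexity computation it is an isotopy-type pinch and leaves the complexity unchanged. This produces the desired constant-complexity path in $G(w)$ from $S_1$ to $S_2$.

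The hard part is precisely this general-position step: verifying that an ambient isotopy of $(M,\mT^1)$ between two bent surfaces can be homotoped, rel endpoints and without ever crossing $\mT^1$, to a trajectory that touches $\mT^2$ only in a finite sequence of pinch moves through interiors of $2$-cells, so that the entire path stays inside the finite complex $G(w)$. Subsidiary care is needed for skewered-sphere components: since both surfaces have weight $w$ and represent the same normal surface they contain the same number of skewered spheres (each contributes exactly $2$ to one edge coordinate and $2$ to the complexity), and one checks that a skewered sphere can be moved between triangles by isotopy-type pinch moves, so that these extraneous pieces neither change the complexity nor obstruct the construction.
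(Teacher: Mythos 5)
Your ``if'' direction is fine and matches the paper's (which simply declares it immediate): the trichotomy for pinch moves plus the complexity count shows a constant-complexity pinch is an isotopy transverse to $\mT^1$. The problem is the ``only if'' direction, where you take a genuinely different route from the paper and leave its central step unproven. The paper does not put an ambient isotopy in general position with respect to $\mT^2$; it first observes that a bent surface without tube pieces is determined by its intersection with $\mT^1$, so there is nothing to prove there, and then, for tubed pieces, it runs the outermost-disk argument of Lemma~\ref{indextwoalglem} on the two specific K-disks dual to each tube (the bridge disk in a triangle and the compressing disk meeting $\mT^2$ in one arc), reordering the resulting moves so the complexity of the intermediate \emph{bent} surfaces is non-increasing, hence constant. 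Your general-position sweep, by contrast, founders exactly where you flag ``the hard part'': a generic isotopy of $(M,\mT^1)$ will have center tangencies with triangle interiors, which create closed curves of intersection in a $2$-cell, and will pass through configurations in which a tube lies inside a tetrahedron, so the intersection with that tetrahedron is not a union of disks. Both events take you out of the vertex set of $G(w)$, and you give no mechanism for homotoping the isotopy to avoid them; asserting that the definition of bent surface ``rules out'' such intermediate configurations is not an argument, since the definition constrains the endpoints of the path, not the trajectory.

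There is a second, related error: you claim each isotopy-type event ``leaves the complexity unchanged'' because an ambient isotopy preserves the surface. But the complexity recorded along a path in $G(w)$ is the complexity of the bent surface read off from the arc pattern in $\mT^2$ (equivalently, of the surface compressed maximally in the complement of $\mT^2$), not of the moving surface itself. These differ precisely when a tube sits inside a tetrahedron, which is unavoidable when an isotopy carries a tube from one triangle to another; this is why the paper must invoke the reordering argument of Lemma~\ref{indextwoalglem} to force the bent-surface complexity to be monotone rather than merely to return to its initial value. Without that reordering (or some substitute), your path in $G(w)$ is not known to have constant complexity even after you have somehow arranged for it to stay in $G(w)$. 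To repair the proof along the paper's lines, replace the general-position sweep by the explicit sequence of isotopies across outermost subdisks of the images $D_1'$, $D_2'$ of the disks dual to each tube, and cite the reordering argument to control the complexity.
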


\begin{proof}
Any path in $G(w)$ where the complexity is constant represents an isotopy transverse to $\mT^1$, so one direction is immediate.  For the other direction, assume $S$ and $S'$ are bent surfaces representing isotopic index-zero, -one or -two surfaces.

Because an index-zero surface intersects the triangles of $\mT$ in straight arcs and the tetrahedra in disks, it is determined entirely by its intersection with the 1-skeleton, so two index-zero normal surfaces are isotopic transverse to the 1-skeleton $\mT^1$ if and only if they are isotopic relative to the 2-skeleton $\mT^2$.  A similar argument applies to index-one and index-two normal surfaces without tubes.  

For surfaces with tubes, we will follow the proof of Lemma~\ref{indextwoalglem}.  For each tube in $S$, there is a bridge disk $D_1$ for $S$ contained in a triangle of $\mT$ and a compressing disk $D_2$ that intersects $\mT^2$ in a single arc.  The isotopy from $S$ to $S'$ takes $D_1$, $D_2$ to a pair of K-disks $D'_1$, $D'_2$ for $S'$.  (If $S$ has two tubes there will a second, disjoint pair of disks as well.)  If we isotope $S'$ across outermost disks in $D'_1$, $D'_2$, we will find a sequence of bent surfaces from $S'$ to $S$.  Following the argument in the proof of Lemma~\ref{indextwoalglem}, we can order the sequence so that the complexity of the intermediate bent surfaces is non-increasing.  This determines a sequence of edges in $G(w)$ with constant complexity from $S'$ to $S$.
\end{proof}

\section{The proof of Theorem~\ref{mainthm2}}
\label{thm2sect}

In this section, we prove Theorem~\ref{mainthm2}, which states that given a partially flat angled ideal triangulation $\mT$ and an integer $g$, we can calculate the subcomplex of $\mD^2(M, \mT^1)$ spanned by vertices representing surfaces with genus at most $g$.  (For a disconnected surface, we mean that the sum of the genera of the components is at most $g$.)

\begin{proof}[Proof of Theorem~\ref{mainthm2}]
The vertices of $\mD(M, \mT^1)$ correspond to index-zero, -one and -two vertices in the complex of surfaces $\mS(M, \mT^1)$.  By Lemma~\ref{normalizerlem}, these vertices correspond to index-zero, -one and -two normal surfaces with respect to $\mT$.  

By Theorem 4.3 in~\cite{lacknb:1eff}, there is for any integer $n$, a finite collection of connected index-zero and -one normal (i.e. normal and almost normal) surfaces with respect to $\mT$ whose genus is at most $n$, and there is an algorithm to compute them.  Every disconnected surface is a union of normal and almost normal surfaces with positive genus, so there is a finite number of (possibly disconnected) index-zero and index-one normal surfaces such that the sum of the genera of the components is at most $n$.  

The argument in~\cite{lacknb:1eff} uses standard linear programming techniques combined with the fact (Theorem 2.1 in~\cite{lacknb:1eff}) that in a partially flat angled ideal triangulation, the only non-negative Euler characteristic normal surface is a a vertex linking torus.  These techniques are equally applicable to index-two normal surfaces, though we will leave the details to the reader.  Lemma~\ref{bentsurfaceisotopiclem} shows that there is an algorithm to determine when two index-zero, -one or -two surfaces represent the same vertex in $\mD^2$, so there is an algorithm to compute the finitely many index-zero, -one or -two normal surfaces of genus at most $g$.  This algorithm constructs the vertices of $\mD^2(M, \mT^1)$ with genus at most $g$.

Next we must compute the edges and faces of $\mD(M, \mT^1)$.  Because there are finitely many index-one and index-two vertices, it will be sufficient to compute the edges and faces below each one.  Every edge below an index-one or index-two vertex corresponds to a sequence of K-compressions that end at a surface lower index.  In Section~\ref{bentsurfacesect}, we defined the graph of bent surfaces for a given edge weight vector.  By Lemma~\ref{indexonealglem}, every such path below an index-one surface is represented by a path in this graph of bent surfaces.  Similarly, Lemma~\ref{indextwoalglem} shows that every such path below an index-two normal surface corresponds to a path in its bent surface graph.  By Lemma~\ref{finitebentsurfaceslem}, each of these graphs is finite and constructible, so there is an algorithm to find all such paths.

Once we have calculated the edges, we will attach a triangle to every loop of edges whose vertices have distinct indices.  By construction these are the only faces in $\mD^2(M, \mT^1)$ and every such loop bounds a face.  Since there are finitely many edges, there are finitely many such paths, so this final part of the construction is algorithmic as well.
\end{proof}

\appendix
\section{Reference list of axioms}
\label{axiomsect}

\noindent
\textbf{The Morse axiom}: Every 2-cell in $\mS$ is a diamond, a triangle or a bigon as in the construction of $\mS(M, \mT^1)$.  Given three edges such that any two bound a 2-cell, the projection of any two across the third will determine a face, an edge or will project to a point.  Every $n$-cell $C$ is defined by mapping the boundary of an $n$-cube in the $n-1$-skeleton via projections. \\

\noindent
\textbf{The net axiom}: For any vertex $v \in \mS$, there is an integer $\ell(v)$ such that every edge path starting at $v$, along which the complexity strictly decreases, has length at most $\ell(v)$. \\

\noindent
\textbf{The parallel orientation axiom}: For any 2-cell $q$ in $\mS$, the orientations on the edges of $q$ make it a parallel-oriented diamond, triangle or bigon. \\

\noindent
\textbf{The Casson-Gordon axiom}: Let $v$ be a maximum in an oriented path $E$, and let $v_-$, $v_+$ be the minima of $E$ right before and after $v$, respectively.  If $v_-$ is compressible to the positive side then either the path link of $v$ is contractible or $v_+$ is compressible to the positive side.  Similarly, if $v_+$ is compressible to the negative side then either the path link of $v$ is contractible or $v_-$ is compressible to the negative side.  \\

\noindent
\textbf{The barrier axiom}:  Given any vertex $v \in \mS$, there are vertices $v_-$, $v_+$ and paths $E_-$, $E_+$ starting in $v_-$ and ending in $v_+$, respectively such that the following hold: Any directed path descending from $v$ can be extended to a decreasing path ending in $v_+$ that is equivalent to $E_+$.  Any reverse-directed path descending from $v$ can be extended to an decreasing path ending at $v_-$ that is equivalent to $E_-$. \\

\noindent
\textbf{The translation axiom}: Let $q^+$ be an $n$-cell in $\mS$ such that the  edges of $q^+$ all point away from $v$ and let $q^-$ be an $m$-cell in $\mS$ such that the edges of $q^-$ all point towards $v$.  Then there is an $(n+m)$-cell $C$ isomorphic to $q^+ \times q^-$ such that $q^+ = q^+ \times \{v\}$ and $q^- = \{v\} \times q^+$. Moreover, $C$ is unique up to automorphisms of $\mS$ fixing the vertices and edges of the complex. \\

\bibliographystyle{amsplain}
\bibliography{thin}

\end{document}